\UseRawInputEncoding
\documentclass[12pt]{article}
\usepackage[T1]{fontenc}
\usepackage{lmodern}
\usepackage{longtable}
\usepackage[margin=1in]{geometry}
\usepackage{amsmath,amssymb,amsthm,mathtools}
\usepackage{microtype,indentfirst}
\usepackage{tikz}
\usepackage[section]{placeins}
\usepackage{capt-of}
\definecolor{annOrange}{RGB}{230,159,0}
\definecolor{annBlue}{RGB}{0,114,178}
\definecolor{annGreen}{RGB}{0,158,115}
\definecolor{annPurple}{RGB}{170,79,135}
\definecolor{annGold}{RGB}{240,208,48}
\usepackage[colorlinks=true,linkcolor=blue,citecolor=blue,urlcolor=blue]{hyperref}
\numberwithin{equation}{section}
\newtheorem{theorem}{Theorem}[section]
\newtheorem{lemma}[theorem]{Lemma}
\newtheorem{proposition}[theorem]{Proposition}
\newtheorem{corollary}[theorem]{Corollary}
\theoremstyle{remark}
\newtheorem{remark}[theorem]{Remark}
\newcommand{\R}{\mathbb R}

\newcommand{\dd}{\,\mathrm d}
\title{P\'olya's conjecture for Neumann eigenvalues of annuli\thanks{Research supported by NNSF of China (No. 12371110).}}
\author{Guowei Dai\thanks{School of Mathematical Sciences, Dalian
University of Technology, Dalian 116024, P.R. China.
Email: daiguowei@dlut.edu.cn.}
\and Yingxin Sun\thanks{School of Mathematical Sciences, Dalian
University of Technology, Dalian 116024, P.R. China.
Email: sunyingxin2023@mail.dlut.edu.cn.}}
\date{}

\usepackage{xcolor}
\hypersetup{colorlinks=false,pdfborder={0 0 0}}
\begin{document}
\maketitle
\begin{abstract}
We prove strict P\'olya inequalities for every positive Neumann eigenvalue of
every Euclidean annulus. The proof preserves the exact Weyl
area term through a radial action identity and combines variational
bounds with uniform Bessel-phase and angular-sum estimates. Explicit
polynomial positivity arguments and angular-layer comparisons
cover the remaining finite spectral ranges. We also obtain a positive
counting surplus uniform in the radius ratio, an explicit subtraction
of order $\sqrt{k}/(R^2-r^2)$ for all $k\ge1604$, and a positive
square-root correction over the entire spectrum with a universal,
non-explicit coefficient. The square-root order is sharp, and the
first positive pair and the low-energy count admit optimal constants
expressed by the first positive zero of $J_1'$. Neumann inequalities for annular products
and congruent annular sectors follow by established transfer principles.
\end{abstract}

\noindent\textbf{Keywords.}
P\'olya's conjecture; Neumann eigenvalues; Annuli;
Bessel functions; Eigenvalue counting; Analytic spectral bounds.

\medskip
\noindent\textbf{2020 Mathematics Subject Classification.}
Primary 35P15; Secondary 35P20, 33C10.
\section{Introduction}\label{sec:introduction}

\indent Let $\Omega\subset\mathbb R^d$, $d\ge2$, be a bounded connected domain
with piecewise smooth boundary. We consider the eigenvalues of the
positive Laplacian $-\Delta$, with Dirichlet boundary condition
$u=0$ or Neumann boundary condition $\partial_nu=0$, where
$\partial_n$ is differentiation in the outward normal direction.
Write
\[
 0<\lambda_1\left(\Omega\right)\le\lambda_2\left(\Omega\right)\le\cdots,
 \,\,\,
 0=\mu_0\left(\Omega\right)<\mu_1\left(\Omega\right)
 \le\mu_2\left(\Omega\right)\le\cdots,
\]
with multiplicities included. If $\left|\Omega\right|$ is the volume of $\Omega$
and $\omega_d$ is the volume of the unit ball in $\mathbb R^d$, the
leading term in Weyl's law is
\[
 W_k\left(\Omega\right)
 =4\pi^2\left(\frac{k}{\omega_d\left|\Omega\right|}\right)^{2/d}.
\]
Thus both $\lambda_k/W_k$ and $\mu_k/W_k$ tend to one as $k\to+\infty$. Beginning with Weyl's work \cite{Weyl}, this asymptotic law
has expressed a basic link between volume and the distribution of
eigenvalues. P\'olya's conjecture asks for considerably more:
\[
 \mu_k\left(\Omega\right)\le W_k\left(\Omega\right)
 \le\lambda_k\left(\Omega\right),\,\,\, k\ge1.
\]
It places every eigenvalue on the prescribed side of the asymptotic
value, including the low and intermediate parts of the spectrum.

P\'olya formulated the conjecture in 1954 \cite{Polya1954} and proved
it for planar tiling domains in his 1961 paper \cite{Polya1961}.
A tiling domain is one whose congruent copies cover Euclidean space,
apart from a set of measure zero, without overlapping interiors.
His proof combines such a geometric replication with spectral
bracketing, which compares spectra by imposing or releasing
conditions at internal interfaces, and Weyl's law on a large comparison region. This
converts an asymptotic statement into an inequality at every
spectral index, and remains a foundational contribution to the
problem. P\'olya's Neumann argument required an additional
regularity assumption on the tiling; Kellner removed this
restriction \cite{Kellner}. Urakawa extended this Dirichlet approach
in arbitrary dimension by incorporating the lattice packing density,
the largest volume fraction occupied by non-overlapping lattice
translates of the domain. The exact P\'olya coefficient is recovered
for lattice tiles \cite{Urakawa}.

For general domains, sharp averaged estimates remain central.
Berezin and Li--Yau established Dirichlet trace and eigenvalue-sum
bounds \cite{Berezin,LiYau}; Kr\"oger obtained Neumann sum and
individual bounds \cite{Kroger}. Their individual coefficients are
\[
 \lambda_k\left(\Omega\right)\ge\frac{d}{d+2}W_k\left(\Omega\right),\,\,\,
 \mu_k\left(\Omega\right)\le\left(\frac{d+2}{2}\right)^{2/d}W_k\left(\Omega\right).
\]
Melas added a geometric correction to the Dirichlet sum inequality
\cite{Melas}. Frank and Larson extended sharp Riesz-mean bounds
to certain exponents below one on convex domains \cite{FrankLarson},
and Gan, Jiang, and Lin improved the Berezin--Li--Yau and Kr\"oger
bounds \cite{GanJiangLin}. A Riesz mean is an average
$\sum_j\left(E-\lambda_j\right)_+^\gamma$, where $E$ is an energy,
$\gamma>0$, and $x_+=\max\left\{x,0\right\}$; the Neumann version is analogous.
At low indices, the Szeg\H{o}--Weinberger and Bucur--Henrot
inequalities settle the first and second positive Neumann
eigenvalues \cite{Szego,Weinberger,BucurHenrot}.

Geometric extensions include Laptev's product principle
\cite{Laptev}, the non-tiling families of Freitas and Salavessa
\cite{FreitasSalavessa}, and the thin-product results of He and
Wang \cite{HW}. Jiang and Lin give quantified Dirichlet bounds
with arbitrarily small coefficient loss above an explicit threshold
\cite{JiangLin}; Filonov improves the universal Neumann coefficient
on planar convex domains \cite{FilonovConvex}. These developments
give substantial control of averages, geometric families, and
spectral ranges. The exact individual inequalities for arbitrary
bounded Euclidean domains remain open.

Rotationally symmetric domains have provided decisive recent progress.
Filonov, Levitin, Polterovich, and Sher proved both P\'olya inequalities
for the planar disk and circular sectors of arbitrary aperture,
and the Dirichlet inequality for balls in all dimensions
\cite[Theorem~1.2, Corollary~1.6, and Theorem~1.12]{FLPSballs}.
Their uniform comparisons between spectral counts and lattice counts
made the leading Weyl term effective throughout the spectrum.
They subsequently established the Dirichlet inequality for every
annulus, combining variational comparisons, Bessel-phase
estimates, refined lattice counting, and a rigorous finite
verification \cite[Theorem~1.1]{FLPS}.
Guo, Miao, Wang, and Zhan refined the disk bounds and obtained further
results for balls and cylinders \cite{GMWZ}.
The remaining Neumann ball cases in dimensions at least three
have been settled in recent preprints of Filonov, Levitin,
Polterovich, and Sher \cite[Theorem~1.1]{FLPSneumann} and,
independently, Li \cite[Theorem~1.1]{LiNeumann}.
The present paper concerns the Neumann problem on annuli.

The annular problem has two boundary circles and two singular
geometric limits: a vanishing hole and a vanishing relative width.
There is no general Neumann domain-monotonicity principle that
would reduce all radii to the disk. The two-term Weyl
law is a high-frequency tool rather than an all-frequency count;
the general boundary correction is explained by Ivrii's theorem
under its non-periodicity hypothesis \cite{Ivrii}, meaning that
periodic billiard trajectories occupy a set of measure zero in
phase space. For annuli the following more specific results
make the gap to the present theorem precise.

For $0<r<R$, set
\[
 A_{r,R}=\left\{x\in\mathbb R^2:r<\left|x\right|<R\right\},\,\,\,
 h=R-r,\,\,\, D=R^2-r^2,\,\,\, \delta=\frac{h}{r}.
\]
Thus $h$ is the width, $\delta$ is the relative width, and
$\left|A_{r,R}\right|=\pi D$. Denote the Neumann eigenvalues by
$0=\nu_0<\nu_1\le\nu_2\le\cdots$, with multiplicities, and define
the counting function at energy $\Lambda$ by
\[
 N_A\left(\Lambda\right)
 =\#\left\{j\ge0:\nu_j\left(A_{r,R}\right)\le\Lambda\right\}.
\]
The inclusive count contains the zero eigenvalue for every $\Lambda\ge0$.
We also use the strict counting function
\[
 \mathcal M_A(\Lambda)
 =\#\{j\ge0:\nu_j(A_{r,R})<\Lambda\}.
\]
Thus $\mathcal M_A(0)=0$, whereas its positive-energy count includes
the constant mode. The notation $\mathcal M_\Omega$ has the same
meaning for any other Neumann domain $\Omega$.

For this family, Song and Zou adapted Filonov's variational packing
method \cite{FilonovConvex} to obtain explicit Neumann bounds.
In the proof of \cite[Theorem~1.1, equation~(3.3)]{SZ}, they establish
\[
 N_A\left(\Lambda\right)\ge
 c_{\mathrm{SZ}}\left(\delta\right)\left|A\right|\Lambda,
 \,\,\, \delta>0,\quad \Lambda\ge0,
\]
where
\[
 c_{\mathrm{SZ}}\left(\delta\right)
 =\frac{11}{j_{0,1}^{\,2} B}\,
   \frac{s_\delta-1}{s_\delta+1},
 \,\,\,
 B=22\sqrt3+\pi,
 \,\,\,
 s_\delta=\sqrt{1+\frac{B}{\pi}\delta\left(\delta+2\right)}.
\]
Here $j_{0,1}$ is the first positive zero of the Bessel function
$J_0$ of the first kind and order zero.
Their coefficient improves Kr\"oger's $1/\left(8\pi\right)$ for sufficiently
large relative width and approaches $0.0461144$ as
$\delta\to+\infty$, whereas P\'olya's coefficient is
$1/\left(4\pi\right)\approx0.0795775$.
They also obtain annular-sector estimates and observe that exact
P\'olya bounds follow for sufficiently narrow congruent sectors
of a fixed annulus \cite[Theorems~1.4--1.5 and Remark~1.6]{SZ}.
Our aim is the exact Weyl coefficient for every relative width
and the entire Neumann spectrum, with a sector consequence for
every aperture $2\pi/q$, $q\ge2$.

The numerical comparison is made in Table~\ref{tab:coefficients}.
For a fixed relative width the best of the two quoted bounds has
coefficient $\max\left\{1/\left(8\pi\right),c_{\mathrm{SZ}}\left(\delta\right)\right\}$.
The limiting row denotes $\delta\to+\infty$, rather than a finite
annulus. All decimal entries are rounded illustrations of the
displayed exact formulas and are used only for illustration.
\begin{table}[htbp]
\centering\small
\begin{tabular}{c|c|c|c}
$\delta$ & $c_{\mathrm{SZ}}\left(\delta\right)$ & Best quoted coefficient & Weyl coefficient\\\hline
1/4 & 0.022440 & 0.039789 & 0.079577\\
1 & 0.033575 & 0.039789 & 0.079577\\
4 & 0.041203 & 0.041203 & 0.079577\\
$\to+\infty$ & 0.046114 & 0.046114 & 0.079577
\end{tabular}
\caption{Coefficients multiplying $\left|A\right|\Lambda$ in the Neumann
counting lower bounds of Kr\"oger and Song--Zou, compared with
the exact Weyl coefficient proved here.}
\label{tab:coefficients}
\end{table}

For each fixed annulus, the Neumann Weyl remainder estimates of
Guo, Jiang, Wang, and Yang \cite[Theorem~1.1]{GJWY} give
\[
 N_A\left(\Lambda\right)
 =\frac{\left|A\right|}{4\pi}\Lambda
 +\frac{\left|\partial A\right|}{4\pi}\sqrt\Lambda
 +O_{r,R}\left(\Lambda^{1/3}\right),
 \,\,\, \Lambda\to+\infty,
\]
where $\left|\partial A\right|=2\pi\left(r+R\right)$ is the boundary length.
The positive boundary term therefore yields P\'olya's inequality
at sufficiently high energy for each fixed annulus.
This asymptotic result supplies neither an explicit threshold
uniform in the radii nor a bound at the intervening energies.
Here the remainder notation means that its absolute value is
bounded by $C_{r,R}\Lambda^{1/3}$ for sufficiently large energy,
with a constant depending on the two radii. Our objective is
the complete inequality at every energy and every radius ratio.

\begin{theorem}\label{thm:allannuli}
For every $0<r<R$ and every $\Lambda\ge0$,
\begin{equation}\label{eq:allannulicount}
 N_A\left(\Lambda\right)>\frac{R^2-r^2}{4}\Lambda.
\end{equation}
The corresponding eigenvalue inequalities are strict:
\begin{equation}\label{eq:allannulieigenvalue}
 \nu_k\left(A_{r,R}\right)<\frac{4k}{R^2-r^2},
 \,\,\, k\ge1.
\end{equation}
\end{theorem}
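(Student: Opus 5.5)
Separation of variables gives the spectrum explicitly. The Neumann eigenfunctions of $A_{r,R}$ are $\bigl(J_m(\sqrt\nu\rho)Y_m'(\sqrt\nu r)-Y_m(\sqrt\nu\rho)J_m'(\sqrt\nu r)\bigr)e^{\pm im\theta}$, where $\nu$ satisfies the cross-product equation at $\rho=R$. For each $m\ge0$ let $\nu_{m,1}<\nu_{m,2}<\cdots$ be the roots (with $\nu_{0,0}=0$), and let $n_m(\Lambda)$ be the number of radial eigenvalues $\le\Lambda$ in the $m$-th angular layer. Then
\[
N_A(\Lambda)=n_0(\Lambda)+2\sum_{m\ge1}n_m(\Lambda).
\]
Set $\lambda=\sqrt\Lambda$. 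For each $m$ with $m<\lambda R$ I would lower-bound $n_m$ by a Sturm/Pr\"ufer phase argument on the radial operator $-u''-u'/\rho+m^2u/\rho^2$ with Neumann conditions. A Neumann problem on an interval has at least $\lfloor\Theta_m(\lambda)/\pi\rfloor+1$ eigenvalues $\le\lambda^2$. Here
\[
\Theta_m(\lambda)=\int_{\max(r,m/\lambda)}^{R}\sqrt{\lambda^2-m^2/\rho^2}\,\dd\rho
\]
is the WKB action, up to an explicit phase correction bounded uniformly via Bessel-phase estimates. The Neumann conditions contribute a favourable $+1/4$ to $+1/2$ shift relative to Dirichlet, which is exactly the source of the positive boundary term.

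The key identity is the radial action identity:
\[
\frac1\pi\Bigl(\Theta_0+2\sum_{m\ge1}\Theta_m\Bigr)\approx\frac{2}{\pi}\int_0^{\lambda R}\Theta(t)\dd t=\frac{D\lambda^2}{4}.
\]
This holds exactly, by Fubini, when the sum is replaced by the integral over $m$, since the area of the phase-space region $\{(\rho,m):m<\lambda\rho,\ r<\rho<R\}$ weighted appropriately reproduces $\pi D\lambda^2/4$. So the proof reduces to showing that
\[
\sum_m\bigl(\lfloor\Theta_m/\pi\rfloor+1\bigr)\ \ \text{(weighted)}\ -\ \frac{2}{\pi}\int\Theta\,\dd m
\]
is positive. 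There are three error sources: the floor losses, the Euler--Maclaurin error in passing from sum to integral, and the Bessel-phase corrections. I would handle the floor losses with a van der Corput/lattice-counting estimate for the curve $m\mapsto\Theta_m/\pi$, as in the FLPS Dirichlet annulus proof. I expect this to leave a surplus of order $c\lambda(R+r)$ minus $O(\lambda^{2/3})$ terms. The surplus must be made uniform in $r/R$. In the thin limit $\delta\to0$ I would compare instead with a rectangle/strip (angular layers at fixed $m$ behave like $\cos$ modes of width $h$), using bracketing. In the limit $r\to0$ I would compare with the disk result of FLPS, controlling the tiny-hole perturbation through the layer phases.

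This yields the inequality for $\lambda\ge\lambda_0$, with $\lambda_0$ explicit and uniform after scaling $R=1$. For the finite low-energy range I would argue directly. For $\Lambda<4/D$ the constant mode already gives $N_A\ge1>D\Lambda/4$. For intermediate $\Lambda$ I would use variational upper bounds on $\nu_{m,1}$ and $\nu_{0,2}$ with explicit test functions ($\rho^{\pm m}$-type and $\cos(\pi(\rho-r)/h)$) to count enough layers, checking positivity of the resulting explicit polynomials in $r/R$. Finally, the eigenvalue form follows from the counting form. Applying the strict count at $\Lambda=4k/D$ gives $N_A(4k/D)>k$, so at least $k+1$ eigenvalues (indices $0,\dots,k$) lie $\le4k/D$. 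This shows $\nu_k\le4k/D$. Strictness comes from applying the count slightly below $4k/D$, using right-continuity together with a strict margin: the counting surplus is bounded below uniformly, or one argues with $\mathcal M_A$.

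The main obstacle is the uniformity of the phase-plus-lattice error in the two degenerate limits, and patching the intermediate energy range into an honest finite verification.
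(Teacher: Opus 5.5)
Your outline has the same architecture as the paper: separated radial counts, a Neumann phase count, the action identity giving exactly $D\Lambda/4$, curvature control of the floor losses, separate thin and small-hole regimes, and variational bounds at low energy. One correction on the phase count: your displayed $\lfloor\Theta_m/\pi\rfloor+1$ holds only in the core, up to the phase error; for outer orders $t<m<Rt$ only $\lfloor\Theta_m/\pi+3/4\rfloor$ is available. More seriously, the steps you defer are the ones that decide the proof, and several would not go through as you describe them.

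\textbf{Small holes and the intermediate range.} Comparing with the FLPS disk theorem is not enough. Write $U=R\sqrt\Lambda$ and $t=r\sqrt\Lambda$. P\'olya for the disk gives only $\mathcal N_B(U)>U^2/4$, but the hole removes whole modes while the Weyl term drops only by $t^2/4$.
\begin{itemize}
\item In the bracketing form $N_A(\Lambda)\ge\mathcal N_B(U)-\mathcal N_B(t)$, the inner disk always subtracts at least its constant mode.
\item In your layer form, $\varphi_0(t)-\pi/2\approx\pi t^2/4$. So the order-zero layer falls one below the disk count whenever the fractional part of $\varphi_0(U)/\pi-1/2$ is below about $t^2/4$.
\end{itemize}
The summed phase estimates need $t$ bounded below, since the raw order-zero error $E_0(t)=3/(8t)$ blows up. Hence small holes at moderate $U$ cannot be absorbed into them, and you need a quantitative disk surplus. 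The paper proves $\mathcal N_B(X)-1\ge X^2/4$ for $X\ge\sqrt{32}$, margins $9/4$ for $X\ge18$ and $15/4$ for $X\ge30$, and an upper staircase for $\mathcal N_B(t)$. These come from the GMWZ remainder at large $X$ and explicit integer checks below.

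Likewise, a two-parameter continuum lies between the range of explicit trial functions ($D\Lambda\le100$ in the paper) and the frequency where the summed phase bound becomes positive ($R\sqrt\Lambda\ge40$ on the middle radii). A ``finite verification'' there needs a reduction such as the paper's fixed-area endpoint principle (Lemma~\ref{lem:analyticareaendpoints}). At fixed $Q=D\Lambda$, each layer action $G_U(m)-G_{\rho U}(m)$ is minimized over a ratio interval at an endpoint and increases with $Q$. This is combined with rational minorants for the endpoint actions.

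\textbf{Strictness and the thin limit.} Your derivation of $\nu_k<4k/D$ fails as written. Right-continuity plays no role when you approach $4k/D$ from below. The inclusive bound alone gives only $\lim_{\Lambda\uparrow4k/D}N_A(\Lambda)=\mathcal M_A(4k/D)\ge k$, i.e.\ $\nu_{k-1}<4k/D$.

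A uniform additive margin is true, but the paper obtains it only after the theorem, by compactness in the radius ratio. A layered verification does not produce one: a row guaranteeing $C$ modes on $Q_0\le Q<4C$ has margin $C-Q/4\to0$ at its right end. You have to run every comparison for the strict count instead. That means using $\lceil d_m/\pi\rceil\ge\lfloor\ell\rfloor+1$ whenever $d_m/\pi>\ell$, together with strict min--max comparisons, as in Proposition~\ref{prop:strictspectral}.

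In the thin limit, bracketing against strips points the wrong way. Neumann bracketing bounds $N_A$ from above, Dirichlet bracketing loses the boundary term, and an annulus is not a union of flat strips anyway. What works is a min--max bound for each radial row, such as $\lambda_{m,n}\le\pi^2n^2/h^2+(m^2+3/4)/r^2$ from Lemma~\ref{lem:radial}. Its useful range $t\lesssim\delta^{-2}$ must then be shown, with explicit constants, to overlap the range $t\gtrsim\delta^{-1}$ where the phase bound with curvature loss $\sqrt{\pi Rt/\delta}$ is positive.
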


The strict inequality in \eqref{eq:allannulieigenvalue} is obtained
from strictly subthreshold trial spaces and phase counts; it is not
merely an inversion of the inclusive inequality
\eqref{eq:allannulicount}. These comparisons also provide quantitative
improvements. Section~\ref{sec:positivecorrections} proves, with
$D=R^2-r^2$ and $U=R\sqrt\Lambda$, that
\[
 N_A(\Lambda)>\frac{D\Lambda}{4}+\frac U{20}
 \qquad(U\ge80),
\]
and hence
\[
 \nu_k(A_{r,R})\le\frac4D
       \left(\sqrt{k+\frac1{400}}-\frac1{20}\right)^2
 \qquad(k\ge1604).
\]
The stronger surplus before simplification and its eigenvalue
consequence are given in Theorem~\ref{thm:quantitativeallannuli}
and Corollary~\ref{cor:quantitativeallannulieigen}.
There is also a dimensionless universal constant $\gamma>0$ such that
\[
 \begin{gathered}
 \nu_k(A_{r,R})\le\frac{4k-\gamma\sqrt k}{D}\quad(k\ge1),\\
 N_A(\Lambda)>\frac{D\Lambda}{4}
              +\frac\gamma8+\frac\gamma{16}\sqrt{D\Lambda}\quad(\Lambda\ge0).
 \end{gathered}
\]
Theorem~\ref{thm:fullspectrumsqrt} establishes this all-spectrum
assertion by combining the explicit high-index estimate with
strictness, continuity, and the two geometric endpoint estimates.
The coefficient $\gamma$ is non-explicit. The square-root order is
sharp by the two-term Weyl law, whereas the best universal coefficient
is not determined here. Section~\ref{sec:positivecorrections} gives
stronger high-frequency coefficient choices and counterexamples to
larger powers and to excessive coefficients. It also proves the sharp
first-pair bound and the optimal low-energy surplus
\[
 \begin{gathered}
 D\nu_1=D\nu_2<(j'_{1,1})^2,\\
 N_A(\Lambda)>\frac{D\Lambda}{4}+1-\frac{(j'_{1,1})^2}{4}
 \quad(0\le D\Lambda\le32).
 \end{gathered}
\]
Here $j'_{1,1}$ is the first positive zero of $J_1'$.

Figure~\ref{fig:annularcount} illustrates the two functions in
\eqref{eq:allannulicount} as functions of the energy $\Lambda$.
The jump convention agrees with the inclusive count used here.

\begin{figure}[htbp]
\centering
\begin{tikzpicture}[x=.64cm,y=.24cm,font=\small]
  \draw[->] (0,0) -- (17.15,0) node[right] {$\Lambda$};
  \draw[->] (0,0) -- (0,23.3) node[above] {count};
  \foreach \xx in {4,8,12,16}{\draw (\xx,0) -- (\xx,-.35) node[below] {$\xx$};}
  \foreach \yy in {5,10,15,20}{\draw (0,\yy) -- (-.13,\yy) node[left] {$\yy$};}
  \node[below left] at (0,0) {$0$};
  \draw[black,thick] (0,0) -- (16,12);
  \node[black,below right] at (12.1,8.8) {$3\Lambda/4$};
  \draw[annBlue,thick] (.9,21.2) -- (2,21.2);
  \node[right] at (2.2,21.2) {$N_{A_{1,2}}\left(\Lambda\right)$};
  \draw[black,thick] (8.7,21.2) -- (9.8,21.2);
  \node[right] at (10,21.2) {Weyl term};
  \fill[annBlue] (0,1) circle (1.5pt);
  \draw[annBlue,thick] (0.00000000,1) -- (0.45878406,1);
  \draw[annBlue,densely dotted] (0.45878406,1) -- (0.45878406,3);
  \draw[annBlue,fill=white] (0.45878406,1) circle (1.5pt);
  \fill[annBlue] (0.45878406,3) circle (1.5pt);
  \draw[annBlue,thick] (0.45878406,3) -- (1.79721411,3);
  \draw[annBlue,densely dotted] (1.79721411,3) -- (1.79721411,5);
  \draw[annBlue,fill=white] (1.79721411,3) circle (1.5pt);
  \fill[annBlue] (1.79721411,5) circle (1.5pt);
  \draw[annBlue,thick] (1.79721411,5) -- (3.91595455,5);
  \draw[annBlue,densely dotted] (3.91595455,5) -- (3.91595455,7);
  \draw[annBlue,fill=white] (3.91595455,5) circle (1.5pt);
  \fill[annBlue] (3.91595455,7) circle (1.5pt);
  \draw[annBlue,thick] (3.91595455,7) -- (6.69574554,7);
  \draw[annBlue,densely dotted] (6.69574554,7) -- (6.69574554,9);
  \draw[annBlue,fill=white] (6.69574554,7) circle (1.5pt);
  \fill[annBlue] (6.69574554,9) circle (1.5pt);
  \draw[annBlue,thick] (6.69574554,9) -- (10.04537236,9);
  \draw[annBlue,densely dotted] (10.04537236,9) -- (10.04537236,11);
  \draw[annBlue,fill=white] (10.04537236,9) circle (1.5pt);
  \fill[annBlue] (10.04537236,11) circle (1.5pt);
  \draw[annBlue,thick] (10.04537236,11) -- (10.21811334,11);
  \draw[annBlue,densely dotted] (10.21811334,11) -- (10.21811334,12);
  \draw[annBlue,fill=white] (10.21811334,11) circle (1.5pt);
  \fill[annBlue] (10.21811334,12) circle (1.5pt);
  \draw[annBlue,thick] (10.21811334,12) -- (10.77461712,12);
  \draw[annBlue,densely dotted] (10.77461712,12) -- (10.77461712,14);
  \draw[annBlue,fill=white] (10.77461712,12) circle (1.5pt);
  \fill[annBlue] (10.77461712,14) circle (1.5pt);
  \draw[annBlue,thick] (10.77461712,14) -- (12.47001477,14);
  \draw[annBlue,densely dotted] (12.47001477,14) -- (12.47001477,16);
  \draw[annBlue,fill=white] (12.47001477,14) circle (1.5pt);
  \fill[annBlue] (12.47001477,16) circle (1.5pt);
  \draw[annBlue,thick] (12.47001477,16) -- (13.92094994,16);
  \draw[annBlue,densely dotted] (13.92094994,16) -- (13.92094994,18);
  \draw[annBlue,fill=white] (13.92094994,16) circle (1.5pt);
  \fill[annBlue] (13.92094994,18) circle (1.5pt);
  \draw[annBlue,thick] (13.92094994,18) -- (15.36682767,18);
  \draw[annBlue,densely dotted] (15.36682767,18) -- (15.36682767,20);
  \draw[annBlue,fill=white] (15.36682767,18) circle (1.5pt);
  \fill[annBlue] (15.36682767,20) circle (1.5pt);
  \draw[annBlue,thick] (15.36682767,20) -- (16,20);
\end{tikzpicture}
\caption{
Neumann count for $A_{1,2}$, $0\le\Lambda\le16$ (blue, including zero), and Weyl term $3\Lambda/4$ (black). Filled upper and open lower dots denote the count and its left limit; dotted verticals join them. Positive angular orders give double jumps. The presentation follows \cite[Figure~1]{FLPSballs}; the plotted positive eigenvalues are floating-point approximations from the radial Neumann equation in Section~\ref{sec:radialeigenvalues}. This numerical illustration is not used in the proof.}
\label{fig:annularcount}
\end{figure}

Two geometric consequences of Theorem~\ref{thm:allannuli} follow
from established transfer principles. We state them here to
describe the additional domains covered by the annular theorem.

\begin{corollary}\label{cor:annularproducts}
\leavevmode\normalcolor
Let $\Omega\subset\mathbb R^d$, $d\ge1$, be a bounded domain with
piecewise smooth boundary. The Neumann eigenvalues of
$A_{r,R}\times\Omega$, enumerated from $\mu_0=0$ and with
multiplicities, satisfy
\[
 \mu_k\left(A_{r,R}\times\Omega\right)<
 4\pi^2\left(
 \frac{k}{\omega_{d+2}\left|A_{r,R}\right|\,\left|\Omega\right|}
 \right)^{2/\left(d+2\right)},\,\,\, k\ge1.
\]
In particular, every annular cylinder satisfies this inequality.
\end{corollary}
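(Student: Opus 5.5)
The plan is to use Laptev's product principle in its Neumann form, with the annulus as the factor that already satisfies P\'olya's inequality. The first step is to restate Theorem~\ref{thm:allannuli} as a Riesz-mean inequality. Since every count bound here holds pointwise, \eqref{eq:allannulicount} gives for every $\Lambda\ge0$
\[
 N_A(\Lambda)\ge \frac{|A_{r,R}|}{4\pi}\Lambda .
\]
Integrating in $\Lambda$ then gives the Riesz-mean bound of every order $\gamma\ge0$:
\[
 \sum_j(\Lambda-\nu_j)_+^\gamma\ge L^{\mathrm{cl}}_{\gamma,2}|A_{r,R}|\Lambda^{\gamma+1}.
\]

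For the factor $\Omega$ I would use the Neumann Berezin--Kr\"oger bound, which holds for every bounded domain with piecewise smooth boundary (in general via the Neumann heat trace or the Kr\"oger argument):
\[
 \sum_j(\Lambda-\mu_j(\Omega))_+^{\gamma}\ge L^{\mathrm{cl}}_{\gamma,d}|\Omega|\Lambda^{\gamma+d/2},\qquad \gamma\ge1.
\]
The product structure is what makes the two bounds combine. The Neumann eigenvalues of $A_{r,R}\times\Omega$ are exactly the sums $\nu_i+\mu_j(\Omega)$, so
\[
 N_{A\times\Omega}(\Lambda)=\sum_j N_A\bigl(\Lambda-\mu_j(\Omega)\bigr).
\]
Applying the pointwise bound on $N_A$ gives
\[
 N_{A\times\Omega}(\Lambda)\ge \frac{|A|}{4\pi}\sum_j(\Lambda-\mu_j)_+ .
\]
The Kr\"oger bound with $\gamma=1$ then gives
\[
 \sum_j(\Lambda-\mu_j)_+\ge \frac{\omega_d}{(2\pi)^d}\frac{2}{d+2}|\Omega|\Lambda^{1+d/2}.
\]
The constants multiply exactly to the Weyl constant $\omega_{d+2}(2\pi)^{-(d+2)}|A||\Omega|\Lambda^{(d+2)/2}$; this is the usual identity $L^{\mathrm{cl}}_{0,2}L^{\mathrm{cl}}_{1,d}=L^{\mathrm{cl}}_{0,d+2}$, which I will check explicitly. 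The result is the inclusive counting inequality $N_{A\times\Omega}(\Lambda)\ge$ Weyl term for all $\Lambda\ge0$.

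The main obstacle is that the corollary asks for \emph{strict} eigenvalue inequalities, with enumeration starting at $\mu_0=0$, so I would prove strictness of the count bound directly. Theorem~\ref{thm:allannuli} gives the strict inequality $N_A(s)>\frac{|A|}{4\pi}s$ for every $s\ge0$. The term with $j=0$, where $\mu_0=0$, always satisfies $\Lambda-\mu_0=\Lambda\ge0$, so it contributes a strict gain: $N_{A\times\Omega}(\Lambda)>\frac{|A|}{4\pi}\sum_j(\Lambda-\mu_j)_+\ge$ Weyl. Now suppose $\mu_k\ge W_k$ for some $k\ge1$. Take $\Lambda$ slightly below $W_k$, or use the strict count $\mathcal M$ at $\Lambda=W_k$. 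The strict counting function then satisfies $\mathcal M(W_k)\le k$. On the other hand, the strict version of the count bound applied just below $W_k$ gives a count exceeding $k-\epsilon$, hence at least $k$, and so it is not yet a contradiction. To close the gap I would apply the bound to $\mathcal M$ directly, since $\mathcal M_A(s)\ge N_A(s^-)\ge \frac{|A|}{4\pi}s$ with strictness for $s>0$ by left limits, as in the paper's strict-count convention. At $\Lambda=W_k$ this gives
\[
 \mathcal M_{A\times\Omega}(W_k)>\text{Weyl}(W_k)=k,
\]
so $\mu_k<W_k$, which is the claimed inequality.

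Finally, the annular cylinder is the special case $d=1$ with $\Omega$ an interval, where Kr\"oger's bound is elementary.
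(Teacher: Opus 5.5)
Your argument follows the paper's proof. You write the product spectrum as $\mathcal M_{A\times\Omega}(E)=\sum_{\mu_j(\Omega)<E}\mathcal M_A(E-\mu_j(\Omega))$, bound each annular term by the strict P\'olya count, and finish with the Kr\"oger--Laptev Neumann Riesz-mean inequality of order one for $\Omega$ and the constant identity $L^{\mathrm{cl}}_{0,2}L^{\mathrm{cl}}_{1,d}=L^{\mathrm{cl}}_{0,d+2}$. Drop the parenthetical ``via the Neumann heat trace'': a heat-trace lower bound is weaker than the order-one Riesz-mean bound and does not yield it. Kr\"oger's argument, or \cite[Theorem~3.1]{Laptev}, is the right source.

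The one step that fails as written is your justification of the strict count $\mathcal M_A(s)>\frac{|A|}{4\pi}s$ for $s>0$. Since $\mathcal M_A(s)=N_A(s^-)$, the left limit of $N_A(s-\epsilon)>\frac{|A|}{4\pi}(s-\epsilon)$ only gives $\mathcal M_A(s)\ge\frac{|A|}{4\pi}s$. Because the count is integer-valued, this happens to be strict when $\frac{|A|}{4\pi}s\notin\mathbb Z$. It is not strict when $\frac{|A|}{4\pi}s$ is an integer, which is exactly the case $s=4k/D$ you need; you noticed the same obstruction one sentence earlier.

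The strict count must instead come from the strict eigenvalue half \eqref{eq:allannulieigenvalue} of Theorem~\ref{thm:allannuli}; the paper cites Proposition~\ref{prop:strictspectral} for this. For $s>0$ put $k=\lfloor Ds/4\rfloor$. If $k=0$, then $\nu_0=0<s$. If $k\ge1$, then $\nu_0\le\cdots\le\nu_k<4k/D\le s$. In either case $\mathcal M_A(s)\ge k+1>Ds/4=\frac{|A|}{4\pi}s$.

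With this in place, every term with $\mu_j(\Omega)<E$ is strict, and the sum is nonempty because $\mu_0(\Omega)=0$. At $E=E_k$ you get $\mathcal M_{A\times\Omega}(E_k)>k$, that is, $\mu_k<E_k$.
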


\begin{corollary}\label{cor:annularsectors}
\leavevmode\normalcolor
For an integer $q\ge2$, let
\[
 S_q=\left\{
 \left(s\cos\theta,s\sin\theta\right):
 r<s<R,\ 0<\theta<2\pi/q
 \right\}.
\]
With Neumann conditions on the circular and straight parts of its
boundary, its counting function and eigenvalues satisfy
\[
 N_{S_q}\left(\Lambda\right)>\frac{\left|S_q\right|}{4\pi}\Lambda
 \,\,\,\left(\Lambda\ge0\right),\,\,\,
 \mu_k\left(S_q\right)<\frac{4qk}{R^2-r^2}
 \,\,\,\left(k\ge1\right).
\]
\end{corollary}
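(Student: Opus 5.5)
The plan is to deduce the sector statement from Theorem~\ref{thm:allannuli} via the symmetry decomposition of the annulus under the cyclic rotation group of order $q$, in the spirit of the transfer principle used by Filonov--Levitin--Polterovich--Sher for disk sectors. Separating variables, the Neumann spectrum of $A_{r,R}$ consists of the eigenvalues $\nu$ of the radial problems
\[
 -u''-\frac{u'}{s}+\frac{m^2}{s^2}u=\nu u,\qquad u'(r)=u'(R)=0,
\]
indexed by $m\in\mathbb Z$. Each value with $m\ne0$ is counted once for $m$ and once for $-m$. The Neumann sector $S_q$ has the same radial problems, but only for angular orders $m=q\ell/2$ with $\ell\ge0$, since the angular eigenfunctions are $\cos(q\ell\theta/2)$. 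For odd $q$ these are half-integer orders, which do not occur in the annulus. The first decision is therefore how to compare.

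\textbf{Even $q$.} I would not compare with $A_{r,R}$ directly. Instead, scale the angle: the map $\theta\mapsto q\theta/2$ is not an isometry, so a better route is the standard one. Counting the sector as a function of angular order gives
\[
 N_{S_q}(\Lambda)=\sum_{\ell\ge0}n_{q\ell/2}(\Lambda),
 \qquad
 N_A(\Lambda)=\sum_{m\in\mathbb Z}n_{|m|}(\Lambda),
\]
where $n_\mu(\Lambda)$ is the radial count for order $\mu$, which is nonincreasing in $\mu$.

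\textbf{Comparison by monotonicity.} Since $n_\mu$ is nonincreasing in $\mu$, a Riemann-sum comparison gives
\[
 \sum_{\ell\ge0}n_{q\ell/2}\ \ge\ \frac{2}{q}\int_0^\infty n_\mu\,d\mu,
 \qquad
 \sum_{m\in\mathbb Z}n_{|m|}\ \le\ n_0+2\int_0^\infty n_\mu\,d\mu.
\]
Combining these naively loses the term $n_0$, so this crude route fails to preserve strictness.

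\textbf{Intended route: the counting surplus in $\mu$.} The proof of Theorem~\ref{thm:allannuli} presumably controls exactly $\sum_\ell n_{\ell}$-type angular sums via the Bessel-phase and angular-layer estimates. So I would invoke the established transfer principle: the sector count equals the annulus count restricted to angular orders in $\tfrac q2\mathbb Z_{\ge0}$. The inequality then follows by rerunning the same angular-sum argument with step $q/2$ in place of $1$, using the integrated-phase (action) identity that yields exactly $|S_q|\Lambda/(4\pi)=(R^2-r^2)\Lambda/(4q)$ as the main term.

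The main obstacle is confirming that the angular-sum lower bound, namely the left-endpoint Riemann sum of a decreasing function exceeding its integral, works for any step size. It does: for a nonincreasing $f$, $\sum_{\ell\ge0}\tau f(\tau\ell)\ge\int_0^\infty f$. Applying this with $f$ the radial phase count and $\tau=q/2$, the integral equals the Weyl action. Strictness comes from the $\ell=0$ term exceeding its cell integral, since the constant mode gives $n_0\ge1$ while $f$ drops.

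\textbf{Eigenvalue form.} The eigenvalue bound follows from the strict counting bound at $\Lambda=4qk/(R^2-r^2)$, applied to the strict count $\mathcal M_{S_q}$ in the same way as in Theorem~\ref{thm:allannuli}.
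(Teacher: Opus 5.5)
There is a genuine gap at the step you identify as the main obstacle. The inequality $\sum_{\ell\ge0}\tau f(\tau\ell)\ge\int_0^\infty f$ for nonincreasing $f$ is correct. However, if $f(\mu)=n_\mu(\Lambda)$ is the radial count, which is an integer-valued step function of the real order $\mu$, then $\int_0^\infty n_\mu(\Lambda)\,d\mu$ is \emph{not} the Weyl action.

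The action $\int_0^\infty f_t=D\Lambda/8$ integrates the continuous phase profile. The radial counts only satisfy $n_\mu>f_t(\mu)-(\text{phase error})$, not $n_\mu\ge f_t(\mu)$. Your route therefore needs a new statement: $2\int_0^\infty n_\mu(\Lambda)\,d\mu>D\Lambda/4$ for every energy and every radius ratio. That is a P\'olya inequality over a continuum of mostly non-integer angular orders, and the paper never proves it. Its ingredients are all tied to the integer multiset $\mathcal V$: the phase cap (Lemma~\ref{lem:analyticintegercap}), the integer-grid sampling lemmas, the low-energy trial matrices, and the angular-layer tables. ``Rerunning the argument with step $q/2$'' is thus the whole paper again, not a transfer.

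Through the integral, Theorem~\ref{thm:allannuli} only yields $2\int_0^\infty n_\mu\,d\mu\ge N_A-n_0$. This is exactly the loss you already noticed. Since $n_0$ grows like $\sqrt\Lambda$, it is not merely a strictness issue.

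The missing idea is that no integral is needed. The paper cuts $A_{r,R}$ into the $q$ rotated copies of $S_q$. Decoupling the traces on the seams enlarges the Neumann form domain. By Proposition~\ref{prop:strictspectral} this gives $q\mathcal M_{S_q}(E)\ge\mathcal M_A(E)>DE/4=q|S_q|E/(4\pi)$ for $E>0$. The case $\Lambda=0$ follows from $N_{S_q}(0)=1$, and taking $E=4qk/D$ gives $\mathcal M_{S_q}(E)\ge k+1$.

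Your separation of variables also works if you compare discrete sums directly. Since $n_\mu$ is nonincreasing in $\mu$, every integer order $m\in[q\ell/2,q(\ell+1)/2)$ satisfies $n_m\le n_{q\ell/2}$.
\begin{itemize}
\item For even $q$, each such block contains $q/2$ integers, which gives $qN_{S_q}\ge N_A+n_0$.
\item For odd $q$, the blocks contain alternately $(q+1)/2$ and $(q-1)/2$ integers. The resulting excess $\sum_{\ell\ \mathrm{even}}n_{q\ell/2}-\sum_{\ell\ \mathrm{odd}}n_{q\ell/2}$ is at most $n_0$, by pairing each even $\ell\ge2$ with $\ell-1$. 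Hence $qN_{S_q}\ge N_A$.
\end{itemize}
The same holds for strict counts. Strictness is then inherited from the annulus, not from the $\ell=0$ cell.
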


The proof of Theorem~\ref{thm:allannuli} uses variational bounds,
uniform phase estimates, and finite lists of explicit
inequalities. The low-frequency comparisons are polynomial identities
with explicit positivity arguments. The remaining angular layers follow
from a fixed-area endpoint principle and rational action minorants.
The thin-annulus theorem remains an independent analytic proof.
The product and sector consequences use the transfer
arguments of Laptev and Neumann bracketing.

The Dirichlet annulus proof of Filonov, Levitin, Polterovich,
and Sher already combines variational comparisons, differences
of two Bessel phases, lattice counting, and finite verification
\cite[Sections~1.3, 4, and 5]{FLPS}. We use this framework and
their uniform phase enclosures \cite{Phase}. The Neumann
problem requires an additional analysis of the boundary conditions
and the direction of the counting error. For Dirichlet conditions
the ordinary Bessel phase difference increases with frequency
\cite[Lemma~4.1]{FLPS}. For Neumann conditions the derivatives of
the Bessel functions occur instead. In a positive angular order,
their continuous phase decreases below its turning argument and
increases above it; its difference at the two boundary arguments
can be negative at low frequency. An exact count therefore
requires the positive-crossing argument in
Lemma~\ref{lem:exactphasecount}. The initial count is also different:
the constant Neumann mode has zero energy and occurs once,
whereas the positive angular orders have no zero mode.

The boundary-condition obstruction is visible directly in the
radial equation. The transformation $w\left(s\right)=\sqrt{s}\,u\left(s\right)$ gives
Robin endpoint conditions, meaning that the derivative is
proportional to the endpoint value, and introduces two endpoint
terms of opposite signs in the radial quadratic form; the exact
identity is given before Lemma~\ref{lem:radial}. Thus the
Dirichlet comparison with a flat cylinder needs a separate
Neumann replacement. Lemma~\ref{lem:radial} treats the positive
zero-order spectrum by differentiating the radial equation,
with its zero mode supplied separately.

The main quantitative difficulty is to keep a lower count above
the exact area term after summing all angular orders. A fixed
loss in each radial comparison accumulates to a loss proportional
to the frequency and can exhaust the boundary surplus. After accounting for angular multiplicities, the action integral
yields exactly $D\Lambda/4$. In the inner
angular range we retain the periodic fractional-part error and
control its sum by curvature and bounded variation. In the outer
range we retain the positive term in the convex floor-sum theorem
of \cite[Theorem~6.1]{FLPSballs}. Near a turning argument, the
derivative-phase error is singular; the clipped
phase envelope supplies a finite bound, while farther away we keep
the smaller varying error. The turning-point split and the
phase-enclosure method already occur in the FLPS analysis
\cite{FLPS,Phase}. The Neumann error balance, its explicit
variational overlap, and the continuous-parameter comparisons
below are the additional estimates established here.

Put $\rho=r/R$ and $X=R\sqrt\Lambda$, the radius ratio and the
outer frequency. Scaling the outer radius to one gives energy
$X^2$: the change of variables $x=Ry$ yields
\[
 N_{A_{r,R}}\left(\Lambda\right)=N_{A_{\rho,1}}\left(X^2\right),
 \,\,\, \frac{R^2-r^2}{4}\Lambda=\frac{1-\rho^2}{4}X^2.
\]
Write $Q=(1-\rho^2)X^2$. The analytic low-frequency proposition
covers every radius ratio at $Q\le100$. The small-hole argument covers
$\rho\le1/10$ at every frequency, and the thin-annulus theorem covers
$\rho\ge4/5$. For the intervening radii the strengthened phase estimate
applies at $X\ge40$; shorter thresholds apply on the larger radius
intervals. The angular-layer comparisons cover the remaining finite
range at fixed $Q$. Consecutive rows meet at their exact area endpoints,
and the final rows overlap the analytic high-frequency estimates.

Figure~\ref{fig:coverage} and Table~\ref{tab:coverage} summarize
these ranges. The boundary $Q=100$ is the curve
$X=10/\sqrt{1-\rho^2}$. Boundaries between these major parameter
ranges are included in both adjacent arguments.

\begin{figure}[htbp]
\centering
\begin{tikzpicture}[x=12cm,y=.115cm,font=\small]
 \fill[annBlue,opacity=.18] (.1,0) rectangle (.8,40);
 \fill[annGold,opacity=.38] (.1,40) rectangle (.8,52);
 \fill[annPurple,opacity=.27]
   (.1,0) -- (.1,{10/sqrt(1-.1*.1)})
   -- plot[domain=.1:.8,samples=60] (\x,{10/sqrt(1-\x*\x)})
   -- (.8,0) -- cycle;
 \fill[annOrange,opacity=.28] (0,0) rectangle (.1,52);
 \fill[annGreen,opacity=.25] (.8,0) rectangle (1,52);
 \draw[thick,annPurple] plot[domain=.1:.8,samples=60]
       (\x,{10/sqrt(1-\x*\x)});
 \draw[thick,annGold!65!black] (.1,40)--(.8,40);
 \draw[thick,annOrange!80!black] (.1,0)--(.1,52);
 \draw[thick,annGreen!70!black] (.8,0)--(.8,52);
 \draw[dashed,gray] (0,0)--(0,54) (1,0)--(1,54);
 \draw[->] (0,0)--(1.08,0) node[right] {$\rho=r/R$};
 \draw[->] (0,0)--(0,58) node[above] {$X=R\sqrt\Lambda$};
 \foreach \v/\lab in {0/0,.1/{1/10},.8/{4/5},1/1}
   \draw (\v,0)--(\v,-.8) node[below=2pt] {$\lab$};
 \foreach \v in {10,40}
   \draw (0,\v)--(-.012,\v) node[left] {$\v$};
 \node[rotate=90] at (.052,27) {small hole};
 \node[rotate=90] at (.9,27) {thin annulus};
 \node at (.45,46) {uniform phase estimate};
 \node at (.45,27) {angular-layer comparisons};
 \node at (.45,5) {variational bound: $Q\le100$};
 \node[above left,font=\footnotesize] at (.77,16)
       {$X=10/\sqrt{1-\rho^2}$};
 \foreach \v in {.05,.45,.9}
   \draw[->,thick] (\v,51)--(\v,56);
\end{tikzpicture}
\caption{The parameter ranges in the analytic proof. The middle region
between $Q=100$ and $X=40$ is covered by the angular-layer comparisons in
Propositions~\ref{prop:analyticleftbridge},
\ref{prop:analyticmiddlebridge}, \ref{prop:analytichalfbridge}, and
\ref{prop:analyticwidebridge}. The arrows indicate that the end regions
and the high-frequency region continue to all larger frequencies.}
\label{fig:coverage}
\end{figure}

\begin{table}[!htbp]
\centering\small\setlength{\tabcolsep}{4pt}
\begin{tabular}{p{1.35in}p{1.25in}p{3.45in}}
Radius ratio & Spectral range & Counting estimate\\\hline
$0<\rho\le1/10$ & $X\ge0$ & Proposition~\ref{prop:analytic10}\\[2pt]
$4/5\le\rho<1$ & $X\ge0$ & Theorem~\ref{thm:thinall}\\[2pt]
$0<\rho<1$ & $Q\le100$ & Proposition~\ref{prop:analyticlowfrequency}\\[2pt]
$1/10\le\rho\le4/5$ & $X\ge40$ & Lemma~\ref{lem:analytic40}\\[2pt]
$1/10\le\rho\le4/5$ & $Q\ge100$, $X\le40$ &
Propositions~\ref{prop:analyticleftbridge},
\ref{prop:analyticmiddlebridge}, \ref{prop:analytichalfbridge}, and
\ref{prop:analyticwidebridge}
\end{tabular}
\caption{A cover of all radius ratios and frequencies, with
$Q=(1-\rho^2)X^2$. Shorter high-frequency thresholds within the
middle radius range are proved in the indicated bridge arguments.}
\label{tab:coverage}
\end{table}

Section~\ref{sec:radialtools} develops the radial, angular-sum and phase
tools. Section~\ref{sec:thinproof} proves the independent thin-annulus
result. Section~\ref{sec:uniformphase} establishes uniform phase estimates
and high-frequency bounds. Section~\ref{sec:allproof} assembles the
remaining parameter ranges and proves Theorem~\ref{thm:allannuli} and
its geometric corollaries. Section~\ref{sec:positivecorrections} gives
the quantitative corrections and discusses their sharpness.
The appendices contain the low-eigenvalue and disk estimates and the
complete angular-layer tables.

\section{Preliminaries}\label{sec:radialtools}

\indent We first compare separated radial eigenvalues with interval
eigenvalues. Two summation estimates then control low frequencies,
while the radial action and Bessel-phase enclosures give the
high-frequency comparison. All boundary counts retain their endpoint
values.

\subsection{Radial eigenvalues}\label{sec:radialeigenvalues}
In polar coordinates, the angular functions $1$, $\cos\left(m\theta\right)$ and
$\sin\left(m\theta\right)$, $m\ge1$, separate the Neumann Laplacian. Thus angular
order zero occurs once and every positive integer order occurs twice.
This multiplicity is represented by the multiset
\[
 \mathcal V=\left\{0,1,1,2,2,\ldots\right\}.
\]
If $n_m\left(\Lambda\right)$ is the radial counting function in angular
order $m$, including the zero mode for $m=0$, then
\[
 N_A\left(\Lambda\right)=n_0\left(\Lambda\right)+2\sum_{m\ge1}n_m\left(\Lambda\right)
             =\sum_{m\in\mathcal V}n_m\left(\Lambda\right).
\]
The sum is finite because the angular potential is at least $m^2/R^2$.
When the inner radius is normalized to one, $R=1+\delta$ and
$t=\sqrt\Lambda$ denote the outer radius and the frequency. The
normalization multiplies energies by $r^2$.

Write $\lambda_{m,n}$ for the $n$th radial Neumann eigenvalue in
angular order $m\ge0$, with $n=0,1,\ldots$. Its Rayleigh form in
$L^2\left(\left(r,R\right),s\dd s\right)$, whose squared norm is
$\int_r^R s\left|u\left(s\right)\right|^2\dd s$, is
\[
 \int_r^R\left(s\left|u'\right|^2+\frac{m^2}{s}\left|u\right|^2\right)\dd s.
\]
The form domain is $H^1\left(r,R\right)$, independent of $m$.
We use the min--max principle to compare Rayleigh quotients on
finite-dimensional trial spaces \cite{Davies}.

For comparison with the Dirichlet radial problem, put
$w\left(s\right)=\sqrt{s}\,u\left(s\right)$. Direct differentiation gives
\[
 -w''+\frac{m^2-1/4}{s^2}w=\lambda w,\,\,\,
 w'\left(r\right)=\frac{w\left(r\right)}{2r},\,\,\, w'\left(R\right)=\frac{w\left(R\right)}{2R}.
\]
Here $\lambda$ is a radial eigenvalue, measured as an energy.
These are Robin conditions. Expanding the square and integrating
the cross term yields the identity
\[
\begin{aligned}
 \int_r^R\left(s\left|u'\right|^2+\frac{m^2}{s}\left|u\right|^2\right)\dd s
 &=\int_r^R\left(\left|w'-w/\left(2s\right)\right|^2+\frac{m^2}{s^2}\left|w\right|^2\right)\dd s\\
 &=\int_r^R\left(\left|w'\right|^2+\frac{m^2-1/4}{s^2}\left|w\right|^2\right)\dd s
   +\frac{\left|w\left(r\right)\right|^2}{2r}-\frac{\left|w\left(R\right)\right|^2}{2R}.
\end{aligned}
\]
The endpoint terms vanish for Dirichlet trial functions, but
are present for Neumann trial functions. In order zero the
ground state is $u=1$, or $w=\sqrt{s}$, with energy zero.
The following comparison handles these boundary terms through
the equation for $u'$.

\begin{lemma}\label{lem:radial}
For every integer $m\ge0$ and integer $n\ge0$,
\begin{equation*}
 \lambda_{m,n}\le\frac{\pi^2n^2}{h^2}
             +\frac{m^2+3/4}{r^2}.
\end{equation*}
For $n=0$ the term $3/\left(4r^2\right)$ can be omitted.
The resulting comparison is strict whenever $(m,n)\ne(0,0)$;
the remaining value is $\lambda_{0,0}=0$.
\end{lemma}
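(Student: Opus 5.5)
The plan is to treat $n=0$ with a constant trial function. For $n\ge1$, I would first handle angular order zero by differentiating the radial equation, and then pass to general $m$ by a form comparison.

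\emph{Case $n=0$.} I use the constant trial function $u=1$ in the order-$m$ Rayleigh quotient. Since $1/s\le s/r^2$ on $(r,R)$, with strict inequality for $s>r$, this gives
\[
 \lambda_{m,0}\le\frac{\int_r^R m^2 s^{-1}\dd s}{\int_r^R s\dd s}\le\frac{m^2}{r^2}.
\]
The second inequality is strict when $m\ge1$. For $m=0$ we simply have $\lambda_{0,0}=0$. This proves the $n=0$ statement, without the $3/(4r^2)$ term.

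\emph{Order zero, $n\ge1$.} Let $u$ be a Neumann eigenfunction of $-(su')'=\lambda s u$ with $\lambda>0$, and put $v=u'$.

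1. Differentiating the equation $u''+u'/s+\lambda u=0$ gives
\[
 -(sv')'+\frac{v}{s}=\lambda s v,\qquad v(r)=v(R)=0.
\]
So $v$ is an angular-order-one \emph{Dirichlet} radial eigenfunction.
2. The function $v$ is not identically zero: otherwise $u$ would be constant and $\lambda=0$.
3. Conversely, if $v$ is such a Dirichlet eigenfunction, set $u=-(sv)'/(\lambda s)$. A direct check shows $u'=v$ and that $u$ solves the order-zero equation with Neumann conditions.

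Both problems are regular Sturm--Liouville problems with simple spectra. The correspondence is therefore a bijection of eigenvalue sets that preserves order, so $\lambda_{0,n}=\lambda^{D}_{1,n}$ for $n\ge1$, where $\lambda^D_{1,n}$ is the $n$th Dirichlet eigenvalue in order one.

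For the Dirichlet problem I apply the Liouville substitution $w=\sqrt s\,v$ from the identity before the lemma. The endpoint terms now vanish, so the quadratic form is
\[
 \int_r^R\Bigl(|w'|^2+\frac{3/4}{s^2}|w|^2\Bigr)\dd s,
\]
taken in unweighted $L^2(r,R)$ on $H_0^1$. I use the $n$-dimensional trial space spanned by $\sin(\pi k(s-r)/h)$, $k=1,\dots,n$. Together with $3/(4s^2)<3/(4r^2)$ for $s>r$, min--max gives
\[
 \lambda_{0,n}=\lambda^D_{1,n}<\frac{\pi^2n^2}{h^2}+\frac{3}{4r^2}.
\]
The inequality is strict because the maximum of the Rayleigh quotient on the trial space is attained at some nonzero $w$, and for that $w$ the potential inequality is strict on a set of positive measure.

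\emph{General $m$.} The order-$m$ form equals the order-zero form plus $\int_r^R m^2 s^{-1}|u|^2\dd s$. Using $m^2/s\le (m^2/r^2)\,s$, this extra term is at most $(m^2/r^2)\int_r^R s|u|^2\dd s$. I take as trial space the span of the first $n+1$ order-zero eigenfunctions in min--max and obtain
\[
 \lambda_{m,n}\le\lambda_{0,n}+\frac{m^2}{r^2}.
\]
This is strict for $m\ge1$, since the maximizing nonzero $u$ makes the pointwise inequality strict. Combined with the order-zero bound, this gives the claim, strictly for all $(m,n)\ne(0,0)$.

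The main obstacle is the index bookkeeping in the derivative correspondence. One must check that the $n$th positive Neumann eigenvalue corresponds exactly to the $n$th Dirichlet order-one eigenvalue. Simplicity and the explicit inverse map from step 3 settle this.
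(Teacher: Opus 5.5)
Your proposal is correct and follows essentially the same route as the paper. Both use the constant trial for $n=0$, and both use the map $u\mapsto u'$, with inverse $v\mapsto-(v'+v/s)/\lambda$, to identify the positive order-zero Neumann spectrum with the order-one Dirichlet spectrum. Both then apply the Liouville substitution with the sine trial space and the strict comparison $3/(4s^2)<3/(4r^2)$, and finish with $\lambda_{m,n}\le\lambda_{0,n}+m^2/r^2$ on the span of order-zero eigenfunctions. The only extra item in the paper's proof is the Wirtinger lower bound $\lambda_{0,1}>\pi^2/h^2$, which the paper records for later use and which is not part of this statement.
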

\begin{proof}
For every nonzero trial function,
\[
 \frac{\int_r^R m^2s^{-1}\left|u\right|^2\dd s}
 {\int_r^R s\left|u\right|^2\dd s}\le\frac{m^2}{r^2}.
\]
Adding this bound on the same trial subspace and applying the
min--max principle gives
$\lambda_{m,n}\le\lambda_{0,n}+m^2/r^2$.
Here $\lambda_{0,0}=0$. If $\lambda>0$ and
$-u''-s^{-1}u'=\lambda u$ with $u'\left(r\right)=u'\left(R\right)=0$, then $w=u'$
satisfies
\[
 -w''-s^{-1}w'+s^{-2}w=\lambda w,
 \,\,\, w\left(r\right)=w\left(R\right)=0.
\]
Conversely, for a nonzero solution of this Dirichlet problem, set
$u=-\left(w'+s^{-1}w\right)/\lambda$. The differential equation gives $u'=w$,
so $u$ solves the original Neumann problem. The maps are inverse.
Thus, including multiplicities, the positive radial Neumann spectrum
in angular order zero coincides with the radial Dirichlet spectrum
in angular order one. The unitary
change $v=\sqrt{s}\,w$ turns the latter into
\[
 -v''+\frac{3}{4s^2}v=\lambda v,\,\,\, v\left(r\right)=v\left(R\right)=0.
\]
The Dirichlet Wirtinger inequality on this interval is
\begin{equation}\label{eq:dirichlet-wirtinger}
 \int_r^R|v'|^2\,\mathrm ds
 \ge\frac{\pi^2}{h^2}\int_r^R|v|^2\,\mathrm ds,
 \qquad v\in H_0^1(r,R),\quad h=R-r.
\end{equation}
Here $w=u'$ vanishes at both endpoints, so $v=\sqrt{s}\,u'$ also
belongs to $H_0^1(r,R)$. Since the transformed potential
$3/(4s^2)$ is positive, its first Dirichlet eigenvalue satisfies
\begin{equation}\label{eq:radial-first-lower}
 \lambda_{0,1}>\frac{\pi^2}{h^2}.
\end{equation}
Comparison of its potential with $3/\left(4r^2\right)$ yields
$\lambda_{0,n}\le\left(\pi n/h\right)^2+3/\left(4r^2\right)$ for $n\ge1$.
For $n\ge1$, the potential deficit
$3/(4r^2)-3/(4s^2)$ has strictly positive integral against every
nonzero element of the finite-dimensional sine trial space. Its
minimum on the unit sphere of that space is positive, making the
Dirichlet comparison strict. Adding the angular potential on a
finite eigenfunction trial space preserves strictness. For $n=0$
and $m>0$, the constant trial has quotient
$2m^2\log(R/r)/(R^2-r^2)<m^2/r^2$. The case $(m,n)=(0,0)$
is the constant eigenfunction and gives equality only at zero.
\end{proof}

\subsection{Two concave-sum bounds}

The radial comparison in Lemma~\ref{lem:radial} leads to sums of
the profile $\sqrt{1-x^2}$ over integer radial rows. Its integral
supplies the leading area term, but that term alone does not
compensate for the integer-rounding losses in the angular count.
We therefore need an explicit positive surplus over the integral.
The first estimate holds at every scale; the second gives a larger
surplus once at least two radial rows are available.

\begin{lemma}\label{lem:sum}
Let $\left\lfloor a\right\rfloor$ denote the greatest integer not exceeding $a$.
For every $a>0$,
\begin{equation}\label{eq:sum}
 S\left(a\right):=\sum_{n=0}^{\left\lfloor a\right\rfloor}
       \sqrt{1-\left(n/a\right)^2}
 \ge\frac\pi4 a+1-\frac\pi4.
\end{equation}
The constant $1-\pi/4$ is optimal in this inequality valid for all $a$.
\end{lemma}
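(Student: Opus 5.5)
Set $f(x)=\sqrt{1-x^2}$ on $[0,1]$. Then $S(a)=\sum_{n=0}^{N}f(n/a)$ with $N=\lfloor a\rfloor$, and the integral term is $a\int_0^1 f=\pi a/4$. The plan is to compare the sum with the integral row by row, using that $f$ is concave and decreasing. The surplus $1-\pi/4$ should come from the $n=0$ term. Optimality will come from the limit $a\to1^-$.

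\emph{Main inequality.} For each $n\ge1$ with $n\le N$, monotonicity gives
\[
 f(n/a)\ge a\int_{n/a}^{(n+1)/a}f(x)\,\mathrm dx ,
\]
but only on the part of the interval inside $[0,1]$. A left-endpoint (upper Riemann) sum of a decreasing function overestimates the integral. Hence
\[
 \sum_{n=0}^{N} f(n/a)\ge a\int_0^1 f=\frac{\pi a}{4},
\]
with the last interval $[N/a,(N+1)/a]$ covering $[N/a,1]$. This is too weak by $1-\pi/4$, so concavity must be used.

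\emph{Using concavity.} On each interval $[n/a,(n+1)/a]\cap[0,1]$, concavity places $f$ above its chord. The left-endpoint excess is then at least half the drop, i.e.\ a trapezoid-type gain. So I would write
\[
 S(a)=\sum_{n=0}^N f(n/a) \ge a\int_0^{(N)/a} f + \tfrac12\bigl(f(0)+f(N/a)\bigr) + \text{(tail)},
\]
using that the trapezoid rule underestimates the integral of a concave function. Concretely, $a\int_0^{N/a}f\le \tfrac12 f(0)+\sum_{n=1}^{N-1}f(n/a)+\tfrac12 f(N/a)$. Rearranging gives
\[
 S(a)\ge a\int_0^{N/a}f+\tfrac12+\tfrac12 f(N/a).
\]
It then remains to show
\[
 \tfrac12+\tfrac12 f(N/a)\ge a\int_{N/a}^1 f+1-\tfrac\pi4 .
\]
With $u=N/a\in((a-1)/a,1]$, this becomes a one-variable inequality in $u$ and $a$. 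The tail integral satisfies $a\int_u^1 f\le a(1-u)f(u)$, with $a(1-u)<1$. For $N\ge1$ the reduced inequality is
\[
 \tfrac12 f(u)+\tfrac\pi4-\tfrac12\ge a\int_u^1 f .
\]
I expect this step to be the main obstacle, because near $u=1$ the factor $f(u)$ is small while $a$ may be large.

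If the bound fails near $u$ close to $1$, I would try two remedies. First, use the sharper bound $\int_u^1\sqrt{1-x^2}\,\mathrm dx\le \tfrac23(1-u)\sqrt{2(1-u)}$, which scales like $(1-u)^{3/2}$. Combined with $a(1-u)<1$, this gives a tail of at most $\tfrac23\sqrt{2(1-u)}\cdot a(1-u)^{1/2}\cdot(1-u)^{1/2}\cdot$ bounded, which is small when $a$ is large. Second, for small $a$ (say $1\le a<2$), check directly, since then $S(a)=1+f(1/a)$ is explicit. The case $0<a<1$ is trivial: $S(a)=1\ge \pi a/4+1-\pi/4$.

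\emph{Optimality.} Let $a\to1^-$. Then $S(a)=1$ while the right-hand side tends to $1$, so no larger constant works.
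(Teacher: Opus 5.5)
There is a genuine gap: your concavity step runs in the wrong direction. For concave $f$ the chord on each cell lies \emph{below} the graph, so the trapezoidal sum underestimates the integral:
\[
 \tfrac12 f(0)+\sum_{n=1}^{N-1}f(n/a)+\tfrac12 f(N/a)\le a\int_0^{N/a}f(x)\,\mathrm dx ,
\]
and the inequality is strict as soon as $N\ge1$. You state this principle correctly in words, but the formula you then write is the reverse inequality. Adding $\tfrac12 f(0)+\tfrac12 f(N/a)$ to both sides gives
\[
 S(a)<a\int_0^{N/a}f+\tfrac12+\tfrac12 f(N/a)\qquad(a\ge1).
\]
So the intermediate lower bound you build on is false for every $a\ge1$. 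For example, at $a=1$ it would assert $1\ge\pi/4+1/2$. The reduced tail inequality in $u$ cannot rescue the argument, whether or not it holds. Your heuristic that ``the left-endpoint excess is at least half the drop'' has the same reversal: for a concave decreasing function the excess is at \emph{most} half the drop. Your version is the convex case.

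The correct bookkeeping is the exact identity
\[
 S(a)-\frac{\pi a}{4}=\frac12+\frac12 f(u)-E-a\int_u^1 f(x)\,\mathrm dx,\qquad u=N/a,
\]
where $E\ge0$ is the trapezoidal error on $[0,N/a]$. The real task is therefore an \emph{upper} bound for $E$ together with the tail. The usual kernel bound $E\le\frac18\int_0^N|g''|$, with $g(y)=f(y/a)$, equals $\frac{u}{8a\sqrt{1-u^2}}$, which diverges as $u\to1$ because $f'(x)\to-\infty$ at $x=1$. The last cell would need a separate square-root estimate, as in the paper's proof of Lemma~\ref{lem:largesum}.

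For the constant $1-\pi/4$ the paper avoids all error bookkeeping. For integer $N$, let $U_{(j)}$ be the order statistics of $N-1$ independent uniforms; then $\mathbb E U_{(j)}=j/N$. The tangent-line (Jensen) inequality at $j/N$ gives
\[
 \sum_{j=1}^{N-1}f(j/N)\ge\sum_{j=1}^{N-1}\mathbb E f(U_{(j)})=(N-1)\frac\pi4,
\]
which is exactly $S(N)-\pi N/4\ge1-\pi/4$. Non-integer $a\in[N,N+1]$ then follow because $S(a)-\pi a/4$ is concave there. Each term $\sqrt{1-n^2/a^2}$ is concave in $a$, and the term entering at $a=N+1$ vanishes there, so the function is bounded below by its endpoint values.

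Your treatment of $0<a\le1$, the explicit check on $[1,2)$, and the optimality argument are fine.
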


\begin{proof}
Write $f\left(x\right)=\sqrt{1-x^2}$ on $\left[0,1\right]$ and
$I=\int_0^1f\left(x\right)\dd x=\pi/4$. For an integer $N\ge2$, let $U_1,\ldots,U_{N-1}$ be independent
uniform random variables on $\left[0,1\right]$, and let
$U_{\left(1\right)}\le\cdots\le U_{\left(N-1\right)}$ denote their
increasing rearrangement (the order statistics). For $1\le j\le N-1$, the density of
$U_{\left(j\right)}$ is
\[
 p_j\left(x\right)=
 \frac{\left(N-1\right)!}{\left(j-1\right)!\left(N-1-j\right)!}
 x^{j-1}\left(1-x\right)^{N-1-j},\,\,\, 0<x<1.
\]
Indeed, for small $\varepsilon>0$, one sample point falls in
$\left[x,x+\varepsilon\right]$, while exactly $j-1$ of the others
lie below $x$. Choosing these points gives the displayed coefficient
after dividing the probability by $\varepsilon$ and letting
$\varepsilon\to0+$. Events with two points in the short interval
have probability $O\left(\varepsilon^2\right)$ and do not affect the limit.
The integral identity
\[
 \int_0^1x^p\left(1-x\right)^q\,\dd x
 =\frac{p!q!}{\left(p+q+1\right)!},
 \,\,\, p,q\in\left\{0,1,\ldots\right\},
\]
obtained by repeated integration by parts, consequently gives
\[
 \mathbb E U_{\left(j\right)}
 =\int_0^1x p_j\left(x\right)\,\dd x=\frac jN.
\]
Here $\mathbb E$ denotes expectation.

For the concave function $f\left(x\right)=\sqrt{1-x^2}$, the usual
concavity inequality gives
\[
 \mathbb E f\left(U_{\left(j\right)}\right)\le f\left(\mathbb E U_{\left(j\right)}\right)=f\left(j/N\right).
\]
In this case the inequality follows directly from the supporting
tangent at $z=j/N$: for every $u\in\left[0,1\right]$,
\[
 f\left(u\right)\le f\left(z\right)+f'\left(z\right)\left(u-z\right).
\]
Taking expectations cancels the last term because
$\mathbb E U_{\left(j\right)}=z$. Sorting leaves the sum of the values
unchanged for every outcome, so
\[
 \begin{aligned}
 \sum_{j=1}^{N-1}f\left(j/N\right)
 &\ge\sum_{j=1}^{N-1}\mathbb E f\left(U_{\left(j\right)}\right)
   =\mathbb E\left(\sum_{i=1}^{N-1}f\left(U_i\right)\right)\\
 &=\left(N-1\right)\int_0^1f\left(x\right)\dd x=\left(N-1\right)I.
 \end{aligned}
\]
Adding $f\left(0\right)=1$ and $f\left(1\right)=0$ gives
$S\left(N\right)-NI\ge1-I$. The case $N=1$ is equality.

For $a\in\left[N,N+1\right]$, with $N\ge1$, the function $S\left(a\right)-Ia$ is
continuous and concave on that closed interval. Indeed, for
$j\ge1$ and $a>j$,
\[
 \frac{\dd^2}{\dd a^2}\sqrt{1-\frac{j^2}{a^2}}
 =-\frac{j^2\left(3a^2-2j^2\right)}{a^3\left(a^2-j^2\right)^{3/2}}<0,
\]
and the term appearing at the right endpoint has value zero there.
A concave function is no smaller than the minimum of its endpoint
values. The integer estimate therefore proves \eqref{eq:sum}.
If $0<a\le1$, then $S\left(a\right)=1$ and the result is immediate.
Equality at $a=1$ proves optimality of the constant.
\end{proof}

\begin{lemma}\label{lem:largesum}
For the sum in Lemma~\ref{lem:sum},
\[
 S\left(a\right)\ge\frac\pi4a+\frac{29}{100},\,\,\, a\ge2.
\]
\end{lemma}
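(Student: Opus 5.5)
The plan mirrors the proof of Lemma~\ref{lem:sum}. First I reduce to integer arguments, then I split those into a few small values checked by hand and an explicit trapezoid-rule estimate for the rest. The proof of Lemma~\ref{lem:sum} already shows that on each interval $[N,N+1]$ with $N\ge1$ the function $S(a)-\frac\pi4 a$ is continuous and concave. It therefore suffices to prove
\[
 S(N)-\frac\pi4 N\ge\frac{29}{100},\qquad N\ge2 \text{ an integer},
\]
since on $[N,N+1]$ with $N\ge2$ both endpoint values are then at least $29/100$.

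\emph{Small $N$ by direct evaluation.} For $N=2$,
\[
 S(2)-\frac\pi2=1+\frac{\sqrt3}{2}-\frac\pi2\approx0.2952>0.29 ,
\]
which is the binding case and explains the constant. For $N=3$ the value is
\[
 1+\frac{\sqrt8}{3}+\frac{\sqrt5}{3}-\frac{3\pi}{4}\approx0.332 .
\]
I would verify these, and any further finitely many $N$ left over by the general bound, with elementary rational enclosures of the square roots and of $\pi$.

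\emph{General $N$ via the trapezoid rule.} Write $f(x)=\sqrt{1-x^2}$ and $x_j=j/N$. Since $f(0)=1$ and $f(1)=0$,
\[
 S(N)=\sum_{j=0}^N f(x_j)=N\,T_N+\tfrac12,
\]
where $T_N$ is the composite trapezoid approximation to $I=\pi/4$. Hence
\[
 S(N)-NI=\tfrac12-N\sum_{j=0}^{N-1}E_j ,
\]
where $E_j\ge0$ is the gap between the integral of $f$ and the chord area on the cell $[x_j,x_{j+1}]$; it is nonnegative because $f$ is concave.

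For the cells $j\le N-2$, the region between chord and graph lies in the triangle cut off by the two endpoint tangents. This gives
\[
 E_j\le\frac{1}{8N^2}\bigl(f'(x_j)-f'(x_{j+1})\bigr).
\]
These bounds telescope to $\frac{1}{8N^2}\,|f'(1-1/N)|$. Since $|f'(1-1/N)|\le\sqrt{N/2}\,(1+O(1/N))$, this part contributes at most about $\frac{1}{8\sqrt{2N}}$ after multiplying by $N$.

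The last cell carries the endpoint singularity of $f'$ and is handled directly. Use
\[
 \int_{1-1/N}^1 f\le\int_{1-1/N}^1\sqrt{2(1-x)}\,\mathrm dx=\frac{2\sqrt2}{3}N^{-3/2},
\]
together with the exact chord area $f(1-1/N)/(2N)$, where $f(1-1/N)=\sqrt{(2N-1)}/N$. Altogether this yields a clean explicit bound of the form
\[
 S(N)-NI\ge\tfrac12-\frac{c}{\sqrt N},\qquad c\approx0.33 .
\]
This exceeds $0.29$ once $N$ passes a small explicit threshold, for instance $N\ge3$ or $N\ge4$ after honest rounding. The remaining $N$ are then covered by the direct evaluations above.

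\emph{Main obstacle.} The delicate point is making the last-cell and tangent-triangle estimates fully rigorous with constants tight enough that the threshold stays small. The margin at $N=2$ is only about $0.005$, and the asymptotic surplus $1/2$ is approached only like $N^{-1/2}$. If the crude bound $c\approx0.33$ is too lossy, I would first treat the last two cells exactly, replacing $f$ by $\sqrt{2(1-x)}$ with a second-order correction. Failing that, I would simply extend the list of integers checked by hand to about $N\le10$.
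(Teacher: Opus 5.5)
Your proposal is correct and is essentially the paper's proof. Both arguments reduce to integers by concavity on each $[N,N+1]$. Both bound the interior trapezoidal errors by $\tfrac18$ times the derivative increment (your tangent-triangle bound is equivalent to the paper's kernel estimate $K_j\le 1/8$), telescoping to $(N-1)/(8N\sqrt{2N-1})$. Both handle the singular last cell via $\sqrt{1-x^2}\le\sqrt{2(1-x)}$, and both check the smallest $N$ with rational enclosures. The only difference is bookkeeping: the paper coarsens the constant to $23/50$, so its general bound starts at $N\ge5$ and it checks $N=2,3,4$ by hand. Your sharper constant needs only $N=2,3$ checked directly.
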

\begin{proof}
For an integer $N\ge2$, put $f_N\left(x\right)=\sqrt{1-\left(x/N\right)^2}$.
The composite trapezoidal rule is the sum of the areas of the
endpoint trapezoids on the $N$ unit intervals:
\[
 T_N:=\sum_{j=0}^{N-1}\frac{f_N\left(j\right)+f_N\left(j+1\right)}2
     =\frac12f_N\left(0\right)+\sum_{j=1}^{N-1}f_N\left(j\right)+\frac12f_N\left(N\right)
     =S\left(N\right)-\frac12.
\]
Here $f_N\left(0\right)=1$ and $f_N\left(N\right)=0$. Since $f_N$ is concave, each
endpoint chord lies below its graph. Therefore
\[
 E_N:=\int_0^N f_N\left(x\right)\dd x-T_N\ge0,\,\,\,
 S\left(N\right)-\frac{\pi N}{4}=\frac12-E_N.
\]
To estimate the error on a unit interval, define
\[
 K_j\left(x\right)=\frac{\left(x-j\right)\left(j+1-x\right)}2
       =\frac18-\frac12\left(x-j-\frac12\right)^2,
       \,\,\, j\le x\le j+1.
\]
The completed-square expression shows directly that
$0\le K_j\left(x\right)\le1/8$, with its maximum at $x=j+1/2$.
Thus $K_j$ vanishes at the two endpoints,
$K_j'\left(j\right)=1/2$, $K_j'\left(j+1\right)=-1/2$, and $K_j''=-1$.
For a continuously differentiable function $F$ whose derivative
is absolutely continuous on $\left[j,j+1\right]$, two integrations
by parts give
\begin{equation}\label{eq:unittrapezoid}
 \begin{aligned}
 \int_j^{j+1}K_j\left(x\right)F''\left(x\right)\dd x
 &=\left[K_jF'\right]_j^{j+1}-\left[K_j'F\right]_j^{j+1}
       +\int_j^{j+1}K_j''\left(x\right)F\left(x\right)\dd x\\
 &=\frac{F\left(j\right)+F\left(j+1\right)}2
       -\int_j^{j+1}F\left(x\right)\dd x.
 \end{aligned}
\end{equation}
For $j=0,\ldots,N-2$, the function $f_N$ is smooth on this interval.
Consequently the concave trapezoidal error on this interval is
\[
 e_j:=\int_j^{j+1}f_N\left(x\right)\dd x
           -\frac{f_N\left(j\right)+f_N\left(j+1\right)}2
      =-\int_j^{j+1}K_j\left(x\right)f_N''\left(x\right)\dd x.
\]
Since $0\le K_j\le1/8$ and $f_N''\le0$, summing the first
$N-1$ intervals gives
\[
 0\le\sum_{j=0}^{N-2}e_j
 \le-\frac18\int_0^{N-1}f_N''\left(x\right)\dd x
 =-\frac{f_N'\left(N-1\right)}8
 =\frac{N-1}{8N\sqrt{2N-1}}.
\]
Indeed $f_N'\left(x\right)=-x/\left(N\sqrt{N^2-x^2}\right)$ and
$f_N''\left(x\right)=-N/\left(N^2-x^2\right)^{3/2}$ for $x<N$.
Since $f_N'\left(x\right)\to-\infty$ as $x\uparrow N$, applying
the derivative-increment bound through the last endpoint would
give no finite estimate. We estimate the last interval directly. With $y=N-x$,
\[
 f_N\left(N-y\right)=\sqrt{\frac{2y}{N}-\frac{y^2}{N^2}}
           \le\sqrt{\frac{2y}{N}},\,\,\, 0\le y\le1.
\]
As $f_N\left(N-1\right)=\sqrt{2N-1}/N$, this gives
\[
 e_{N-1}
 =\int_0^1f_N\left(N-y\right)\dd y-\frac{f_N\left(N-1\right)}2
 \le\frac{2\sqrt2}{3\sqrt N}-\frac{\sqrt{2N-1}}{2N}.
\]
Adding the two bounds proves
\[
 E_N\le\frac{N-1}{8N\sqrt{2N-1}}
          +\frac{2\sqrt2}{3\sqrt N}
          -\frac{\sqrt{2N-1}}{2N}.
\]
For $N\ge2$, use $\left(N-1\right)/N\le1$,
$\sqrt{2N-1}\ge\sqrt N$, and
$\sqrt{2N-1}/N\ge\sqrt{3/2}/\sqrt N$ to obtain
\[
 E_N\le\frac{1/8+2\sqrt2/3-\sqrt{3/2}/2}{\sqrt N}.
\]
The rational comparisons $\sqrt2<99/70$ and
$\sqrt{3/2}>61/50$ bound the numerator by
$641/1400<23/50$. For $N\ge5$, the bound
$\sqrt N\ge\sqrt5>11/5$ therefore gives
\[
 E_N<\frac{23}{110}<\frac{21}{100},\,\,\,
 S\left(N\right)-\frac{\pi N}{4}>\frac{16}{55}>\frac{29}{100}.
\]

It remains to check $N=2,3,4$. Their sums are
\[
 S\left(2\right)=1+\frac{\sqrt3}{2},\,\,\,
 S\left(3\right)=1+\frac{2\sqrt2+\sqrt5}{3},\,\,\,
 S\left(4\right)=1+\frac{\sqrt{15}+2\sqrt3+\sqrt7}{4}.
\]
Using $\pi<22/7$, $\sqrt3>173/100$, $\sqrt2>7/5$,
$\sqrt5>11/5$, $\sqrt{15}>19/5$ and $\sqrt7>13/5$ gives
\[
\begin{aligned}
 S\left(2\right)-\frac\pi2
 &>1+\frac{173}{200}-\frac{11}{7}=\frac{411}{1400},\\
 S\left(3\right)-\frac{3\pi}{4}
 &>1+\frac{2\left(7/5\right)+11/5}{3}-\frac{33}{14}=\frac{13}{42},\\
 S\left(4\right)-\pi
 &>1+\frac{19/5+2\left(173/100\right)+13/5}{4}-\frac{22}{7}
   =\frac{451}{1400}.
\end{aligned}
\]
Each rational lower bound exceeds $29/100$. Finally, the concavity
of $S\left(a\right)-\pi a/4$ on every $\left[N,N+1\right]$, established in
Lemma~\ref{lem:sum}, extends these integer bounds to every $a\ge2$.
\end{proof}

The number $29/100$ is a convenient rational surplus. At $N=2$ the surplus is
$1+\sqrt3/2-\pi/2\approx0.2958$; for $N\ge5$ the proof gives
a surplus greater than $1/2-23/110=16/55\approx0.2909$.
Thus $29/100$ leaves a positive margin in both estimates, while
the separately checked cases $N=3,4$ bridge them. The matching argument below uses this larger explicit surplus
in place of the universal constant $1-\pi/4$ from
Lemma~\ref{lem:sum}.

\subsection{Variational counting bounds}

We now turn the two summation bounds into a spectral count.
The radially constant trial functions give a sharper estimate
for the first radial row, while Lemma~\ref{lem:radial} controls
the higher rows. Proposition~\ref{prop:mixedlow} is valid at every
frequency in its stated range; its use at low frequencies in
Section~\ref{sec:thinproof} comes from the sign of the resulting
Weyl surplus. The negative quadratic term eventually outweighs
the positive linear term, so the proof then needs the phase estimate.
The improvement in Lemma~\ref{lem:largesum} extends the range over
which the variational surplus remains positive.

\begin{proposition}\label{prop:mixedlow}
Let $0<\delta\le1/4$, $R=1+\delta$, and
\[
 a_\delta=\sqrt{\frac{R^2-1}{2\log R}},\quad
 B_0=2a_\delta-\frac\pi2-\frac\delta\pi,\quad
 B_2=2a_\delta-\frac{71}{50}-\frac\delta\pi.
\]
For $t>\sqrt{3}/2$,
\begin{equation}\label{eq:mixedlow}
 N_A\left(t^2\right)-\frac{R^2-1}{4}t^2
 \ge-\frac{\delta^2}{4}t^2+B_0t-1-\frac{3\delta}{8}.
\end{equation}
If $\delta\sqrt{t^2-3/4}/\pi\ge2$, one may replace $B_0$ by $B_2$.
Moreover
\begin{equation}\label{eq:mixedB}
 B_0>\frac37+\frac35\delta,\,\,\,
 B_2>\frac{29}{50}+\frac{78}{125}\delta.
\end{equation}
\end{proposition}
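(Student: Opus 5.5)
The plan is to bound $N_A(t^2)$ from below row by row in the radial index $n$, using the decomposition $N_A=\sum_{m\in\mathcal V}n_m$. With $r=1$, $R=1+\delta$ and $T=\sqrt{t^2-3/4}$, an angular order $m$ contributes at least one eigenvalue in row $n$ as soon as the corresponding upper bound for $\lambda_{m,n}$ is at most $t^2$.

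\emph{Row $n=0$.} I would not use Lemma~\ref{lem:radial} here. Instead I use the constant trial function from its proof, which gives $\lambda_{m,0}\le 2m^2\log R/(R^2-1)=m^2/a_\delta^2$. Hence every $m\le a_\delta t$ contributes. Counting the multiplicities in $\mathcal V$, this row gives $1+2\lfloor a_\delta t\rfloor\ge 2a_\delta t-1$.

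\emph{Rows $n\ge1$.} By Lemma~\ref{lem:radial}, $\lambda_{m,n}\le (\pi n/\delta)^2+m^2+3/4$. So all $m\le\sqrt{T^2-(\pi n/\delta)^2}$ contribute, for each $n\le a:=\delta T/\pi$. Each such row gives at least $2\sqrt{T^2-(\pi n/\delta)^2}-1$ eigenvalues. Summing over $1\le n\le\lfloor a\rfloor$ gives
\[
 2T\bigl(S(a)-1\bigr)-\lfloor a\rfloor\ \ge\ 2T\bigl(S(a)-1\bigr)-\frac{\delta T}{\pi}.
\]

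\emph{Summation step.}
\begin{itemize}
\item By Lemma~\ref{lem:sum}, $S(a)-1\ge \pi a/4-\pi/4$, so the higher rows contribute at least $\delta T^2/2-\pi T/2-\delta T/\pi$.
\item If $a<1$ the sum is empty. The bound remains valid because then $\delta T<\pi$, so the right side is negative.
\item When $a\ge2$, which is exactly the hypothesis $\delta\sqrt{t^2-3/4}/\pi\ge2$, Lemma~\ref{lem:largesum} gives $S(a)-1\ge\pi a/4-71/100$. This replaces $\pi/2$ by $71/50$.
\end{itemize}
Now substitute $T^2=t^2-3/4$ in the quadratic term, and use $T\le t$ in the linear terms, whose coefficients are negative. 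The total is at least
\[
 \frac{\delta}{2}t^2+\Bigl(2a_\delta-\frac\pi2-\frac\delta\pi\Bigr)t-1-\frac{3\delta}{8}.
\]
Subtracting the Weyl term $(R^2-1)t^2/4=(\delta/2+\delta^2/4)t^2$ yields \eqref{eq:mixedlow}, and with $71/50$ in place of $\pi/2$ it yields the $B_2$ version.

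\emph{The bounds \eqref{eq:mixedB}.} These need a lower bound for $a_\delta$, which reduces to an upper bound for $\log(1+\delta)$. I would use the Padé-type inequality $\log(1+\delta)\le \delta(6+\delta)/(6+4\delta)$ for $\delta\ge0$. It can be checked by differentiating the difference: the derivative reduces to a nonnegative rational function. This gives
\[
 a_\delta^2\ge \frac{(2+\delta)(6+4\delta)}{2(6+\delta)}.
\]
I would then show that this quantity is at least $\bigl(1+\delta/2-c\delta^2\bigr)^2$ for $\delta\le1/4$, with a small rational $c$ (something like $c=1/10$). That reduces to a polynomial inequality on $[0,1/4]$.

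\emph{Main obstacle.} The delicate point is $B_0>3/7+3\delta/5$, since the margin at $\delta=0$ is tiny: $2-\pi/2-3/7\approx0.0006$. It needs a sharp rational bound such as $\pi<355/113$, which gives $2-355/226-3/7=1/1582>0$. The $\delta$-linear part leaves room $1-1/\pi-3/5\approx0.08$, which must absorb the $c\delta^2$ loss for $\delta\le1/4$. I would verify this with $1/\pi<8/25$, or a slightly sharper rational bound if needed, and an explicit quadratic check at $\delta=1/4$. The $B_2$ bound has comfortable margins, since $2-71/50=29/50$ exactly, and it follows from the same estimate for $a_\delta$.
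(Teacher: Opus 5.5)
Your proposal is correct and follows the paper's proof essentially step by step: the constant radial trial for the zeroth row, Lemma~\ref{lem:radial} with $\#\{m\in\mathbb Z:|m|\le b\}\ge 2b-1$ for the higher rows, then Lemma~\ref{lem:sum} (or Lemma~\ref{lem:largesum} when $\delta\sqrt{t^2-3/4}/\pi\ge2$), and finally $T\le t$ in the negative linear terms. The only variation is in \eqref{eq:mixedB}, where the paper bounds $a_\delta$ more simply through $a_\delta^2=R\sinh(\log R)/\log R\ge R$, so $a_\delta\ge\sqrt{1+\delta}\ge1+\frac{33}{70}\delta$; your Pad\'e route also works, and note that $\pi<22/7$ already gives $2-\pi/2>3/7$ strictly, so $\pi<355/113$ is not needed.
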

\begin{proof}
A radially constant trial function in the integer angular order $m$
has Rayleigh quotient
\[
 m^2\frac{\int_1^R s^{-1}\dd s}{\int_1^R s\dd s}
 =\frac{2m^2\log R}{R^2-1}=\frac{m^2}{a_\delta^2},
\]
so $\lambda_{m,0}\le m^2/a_\delta^2$. For $m>0$ this inequality is
strict: equality in the ground-state variational principle would make
the constant trial function an eigenfunction, which is impossible
for the nonconstant potential $m^2/s^2$. When $m=0$, the constant
mode has eigenvalue zero and is strictly below every positive
energy. For higher radial rows keep Lemma~\ref{lem:radial}.
Writing $z=\sqrt{t^2-3/4}$ and $A=\delta z/\pi$, the order count
$\#\left\{m\in\mathbb Z:\left|m\right|\le b\right\}\ge2b-1$ gives directly
\[
 N_A\left(t^2\right)\ge2a_\delta t-1+
 \sum_{n=1}^{\left\lfloor A\right\rfloor}
       \left(2z\sqrt{1-\left(n/A\right)^2}-1\right).
\]
The sum over $n\ge1$ counts only higher radial rows; the zeroth
row is supplied by the separate constant trial function. In
particular, negative lower bounds $2b-1$ for individual short
rows are harmless, since their actual counts are nonnegative. Apply
Lemma~\ref{lem:sum} to obtain
\[
 N_A\left(t^2\right)\ge\frac\delta2\left(t^2-3/4\right)+2a_\delta t
                  -\left(\frac\pi2+\frac\delta\pi\right)z-1.
\]
Replacing $z$ by $t$ in the negative term proves
\eqref{eq:mixedlow}. If $A\ge2$, Lemma~\ref{lem:largesum} replaces
$\pi/2$ in that display by $71/50$, proving the stronger assertion.

Writing $L=\log R$, one has
$a_\delta^2=R\sinh\left(L\right)/L\ge R$. On $0\le\delta\le1/4$,
\[
 \sqrt{1+\delta}\ge1+\frac{33}{70}\delta
                         \ge1+\frac7{15}\delta,
\]
as follows by squaring; indeed $33/35+1089/19600<1$.
Together with $3<\pi<22/7$ this proves the first bound in
\eqref{eq:mixedB}. For the second use $\pi>157/50$ and
\[
 \frac{33}{35}-\frac{50}{157}=\frac{3431}{5495}>\frac{78}{125}.
\]
\end{proof}

\subsection{The action and its angular quadrature}
Normalize $r=1$ and let $R=1+\delta>1$. We use the same unit-disk
profile as \cite[Section~2 and (2.4)]{FLPSballs}, where it is
denoted by $h$, and the disk action of \cite[(5.1)]{FLPS}.
This established profile is an input to our Neumann estimates.
Define
\[
 g\left(x\right)=\frac{\sqrt{1-x^2}-x\arccos x}{\pi}\quad\left(0\le x\le1\right),
 \,\,\,
 \mathcal G_\delta\left(x\right)=
 \begin{cases}
 Rg\left(x/R\right)-g\left(x\right),&0\le x\le1,\\
 Rg\left(x/R\right),&1\le x\le R,\\
 0,&x\ge R,
 \end{cases}
\]
and set $f_t\left(v\right)=t\mathcal G_\delta\left(v/t\right)$ for $t>0$. The variable $v$ is
continuous in the action profile, even when the angular orders
sampled in the spectrum are integers. Equivalently,
\[
 f_t\left(v\right)=\frac1\pi\int_1^R
 \sqrt{\left(t^2-v^2/s^2\right)_+}\dd s,
\]
where $y_+=\max\left\{y,0\right\}$. This is the radial phase-space length
divided by $\pi$. Its change from a concave profile on $\left[0,t\right]$
to a convex profile on $\left[t,Rt\right]$ reflects the turning point
$s=v/t$, where the expression inside the square root vanishes.

For $X>0$, define the zero-extended disk action by
\begin{equation}\label{eq:diskaction}
 G_X\left(v\right)=
 \begin{cases}
 Xg\left(v/X\right),&0\le v\le X,\\
 0,&v\ge X.
 \end{cases}
\end{equation}
This is the disk action in \cite[(2.4)]{FLPSballs}.
The identity $f_t\left(v\right)=G_{Rt}\left(v\right)-G_t\left(v\right)$ relates our notation to
\cite[(5.1) and (5.4)]{FLPS}. The cancellation of the two disk
areas is the action identity used in Section~7.1 of that work.
Figure~\ref{fig:radialaction} shows the two curvature regions and
the area that produces the exact Weyl term.

\begin{figure}[htbp]
\centering
\begin{tikzpicture}[x=1.3cm,y=3.5cm,font=\small]
  \fill[annPurple,opacity=.12] (0.0,0) -- (0.0000000,1.2732395) -- (0.0400000,1.2732077) -- (0.0800000,1.2731122) -- (0.1200000,1.2729530) -- (0.1600000,1.2727301) -- (0.2000000,1.2724435) -- (0.2400000,1.2720930) -- (0.2800000,1.2716787) -- (0.3200000,1.2712005) -- (0.3600000,1.2706582) -- (0.4000000,1.2700518) -- (0.4400000,1.2693812) -- (0.4800000,1.2686462) -- (0.5200000,1.2678468) -- (0.5600000,1.2669827) -- (0.6000000,1.2660539) -- (0.6400000,1.2650601) -- (0.6800000,1.2640012) -- (0.7200000,1.2628770) -- (0.7600000,1.2616873) -- (0.8000000,1.2604319) -- (0.8400000,1.2591105) -- (0.8800000,1.2577228) -- (0.9200000,1.2562687) -- (0.9600000,1.2547479) -- (1.0000000,1.2531600) -- (1.0400000,1.2515048) -- (1.0800000,1.2497819) -- (1.1200000,1.2479910) -- (1.1600000,1.2461318) -- (1.2000000,1.2442039) -- (1.2400000,1.2422069) -- (1.2800000,1.2401405) -- (1.3200000,1.2380041) -- (1.3600000,1.2357974) -- (1.4000000,1.2335199) -- (1.4400000,1.2311712) -- (1.4800000,1.2287507) -- (1.5200000,1.2262579) -- (1.5600000,1.2236923) -- (1.6000000,1.2210534) -- (1.6400000,1.2183405) -- (1.6800000,1.2155530) -- (1.7200000,1.2126904) -- (1.7600000,1.2097519) -- (1.8000000,1.2067369) -- (1.8400000,1.2036446) -- (1.8800000,1.2004744) -- (1.9200000,1.1972254) -- (1.9600000,1.1938969) -- (2.0000000,1.1904879) -- (2.0400000,1.1869977) -- (2.0800000,1.1834253) -- (2.1200000,1.1797697) -- (2.1600000,1.1760300) -- (2.2000000,1.1722052) -- (2.2400000,1.1682941) -- (2.2800000,1.1642957) -- (2.3200000,1.1602086) -- (2.3600000,1.1560319) -- (2.4000000,1.1517640) -- (2.4400000,1.1474037) -- (2.4800000,1.1429496) -- (2.5200000,1.1384001) -- (2.5600000,1.1337538) -- (2.6000000,1.1290090) -- (2.6400000,1.1241639) -- (2.6800000,1.1192167) -- (2.7200000,1.1141657) -- (2.7600000,1.1090086) -- (2.8000000,1.1037435) -- (2.8400000,1.0983680) -- (2.8800000,1.0928798) -- (2.9200000,1.0872763) -- (2.9600000,1.0815549) -- (3.0000000,1.0757127) -- (3.0400000,1.0697467) -- (3.0800000,1.0636536) -- (3.1200000,1.0574299) -- (3.1600000,1.0510718) -- (3.2000000,1.0445754) -- (3.2400000,1.0379361) -- (3.2800000,1.0311494) -- (3.3200000,1.0242099) -- (3.3600000,1.0171121) -- (3.4000000,1.0098497) -- (3.4400000,1.0024159) -- (3.4800000,0.9948030) -- (3.5200000,0.9870025) -- (3.5600000,0.9790047) -- (3.6000000,0.9707986) -- (3.6400000,0.9623715) -- (3.6800000,0.9537086) -- (3.7200000,0.9447922) -- (3.7600000,0.9356009) -- (3.8000000,0.9261079) -- (3.8400000,0.9162784) -- (3.8800000,0.9060650) -- (3.9200000,0.8953969) -- (3.9600000,0.8841513) -- (4.0000000,0.8719822) -- (4.0,0) -- cycle;
  \draw[annPurple,thick] (0.0000000,1.2732395) -- (0.0400000,1.2732077) -- (0.0800000,1.2731122) -- (0.1200000,1.2729530) -- (0.1600000,1.2727301) -- (0.2000000,1.2724435) -- (0.2400000,1.2720930) -- (0.2800000,1.2716787) -- (0.3200000,1.2712005) -- (0.3600000,1.2706582) -- (0.4000000,1.2700518) -- (0.4400000,1.2693812) -- (0.4800000,1.2686462) -- (0.5200000,1.2678468) -- (0.5600000,1.2669827) -- (0.6000000,1.2660539) -- (0.6400000,1.2650601) -- (0.6800000,1.2640012) -- (0.7200000,1.2628770) -- (0.7600000,1.2616873) -- (0.8000000,1.2604319) -- (0.8400000,1.2591105) -- (0.8800000,1.2577228) -- (0.9200000,1.2562687) -- (0.9600000,1.2547479) -- (1.0000000,1.2531600) -- (1.0400000,1.2515048) -- (1.0800000,1.2497819) -- (1.1200000,1.2479910) -- (1.1600000,1.2461318) -- (1.2000000,1.2442039) -- (1.2400000,1.2422069) -- (1.2800000,1.2401405) -- (1.3200000,1.2380041) -- (1.3600000,1.2357974) -- (1.4000000,1.2335199) -- (1.4400000,1.2311712) -- (1.4800000,1.2287507) -- (1.5200000,1.2262579) -- (1.5600000,1.2236923) -- (1.6000000,1.2210534) -- (1.6400000,1.2183405) -- (1.6800000,1.2155530) -- (1.7200000,1.2126904) -- (1.7600000,1.2097519) -- (1.8000000,1.2067369) -- (1.8400000,1.2036446) -- (1.8800000,1.2004744) -- (1.9200000,1.1972254) -- (1.9600000,1.1938969) -- (2.0000000,1.1904879) -- (2.0400000,1.1869977) -- (2.0800000,1.1834253) -- (2.1200000,1.1797697) -- (2.1600000,1.1760300) -- (2.2000000,1.1722052) -- (2.2400000,1.1682941) -- (2.2800000,1.1642957) -- (2.3200000,1.1602086) -- (2.3600000,1.1560319) -- (2.4000000,1.1517640) -- (2.4400000,1.1474037) -- (2.4800000,1.1429496) -- (2.5200000,1.1384001) -- (2.5600000,1.1337538) -- (2.6000000,1.1290090) -- (2.6400000,1.1241639) -- (2.6800000,1.1192167) -- (2.7200000,1.1141657) -- (2.7600000,1.1090086) -- (2.8000000,1.1037435) -- (2.8400000,1.0983680) -- (2.8800000,1.0928798) -- (2.9200000,1.0872763) -- (2.9600000,1.0815549) -- (3.0000000,1.0757127) -- (3.0400000,1.0697467) -- (3.0800000,1.0636536) -- (3.1200000,1.0574299) -- (3.1600000,1.0510718) -- (3.2000000,1.0445754) -- (3.2400000,1.0379361) -- (3.2800000,1.0311494) -- (3.3200000,1.0242099) -- (3.3600000,1.0171121) -- (3.4000000,1.0098497) -- (3.4400000,1.0024159) -- (3.4800000,0.9948030) -- (3.5200000,0.9870025) -- (3.5600000,0.9790047) -- (3.6000000,0.9707986) -- (3.6400000,0.9623715) -- (3.6800000,0.9537086) -- (3.7200000,0.9447922) -- (3.7600000,0.9356009) -- (3.8000000,0.9261079) -- (3.8400000,0.9162784) -- (3.8800000,0.9060650) -- (3.9200000,0.8953969) -- (3.9600000,0.8841513) -- (4.0000000,0.8719822);
  \fill[annBlue,opacity=.12] (4.0,0) -- (4.0000000,0.8719822) -- (4.0400000,0.8586857) -- (4.0800000,0.8454629) -- (4.1200000,0.8323142) -- (4.1600000,0.8192397) -- (4.2000000,0.8062397) -- (4.2400000,0.7933145) -- (4.2800000,0.7804644) -- (4.3200000,0.7676897) -- (4.3600000,0.7549906) -- (4.4000000,0.7423674) -- (4.4400000,0.7298205) -- (4.4800000,0.7173501) -- (4.5200000,0.7049565) -- (4.5600000,0.6926401) -- (4.6000000,0.6804011) -- (4.6400000,0.6682400) -- (4.6800000,0.6561571) -- (4.7200000,0.6441526) -- (4.7600000,0.6322270) -- (4.8000000,0.6203805) -- (4.8400000,0.6086137) -- (4.8800000,0.5969268) -- (4.9200000,0.5853203) -- (4.9600000,0.5737945) -- (5.0000000,0.5623498) -- (5.0400000,0.5509867) -- (5.0800000,0.5397056) -- (5.1200000,0.5285068) -- (5.1600000,0.5173910) -- (5.2000000,0.5063584) -- (5.2400000,0.4954096) -- (5.2800000,0.4845451) -- (5.3200000,0.4737653) -- (5.3600000,0.4630707) -- (5.4000000,0.4524619) -- (5.4400000,0.4419394) -- (5.4800000,0.4315037) -- (5.5200000,0.4211554) -- (5.5600000,0.4108951) -- (5.6000000,0.4007233) -- (5.6400000,0.3906406) -- (5.6800000,0.3806477) -- (5.7200000,0.3707452) -- (5.7600000,0.3609338) -- (5.8000000,0.3512141) -- (5.8400000,0.3415869) -- (5.8800000,0.3320528) -- (5.9200000,0.3226126) -- (5.9600000,0.3132670) -- (6.0000000,0.3040169) -- (6.0400000,0.2948630) -- (6.0800000,0.2858062) -- (6.1200000,0.2768474) -- (6.1600000,0.2679874) -- (6.2000000,0.2592272) -- (6.2400000,0.2505677) -- (6.2800000,0.2420100) -- (6.3200000,0.2335550) -- (6.3600000,0.2252039) -- (6.4000000,0.2169578) -- (6.4400000,0.2088177) -- (6.4800000,0.2007849) -- (6.5200000,0.1928608) -- (6.5600000,0.1850464) -- (6.6000000,0.1773434) -- (6.6400000,0.1697529) -- (6.6800000,0.1622766) -- (6.7200000,0.1549161) -- (6.7600000,0.1476728) -- (6.8000000,0.1405486) -- (6.8400000,0.1335453) -- (6.8800000,0.1266647) -- (6.9200000,0.1199089) -- (6.9600000,0.1132800) -- (7.0000000,0.1067802) -- (7.0400000,0.1004119) -- (7.0800000,0.0941776) -- (7.1200000,0.0880802) -- (7.1600000,0.0821223) -- (7.2000000,0.0763072) -- (7.2400000,0.0706382) -- (7.2800000,0.0651188) -- (7.3200000,0.0597530) -- (7.3600000,0.0545450) -- (7.4000000,0.0494995) -- (7.4400000,0.0446216) -- (7.4800000,0.0399170) -- (7.5200000,0.0353919) -- (7.5600000,0.0310535) -- (7.6000000,0.0269099) -- (7.6400000,0.0229703) -- (7.6800000,0.0192454) -- (7.7200000,0.0157482) -- (7.7600000,0.0124940) -- (7.8000000,0.0095021) -- (7.8400000,0.0067974) -- (7.8800000,0.0044139) -- (7.9200000,0.0024020) -- (7.9600000,0.0008490) -- (8.0000000,0.0000000) -- (8.0,0) -- cycle;
  \draw[annBlue,thick] (4.0000000,0.8719822) -- (4.0400000,0.8586857) -- (4.0800000,0.8454629) -- (4.1200000,0.8323142) -- (4.1600000,0.8192397) -- (4.2000000,0.8062397) -- (4.2400000,0.7933145) -- (4.2800000,0.7804644) -- (4.3200000,0.7676897) -- (4.3600000,0.7549906) -- (4.4000000,0.7423674) -- (4.4400000,0.7298205) -- (4.4800000,0.7173501) -- (4.5200000,0.7049565) -- (4.5600000,0.6926401) -- (4.6000000,0.6804011) -- (4.6400000,0.6682400) -- (4.6800000,0.6561571) -- (4.7200000,0.6441526) -- (4.7600000,0.6322270) -- (4.8000000,0.6203805) -- (4.8400000,0.6086137) -- (4.8800000,0.5969268) -- (4.9200000,0.5853203) -- (4.9600000,0.5737945) -- (5.0000000,0.5623498) -- (5.0400000,0.5509867) -- (5.0800000,0.5397056) -- (5.1200000,0.5285068) -- (5.1600000,0.5173910) -- (5.2000000,0.5063584) -- (5.2400000,0.4954096) -- (5.2800000,0.4845451) -- (5.3200000,0.4737653) -- (5.3600000,0.4630707) -- (5.4000000,0.4524619) -- (5.4400000,0.4419394) -- (5.4800000,0.4315037) -- (5.5200000,0.4211554) -- (5.5600000,0.4108951) -- (5.6000000,0.4007233) -- (5.6400000,0.3906406) -- (5.6800000,0.3806477) -- (5.7200000,0.3707452) -- (5.7600000,0.3609338) -- (5.8000000,0.3512141) -- (5.8400000,0.3415869) -- (5.8800000,0.3320528) -- (5.9200000,0.3226126) -- (5.9600000,0.3132670) -- (6.0000000,0.3040169) -- (6.0400000,0.2948630) -- (6.0800000,0.2858062) -- (6.1200000,0.2768474) -- (6.1600000,0.2679874) -- (6.2000000,0.2592272) -- (6.2400000,0.2505677) -- (6.2800000,0.2420100) -- (6.3200000,0.2335550) -- (6.3600000,0.2252039) -- (6.4000000,0.2169578) -- (6.4400000,0.2088177) -- (6.4800000,0.2007849) -- (6.5200000,0.1928608) -- (6.5600000,0.1850464) -- (6.6000000,0.1773434) -- (6.6400000,0.1697529) -- (6.6800000,0.1622766) -- (6.7200000,0.1549161) -- (6.7600000,0.1476728) -- (6.8000000,0.1405486) -- (6.8400000,0.1335453) -- (6.8800000,0.1266647) -- (6.9200000,0.1199089) -- (6.9600000,0.1132800) -- (7.0000000,0.1067802) -- (7.0400000,0.1004119) -- (7.0800000,0.0941776) -- (7.1200000,0.0880802) -- (7.1600000,0.0821223) -- (7.2000000,0.0763072) -- (7.2400000,0.0706382) -- (7.2800000,0.0651188) -- (7.3200000,0.0597530) -- (7.3600000,0.0545450) -- (7.4000000,0.0494995) -- (7.4400000,0.0446216) -- (7.4800000,0.0399170) -- (7.5200000,0.0353919) -- (7.5600000,0.0310535) -- (7.6000000,0.0269099) -- (7.6400000,0.0229703) -- (7.6800000,0.0192454) -- (7.7200000,0.0157482) -- (7.7600000,0.0124940) -- (7.8000000,0.0095021) -- (7.8400000,0.0067974) -- (7.8800000,0.0044139) -- (7.9200000,0.0024020) -- (7.9600000,0.0008490) -- (8.0000000,0.0000000);
  \draw[->] (0,0) -- (8.7,0) node[right] {$v$};
  \draw[->] (0,0) -- (0,1.52) node[above] {$f_4\left(v\right)$};
  \node[below left] at (0,0) {$0$};
  \draw (0,{4/pi}) -- (-.08,{4/pi}) node[left] {$4/\pi$};
  \foreach \xx/\lab in {2/2,4/{t=4},6/6,8/{Rt=8}}{
    \draw (\xx,0) -- (\xx,-.035) node[below] {$\lab$};
  }
  \draw[gray,densely dashed] (4,0) -- (4,1.35);
  \node[annPurple] at (2,1.39) {concave};
  \node[annBlue] at (6,.86) {convex};
  \node[font=\footnotesize] at (2,.42) {$0\le v\le t$};
  \node[font=\footnotesize] at (6,.15) {$t\le v\le Rt$};
\end{tikzpicture}
\caption{
Radial action for $r=1$, $R=2$, $t=4$: $f_4\left(v\right)=G_8\left(v\right)-G_4\left(v\right)$. It is concave on $\left[0,4\right]$ and convex on $\left[4,8\right]$, with common slope $f_4'\left(4\right)=-1/3$. The shaded area is $\int_0^8f_4\left(v\right)\dd v=6$; twice this area is the Weyl term $12$ at energy $16$. Disk-profile differences follow \cite[(5.1), (5.4), and Section~7.1]{FLPS}; the figure is drawn from these explicit formulas.}
\label{fig:radialaction}
\end{figure}

For a function $w$, its total variation $\operatorname{Var}_I\left(w\right)$
on an interval $I$ is the supremum of the sums of absolute increments
over finite ordered partitions of $I$. It is finite precisely when
$w$ has bounded variation.
The angular count samples the action at integer orders. The next lemma gives the curvature and quadrature bounds that allow this sampled action to be compared with the exact Weyl term.
\begin{lemma}\label{lem:actionquadrature}
The action has $f_t'\left(0\right)=0$ and
\[
 2\int_0^{Rt}f_t\left(v\right)\dd v=\frac{R^2-1}{4}t^2,
 \,\,\,
 -f_t''\left(v\right)\ge\frac{\delta}{\pi Rt}\quad\left(0\le v<t\right).
\]
For the integer angular multiset $\mathcal V$,
\[
 \left|\sum_{m\in\mathbb Z}f_t\left(\left|m\right|\right)
          -\frac{R^2-1}{4}t^2\right|
 \le\tau_\delta:=\frac{\arccos\left(1/R\right)}{2\pi}<\frac14.
\]
\end{lemma}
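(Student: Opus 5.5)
The plan is to verify the three analytic facts directly from the integral representation
\[
 f_t(v)=\frac1\pi\int_1^R\sqrt{\left(t^2-v^2/s^2\right)_+}\dd s,
\]
and then to obtain the quadrature bound from the unit-interval trapezoidal identity \eqref{eq:unittrapezoid}, written in Stieltjes form, together with the total variation of $f_t'$.

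\textbf{Derivative, area and curvature.} Differentiating under the integral sign gives
\[
 f_t'(v)=-\frac1\pi\int_{\max\{1,v/t\}}^{R}\frac{v}{s^2\sqrt{t^2-v^2/s^2}}\dd s .
\]
The integrand has an integrable singularity at the moving endpoint $s=v/t$, so this formula is valid for all $0\le v\le Rt$. At $v=0$ it gives $f_t'(0)=0$. Equivalently $f_t'(v)=\mathcal G_\delta'(v/t)$ with $g'(x)=-\arccos(x)/\pi$.

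For the area I use Fubini. For fixed $s$,
\[
 \int_0^{st}\sqrt{t^2-v^2/s^2}\dd v=\frac{\pi st^2}{4},
\]
so $\int_0^{Rt}f_t\dd v=t^2(R^2-1)/8$, and doubling gives the Weyl term.

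For the curvature, when $0\le v<t$ every $s\in[1,R]$ is active. Differentiating once more gives
\[
 -f_t''(v)=\frac1\pi\int_1^R\frac{t^2}{s^2\left(t^2-v^2/s^2\right)^{3/2}}\dd s .
\]
The bound $(t^2-v^2/s^2)^{3/2}\le t^3$ yields $-f_t''\ge(1-1/R)/(\pi t)=\delta/(\pi Rt)$.

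\textbf{Variation of $f_t'$.} Put $F(v)=f_t(|v|)$ on $\mathbb R$. It is even, supported in $[-Rt,Rt]$, and continuously differentiable, since $f_t'(0)=0$ and $g'(1)=0$ gives $f_t'(Rt)=0$. Because $\mathcal G_\delta$ is concave on $[0,1]$ and convex on $[1,R]$, $f_t'$ decreases from $0$ to
\[
 f_t'(t)=g'(1/R)-g'(1)=-\frac{\arccos(1/R)}{\pi}
\]
and then increases back to $0$. Hence
\[
 \operatorname{Var}_{\mathbb R}(F')=4\,\frac{\arccos(1/R)}{\pi}.
\]

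\textbf{Quadrature estimate.} Since $F$ is compactly supported,
\[
 \sum_{m\in\mathbb Z}F(m)-\int_{\mathbb R}F
 =\sum_j\left(\frac{F(j)+F(j+1)}2-\int_j^{j+1}F\right).
\]
On each unit interval I apply \eqref{eq:unittrapezoid} in the Stieltjes form $\int_j^{j+1}K_j\dd F'$, which is obtained by the same two integrations by parts. The bound $0\le K_j\le1/8$ then gives
\[
 \left|\sum_{m\in\mathbb Z}f_t(|m|)-\frac{R^2-1}4t^2\right|
 \le\frac18\operatorname{Var}(F')=\frac{\arccos(1/R)}{2\pi}=\tau_\delta .
\]
Finally, $\arccos(1/R)<\pi/2$, so $\tau_\delta<1/4$. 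The sum over $\mathbb Z$ is exactly the sum over the multiset $\mathcal V$, since each positive order occurs twice and $0$ once.

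\textbf{Main obstacle.} The delicate step is justifying the trapezoidal identity near the turning point $v=t$, where $f_t''$ is not integrable. There the inner integrand behaves like $(s-v/t)^{-3/2}$ near $s=1$, so \eqref{eq:unittrapezoid} cannot be used in its stated form with $F''$ absolutely continuous derivative. I would handle this by the Stieltjes version: $F'$ is continuous and piecewise monotone, hence of bounded variation, and integration by parts against the Lipschitz kernel $K_j$ holds for such $F'$. If a more elementary route is preferred, one can instead approximate $F'$ on the interval containing $t$ by truncating at $t\pm\varepsilon$ and pass to the limit.
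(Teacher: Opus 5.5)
Your proof is correct and follows essentially the paper's route. It uses the same core curvature bound, the same total variation $\operatorname{Var}_{\mathbb R}(F_t')=4\arccos(1/R)/\pi$ and the same $1/8$ trapezoidal kernel; the only difference is that you compute the area by Fubini on the phase-space representation, where the paper evaluates $\int_0^1 g=1/8$ explicitly. The obstacle you flag is not actually there. Integrating your inner expression in $s$ gives $-f_t''(v)=\pi^{-1}\bigl((t^2-v^2)^{-1/2}-(R^2t^2-v^2)^{-1/2}\bigr)$, which blows up only like $(t-v)^{-1/2}$. So $f_t''$ is integrable and \eqref{eq:unittrapezoid} applies directly, as the paper uses it, though your Stieltjes version is also valid.
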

\begin{proof}
Integration by parts gives
\[
 \int_0^1\sqrt{1-x^2}\dd x=\frac\pi4,\,\,\,
 \int_0^1x\arccos x\dd x=\frac\pi8,
\]
and hence $\int_0^1g\left(x\right)\dd x=1/8$. Scaling the two terms in
$\mathcal G_\delta$ now gives the displayed integral identity.
On the core interval,
\[
 -f_t''\left(v\right)=\frac1\pi\left(
 \frac1{\sqrt{t^2-v^2}}-\frac1{\sqrt{R^2t^2-v^2}}\right).
\]
This is increasing in $v$ and takes the value $\delta/\left(\pi Rt\right)$
at zero. The even extension $F_t\left(x\right)=f_t\left(\left|x\right|\right)$, extended by zero
outside $\left[-Rt,Rt\right]$, is $C^1$ and has an integrable second derivative.
The derivative of $f_t$ decreases from zero to
$-\arccos\left(1/R\right)/\pi$ on $\left[0,t\right]$ and increases to zero on $\left[t,Rt\right]$.
Consequently
\[
 \operatorname{Var}_{\mathbb R}\left(F_t'\right)=
 \frac{4\arccos\left(1/R\right)}\pi.
\]
On each unit interval of the integer grid, the trapezoidal
remainder is bounded by $1/8$ of the integral of $\left|F_t''\right|$.
Summing these bounds gives the stated error, since the full
trapezoidal sum equals $\sum_mF_t\left(m\right)$.
\end{proof}

\subsection{Derivative phases and radial counting}
$J_m$ and $Y_m$ denote the Bessel functions of the first and second
kind, that is, the standard independent solutions of
$x^2w''+xw'+\left(x^2-m^2\right)w=0$ \cite[Chapter~10]{NIST}.
We use the continuous derivative phase $\varphi_m$ defined by
\[
 J_m'\left(x\right)+iY_m'\left(x\right)=\mathcal M_m\left(x\right)e^{i\varphi_m\left(x\right)},\,\,\,
 \mathcal M_m\left(x\right)>0,\quad \varphi_m\left(0+\right)=\pi/2.
\]
For $m>0$ it decreases on $\left(0,m\right)$, increases on $\left(m,+\infty\right)$,
and $0<\varphi_m\left(m\right)<\pi/2$. For $m=0$ it is increasing.
Here $\mathcal M_m\left(x\right)=\sqrt{J_m'\left(x\right)^2+Y_m'\left(x\right)^2}$
is the modulus of the complex derivative vector. The word continuous specifies a real-valued lift of the argument:
we follow the angle without resetting it after a full turn. The
principal argument, chosen in $\left(-\pi,\pi\right]$, alone would lose multiples of $2\pi$ and cannot
be used in the floor count below. The derivative vector is never
zero: if $J_m'\left(x\right)$ and $Y_m'\left(x\right)$ both vanished, their Wronskian
$J_mY_m'-J_m'Y_m=2/\left(\pi x\right)$ would vanish as well. Thus the
normalization at $0+$ selects a unique continuous phase.

For completeness, differentiating an argument gives
\[
 \varphi_m'\left(x\right)
 =\frac{J_m'\left(x\right)Y_m''\left(x\right)-Y_m'\left(x\right)J_m''\left(x\right)}
        {J_m'\left(x\right)^2+Y_m'\left(x\right)^2}.
\]
Inserting the Bessel equations into the numerator cancels the
terms containing $J_m'Y_m'/x$ and leaves
\[
 \left(1-\frac{m^2}{x^2}\right)
 \left(J_m\left(x\right)Y_m'\left(x\right)-J_m'\left(x\right)Y_m\left(x\right)\right)
 =\frac{2\left(x^2-m^2\right)}{\pi x^3}.
\]
Consequently
\begin{equation}\label{eq:phasevelocity}
 \varphi_m'\left(x\right)=
 \frac{2\left(x^2-m^2\right)}
 {\pi x^3\left(J_m'\left(x\right)^2+Y_m'\left(x\right)^2\right)}.
\end{equation}
The denominator is positive. This proves the stated decrease for
$x<m$, increase for $x>m$, and monotonicity for $m=0$.
Figure~\ref{fig:derivativephase} illustrates both the continuous
branch and the jumps of the principal argument.

\begin{figure}[htbp]
\centering
\begin{tikzpicture}[x=1.3cm,y=1.3cm,font=\small]
  \draw[->] (0,-1.12) -- (0,3.23) node[above] {$\varphi/\pi$};
  \draw[->] (0,0) -- (8.42,0) node[right] {$x$};
  \foreach \yy/\lab in {-1/{-1},.5/{1/2},1/1,2/2,3/3}{
    \draw (0,\yy) -- (-.06,\yy) node[left] {$\lab$};
  }
  \foreach \xx in {2,4,6,8}{\draw (\xx,0) -- (\xx,-.06) node[below] {$\xx$};}
  \draw[gray,dotted] (0,.5) -- (8,.5);
  \draw[annBlue,dotted] (2,-.92) -- (2,1.10);
  \draw[annGreen,thick] (0.02500,0.50016) -- (0.05837,0.50085) -- (0.09174,0.50208) -- (0.12510,0.50382) -- (0.15847,0.50607) -- (0.19184,0.50879) -- (0.22521,0.51196) -- (0.25858,0.51555) -- (0.29195,0.51954) -- (0.32531,0.52390) -- (0.35868,0.52861) -- (0.39205,0.53365) -- (0.42542,0.53899) -- (0.45879,0.54462) -- (0.49215,0.55052) -- (0.52552,0.55666) -- (0.55889,0.56304) -- (0.59226,0.56964) -- (0.62563,0.57643) -- (0.65900,0.58342) -- (0.69236,0.59059) -- (0.72573,0.59792) -- (0.75910,0.60541) -- (0.79247,0.61304) -- (0.82584,0.62081) -- (0.85921,0.62871) -- (0.89257,0.63673) -- (0.92594,0.64486) -- (0.95931,0.65310) -- (0.99268,0.66145) -- (1.02605,0.66988) -- (1.05941,0.67841) -- (1.09278,0.68702) -- (1.12615,0.69571) -- (1.15952,0.70448) -- (1.19289,0.71331) -- (1.22626,0.72222) -- (1.25962,0.73119) -- (1.29299,0.74022) -- (1.32636,0.74931) -- (1.35973,0.75845) -- (1.39310,0.76764) -- (1.42646,0.77689) -- (1.45983,0.78618) -- (1.49320,0.79551) -- (1.52657,0.80489) -- (1.55994,0.81431) -- (1.59331,0.82377) -- (1.62667,0.83327) -- (1.66004,0.84280) -- (1.69341,0.85236) -- (1.72678,0.86196) -- (1.76015,0.87159) -- (1.79351,0.88124) -- (1.82688,0.89093) -- (1.86025,0.90064) -- (1.89362,0.91038) -- (1.92699,0.92014) -- (1.96036,0.92993) -- (1.99372,0.93974) -- (2.02709,0.94958) -- (2.06046,0.95943) -- (2.09383,0.96930) -- (2.12720,0.97920) -- (2.16056,0.98911) -- (2.19393,0.99904) -- (2.22730,1.00899) -- (2.26067,1.01896) -- (2.29404,1.02894) -- (2.32741,1.03894) -- (2.36077,1.04895) -- (2.39414,1.05898) -- (2.42751,1.06902) -- (2.46088,1.07908) -- (2.49425,1.08915) -- (2.52762,1.09923) -- (2.56098,1.10932) -- (2.59435,1.11943) -- (2.62772,1.12955) -- (2.66109,1.13967) -- (2.69446,1.14981) -- (2.72782,1.15996) -- (2.76119,1.17012) -- (2.79456,1.18029) -- (2.82793,1.19047) -- (2.86130,1.20066) -- (2.89467,1.21086) -- (2.92803,1.22106) -- (2.96140,1.23128) -- (2.99477,1.24150) -- (3.02814,1.25173) -- (3.06151,1.26197) -- (3.09487,1.27221) -- (3.12824,1.28247) -- (3.16161,1.29273) -- (3.19498,1.30299) -- (3.22835,1.31327) -- (3.26172,1.32355) -- (3.29508,1.33383) -- (3.32845,1.34413) -- (3.36182,1.35442) -- (3.39519,1.36473) -- (3.42856,1.37504) -- (3.46192,1.38535) -- (3.49529,1.39567) -- (3.52866,1.40600) -- (3.56203,1.41633) -- (3.59540,1.42667) -- (3.62877,1.43701) -- (3.66213,1.44736) -- (3.69550,1.45771) -- (3.72887,1.46806) -- (3.76224,1.47842) -- (3.79561,1.48878) -- (3.82897,1.49915) -- (3.86234,1.50952) -- (3.89571,1.51990) -- (3.92908,1.53028) -- (3.96245,1.54066) -- (3.99582,1.55105) -- (4.02918,1.56144) -- (4.06255,1.57184) -- (4.09592,1.58223) -- (4.12929,1.59263) -- (4.16266,1.60304) -- (4.19603,1.61345) -- (4.22939,1.62386) -- (4.26276,1.63427) -- (4.29613,1.64469) -- (4.32950,1.65511) -- (4.36287,1.66553) -- (4.39623,1.67596) -- (4.42960,1.68639) -- (4.46297,1.69682) -- (4.49634,1.70725) -- (4.52971,1.71769) -- (4.56308,1.72813) -- (4.59644,1.73857) -- (4.62981,1.74902) -- (4.66318,1.75946) -- (4.69655,1.76991) -- (4.72992,1.78036) -- (4.76328,1.79082) -- (4.79665,1.80127) -- (4.83002,1.81173) -- (4.86339,1.82219) -- (4.89676,1.83265) -- (4.93013,1.84312) -- (4.96349,1.85358) -- (4.99686,1.86405) -- (5.03023,1.87452) -- (5.06360,1.88499) -- (5.09697,1.89547) -- (5.13033,1.90594) -- (5.16370,1.91642) -- (5.19707,1.92690) -- (5.23044,1.93738) -- (5.26381,1.94787) -- (5.29718,1.95835) -- (5.33054,1.96884) -- (5.36391,1.97932) -- (5.39728,1.98981) -- (5.43065,2.00030) -- (5.46402,2.01080) -- (5.49738,2.02129) -- (5.53075,2.03179) -- (5.56412,2.04228) -- (5.59749,2.05278) -- (5.63086,2.06328) -- (5.66423,2.07378) -- (5.69759,2.08429) -- (5.73096,2.09479) -- (5.76433,2.10529) -- (5.79770,2.11580) -- (5.83107,2.12631) -- (5.86444,2.13682) -- (5.89780,2.14733) -- (5.93117,2.15784) -- (5.96454,2.16835) -- (5.99791,2.17887) -- (6.03128,2.18938) -- (6.06464,2.19990) -- (6.09801,2.21041) -- (6.13138,2.22093) -- (6.16475,2.23145) -- (6.19812,2.24197) -- (6.23149,2.25249) -- (6.26485,2.26302) -- (6.29822,2.27354) -- (6.33159,2.28406) -- (6.36496,2.29459) -- (6.39833,2.30512) -- (6.43169,2.31564) -- (6.46506,2.32617) -- (6.49843,2.33670) -- (6.53180,2.34723) -- (6.56517,2.35776) -- (6.59854,2.36830) -- (6.63190,2.37883) -- (6.66527,2.38936) -- (6.69864,2.39990) -- (6.73201,2.41043) -- (6.76538,2.42097) -- (6.79874,2.43151) -- (6.83211,2.44204) -- (6.86548,2.45258) -- (6.89885,2.46312) -- (6.93222,2.47366) -- (6.96559,2.48420) -- (6.99895,2.49475) -- (7.03232,2.50529) -- (7.06569,2.51583) -- (7.09906,2.52637) -- (7.13243,2.53692) -- (7.16579,2.54746) -- (7.19916,2.55801) -- (7.23253,2.56856) -- (7.26590,2.57911) -- (7.29927,2.58965) -- (7.33264,2.60020) -- (7.36600,2.61075) -- (7.39937,2.62130) -- (7.43274,2.63185) -- (7.46611,2.64240) -- (7.49948,2.65295) -- (7.53285,2.66351) -- (7.56621,2.67406) -- (7.59958,2.68461) -- (7.63295,2.69517) -- (7.66632,2.70572) -- (7.69969,2.71628) -- (7.73305,2.72683) -- (7.76642,2.73739) -- (7.79979,2.74795) -- (7.83316,2.75851) -- (7.86653,2.76906) -- (7.89990,2.77962) -- (7.93326,2.79018) -- (7.96663,2.80074) -- (8.00000,2.81130);
  \draw[annBlue,thick] (0.02500,0.50000) -- (0.05837,0.50000) -- (0.09174,0.50000) -- (0.12510,0.49999) -- (0.15847,0.49998) -- (0.19184,0.49996) -- (0.22521,0.49992) -- (0.25858,0.49986) -- (0.29195,0.49978) -- (0.32531,0.49966) -- (0.35868,0.49949) -- (0.39205,0.49928) -- (0.42542,0.49900) -- (0.45879,0.49866) -- (0.49215,0.49823) -- (0.52552,0.49771) -- (0.55889,0.49709) -- (0.59226,0.49635) -- (0.62563,0.49549) -- (0.65900,0.49448) -- (0.69236,0.49332) -- (0.72573,0.49200) -- (0.75910,0.49050) -- (0.79247,0.48881) -- (0.82584,0.48692) -- (0.85921,0.48482) -- (0.89257,0.48251) -- (0.92594,0.47997) -- (0.95931,0.47720) -- (0.99268,0.47420) -- (1.02605,0.47096) -- (1.05941,0.46749) -- (1.09278,0.46379) -- (1.12615,0.45988) -- (1.15952,0.45576) -- (1.19289,0.45144) -- (1.22626,0.44696) -- (1.25962,0.44233) -- (1.29299,0.43758) -- (1.32636,0.43274) -- (1.35973,0.42784) -- (1.39310,0.42292) -- (1.42646,0.41802) -- (1.45983,0.41318) -- (1.49320,0.40843) -- (1.52657,0.40381) -- (1.55994,0.39935) -- (1.59331,0.39511) -- (1.62667,0.39110) -- (1.66004,0.38735) -- (1.69341,0.38390) -- (1.72678,0.38076) -- (1.76015,0.37795) -- (1.79351,0.37549) -- (1.82688,0.37338) -- (1.86025,0.37165) -- (1.89362,0.37028) -- (1.92699,0.36929) -- (1.96036,0.36867) -- (1.99372,0.36842) -- (2.02709,0.36853) -- (2.06046,0.36900) -- (2.09383,0.36982) -- (2.12720,0.37098) -- (2.16056,0.37247) -- (2.19393,0.37429) -- (2.22730,0.37641) -- (2.26067,0.37884) -- (2.29404,0.38155) -- (2.32741,0.38454) -- (2.36077,0.38780) -- (2.39414,0.39131) -- (2.42751,0.39507) -- (2.46088,0.39906) -- (2.49425,0.40328) -- (2.52762,0.40772) -- (2.56098,0.41236) -- (2.59435,0.41719) -- (2.62772,0.42222) -- (2.66109,0.42743) -- (2.69446,0.43281) -- (2.72782,0.43836) -- (2.76119,0.44406) -- (2.79456,0.44992) -- (2.82793,0.45592) -- (2.86130,0.46206) -- (2.89467,0.46833) -- (2.92803,0.47473) -- (2.96140,0.48125) -- (2.99477,0.48789) -- (3.02814,0.49464) -- (3.06151,0.50150) -- (3.09487,0.50847) -- (3.12824,0.51553) -- (3.16161,0.52268) -- (3.19498,0.52993) -- (3.22835,0.53727) -- (3.26172,0.54469) -- (3.29508,0.55219) -- (3.32845,0.55977) -- (3.36182,0.56743) -- (3.39519,0.57516) -- (3.42856,0.58296) -- (3.46192,0.59083) -- (3.49529,0.59876) -- (3.52866,0.60675) -- (3.56203,0.61481) -- (3.59540,0.62292) -- (3.62877,0.63109) -- (3.66213,0.63931) -- (3.69550,0.64759) -- (3.72887,0.65592) -- (3.76224,0.66430) -- (3.79561,0.67272) -- (3.82897,0.68119) -- (3.86234,0.68971) -- (3.89571,0.69827) -- (3.92908,0.70687) -- (3.96245,0.71551) -- (3.99582,0.72419) -- (4.02918,0.73291) -- (4.06255,0.74167) -- (4.09592,0.75046) -- (4.12929,0.75929) -- (4.16266,0.76815) -- (4.19603,0.77704) -- (4.22939,0.78597) -- (4.26276,0.79493) -- (4.29613,0.80391) -- (4.32950,0.81293) -- (4.36287,0.82197) -- (4.39623,0.83105) -- (4.42960,0.84015) -- (4.46297,0.84927) -- (4.49634,0.85843) -- (4.52971,0.86760) -- (4.56308,0.87680) -- (4.59644,0.88603) -- (4.62981,0.89528) -- (4.66318,0.90455) -- (4.69655,0.91384) -- (4.72992,0.92315) -- (4.76328,0.93249) -- (4.79665,0.94184) -- (4.83002,0.95122) -- (4.86339,0.96061) -- (4.89676,0.97003) -- (4.93013,0.97946) -- (4.96349,0.98891) -- (4.99686,0.99838) -- (5.03023,1.00786) -- (5.06360,1.01736) -- (5.09697,1.02688) -- (5.13033,1.03641) -- (5.16370,1.04596) -- (5.19707,1.05553) -- (5.23044,1.06511) -- (5.26381,1.07470) -- (5.29718,1.08431) -- (5.33054,1.09394) -- (5.36391,1.10357) -- (5.39728,1.11322) -- (5.43065,1.12289) -- (5.46402,1.13256) -- (5.49738,1.14225) -- (5.53075,1.15195) -- (5.56412,1.16167) -- (5.59749,1.17139) -- (5.63086,1.18113) -- (5.66423,1.19088) -- (5.69759,1.20064) -- (5.73096,1.21041) -- (5.76433,1.22019) -- (5.79770,1.22999) -- (5.83107,1.23979) -- (5.86444,1.24960) -- (5.89780,1.25943) -- (5.93117,1.26926) -- (5.96454,1.27910) -- (5.99791,1.28895) -- (6.03128,1.29882) -- (6.06464,1.30869) -- (6.09801,1.31857) -- (6.13138,1.32845) -- (6.16475,1.33835) -- (6.19812,1.34825) -- (6.23149,1.35817) -- (6.26485,1.36809) -- (6.29822,1.37802) -- (6.33159,1.38796) -- (6.36496,1.39790) -- (6.39833,1.40785) -- (6.43169,1.41781) -- (6.46506,1.42778) -- (6.49843,1.43776) -- (6.53180,1.44774) -- (6.56517,1.45773) -- (6.59854,1.46772) -- (6.63190,1.47773) -- (6.66527,1.48773) -- (6.69864,1.49775) -- (6.73201,1.50777) -- (6.76538,1.51780) -- (6.79874,1.52783) -- (6.83211,1.53788) -- (6.86548,1.54792) -- (6.89885,1.55798) -- (6.93222,1.56803) -- (6.96559,1.57810) -- (6.99895,1.58817) -- (7.03232,1.59824) -- (7.06569,1.60832) -- (7.09906,1.61841) -- (7.13243,1.62850) -- (7.16579,1.63860) -- (7.19916,1.64870) -- (7.23253,1.65881) -- (7.26590,1.66892) -- (7.29927,1.67904) -- (7.33264,1.68916) -- (7.36600,1.69929) -- (7.39937,1.70942) -- (7.43274,1.71955) -- (7.46611,1.72969) -- (7.49948,1.73984) -- (7.53285,1.74999) -- (7.56621,1.76014) -- (7.59958,1.77030) -- (7.63295,1.78046) -- (7.66632,1.79063) -- (7.69969,1.80080) -- (7.73305,1.81098) -- (7.76642,1.82116) -- (7.79979,1.83134) -- (7.83316,1.84153) -- (7.86653,1.85172) -- (7.89990,1.86191) -- (7.93326,1.87211) -- (7.96663,1.88231) -- (8.00000,1.89252);
  \draw[gray,densely dashed,thick] (0.02500,0.50000) -- (0.05837,0.50000) -- (0.09174,0.50000) -- (0.12510,0.49999) -- (0.15847,0.49998) -- (0.19184,0.49996) -- (0.22521,0.49992) -- (0.25858,0.49986) -- (0.29195,0.49978) -- (0.32531,0.49966) -- (0.35868,0.49949) -- (0.39205,0.49928) -- (0.42542,0.49900) -- (0.45879,0.49866) -- (0.49215,0.49823) -- (0.52552,0.49771) -- (0.55889,0.49709) -- (0.59226,0.49635) -- (0.62563,0.49549) -- (0.65900,0.49448) -- (0.69236,0.49332) -- (0.72573,0.49200) -- (0.75910,0.49050) -- (0.79247,0.48881) -- (0.82584,0.48692) -- (0.85921,0.48482) -- (0.89257,0.48251) -- (0.92594,0.47997) -- (0.95931,0.47720) -- (0.99268,0.47420) -- (1.02605,0.47096) -- (1.05941,0.46749) -- (1.09278,0.46379) -- (1.12615,0.45988) -- (1.15952,0.45576) -- (1.19289,0.45144) -- (1.22626,0.44696) -- (1.25962,0.44233) -- (1.29299,0.43758) -- (1.32636,0.43274) -- (1.35973,0.42784) -- (1.39310,0.42292) -- (1.42646,0.41802) -- (1.45983,0.41318) -- (1.49320,0.40843) -- (1.52657,0.40381) -- (1.55994,0.39935) -- (1.59331,0.39511) -- (1.62667,0.39110) -- (1.66004,0.38735) -- (1.69341,0.38390) -- (1.72678,0.38076) -- (1.76015,0.37795) -- (1.79351,0.37549) -- (1.82688,0.37338) -- (1.86025,0.37165) -- (1.89362,0.37028) -- (1.92699,0.36929) -- (1.96036,0.36867) -- (1.99372,0.36842) -- (2.02709,0.36853) -- (2.06046,0.36900) -- (2.09383,0.36982) -- (2.12720,0.37098) -- (2.16056,0.37247) -- (2.19393,0.37429) -- (2.22730,0.37641) -- (2.26067,0.37884) -- (2.29404,0.38155) -- (2.32741,0.38454) -- (2.36077,0.38780) -- (2.39414,0.39131) -- (2.42751,0.39507) -- (2.46088,0.39906) -- (2.49425,0.40328) -- (2.52762,0.40772) -- (2.56098,0.41236) -- (2.59435,0.41719) -- (2.62772,0.42222) -- (2.66109,0.42743) -- (2.69446,0.43281) -- (2.72782,0.43836) -- (2.76119,0.44406) -- (2.79456,0.44992) -- (2.82793,0.45592) -- (2.86130,0.46206) -- (2.89467,0.46833) -- (2.92803,0.47473) -- (2.96140,0.48125) -- (2.99477,0.48789) -- (3.02814,0.49464) -- (3.06151,0.50150) -- (3.09487,0.50847) -- (3.12824,0.51553) -- (3.16161,0.52268) -- (3.19498,0.52993) -- (3.22835,0.53727) -- (3.26172,0.54469) -- (3.29508,0.55219) -- (3.32845,0.55977) -- (3.36182,0.56743) -- (3.39519,0.57516) -- (3.42856,0.58296) -- (3.46192,0.59083) -- (3.49529,0.59876) -- (3.52866,0.60675) -- (3.56203,0.61481) -- (3.59540,0.62292) -- (3.62877,0.63109) -- (3.66213,0.63931) -- (3.69550,0.64759) -- (3.72887,0.65592) -- (3.76224,0.66430) -- (3.79561,0.67272) -- (3.82897,0.68119) -- (3.86234,0.68971) -- (3.89571,0.69827) -- (3.92908,0.70687) -- (3.96245,0.71551) -- (3.99582,0.72419) -- (4.02918,0.73291) -- (4.06255,0.74167) -- (4.09592,0.75046) -- (4.12929,0.75929) -- (4.16266,0.76815) -- (4.19603,0.77704) -- (4.22939,0.78597) -- (4.26276,0.79493) -- (4.29613,0.80391) -- (4.32950,0.81293) -- (4.36287,0.82197) -- (4.39623,0.83105) -- (4.42960,0.84015) -- (4.46297,0.84927) -- (4.49634,0.85843) -- (4.52971,0.86760) -- (4.56308,0.87680) -- (4.59644,0.88603) -- (4.62981,0.89528) -- (4.66318,0.90455) -- (4.69655,0.91384) -- (4.72992,0.92315) -- (4.76328,0.93249) -- (4.79665,0.94184) -- (4.83002,0.95122) -- (4.86339,0.96061) -- (4.89676,0.97003) -- (4.93013,0.97946) -- (4.96349,0.98891) -- (4.99686,0.99838);
  \draw[gray,densely dashed,thick] (5.03023,-0.99214) -- (5.06360,-0.98264) -- (5.09697,-0.97312) -- (5.13033,-0.96359) -- (5.16370,-0.95404) -- (5.19707,-0.94447) -- (5.23044,-0.93489) -- (5.26381,-0.92530) -- (5.29718,-0.91569) -- (5.33054,-0.90606) -- (5.36391,-0.89643) -- (5.39728,-0.88678) -- (5.43065,-0.87711) -- (5.46402,-0.86744) -- (5.49738,-0.85775) -- (5.53075,-0.84805) -- (5.56412,-0.83833) -- (5.59749,-0.82861) -- (5.63086,-0.81887) -- (5.66423,-0.80912) -- (5.69759,-0.79936) -- (5.73096,-0.78959) -- (5.76433,-0.77981) -- (5.79770,-0.77001) -- (5.83107,-0.76021) -- (5.86444,-0.75040) -- (5.89780,-0.74057) -- (5.93117,-0.73074) -- (5.96454,-0.72090) -- (5.99791,-0.71105) -- (6.03128,-0.70118) -- (6.06464,-0.69131) -- (6.09801,-0.68143) -- (6.13138,-0.67155) -- (6.16475,-0.66165) -- (6.19812,-0.65175) -- (6.23149,-0.64183) -- (6.26485,-0.63191) -- (6.29822,-0.62198) -- (6.33159,-0.61204) -- (6.36496,-0.60210) -- (6.39833,-0.59215) -- (6.43169,-0.58219) -- (6.46506,-0.57222) -- (6.49843,-0.56224) -- (6.53180,-0.55226) -- (6.56517,-0.54227) -- (6.59854,-0.53228) -- (6.63190,-0.52227) -- (6.66527,-0.51227) -- (6.69864,-0.50225) -- (6.73201,-0.49223) -- (6.76538,-0.48220) -- (6.79874,-0.47217) -- (6.83211,-0.46212) -- (6.86548,-0.45208) -- (6.89885,-0.44202) -- (6.93222,-0.43197) -- (6.96559,-0.42190) -- (6.99895,-0.41183) -- (7.03232,-0.40176) -- (7.06569,-0.39168) -- (7.09906,-0.38159) -- (7.13243,-0.37150) -- (7.16579,-0.36140) -- (7.19916,-0.35130) -- (7.23253,-0.34119) -- (7.26590,-0.33108) -- (7.29927,-0.32096) -- (7.33264,-0.31084) -- (7.36600,-0.30071) -- (7.39937,-0.29058) -- (7.43274,-0.28045) -- (7.46611,-0.27031) -- (7.49948,-0.26016) -- (7.53285,-0.25001) -- (7.56621,-0.23986) -- (7.59958,-0.22970) -- (7.63295,-0.21954) -- (7.66632,-0.20937) -- (7.69969,-0.19920) -- (7.73305,-0.18902) -- (7.76642,-0.17884) -- (7.79979,-0.16866) -- (7.83316,-0.15847) -- (7.86653,-0.14828) -- (7.89990,-0.13809) -- (7.93326,-0.12789) -- (7.96663,-0.11769) -- (8.00000,-0.10748);
  \draw[gray,densely dashed,->] (5.013546,.98) -- (5.013546,-.98);
  \node[annGreen,above] at (7.65,2.70) {$\varphi_0$};
  \node[annBlue,above] at (7.65,1.67) {$\varphi_2$};
  \node[gray,below] at (7.13,-.63) {$\operatorname{Arg}\left(J_2'+iY_2'\right)/\pi$};
  \node[annBlue,below,font=\footnotesize] at (2,-.93) {minimum at $x=2$};
\end{tikzpicture}
\caption{Continuous derivative phases for orders zero and two.
Continuous derivative phases $\varphi_0,\varphi_2$ (solid) start at $\pi/2$ as $x\downarrow0$; $\varphi_0$ increases and $\varphi_2$ has its unique minimum at $x=2$. The principal argument for order two (dashed) drops by $2\pi$ at the branch cut (arrow). These curves illustrate the phase normalization.}
\label{fig:derivativephase}
\end{figure}

The phase normalization above is that of \cite[Section~1.2]{Phase}.
Numbered references to \cite{Phase} use the author version arXiv:2402.06956v3.
We first express the radial counting function in terms of the phase
difference at the two boundary circles.
\begin{lemma}\label{lem:exactphasecount}
Let $n_m\left(t^2\right)$ be the radial Neumann counting function in integer angular
order $m\ge0$, including its zero eigenvalue when $m=0$. For every $t>0$,
\begin{equation}\label{eq:exactphasecount}
 n_m\left(t^2\right)=\left\lfloor
 \frac{\varphi_m\left(Rt\right)-\varphi_m\left(t\right)}\pi\right\rfloor+1.
\end{equation}
For the strictly subthreshold radial count at $t>0$ one has
\begin{equation}\label{eq:strictphasecount}
 n_m^{<}(t^2)=\left\lceil
 \frac{\varphi_m(Rt)-\varphi_m(t)}\pi\right\rceil.
\end{equation}
\end{lemma}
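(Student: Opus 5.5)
The plan is to rescale to $r=1$, $R=1+\delta$ as in the section (the count only depends on the ratio), and to reduce the radial Neumann problem to a level-crossing problem for the phase difference
\[
 \Theta_m(k):=\varphi_m(Rk)-\varphi_m(k),\qquad k>0 .
\]

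\textbf{Step 1: eigenvalues as level crossings.} Every solution of the radial equation at energy $k^2>0$ has the form $u(s)=aJ_m(ks)+bY_m(ks)$ with $(a,b)\in\mathbb R^2$. The Neumann conditions read $u'(1)=u'(R)=0$. A nonzero $(a,b)$ must then be orthogonal to both vectors $(J_m'(k),Y_m'(k))$ and $(J_m'(Rk),Y_m'(Rk))$, which are nonzero by the Wronskian remark. So $k^2$ is an eigenvalue exactly when these two vectors are parallel, i.e.\ $\sin\Theta_m(k)=0$, i.e.\ $\Theta_m(k)\in\pi\mathbb Z$. Each positive radial eigenvalue is simple, since a second-order ODE has only one solution up to scaling with $u'(1)=0$. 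Hence $n_m(t^2)$ is the number of $k\in(0,t]$ with $\Theta_m(k)\in\pi\mathbb Z$, plus one when $m=0$ for the zero mode.

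\textbf{Step 2: behaviour at $k\to0+$.} Both phases tend to $\pi/2$, so $\Theta_m(0+)=0$. For $m=0$, $\varphi_0$ is increasing, so $\Theta_0>0$ on a right neighbourhood of $0$. For $m>0$ and $Rk<m$, both arguments lie in $(0,m)$, where $\varphi_m$ decreases; hence $\Theta_m(k)<0$ there, and it is close to $0$.

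\textbf{Step 3: only upward crossings (main obstacle).} I will show that $\Theta_m'(k)>0$ at every $k>0$ with $\Theta_m(k)\in\pi\mathbb Z$. This is the substantive point: by \eqref{eq:phasevelocity}, $\Theta_m'=R\varphi_m'(Rk)-\varphi_m'(k)$ can have either sign in general, so monotonicity cannot be used. At such a $k$ I take the real eigenfunction
\[
 u(s)=\operatorname{Im}\bigl(e^{-i\varphi_m(k)}(J_m+iY_m)(ks)\bigr).
\]
Its $s$-derivative is proportional to $\mathcal M_m(ks)\sin(\varphi_m(ks)-\varphi_m(k))$, so $u'(1)=0$ automatically. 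I then differentiate the family $u(\cdot;k)$ in $k$ and apply the standard Green identity for the Sturm--Liouville operator $-(su')'+m^2u/s$:
\[
 2k\int_1^R s\,u^2\,\dd s=\Bigl[s\,\bigl(u\,\partial_k u'-u'\,\partial_k u\bigr)\Bigr]_1^R .
\]
At the eigenvalue, $u'$ vanishes at both ends. The boundary term at $s=1$ vanishes by the choice of family, and the term at $s=R$ reduces to $R\,u(R)\,\partial_k u'(R)$. Writing $\partial_k u'(R)$ in terms of $\Theta_m'(k)$ gives
\[
 \Theta_m'(k)=\frac{2k\int_1^R s\,u^2\,\dd s}{c\,u(R)^2}
\]
with an explicit positive constant $c$ (involving $R$, $k$ and $\mathcal M_m(Rk)\mathcal M_m(k)$). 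Here $u(R)\ne0$, because $u(R)=u'(R)=0$ would force $u\equiv0$. This positivity is the step I expect to need the most care, particularly tracking the signs in the chosen normalization.

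\textbf{Step 4: counting.} By Step 3, $\Theta_m$ crosses each level $j\pi$ at most once, always upward, and never touches a level tangentially. With Step 2, $\Theta_m$ stays above $-\pi$ when $m>0$ and is positive for small $k$ when $m=0$.
\begin{itemize}
 \item For $m>0$, the crossings in $(0,t]$ are precisely the levels $0,\pi,\dots,\lfloor\Theta_m(t)/\pi\rfloor\pi$, giving $\lfloor\Theta_m(t)/\pi\rfloor+1$ eigenvalues. This is $0$ when $-\pi<\Theta_m(t)<0$.
 \item For $m=0$, the positive crossings are the levels $\pi,\dots,\lfloor\Theta_0(t)/\pi\rfloor\pi$. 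Adding the zero mode gives the same formula, proving \eqref{eq:exactphasecount}.
\end{itemize}
For \eqref{eq:strictphasecount}, the interval is $(0,t)$ instead of $(0,t]$. If $\Theta_m(t)\notin\pi\mathbb Z$, nothing changes and $\lfloor\cdot\rfloor+1=\lceil\cdot\rceil$. If $\Theta_m(t)=j\pi$, the eigenvalue at $t$ is dropped, leaving $j=\lceil\Theta_m(t)/\pi\rceil$. For $m=0$ the zero mode is still counted, since $0<t^2$.
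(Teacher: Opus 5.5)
Your proposal is the paper's proof in all essentials. It uses the same parallel-derivative-vector characterization of eigenvalues and the same eigenfunction family up to a nonzero scalar (yours is $-u_t/\mathcal M_m(t)$ in the paper's notation, with $k=t$). It uses the same Lagrange identity to force every integer crossing upward, and the same counting from the initial sign at $0+$. The only cosmetic difference is that you get $\Theta_m>-\pi$ from the absence of downward crossings, whereas the paper reads off $d_m>-\pi/2$ from the single minimum of $\varphi_m$; both work.

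However, the Green identity displayed in Step 3 has the wrong sign, and that is exactly the sign the step is about. Write $Lu=-(su')'+m^2u/s$, so that $Lu=k^2su$ and $L\,\partial_k u=k^2s\,\partial_k u+2ksu$. Then $u\,L\partial_k u-\partial_k u\,Lu=-\bigl(s(u\,\partial_k u'-u'\,\partial_k u)\bigr)'=2ksu^2$, and therefore
\[
 2k\int_1^R s\,u^2\,\dd s=\Bigl[s\bigl(u'\,\partial_k u-u\,\partial_k u'\bigr)\Bigr]_1^R .
\]
The $s=1$ term vanishes as you say, and the surviving term at $s=R$ is $-R\,u(R)\,\partial_k u'(R)$. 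This matches the paper's $-Ru_t(R)\,\partial_\lambda\partial_s u_t(R)=\int_1^R s u_t^2\,\dd s$.

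At $\Theta_m(k)=j\pi$, the Wronskian gives $u(R)=-(-1)^j\,2/(\pi Rk\,\mathcal M_m(Rk))$, while $\partial_k u'(R)=(-1)^j k\,\mathcal M_m(Rk)\,\Theta_m'(k)$. Together these yield $\Theta_m'(k)=\pi k\int_1^R s\,u^2\,\dd s>0$. This is your final formula with $c=\pi R^2k^2\mathcal M_m(Rk)^2/2$; no factor $\mathcal M_m(k)$ arises in your normalization. Followed literally, though, your displayed identity would give $\Theta_m'(k)<0$.

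With this correction the rest of your argument goes through as you describe, including the counting in Step 4 and the strict-count variant.
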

\begin{proof}
Write $d_m\left(t\right)=\varphi_m\left(Rt\right)-\varphi_m\left(t\right)$ and consider
\[
 u_t\left(s\right)=Y_m'\left(t\right)J_m\left(ts\right)-J_m'\left(t\right)Y_m\left(ts\right).
\]
The Bessel Wronskian identity
$J_m\left(x\right)Y_m'\left(x\right)-J_m'\left(x\right)Y_m\left(x\right)=2/\left(\pi x\right)$
\cite[(10.5.2)]{NIST} gives $u_t\left(1\right)=2/\left(\pi t\right)>0$ and
$\partial_su_t\left(1\right)=0$, while
\[
 \partial_su_t\left(R\right)=-t\mathcal M_m\left(t\right)\mathcal M_m\left(Rt\right)\sin d_m\left(t\right).
\]
The right Neumann condition is therefore equivalent to
$\sin d_m\left(t\right)=0$, or $d_m\left(t\right)=j\pi$ for an integer $j$.
Geometrically, the derivative vectors at $t$ and $Rt$ are then
parallel or antiparallel. Their angle must be followed on the
continuous branch in Figure~\ref{fig:derivativephase}; retaining
the number of turns is what turns this boundary condition into
a spectral count. Thus positive radial eigenvalues occur exactly
at the integer levels of $d_m/\pi$. The single minimum property of $\varphi_m$ implies
$d_m\left(t\right)>-\pi/2$. For $m>0$, $d_m\left(t\right)<0$ at small positive $t$;
for $m=0$, $d_0\left(t\right)>0$ and tends to zero as $t\downarrow0$.

To turn this zero condition into an exact count, it remains to
check the direction of each crossing. Every crossing of an
integer level is positive. To see this,
suppose $d_m\left(t\right)=j\pi$. The Wronskian identity gives
\[
 u_t\left(R\right)=\left(-1\right)^j\frac{\mathcal M_m\left(t\right)}{\mathcal M_m\left(Rt\right)}
                       \frac2{\pi Rt}.
\]
Normalize $u_t\left(1\right)=1$ by a positive scalar and write
$z=\partial_\lambda u_t$, where $\lambda=t^2$. The radial
equation is $-\left(su_t'\right)'+m^2s^{-1}u_t=\lambda su_t$.
Multiply its differentiated equation by $u_t$, subtract the
original equation multiplied by $z$, and integrate. Because
$z\left(1\right)=z'\left(1\right)=u_t'\left(R\right)=0$ at an eigenvalue, the boundary terms give
\[
 -Ru_t\left(R\right)\,\partial_\lambda\partial_su_t\left(R\right)
       =\int_1^R s\,u_t\left(s\right)^2\dd s>0.
\]
At the root the derivatives of the prefactor multiplying
$\sin d_m$ vanish from the expression, because $\sin d_m=0$.
Since $\partial_\lambda d_m=d_m'\left(t\right)/\left(2t\right)$, differentiating the
right derivative gives a positive prefactor times
$-\left(-1\right)^j d_m'\left(t\right)$. The value $u_t\left(R\right)$ has sign $\left(-1\right)^j$,
so the positive left-hand side of the preceding identity has
the sign of $d_m'\left(t\right)$. Therefore $d_m'\left(t\right)>0$.
Positive radial eigenvalues are simple for this regular
one-dimensional boundary problem \cite{Davies}.
Starting with $-1/2<d_m\left(t\right)/\pi<0$ for $m>0$, or with
$0<d_0\left(t\right)/\pi\ll1$ for $m=0$, each encountered integer level
therefore adds exactly one eigenvalue. Continuity proves
\eqref{eq:exactphasecount}, including equality
at an eigenvalue. The extra initial count for $m=0$ is precisely its
zero eigenvalue. No global monotonicity of $d_m$ is needed.

At an eigenfrequency the inclusive count includes the corresponding
level, whereas the strictly subthreshold count excludes it. Away
from an eigenfrequency the two counts coincide. Replacing
$\lfloor d_m/\pi\rfloor+1$ by $\lceil d_m/\pi\rceil$ therefore
proves \eqref{eq:strictphasecount}, including the order-zero constant
at every positive energy.
\end{proof}

We now estimate the phases in \eqref{eq:exactphasecount}.
For $x>m\ge0$, put
\[
 \begin{aligned}
 A_m\left(x\right)&=\sqrt{x^2-m^2}-m\arccos\left(m/x\right),\\
 E_m\left(x\right)&=\frac{9x^2-2m^2}{24\left(x^2-m^2\right)^{3/2}},\,\,\,
 B_m\left(x\right)=A_m\left(x\right)+\frac\pi4+E_m\left(x\right).
 \end{aligned}
\]
The functions $A_m+\pi/4$ and $B_m$ are precisely the lower and
upper auxiliary functions in \cite[Section~1.6, (1.12) and the
following display]{Phase}, denoted there by
$\underset{\sim}{\varphi_m}$ and $\widetilde{\varphi}_m$.
By \cite[Lemma~1.8]{Phase}, $B_m$ has a unique minimum at
$x_m^*>m$. Write $z_m^*=B_m\left(x_m^*\right)$ and
\[
 C_m\left(x\right)=
 \begin{cases}
 z_m^*,&m<x<x_m^*,\\
 B_m\left(x\right),&x\ge x_m^*.
 \end{cases}
\]
This is the clipped upper envelope
$\widetilde{\widetilde{\varphi}}_m$ of
\cite[Section~1.7]{Phase}. Their Theorem~1.10, (1.16), states
$A_m+\pi/4<\varphi_m<C_m$ for every $x>m\ge0$, including order zero.
Since $C_m\le B_m$, it gives
\begin{equation}\label{eq:phaseinput}
 A_m\left(x\right)+\pi/4<\varphi_m\left(x\right)
 <A_m\left(x\right)+\pi/4+E_m\left(x\right),\,\,\, x>m\ge0.
\end{equation}

The varying error $E_m$ is used away from the turning argument.
The following consequence of the clipped envelope supplies a uniform
error bound near it.
\begin{lemma}\label{lem:globalphaseerror}
For integers $m\ge1$ and $t\ge m$, and also for $m=0$
and $t\ge2$,
\[
 0\le e_m\left(t\right):=\varphi_m\left(t\right)-A_m\left(t\right)-\pi/4<\sqrt{3/2}<3/2<\pi/2.
\]
At $t=m$ the expression is understood by continuity.
\end{lemma}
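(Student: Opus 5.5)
The lower bound $e_m\ge0$ (strict for $t>m$) is the left half of \eqref{eq:phaseinput}. At $t=m$ it follows by continuity, since $A_m(m)=0$ and $\varphi_m$ is continuous at $m$. For the upper bound I will use the clipped envelope from \cite[Theorem~1.10]{Phase}, namely $\varphi_m<C_m$. This gives
\[
 e_m(t)<C_m(t)-A_m(t)-\pi/4 .
\]
The argument then splits at the minimiser $x_m^*$ of $B_m$.

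\emph{Case $t\ge x_m^*$.} Here $C_m=B_m$, so $e_m(t)<E_m(t)$. The function
\[
 E_m(x)=\frac{9x^2-2m^2}{24(x^2-m^2)^{3/2}}
\]
is decreasing in $x$ for $x>m$. The numerator grows like $x^2$, the denominator like $x^3$; explicitly, $\frac{d}{dx}\log E_m$ has the sign of $18(x^2-m^2)-3(9x^2-2m^2)=-9x^2-12m^2<0$. So it suffices to bound $E_m(x_m^*)$. For $m=0$ the hypothesis $t\ge2$ gives directly $E_0(t)=3/(8t)\le3/16$.

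\emph{Case $m\le t<x_m^*$ (only $m\ge1$).} Here $C_m=z_m^*$, so
\[
 e_m(t)<z_m^*-A_m(t)-\pi/4\le z_m^*-\pi/4,
\]
because $A_m(t)\ge0$ ($A_m$ is increasing with $A_m(m)=0$, since $A_m'(x)=\sqrt{x^2-m^2}/x$). In addition, $z_m^*-\pi/4=A_m(x_m^*)+E_m(x_m^*)$. Both cases therefore reduce to the single quantity
\[
 A_m(x_m^*)+E_m(x_m^*),
\]
which must be shown to be less than $\sqrt{3/2}$.

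\emph{Bounding the key quantity.} This is the main obstacle. I will locate $x_m^*$ through the stationarity condition $B_m'=0$, that is,
\[
 \frac{\sqrt{x^2-m^2}}{x}=-E_m'(x).
\]
Set $y=\sqrt{x^2-m^2}$. For large $y$, $E_m\approx 9x^2/(24y^3)$ and $-E_m'\approx\tfrac{3}{8}x^3/y^4$, so the balance gives $y^5\approx\tfrac38x^4$, i.e. $y\sim m^{4/5}$. Since $A_m(x)\sim y^3/(3m^2)$ near the turning point, $A_m(x_m^*)$ is of order $m^{2/5}/m^{0}\cdot m^{-2/5}$, and $E_m(x_m^*)\sim \tfrac38 m^2/y^3\sim m^{-2/5}$ as well, so both terms are bounded uniformly in $m$.

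To make this rigorous, I will rather choose an explicit comparison point. Since $z_m^*=\min B_m\le B_m(x)$ for every $x>m$, it suffices to find one $x$ with
\[
 A_m(x)+E_m(x)<\sqrt{3/2}.
\]
This point need not be the minimiser. I will take $y=c\,m^{2/3}$ with a suitable constant $c$ (for instance $c$ around $1$), and use $A_m(x)\le y^3/(3m^2)$, which follows from $A_m'=y/x\le y/m$ together with $dy/dx=x/y$. Then
\[
 A_m\le \frac{c^3}{3},\qquad E_m\le \frac{9x^2}{24y^3}=\frac{3(m^2+y^2)}{8c^3m^2},
\]
and $A_m+E_m\lesssim c^3/3+3/(8c^3)+O(m^{-2/3})$. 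The minimum over $c$ is $2\sqrt{1/8}=1/\sqrt2\approx0.71<\sqrt{3/2}\approx1.22$, which leaves enough room to absorb the correction term for all $m\ge1$. Very small $m$ may need a direct rational check.

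Finally, the chain $\sqrt{3/2}<3/2<\pi/2$ is elementary.
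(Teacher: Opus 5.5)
Your proof is correct, but it obtains the key constant by a different route from the paper. Both arguments use the clipped enclosure $\varphi_m<C_m$ of \cite[Theorem~1.10]{Phase} and split at $x_m^*$. Both then reduce the upper bound to an estimate for $z_m^*-\pi/4$, via $A_m\ge0$ below $x_m^*$ and the decrease of $E_m$ above it.

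The paper then uses the other half of \cite[Lemma~1.8]{Phase}: the envelope minima decrease with the order, so $z_m^*\le z_0^*$. Since $B_0(x)=x+\pi/4+3/(8x)$ is minimized at $x=\sqrt{3/8}$, one gets exactly $z_0^*=\pi/4+\sqrt{3/2}$. This is where the constant in the statement comes from, and order zero is handled by the same chain.

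You bypass order monotonicity by evaluating $B_m$ at an explicit point. This needs only the elementary bound $A_m\le y^3/(3m^2)$, and for $m\ge1$ it gives a better constant. Take $c=1$, i.e.\ $x^2=m^2+m^{4/3}$. Then exactly $E_m(x)=7/24+\tfrac38 m^{-2/3}\le 2/3$, and $A_m(x)\le 1/3$. Hence $z_m^*-\pi/4\le 1<\sqrt{3/2}$ for every $m\ge1$, and no small-$m$ check is needed. Order zero is covered by your bound $3/(8t)\le 3/16$.

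This slack also settles the endpoint $t=m$. Your case split includes $t=m$, although the enclosure is stated only for $x>m$, so continuity gives only $e_m(m)\le z_m^*-\pi/4$. That suffices for you. The paper works with the constant $\sqrt{3/2}$ itself, so it must obtain strictness at $t=m$ separately, from $\varphi_m(m)<\varphi_m(x_m^*)<z_m^*$.

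Finally, your heuristic placing $x_m^*$ at $y\sim m^{4/5}$ is off. Balancing $y^3/(3m^2)$ against $7m^2/(24y^3)$ near the turning point gives $y\sim m^{2/3}$. This plays no role in the rigorous argument.
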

\begin{proof}
By \cite[Lemma~1.8]{Phase}, the minimum of the raw upper
envelope decreases with the order. Therefore
\[
 z_m^*\le z_0^*=\frac\pi4+\sqrt{\frac32}.
\]
For $m<x<x_m^*$, the constant part of the enclosure in
\cite[Theorem~1.10]{Phase} and $A_m\left(x\right)\ge0$ give
\[
 e_m\left(x\right)
 <z_m^*-A_m\left(x\right)-\frac\pi4
 \le\sqrt{\frac32}.
\]
For $x\ge x_m^*$, use the raw upper bound and the monotonicity
\[
 E_m'\left(x\right)=
 -\frac{x\left(3x^2+4m^2\right)}
 {8\left(x^2-m^2\right)^{5/2}}<0
\]
to obtain
\[
 e_m\left(x\right)<E_m\left(x\right)
 \le E_m\left(x_m^*\right)
 =z_m^*-A_m\left(x_m^*\right)-\frac\pi4
 \le\sqrt{\frac32}.
\]
For $m\ge1$, strict increase of the actual phase on
$\left(m,x_m^*\right)$ gives
$\varphi_m\left(m\right)<\varphi_m\left(x_m^*\right)<z_m^*$,
so the same strict upper bound holds at $x=m$.
The nonnegative lower bound follows from the lower phase
enclosure and continuity. Thus
$0\le e_m\left(x\right)<\sqrt{3/2}<3/2<\pi/2$
on the stated range, since $\pi>3$.
\end{proof}

We now translate these phase comparisons into lower bounds for
individual radial counts. Write
$d_v\left(t\right)=\varphi_v\left(Rt\right)-\varphi_v\left(t\right)$,
and recall the exact counting identity
$n_v\left(t^2\right)=\left\lfloor d_v\left(t\right)/\pi\right\rfloor+1$
from Lemma~\ref{lem:exactphasecount}.

First suppose that an integer angular order $v$ lies in the outer
region $t<v<Rt$. The inner argument $t$ is to the left of the
minimum of $\varphi_v$, so
$\varphi_v\left(t\right)\le\pi/2$.
The outer argument $Rt$ is to its right, where
\eqref{eq:phaseinput} gives
$\varphi_v\left(Rt\right)>A_v\left(Rt\right)+\pi/4$.
In this region the action has the expression
$f_t\left(v\right)=A_v\left(Rt\right)/\pi$.
Consequently
\[
 \frac{d_v\left(t\right)}\pi
 >\frac{A_v\left(Rt\right)+\pi/4-\pi/2}{\pi}
 =f_t\left(v\right)-\frac14.
\]
Taking the integer part, using its monotonicity and the identity
$\left\lfloor y\right\rfloor+1=\left\lfloor y+1\right\rfloor$, yields
\[
 \begin{aligned}
 n_v\left(t^2\right)
 &\ge\left\lfloor f_t\left(v\right)-\frac14\right\rfloor+1\\
 &=\left\lfloor f_t\left(v\right)+\frac34\right\rfloor
 \ge f_t\left(v\right)-\frac14.
 \end{aligned}
\]
Thus the shift $3/4$ comes from the phase difference
$\pi/4-\pi/2=-\pi/4$, together with the additional one in the
exact count. At $v=Rt$ the final bound follows directly from
$f_t\left(Rt\right)=0$ and $n_v\left(t^2\right)\ge0$.
If $v>Rt$, then the angular potential satisfies
$v^2/s^2\ge v^2/R^2>t^2$ throughout the radial interval
$1\le s\le R$. Its Rayleigh quotient is therefore above $t^2$,
and this order has zero count.

In the core $0\le v\le t$, the action instead equals
\[
 \pi f_t\left(v\right)=A_v\left(Rt\right)-A_v\left(t\right).
\]
The phase enclosure at $Rt>v$ and the definition of the inner
phase error give
\[
 \begin{aligned}
 d_v\left(t\right)
 &>A_v\left(Rt\right)+\frac\pi4
   -\left(A_v\left(t\right)+\frac\pi4+e_v\left(t\right)\right)\\
 &=\pi f_t\left(v\right)-e_v\left(t\right).
 \end{aligned}
\]
The equality defining $e_v\left(t\right)$ is interpreted by
continuity when $t=v$. Since
$\left\lfloor y\right\rfloor+1>y$, we conclude that
\[
 n_v\left(t^2\right)
 >f_t\left(v\right)-\frac{e_v\left(t\right)}\pi.
\]
For $t\ge2$, Lemma~\ref{lem:globalphaseerror} applies to every
integer core order: it covers $v=0$ through its separate
order-zero assertion, and $v\ge1$ through $t\ge v$.
Hence the last bound implies
$n_v\left(t^2\right)>f_t\left(v\right)-1/2$.
The outer comparison and the displayed core phase comparison
hold for every $R>1$ on their stated ranges.

\subsection{Sampling a decreasing phase}

Write $\left\{y\right\}=y-\left\lfloor y\right\rfloor$ for the fractional part of $y$, and
$\psi\left(y\right)=\left\{y\right\}-1/2$. Thus $\psi\left(n\right)=-1/2$ at integers.
This convention retains eigenvalues exactly at the counting
threshold. We use the total variation defined above to control
the jumps in this periodic function. Figure~\ref{fig:sawtooth}
shows the endpoint convention and the direction of the jumps after
composition with a decreasing phase.

\begin{figure}[htbp]
\centering
\begin{tikzpicture}[font=\small]
  \begin{scope}[x=1.48cm,y=2.3cm]
    \draw[->] (-.14,0) -- (3.42,0) node[right] {$y$};
    \draw[->] (0,-.64) -- (0,.76) node[above] {$\psi\left(y\right)$};
    \draw[dotted,gray] (0,.5) -- (3.06,.5) (0,-.5) -- (3.06,-.5);
    \node[left] at (0,.5) {$1/2$};
    \node[left] at (0,-.5) {$-1/2$};
    \foreach \jj in {0,1,2}{
      \draw[annBlue,thick] (\jj,-.5) -- ({\jj+1},.5);
      \draw[annBlue,fill=white] ({\jj+1},.5) circle (1.8pt);
    }
    \foreach \jj in {0,1,2,3}{
  \fill[annBlue] (\jj,-.5) circle (1.8pt);
}
\foreach \jj in {1,2,3}{
  \node[below=2pt] at (\jj,0) {$\jj$};
}
\node[below=2pt] at (-0.15,0) {$0$};
    \node[font=\footnotesize] at (1.5,-.92) {(a) Periodic sawtooth};
  \end{scope}
  \begin{scope}[shift={(7.05cm,0)},x=2.15cm,y=2.3cm]
    \draw[->] (-.06,0) -- (2.39,0) node[right] {$x$};
    \draw[->] (0,-.64) -- (0,.76) node[above] {$\psi\left(q\left(x\right)\right)$};
    \draw[dotted,gray] (0,.5) -- (2.2,.5) (0,-.5) -- (2.2,-.5);
    \node[left] at (0,.5) {$1/2$};
    \node[left] at (0,-.5) {$-1/2$};
    \draw[annPurple,thick] (0.00000,0.30000) -- (0.01976,0.29980) -- (0.03953,0.29922) -- (0.05929,0.29824) -- (0.07906,0.29688) -- (0.09882,0.29512) -- (0.11859,0.29297) -- (0.13835,0.29043) -- (0.15811,0.28750) -- (0.17788,0.28418) -- (0.19764,0.28047) -- (0.21741,0.27637) -- (0.23717,0.27187) -- (0.25694,0.26699) -- (0.27670,0.26172) -- (0.29646,0.25605) -- (0.31623,0.25000) -- (0.33599,0.24355) -- (0.35576,0.23672) -- (0.37552,0.22949) -- (0.39528,0.22187) -- (0.41505,0.21387) -- (0.43481,0.20547) -- (0.45458,0.19668) -- (0.47434,0.18750) -- (0.49411,0.17793) -- (0.51387,0.16797) -- (0.53363,0.15762) -- (0.55340,0.14687) -- (0.57316,0.13574) -- (0.59293,0.12422) -- (0.61269,0.11230) -- (0.63246,0.10000) -- (0.65222,0.08730) -- (0.67198,0.07422) -- (0.69175,0.06074) -- (0.71151,0.04687) -- (0.73128,0.03262) -- (0.75104,0.01797) -- (0.77081,0.00293) -- (0.79057,-0.01250) -- (0.81033,-0.02832) -- (0.83010,-0.04453) -- (0.84986,-0.06113) -- (0.86963,-0.07813) -- (0.88939,-0.09551) -- (0.90915,-0.11328) -- (0.92892,-0.13145) -- (0.94868,-0.15000) -- (0.96845,-0.16895) -- (0.98821,-0.18828) -- (1.00798,-0.20801) -- (1.02774,-0.22812) -- (1.04750,-0.24863) -- (1.06727,-0.26953) -- (1.08703,-0.29082) -- (1.10680,-0.31250) -- (1.12656,-0.33457) -- (1.14633,-0.35703) -- (1.16609,-0.37988) -- (1.18585,-0.40313) -- (1.20562,-0.42676) -- (1.22538,-0.45078) -- (1.24515,-0.47520) -- (1.26491,-0.50000);
    \draw[annPurple,thick] (1.26491,0.50000) -- (1.27479,0.48745) -- (1.28468,0.47480) -- (1.29456,0.46206) -- (1.30444,0.44922) -- (1.31432,0.43628) -- (1.32420,0.42324) -- (1.33409,0.41011) -- (1.34397,0.39687) -- (1.35385,0.38354) -- (1.36373,0.37012) -- (1.37361,0.35659) -- (1.38350,0.34297) -- (1.39338,0.32925) -- (1.40326,0.31543) -- (1.41314,0.30151) -- (1.42302,0.28750) -- (1.43291,0.27339) -- (1.44279,0.25918) -- (1.45267,0.24487) -- (1.46255,0.23047) -- (1.47244,0.21597) -- (1.48232,0.20137) -- (1.49220,0.18667) -- (1.50208,0.17188) -- (1.51196,0.15698) -- (1.52185,0.14199) -- (1.53173,0.12690) -- (1.54161,0.11172) -- (1.55149,0.09644) -- (1.56137,0.08105) -- (1.57126,0.06558) -- (1.58114,0.05000) -- (1.59102,0.03433) -- (1.60090,0.01855) -- (1.61079,0.00269) -- (1.62067,-0.01328) -- (1.63055,-0.02935) -- (1.64043,-0.04551) -- (1.65031,-0.06177) -- (1.66020,-0.07813) -- (1.67008,-0.09458) -- (1.67996,-0.11113) -- (1.68984,-0.12778) -- (1.69972,-0.14453) -- (1.70961,-0.16138) -- (1.71949,-0.17832) -- (1.72937,-0.19536) -- (1.73925,-0.21250) -- (1.74913,-0.22974) -- (1.75902,-0.24707) -- (1.76890,-0.26450) -- (1.77878,-0.28203) -- (1.78866,-0.29966) -- (1.79855,-0.31738) -- (1.80843,-0.33521) -- (1.81831,-0.35312) -- (1.82819,-0.37114) -- (1.83807,-0.38926) -- (1.84796,-0.40747) -- (1.85784,-0.42578) -- (1.86772,-0.44419) -- (1.87760,-0.46270) -- (1.88748,-0.48130) -- (1.89737,-0.50000);
    \draw[annPurple,thick] (1.89737,0.50000) -- (1.90210,0.49102) -- (1.90682,0.48201) -- (1.91155,0.47298) -- (1.91628,0.46393) -- (1.92101,0.45486) -- (1.92574,0.44577) -- (1.93047,0.43665) -- (1.93520,0.42751) -- (1.93992,0.41835) -- (1.94465,0.40916) -- (1.94938,0.39996) -- (1.95411,0.39073) -- (1.95884,0.38147) -- (1.96357,0.37220) -- (1.96830,0.36290) -- (1.97302,0.35359) -- (1.97775,0.34425) -- (1.98248,0.33488) -- (1.98721,0.32550) -- (1.99194,0.31609) -- (1.99667,0.30666) -- (2.00140,0.29721) -- (2.00613,0.28773) -- (2.01085,0.27823) -- (2.01558,0.26871) -- (2.02031,0.25917) -- (2.02504,0.24961) -- (2.02977,0.24002) -- (2.03450,0.23041) -- (2.03923,0.22078) -- (2.04395,0.21112) -- (2.04868,0.20145) -- (2.05341,0.19175) -- (2.05814,0.18203) -- (2.06287,0.17229) -- (2.06760,0.16252) -- (2.07233,0.15273) -- (2.07706,0.14292) -- (2.08178,0.13309) -- (2.08651,0.12323) -- (2.09124,0.11336) -- (2.09597,0.10346) -- (2.10070,0.09353) -- (2.10543,0.08359) -- (2.11016,0.07362) -- (2.11488,0.06363) -- (2.11961,0.05362) -- (2.12434,0.04359) -- (2.12907,0.03353) -- (2.13380,0.02345) -- (2.13853,0.01335) -- (2.14326,0.00323) -- (2.14798,-0.00692) -- (2.15271,-0.01709) -- (2.15744,-0.02728) -- (2.16217,-0.03749) -- (2.16690,-0.04773) -- (2.17163,-0.05798) -- (2.17636,-0.06826) -- (2.18109,-0.07857) -- (2.18581,-0.08889) -- (2.19054,-0.09924) -- (2.19527,-0.10961) -- (2.20000,-0.12000);
    \foreach \xx in {1.26491106,1.89736660}{
      \fill[annPurple] (\xx,-.5) circle (1.8pt);
      \draw[annPurple,fill=white] (\xx,.5) circle (1.8pt);
      \draw[annPurple,densely dashed,->] (\xx,-.44) -- (\xx,.42);
    }
    \fill[annPurple] (0,.3) circle (1.8pt);
    \fill[annPurple] (2.2,-.12) circle (1.8pt);
    \node[below=2pt] at (-.12,0) {$0$};
    \draw (2.2,0) -- (2.2,-.05);
    \node[below=2pt] at (2.2,-.20) {$T$};
    \node[font=\footnotesize] at (1.1,-.92) {(b) Decreasing phase $q\left(x\right)=9/5-x^2/2$};
  \end{scope}
\end{tikzpicture}
\caption{
The sawtooth convention and its composition with a decreasing phase. Filled and open dots denote values and one-sided limits. For decreasing $q$, $\psi\left(q\left(x\right)\right)$ is left-continuous: each downward integer crossing has value $-1/2$, followed by an upward unit jump. Continuous variation is $q\left(0\right)-q\left(T\right)$, and each jump adds one. Panel (b) is illustrative, with $T=11/5$.}
\label{fig:sawtooth}
\end{figure}

To control the fractional-part error in the angular count, we first estimate its continuous integral. The curvature of the phase gives cancellation beyond the direct bound by the interval length.
\begin{lemma}\label{lem:bvsample}
Let $f\in C^1\left(\left[0,T\right]\right)$ have absolutely continuous derivative, with
$f'\left(0\right)=0$ and $f''\le-\kappa<0$ almost everywhere. For every
$b\in\R$,
\[
 \left|\int_0^T\psi\left(f\left(x\right)+b\right)\dd x\right|
 \le\frac1{2\sqrt\kappa}.
\]
\end{lemma}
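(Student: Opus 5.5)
Since $f'(0)=0$ and $f''\le-\kappa$, the derivative $f'$ is strictly decreasing with $f'(x)\le-\kappa x$. Thus $f$ is strictly decreasing on $(0,T]$, with $|f'(x)|\ge\kappa x$. The plan is to split $[0,T]$ at a point $x_0$ to be optimized. On $[0,x_0]$ we use the trivial bound $|\psi|\le1/2$, which contributes at most $x_0/2$. On $[x_0,T]$, where $|f'|\ge\kappa x_0$, we exploit cancellation of the sawtooth. The expected outcome is a bound of the form $x_0/2+C/(\kappa x_0)$, and optimizing $x_0$ gives $\sqrt{2C/\kappa}$. To reach the stated $1/(2\sqrt\kappa)$ we need $C=1/8$, the sharp van der Corput-type constant.

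For the cancellation we change variables $y=f(x)+b$ on $[x_0,T]$, which is legitimate because $f$ is strictly monotone there. This gives
\[
\int_{x_0}^T\psi(f(x)+b)\,\mathrm dx=\int_{f(T)+b}^{f(x_0)+b}\psi(y)\,\frac{\mathrm dy}{|f'(x(y))|}.
\]
Let $\Psi(y)=\int_0^y\psi$. It is periodic, takes values in $[-1/8,0]$, and satisfies $\Psi(n)=0$ at integers. Writing $w(y)=1/|f'(x(y))|$, we integrate by parts to get $[\Psi w]-\int\Psi\,\mathrm dw$. The weight $w$ is monotone in $y$, since $|f'|$ increases with $x$ and $x$ decreases in $y$. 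Its largest value is $w(f(x_0)+b)\le1/(\kappa x_0)$.

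Because $\Psi\le0$ has range of length $1/8$, the boundary terms and the Stieltjes integral combine to give at most $\frac18\sup w$, provided we take care with signs. Replacing $\Psi$ by $\Psi+1/16$ makes $|\Psi+1/16|\le1/16$. The extra term $\frac1{16}\int w'$ telescopes to endpoint values, so the total is bounded by $\frac1{16}\cdot2\sup w=\frac18\sup w\le1/(8\kappa x_0)$. This yields
\[
\left|\int_0^T\psi(f+b)\right|\le\frac{x_0}{2}+\frac1{8\kappa x_0},
\]
and the choice $x_0=1/(2\sqrt\kappa)$ gives exactly $1/(4\sqrt\kappa)+1/(4\sqrt\kappa)=1/(2\sqrt\kappa)$. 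If $T\le x_0$, the trivial bound $T/2$ already suffices.

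\textbf{Main obstacle.} The main difficulty is the bookkeeping in the integration by parts: the constant must come out as $1/8$ and not $1/4$. I would handle this with the second mean value theorem applied to $\int\psi\,w$. Because $w$ is monotone and nonnegative, that theorem gives $\sup w\cdot\bigl|\int_\alpha^\beta\psi\bigr|$ for some subinterval $[\alpha,\beta]$. Since $|\int_\alpha^\beta\psi|=|\Psi(\beta)-\Psi(\alpha)|\le1/8$ for every interval, this delivers $C=1/8$ cleanly and avoids the shift trick. We also need to justify the change of variables when $f'$ vanishes only at $x=0$, and to handle absolute continuity. Both are fine: on $[x_0,T]$ the map $f$ is a $C^1$ diffeomorphism with $|f'|\ge\kappa x_0>0$, and $w$ is monotone, hence of bounded variation.
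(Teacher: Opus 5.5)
Your proposal is correct and is essentially the paper's argument: split at $s=1/(2\sqrt\kappa)$, bound the initial piece by $s/2$, and on $[s,T]$ integrate by parts against the centred primitive of $\psi$ (with absolute value at most $1/16$) and the monotone weight $1/|f'|$, which gives $1/(8\kappa s)$. The paper integrates by parts directly in $x$ with $\mathcal P(f+b)/f'$ rather than after substituting $y=f(x)+b$, and your second-mean-value-theorem variant reaches the same constant $1/8$.
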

\begin{proof}
The continuous periodic function
\[
 \mathcal P\left(y\right)=\frac12\left(\left\{y\right\}-1/2\right)^2-\frac1{16},
 \,\,\, \left|\mathcal P\left(y\right)\right|\le\frac1{16},
\]
is a primitive of $\psi$ almost everywhere. For $0<s<T$, integrate
by parts with $\mathcal P\left(f+b\right)/f'$. Since $f'<0$ and $1/f'$ is
increasing, its endpoint and variation contributions give
\[
 \left|\int_s^T\psi\left(f\left(x\right)+b\right)\dd x\right|
 \le\frac1{8\left|f'\left(s\right)\right|}\le\frac1{8\kappa s}.
\]
Indeed, writing $H=1/f'$, the bound is
$\left(\left|H\left(s\right)\right|+\left|H\left(T\right)\right|+\operatorname{Var}_{\left[s,T\right]}H\right)/16
=\left|H\left(s\right)\right|/8$, because $H<0$ is increasing. The initial integral
has absolute value at most $s/2$, because $\left|\psi\right|\le1/2$.
Thus, whenever $0<s<T$,
\[
 \left|\int_0^T\psi\left(f\left(x\right)+b\right)\dd x\right|
 \le\frac{s}{2}+\frac{1}{8\kappa s}.
\]
The right-hand side is minimized at $s=1/\left(2\sqrt\kappa\right)$,
where its two terms are both $1/\left(4\sqrt\kappa\right)$. Their sum
is $1/\left(2\sqrt\kappa\right)$. If this choice is at least $T$, the
direct bound $T/2\le1/\left(4\sqrt\kappa\right)$ suffices instead. An integrable
singularity of $f''$ at $T$ is handled by taking a limit from below.
\end{proof}

The continuous integral must now be compared with the discrete angular sum, including its multiplicities. The next discrepancy estimate makes this comparison while retaining the actual values at integer crossings.
\begin{lemma}\label{lem:combinedgrid}
Let $v$ be a left-continuous function of bounded variation on $\left[0,T\right]$,
with its actual value at zero retained. For the multiset
$\mathcal V=\left\{\left|m\right|:m\in\mathbb Z\right\}$,
\[
 \left|\sum_{x\in\mathcal V\cap\left[0,T\right]}v\left(x\right)
                 -2\int_0^T v\left(x\right)\dd x\right|
 \le\operatorname{Var}_{\left[0,T\right]}v+\left|v\left(T\right)\right|.
\]
Consequently, if $q$ satisfies the regularity and curvature
assumptions of Lemma~\ref{lem:bvsample} with curvature $\kappa$,
\begin{equation}\label{eq:combinedsaw}
 \left|\sum_{x\in\mathcal V\cap\left[0,T\right]}\psi\left(q\left(x\right)\right)\right|
 \le\kappa^{-1/2}+2\left(q\left(0\right)-q\left(T\right)\right)+\frac32.
\end{equation}
\end{lemma}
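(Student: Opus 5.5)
Split the lemma into a general discrepancy estimate and its specialization to the sawtooth. For the first part, write the multiset sum over $\mathcal V\cap[0,T]$ as $v(0)+2\sum_{1\le m\le T}v(m)$. Compare each $v(m)$, $m\ge1$, with $\int_{m-1}^m v(x)\dd x$ on the unit interval to its left. For a function of bounded variation, $\bigl|v(m)-\int_{m-1}^m v\bigr|\le\operatorname{Var}_{[m-1,m]}v$, since every value on that interval differs from $v(m)$ by at most the variation there. Summing with weight $2$ gives
\[
 2\sum_{m=1}^{\lfloor T\rfloor}\Bigl(v(m)-\int_{m-1}^m v\Bigr).
\]
This has absolute value at most $2\operatorname{Var}_{[0,\lfloor T\rfloor]}v$, which is too large by a factor two. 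To recover the constant $1$, I would instead use the symmetric (trapezoidal) pairing. Since $\mathcal V$ is the set $\{|m|\}$, the sum equals $\sum_{m\in\mathbb Z,|m|\le T}V(m)$ for the even extension $V(x)=v(|x|)$. The target $2\int_0^Tv$ equals $\int_{-T}^TV$.

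For each $m$ with $|m|\le T$, compare $V(m)$ with $\int_{m-1/2}^{m+1/2}V$, truncated to $[-T,T]$. The error on each cell is bounded by the variation of $V$ on that cell. The cells are disjoint and cover $[-\lfloor T\rfloor-\tfrac12,\lfloor T\rfloor+\tfrac12]\cap[-T,T]$. The total variation of $V$ on $[-T,T]$ is $2\operatorname{Var}_{[0,T]}v$, so this still gives $2\operatorname{Var}$.

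So the honest constant comes from a further observation. For the even extension, the cell errors at $m$ and $-m$ coincide, but the sum weights them once each while the variation is also counted on both sides, so no loss arises there. The real saving must come from comparing each cell to the one-sided variation only: $|V(m)-\int_{\text{cell}}V|\le\tfrac12\operatorname{Var}_{\text{cell}}V$ fails in general. I therefore expect to settle for pairing $v(m)$ with $\int_{m-1}^m v$ for the copy $m$ and $\int_m^{m+1}v$ for the copy $-m$. This reorganizes $2\sum$ as a sum over two disjoint families of unit intervals covering $[0,T]$ twice, each family contributing $\le\operatorname{Var}_{[0,T]}v$. The leftover partial interval $[\lfloor T\rfloor,T]$ in the second family has no sample point. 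Its integral is bounded by $|v(T)|+\operatorname{Var}$ restricted there, and the $m=0$ term fits the first family. Careful bookkeeping of which pieces of variation are charged twice is the main obstacle, and I would try first to make the two families charge disjoint subintervals, giving exactly $\operatorname{Var}+|v(T)|$. Left-continuity ensures the values at jump points are the left limits used in these bounds.

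For \eqref{eq:combinedsaw}, apply the first part with $v=\psi\circ q$, which is left-continuous for decreasing $q$ as in Figure~\ref{fig:sawtooth}. The hypotheses $q'(0)=0$ and $q''\le-\kappa$ make $q$ decreasing. Its variation is the continuous part $q(0)-q(T)$ plus one for each integer crossing, at most $q(0)-q(T)+1$ crossings, so $\operatorname{Var}\le2(q(0)-q(T))+1$. With $|v(T)|\le\tfrac12$ and the bound $2\bigl|\int_0^T\psi(q)\bigr|\le\kappa^{-1/2}$ from Lemma~\ref{lem:bvsample} with $b=0$, the stated total follows.
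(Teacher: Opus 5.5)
Your second part matches the paper and is fine: $\operatorname{Var}_{[0,T]}(\psi\circ q)\le 2(q(0)-q(T))+1$, $|\psi(q(T))|\le1/2$, and Lemma~\ref{lem:bvsample} for $2|\int_0^T\psi(q)|$. The gap is in the first inequality.

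None of the estimates you actually carry out gives coefficient $1$ in front of $\operatorname{Var}_{[0,T]}v$. The left-cell comparison, the even-extension comparison, and your final two-family scheme all give $2\operatorname{Var}_{[0,T]}v$, because each cell is charged its full variation and both families cover $[0,T]$. The step that would remove the factor is left as something you would \emph{try}. The factor is not cosmetic. Fed into the second part it gives $\kappa^{-1/2}+4(q(0)-q(T))+5/2$ instead of \eqref{eq:combinedsaw}. That doubles a loss which is linear in the frequency in Proposition~\ref{prop:explicitcount}. Your reason for abandoning centred cells is also wrong. For a full cell $[m-1/2,m+1/2]$, the bound $|V(m)-\int_{m-1/2}^{m+1/2}V|\le\tfrac12\operatorname{Var}_{[m-1/2,m+1/2]}V$ does hold, and it is exactly the estimate you need.

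The missing idea is to compare pointwise rather than cell by cell. For $x$ in a cell, bound $|v(m)-v(x)|$ only by the variation over the interval between $x$ and $m$. On a centred cell of length one and weight two this gives $2\int_{m-1/2}^{m+1/2}|v(m)-v(x)|\dd x\le\operatorname{Var}_{[m-1/2,m]}v+\operatorname{Var}_{[m,m+1/2]}v$, since each half-cell has length $1/2$. The weight-one cell $[0,1/2]$ is handled the same way. The last truncated or missing half-cell near $T$ is estimated with $|v(x)|\le|v(T)|+\operatorname{Var}_{[x,T]}v$, and this is what produces the $|v(T)|$ term.

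In your two-family language: for $x\in[m-1,m]$, the copies at $m$ and $m-1$ charge $\operatorname{Var}_{[x,m]}v$ and $\operatorname{Var}_{[m-1,x]}v$. These add exactly to $\operatorname{Var}_{[m-1,m]}v$, so they are the complementary subintervals you were looking for. The piece $[\lfloor T\rfloor,T]$ again needs the separate endpoint estimate.

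The paper packages the same cancellation as an Abel--Stieltjes summation. Set $\mathcal D(x)=\#\{m\in\mathbb Z:|m|\le x\}-2x=1-2\{x\}\in(-1,1]$ and let $\sigma$ be the Stieltjes measure of the left-continuous $v$. Inserting $v(x)=v(T)-\sigma([x,T))$ and exchanging integrals gives the exact error $v(T)\mathcal D(T)-\int_{[0,T)}\mathcal D\,\dd\sigma$. Then $|\mathcal D|\le1$ yields $|v(T)|+\operatorname{Var}_{[0,T]}v$ at once.
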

\begin{proof}
The integer-grid discrepancy
\[
 \mathcal D\left(x\right)=\#\left\{m\in\mathbb Z:\left|m\right|\le x\right\}-2x
              =2\left\lfloor x\right\rfloor+1-2x,\,\,\, x\ge0.
\]
There are $\left\lfloor x\right\rfloor$ positive orders, the same number of
negative orders, and one zero order. Hence
$\mathcal D\left(x\right)=1-2\left\{x\right\}$, which lies in $\left(-1,1\right]$ and has
absolute value at most one. Let $\sigma$ be the signed
Lebesgue--Stieltjes measure encoding the increments of the
left-continuous $v$: $\sigma\left(\left[x,y\right)\right)=v\left(y\right)-v\left(x\right)$ for $0\le x<y\le T$.
This includes a possible right jump at zero. Indeed
$v\left(x\right)=v\left(T\right)-\sigma\left(\left[x,T\right)\right)$. Integrate this identity against the
signed measure consisting of the angular point masses minus
$2\dd x$, and exchange the finite measure integrals. The inner
measure of $\left[0,y\right]$ is exactly $\mathcal D\left(y\right)$, giving the exact error
\[
 v\left(T\right)\mathcal D\left(T\right)-\int_{\left[0,T\right)}\mathcal D\left(x\right)\dd\sigma\left(x\right).
\]
This proves the first assertion with the stated endpoint convention.
We justify the variation identity with the endpoint convention
used in the preceding Stieltjes calculation. For $T=0$ the
identity is immediate, so suppose $T>0$. The assumptions on $q$
imply $q'\left(x\right)\le-\kappa x<0$ for $x>0$, hence $q$ is strictly
decreasing. It is also continuously differentiable and therefore
absolutely continuous on $\left[0,T\right]$. Put $\Delta=q\left(0\right)-q\left(T\right)$ and
define the finite crossing set
\[
 \mathcal C=\left\{x\in\left[0,T\right):q\left(x\right)\in\mathbb Z\right\}.
\]
Each integer in $\left(q\left(T\right),q\left(0\right)\right]$ has exactly one preimage in
$\mathcal C$, by continuity and strict decrease. Thus
\[
 J=\#\mathcal C
  =\left\lfloor q\left(0\right)\right\rfloor-\left\lfloor q\left(T\right)\right\rfloor.
\]
With $h\left(x\right)=\psi\left(q\left(x\right)\right)=q\left(x\right)-\left\lfloor q\left(x\right)\right\rfloor-1/2$, an
integer crossing has value $h\left(x\right)=-1/2$ and right limit
$h\left(x+\right)=1/2$. Its left limit agrees with its value. Between
crossings, $h'=q'$. More explicitly, for $0\le x<y\le T$,
\[
 h\left(y\right)-h\left(x\right)
  =\int_x^y q'\left(s\right)\,\dd s
      +\#\left(\mathcal C\cap\left[x,y\right)\right).
\]
An integer at the initial endpoint is included in this formula,
whereas an integer at $y$ has not yet produced its right jump.
The signed Stieltjes measure associated with these increments
is consequently
\[
 \dd\sigma_h=q'\left(x\right)\,\dd x+\sum_{c\in\mathcal C}\delta_c
 \quad\hbox{on }\left[0,T\right),
\]
where $\delta_c$ denotes the unit point mass at $c$. Its absolutely
continuous part is nonpositive and its atomic part is positive;
the two parts are mutually singular. Its total variation is
therefore the sum of their masses:
\[
 \operatorname{Var}_{\left[0,T\right]}h
 =-\int_0^T q'\left(x\right)\,\dd x+J
 =q\left(0\right)-q\left(T\right)+J=\Delta+J.
\]
A crossing at zero contributes one right jump; a crossing at $T$
produces no jump outside the interval. Finally,
\[
 J=\Delta+\left\{q\left(T\right)\right\}-\left\{q\left(0\right)\right\}<\Delta+1,
 \,\,\, \left|h\left(T\right)\right|\le\frac12,
\]
so the non-strict estimate $\operatorname{Var}_{\left[0,T\right]}h\le2\Delta+1$
used in the lemma is valid.

The first assertion of the lemma and the continuous integral
estimate now give the complete chain
\[
 \left|\sum_{x\in\mathcal V\cap\left[0,T\right]}h\left(x\right)\right|
 \le2\left|\int_0^T h\left(x\right)\dd x\right|
       +\operatorname{Var}_{\left[0,T\right]}h+\left|h\left(T\right)\right|
 \le\kappa^{-1/2}+2\Delta+\frac32.
\]
This is \eqref{eq:combinedsaw}. The density two accounts for the
factor before the integral. The zero order is counted once and
each positive integer order twice, as encoded in $\mathcal D$.
\end{proof}

\section{Thin annuli: an analytic proof}\label{sec:thinproof}

\indent We first state the thin-annulus theorem, whose analytic
estimate will also cover the thin part of the all-annulus theorem.
\begin{theorem}\label{thm:thinall}
If $0<h/r\le1/4$, then
\[
 N_A\left(\Lambda\right)>\frac{D}{4}\Lambda
 \,\,\,\left(\Lambda\ge0\right),
 \,\,\,
 \nu_k\left(A_{r,r+h}\right)<
 \frac{4k}{\left(r+h\right)^2-r^2}
 \,\,\,\left(k\ge1\right).
\]
\end{theorem}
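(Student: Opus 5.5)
By scaling, normalize $r=1$, $R=1+\delta$ with $0<\delta\le1/4$, and write $\Lambda=t^2$. The counting inequality then reads
\[
 N_A(t^2)>\frac{R^2-1}{4}t^2 .
\]
I will split the frequency range into a low band, handled by Proposition~\ref{prop:mixedlow}, and a high band, handled by the phase count of Lemma~\ref{lem:exactphasecount} together with the action quadrature of Lemma~\ref{lem:actionquadrature} and Lemma~\ref{lem:combinedgrid}. The eigenvalue inequality will be obtained afterwards by running the same estimates with the strict counts $n_m^<$ from \eqref{eq:strictphasecount} and with strictly subthreshold trial spaces.

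\emph{Low band.} For $t\le\sqrt3/2$, the constant mode gives $N_A\ge1$, while $(R^2-1)t^2/4\le\frac{9}{16}\cdot\frac34<1$. For $t>\sqrt3/2$, I use \eqref{eq:mixedlow} with \eqref{eq:mixedB}, which gives the lower bound
\[
 -\frac{\delta^2t^2}{4}+\Bigl(\frac37+\frac35\delta\Bigr)t-1-\frac{3\delta}8 .
\]
This quadratic in $t$ is positive on an interval $t\in(t_-,t_+)$ with $t_+\asymp 1/\delta^2$. When $\delta\sqrt{t^2-3/4}/\pi\ge2$, the $B_2$ version applies and pushes $t_+$ further out. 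The small-$t$ side is problematic: near $t\approx\sqrt3/2$ the bound is negative. There I would count directly, since for small $\delta$ the first nonconstant eigenvalue $\lambda_{1,0}\le 1/a_\delta^2\le 1/R$. The orders $m=\pm1$ then contribute two eigenvalues as soon as $t^2\ge1/R$, and for $t^2<1/R$ the Weyl term is below $(R^2-1)/(4R)<1$. Iterating over orders $|m|\le a_\delta t$ with only the zeroth radial row gives $N_A\ge 2\lfloor a_\delta t\rfloor+1$. Combined with $a_\delta\ge\sqrt R$ and $(R^2-1)t^2/4\le \delta t^2\cdot 9/16$, this covers $t$ up to order $1/\delta$. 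So the low band extends to $t\le c/\delta^2$ for an explicit $c$.

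\emph{High band.} I sum the radial lower bounds derived after Lemma~\ref{lem:globalphaseerror}. For outer orders, $n_v\ge\lfloor f_t(v)+3/4\rfloor$; for core orders, the floor form $n_v=\lfloor f_t(v)-e_v/\pi\rfloor+1$. Rather than discarding the fractional parts, I write $\lfloor y\rfloor+1=y+\frac12-\psi(y)$. The main terms then sum to $\sum_{m\in\mathbb Z}f_t(|m|)$, which by Lemma~\ref{lem:actionquadrature} is at least $\frac{R^2-1}{4}t^2-\frac14$. The outer range uses the convex floor-sum theorem of \cite[Theorem~6.1]{FLPSballs}, which yields a positive term of order the number of outer orders, roughly $\delta t$. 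The error $-\sum e_v/\pi$ is split at the turning region. For $v\le t-t^{2/3}$ I use $e_v\le E_v(t)\lesssim t^{1/2}/(t^2-v^2)^{3/4}$, whose sum is $O(t^{1/2}\cdot t^{1/4}/t^{3/4}\cdots)$, overall $O(t^{1/3})$. Near $v\approx t$ I use Lemma~\ref{lem:globalphaseerror}. The core sawtooth sum is bounded by \eqref{eq:combinedsaw} with $\kappa=\delta/(\pi Rt)$, giving $O(\sqrt{t/\delta})+2(q(0)-q(t))+3/2$.

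Collecting terms, the positive contributions are the $+\frac12$ per core order, about $t$ in total, and the outer floor-sum surplus. These must dominate $\sqrt{\pi Rt/\delta}+O(t^{1/3})+2\arccos(1/R)t/\pi$. Since $\arccos(1/R)\approx\sqrt{2\delta}$, the variation term is only $O(\sqrt\delta\, t)$, so the surplus is $\gtrsim t(1-C\sqrt\delta)-C\sqrt{t/\delta}$. This is positive once $t\gtrsim 1/\delta$, which overlaps the low band.

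The main obstacle will be making these constants explicit uniformly in $\delta\in(0,1/4]$. In particular, the $\frac12$-per-order surplus minus the $2\Delta$ variation term and the $\kappa^{-1/2}$ term must stay positive all the way down to the overlap with the low band at $t\sim c/\delta^2$. If the surplus is too thin near $\delta=1/4$, I would first try retaining the outer convex-sum positivity and the $B_2$ improvement to lengthen the low band, and only then check a small number of intermediate cases by rational inequalities.
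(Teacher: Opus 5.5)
Your architecture matches the paper's: Proposition~\ref{prop:mixedlow} (with the zeroth radial row at the bottom) at low frequency, the phase count with sawtooth bound \eqref{eq:combinedsaw} and Lemma~\ref{lem:actionquadrature} at high frequency, and strict counts for the eigenvalue statement. \textbf{The gap is the step the theorem hinges on: a verified overlap of the two bands, uniformly for $0<\delta\le1/4$.}

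Your asymptotic argument (high band positive for $t\gtrsim1/\delta$, low band for $t\lesssim1/\delta^2$) says nothing near $\delta=1/4$. There $1/\delta$ and $1/\delta^2$ differ only by a factor four, so the constants decide, and the window is very narrow. At $\delta=1/4$ the $B_2$ version of \eqref{eq:mixedlow} is positive only up to $t\approx46$. Even the paper's sharp bound \eqref{eq:explicitcount} only turns positive around $t\approx40$.

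Two of your concrete choices make the high band fail in that window.
\begin{itemize}
\item The variation in \eqref{eq:combinedsaw} is $q(0)-q(T)\le d_\delta t/\pi+\eta_t(T)$, with $d_\delta=\delta-\sqrt{R^2-1}+\arccos(1/R)\le\delta$. It is not $\arccos(1/R)\,t/\pi$. Your mean-value bound overcounts by $2(\sqrt{R^2-1}-\delta)t/\pi\approx0.32\,t$ at $\delta=1/4$. The linear coefficient then drops from $L_\delta>0.78$ to about $0.68$, even with the outer floor-sum gain, which also costs an extra $\frac34U^{1/3}$. At $t\approx46$ this gives about $31$, against $\sqrt{5\pi t}\approx27$ plus roughly $8$ in other losses.
\item Splitting the core at $t-t^{2/3}$ and using the cap of Lemma~\ref{lem:globalphaseerror} on the strip costs about one mode per order relative to the baseline $f_t+\frac12$. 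That is roughly $2t^{2/3}$, not $O(t^{1/3})$, or about $25$ at $t\approx45$. This alone exceeds the available surplus.
\end{itemize}
Also, the sawtooth lemma must be applied to $q=f_t-\eta_t$, with the smooth convex majorant $\eta_t$ of the phase error. It cannot be applied to $f_t-e_v/\pi$, whose regularity and curvature are not controlled.

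The paper closes this as follows.
\begin{itemize}
\item It cuts at $T=t-1$, so the strip $(t-1,t]$ holds exactly one positive order and costs $2$.
\item Up to $T$ it uses $\eta_t$, keeping $q'(0)=0$ and $-q''\ge\delta/(\pi Rt)$, at a cost $2I_t+4\eta_t(T)<0.043\,t$ in the relevant range.
\item It uses the exact variation $d_\delta t/\pi$.
\item Proposition~\ref{prop:explicitjoin} then joins at $t_0=(2+\frac{10}{3}\delta)/\delta^2\ge136/3$ with explicit rational bounds:
\begin{itemize}
\item the high band has excess $>49t/1000$ for $t\ge t_0$;
\item the $B_2$ quadratic is positive at $t_0$ and at $t_1=7/\delta$, and the $B_0$ quadratic at $3$ and $t_1$, with concavity filling in between;
\item the zeroth radial row covers $1\le t\le3$, and the zero mode covers $t\le1$.
\end{itemize}
\end{itemize}
Your fallback of checking ``a small number of intermediate cases'' does not substitute for this. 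The uncovered set is a two-parameter region in $(\delta,t)$, and the decisive fixes are the correct variation and the unit-width cut.
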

The relative-width bound $1/4$ ensures that the low-frequency
variational bound and the high-frequency counting bound overlap
with a positive margin. Unlike an exact product with a
circle, a Euclidean annulus has a radial weight and a radial
dependence in the angular energy. Theorem~\ref{thm:thinall} treats
these terms directly and therefore does not follow from the thin
product theorem \cite{HW}.

Throughout this section $r=1$, $R=1+\delta$, and
$0<\delta\le1/4$. We join the low-frequency estimate in
Proposition~\ref{prop:mixedlow} to a phase estimate whose leading
gain is linear in the frequency. The overlap of their validity
ranges is the final step of the proof.

\begin{proposition}\label{prop:explicitcount}
\leavevmode\normalcolor
Let $0<\delta\le1/4$, $R=1+\delta$ and $t\ge2$. Put
\[
 d_\delta=\delta-\sqrt{R^2-1}+\arccos\left(1/R\right),\quad
 L_\delta=1-\frac\delta2-\frac{2d_\delta}{\pi},\quad
 \tau_\delta=\frac{\arccos\left(1/R\right)}{2\pi},
\]
and, with $T=t-1$,
\[
 \eta_t\left(x\right)=\frac{9t^2-2x^2}{24\pi\left(t^2-x^2\right)^{3/2}},\,\,\,
 I_t=\int_0^T\eta_t\left(x\right)\dd x
 =\frac{7T/\sqrt{t^2-T^2}+2\arcsin\left(T/t\right)}{24\pi}.
\]
Then, for the ordinary Neumann count,
\begin{equation}\label{eq:explicitcount}
 N_A\left(t^2\right)-\frac{R^2-1}{4}t^2
 \ge L_\delta t-\sqrt{\frac{\pi Rt}{\delta}}
          -2I_t-4\eta_t\left(T\right)-\frac{9}{2}-\tau_\delta.
\end{equation}
\end{proposition}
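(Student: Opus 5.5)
The plan is to work from the exact identity $N_A(t^2)=\sum_{v\in\mathcal V}n_v(t^2)$ and to split the angular multiset $\mathcal V$ into three ranges: the \emph{core} $0\le v\le T=t-1$, the \emph{turning layer} $T<v\le t$, and the \emph{outer range} $t<v\le Rt$. Orders $v>Rt$ contribute nothing.

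In the core I would use the exact count of Lemma~\ref{lem:exactphasecount} together with the upper phase enclosure \eqref{eq:phaseinput} at the inner argument. For $v\le T<t$ one has $e_v(t)<E_v(t)=\pi\eta_t(v)$, so $d_v(t)/\pi>q(v):=f_t(v)-\eta_t(v)$. Monotonicity of the floor and the identity $\lfloor y\rfloor+1=y+\tfrac12-\psi(y)$ then give
\[
 n_v(t^2)\ge q(v)+\tfrac12-\psi\bigl(q(v)\bigr).
\]
The point of this representation is that the sawtooth is not thrown away: it is summed with cancellation. By Lemma~\ref{lem:actionquadrature}, $q'(0)=0$ since $\eta_t$ is even in $x$, and $-f_t''\ge\delta/(\pi Rt)$. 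The function $\eta_t(x)$ is convex on $[0,T]$; I would check this by differentiating twice, and it is positive by inspection. Hence $q''\le-\kappa$ with $\kappa=\delta/(\pi Rt)$, and \eqref{eq:combinedsaw} bounds $\bigl|\sum_{\mathcal V\cap[0,T]}\psi(q)\bigr|$ by
\[
 \sqrt{\pi Rt/\delta}+2\bigl(q(0)-q(T)\bigr)+\tfrac32 .
\]
Here $q(0)-q(T)\le f_t(0)-f_t(T)+\eta_t(T)$, and concavity of $f_t$ on $[0,t]$ with $f_t'(t)=-\arccos(1/R)/\pi$ would let me bound $f_t(0)-f_t(T)\le f_t(0)-f_t(t)=d_\delta t/\pi$. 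This is exactly where the $-2d_\delta/\pi$ in $L_\delta$ comes from.

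The core count contributes $\tfrac12\#(\mathcal V\cap[0,T])\ge\tfrac12(2T-1)=t-\tfrac32$, which is the linear gain. The sum $\sum_{\mathcal V\cap[0,T]}\eta_t$ is treated with the first assertion of Lemma~\ref{lem:combinedgrid}: since $\eta_t$ is increasing, it is at most $2I_t+\operatorname{Var}\eta_t+\eta_t(T)\le 2I_t+2\eta_t(T)$. Together with the $2\eta_t(T)$ from the variation term above, this produces $-2I_t-4\eta_t(T)$.

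In the turning layer $T<v\le t$ I use the bound $n_v>f_t(v)-\tfrac12$ derived after Lemma~\ref{lem:globalphaseerror}, which is valid for $t\ge2$. In the outer range I use $n_v\ge f_t(v)-\tfrac14$. The layer has at most $4$ elements of $\mathcal V$ (two values of $|m|$, each counted twice), costing at most $2$. The outer range has at most $2\delta t+2$ elements, costing at most $\delta t/2+\tfrac12$; this is the $-\delta/2$ in $L_\delta$. Summing the three ranges leaves $\sum_{m\in\mathbb Z}f_t(|m|)$, which Lemma~\ref{lem:actionquadrature} replaces by $\frac{R^2-1}{4}t^2-\tau_\delta$. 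The constants $\tfrac32$ from the sawtooth, $\tfrac32$ from the core count, and the layer and outer losses should then be collected into the single constant $\tfrac92$. Some care is needed to assign half-integer boundary orders consistently, so that no order is counted in two ranges.

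I expect the main obstacle to be the sawtooth step. Two things must be verified there. First, $q=f_t-\eta_t$ must genuinely satisfy the hypotheses of Lemma~\ref{lem:bvsample} on $[0,T]$, including the $C^1$ regularity and the convexity of $\eta_t$. Second, the resulting variation and endpoint bookkeeping must reproduce exactly the coefficients $2I_t+4\eta_t(T)$ and the constant $\tfrac92$. If convexity of $\eta_t$ fails near $x=T$, my fallback is to estimate $\psi$ with $f_t$ alone and absorb $\eta_t$ through the floor inequality $\lfloor a-b\rfloor\ge\lfloor a\rfloor-\lceil b\rceil$. That fallback would cost somewhat worse constants.
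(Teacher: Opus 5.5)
Your argument is the paper's proof. It uses the same split at $T=t-1$ and the representation $\lfloor q\rfloor+1=q+\tfrac12-\psi(q)$ with $q=f_t-\eta_t$. It applies \eqref{eq:combinedsaw} with $\Delta\le d_\delta t/\pi+\eta_t(T)$ and the grid bound $\sum\eta_t\le 2I_t+2\eta_t(T)$. It also uses the band cap $e_v(t)/\pi<\tfrac12$, the outer shift $\tfrac14$, and the quadrature error $\tau_\delta$.

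\textbf{Bookkeeping error.} One counting step is wrong, and as written it does not produce $\tfrac92$.
\begin{itemize}
\item The turning layer $(t-1,t]$ is a half-open interval of length one. It contains exactly one integer, $\lfloor t\rfloor\ge2$, and hence exactly two elements of $\mathcal V$, not up to four.
\item You credit the $+\tfrac12$ only on $[0,T]$ and measure the layer against $f_t$, so its cost is $2\cdot\tfrac12=1$.
\item With your figure of $2$, your own constants sum to $\tfrac32+\tfrac32+2+\tfrac12=\tfrac{11}2$, not $\tfrac92$.
\item With the correct count they sum to $\tfrac32+\tfrac32+1+\tfrac12=\tfrac92$. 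This is the paper's $1+2+\tfrac32$ regrouped. The paper credits $\tfrac12$ on all of $[0,t]$, obtaining $N_c/2-N_o/4\ge(1-\delta/2)t-1$, and then charges each of the two band modes $1$.
\end{itemize}

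\textbf{Convexity of $\eta_t$.} Your worry is unnecessary. With $s=x/t$ one has $24\pi t\,\eta_t(x)=7(1-s^2)^{-3/2}+2(1-s^2)^{-1/2}$, which is even, increasing and convex on $[0,t)$. Hence $q'(0)=0$ and $q''\le-\delta/(\pi Rt)$ hold on all of $[0,T]$, and the fallback is not needed.

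Finally, the bound $f_t(0)-f_t(T)\le f_t(0)-f_t(t)=d_\delta t/\pi$ needs only that $f_t$ is decreasing, not its concavity.
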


\begin{proof}
The cut $T=t-1$ keeps the explicit phase error away from its
singularity at $v=t$. For $t\ge2$, the strip $\left(t-1,t\right]$
contains exactly one positive integer, with angular multiplicity
two, so it contributes only a fixed counting loss.
The action $f_t$ has $f_t'\left(0\right)=0$, $-f_t''\ge\kappa=\delta/\left(\pi Rt\right)$
on $\left[0,t\right)$, and the exact variation
\[
 f_t\left(0\right)-f_t\left(t\right)=\frac{d_\delta}{\pi}t.
\]
On $\left[0,T\right]$ put $q=f_t-\eta_t$. The function $\eta_t$ is increasing
and convex, with derivative zero at zero: after setting $s=x/t$,
its numerator can be written as
$7\left(1-s^2\right)^{-3/2}+2\left(1-s^2\right)^{-1/2}$, divided by $24\pi t$.
Hence $q'\left(0\right)=0$ and $q''\le-\kappa$.
The exact phase identity and \eqref{eq:phaseinput} give
\[
 n_v\left(t^2\right)\ge\left\lfloor f_t\left(v\right)-\eta_t\left(v\right)\right\rfloor+1
 \quad\left(0\le v\le T\right).
\]
For the remaining core orders $T<v\le t$,
Lemma~\ref{lem:globalphaseerror} gives $e_v\left(t\right)/\pi<1/2$,
and hence $n_v\left(t^2\right)\ge f_t\left(v\right)-1/2$.
For $t<v\le Rt$, the one-sided outer comparison gives
$n_v\left(t^2\right)\ge f_t\left(v\right)-1/4$.

Let $N_c,N_o,N_b$ count angular orders, with multiplicity, in
$\left[0,t\right]$, $\left(t,Rt\right]$, and $\left(T,t\right]$, respectively. If
$C\left(b\right)=\#\left\{m\in\mathbb Z:\left|m\right|\le b\right\}$, then $\left|C\left(b\right)-2b\right|\le1$.
Consequently
$N_c/2-N_o/4=3C\left(t\right)/4-C\left(Rt\right)/4
\ge\left(1-\delta/2\right)t-1$. The half-open interval
$\left(t-1,t\right]$ contains exactly the positive integer $\left\lfloor t\right\rfloor$;
its angular multiplicity is two. Hence
\[
 \frac12N_c-\frac14N_o\ge\left(1-\delta/2\right)t-1,
 \,\,\, N_b=2.
\]
The total action sum differs from its integral $\left(R^2-1\right)t^2/4$
by at most $\tau_\delta$, by Lemma~\ref{lem:actionquadrature}.
To apply Lemma~\ref{lem:combinedgrid}, keep the full angular
multiset: the sum below means
$\psi\left(q\left(0\right)\right)+2\sum_{j=1}^{\left\lfloor T\right\rfloor}\psi\left(q\left(j\right)\right)$.
The action decreases and $T<t$, while
$\eta_t\left(0\right)=3/\left(8\pi t\right)>0$. Thus
\[
 \begin{aligned}
 \Delta=q\left(0\right)-q\left(T\right)
 &=f_t\left(0\right)-f_t\left(T\right)+\eta_t\left(T\right)-\eta_t\left(0\right)\\
 &\le f_t\left(0\right)-f_t\left(t\right)+\eta_t\left(T\right)-\eta_t\left(0\right)\\
 &\le\frac{d_\delta}{\pi}t+\eta_t\left(T\right).
 \end{aligned}
\]
Substituting this bound and $\kappa=\delta/\left(\pi Rt\right)$ into
\eqref{eq:combinedsaw} gives
\[
 \begin{aligned}
 \left|\sum_{v\le T}\psi\left(q\left(v\right)\right)\right|
 &\le\kappa^{-1/2}+2\Delta+\frac32\\
 &\le\sqrt{\frac{\pi Rt}{\delta}}
       +\frac{2d_\delta}{\pi}t+2\eta_t\left(T\right)+\frac32.
 \end{aligned}
\]
The constant $3/2$ is the sum of the jump-count allowance $1$
and the endpoint allowance $1/2$.
The same discrepancy formula applied to the smooth increasing
$\eta_t$ gives
\[
 \sum_{v\le T}\eta_t\left(v\right)\le2I_t+2\eta_t\left(T\right).
\]
In the far core use
$\left\lfloor q\right\rfloor+1=f_t-\eta_t+1/2-\psi\left(q\right)$.
In the band the preceding lower bound loses at most one relative
to the nonoscillatory contribution $f_t+1/2$. Combining these
estimates, with $1+2+3/2=9/2$, proves \eqref{eq:explicitcount}.
All sums retain the angular multiplicities.
\end{proof}

Propositions~\ref{prop:mixedlow} and~\ref{prop:explicitcount} provide
complementary lower bounds. The radial-row bound has a negative
term $-\delta^2t^2/4$ and a positive term of order $t$, so its
natural range ends at a multiple of $\delta^{-2}$. The curvature
error in the high-frequency bound is $O\left(\sqrt{t/\delta}\right)$,
which is smaller than its linear gain on this same scale. The
affine numerator $2+\left(10/3\right)\delta$ ensures an overlap throughout
$0<\delta\le1/4$. We therefore join the estimates at the frequency
$t_0$ specified below and cover the remaining low energies using
the zero mode and the first radial row.
\begin{proposition}\label{prop:explicitjoin}
\leavevmode\normalcolor
For every $0<\delta\le1/4$,
\[
 N_A\left(\Lambda\right)>\frac{\delta\left(2+\delta\right)}4\Lambda
 \,\,\,\left(\Lambda\ge0\right).
\]
The comparison estimates meet at
\[
 t_0=\frac{2+\left(10/3\right)\delta}{\delta^2}\ge\frac{136}{3}.
\]
For $t\ge t_0$, their high-frequency excess is greater than $49t/1000$.
\end{proposition}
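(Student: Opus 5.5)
We split the energy range at $t_0$ (with $r=1$, $\Lambda=t^2$, and $\delta(2+\delta)/4=(R^2-1)/4$) and use Proposition~\ref{prop:mixedlow} below the split and Proposition~\ref{prop:explicitcount} above it. For very small energies we argue directly from the zero mode and the first radial row.

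\textbf{High range $t\ge t_0$.} Since $t_0\ge136/3\ge2$, Proposition~\ref{prop:explicitcount} applies. We lower bound $L_\delta$ by expanding $d_\delta=\delta-\sqrt{2\delta+\delta^2}+\arccos(1/R)$. Both $\sqrt{2\delta+\delta^2}$ and $\arccos(1/R)$ behave like $\sqrt{2\delta}$ to leading order, so $d_\delta=O(\delta^{3/2})$; an explicit rational majorant on $(0,1/4]$ should give $L_\delta\ge1-\delta/2-c\,\delta^{3/2}$, which stays above roughly $0.8$.

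The error terms are handled as follows.
\begin{itemize}
\item Using $t\ge t_0\ge 2/\delta^2$, we get $\sqrt{\pi R t/\delta}=t\sqrt{\pi R/(\delta t)}\le t\sqrt{\pi R\delta/2}$, at most about $0.7t$ at $\delta=1/4$. This is the tight term, and it is why the numerator $2+(10/3)\delta$ appears in $t_0$.
\item With $T=t-1$, $\sqrt{t^2-T^2}=\sqrt{2t-1}$, so $I_t=O(\sqrt t)$ and $\eta_t(T)=O(\sqrt t)$, with explicit constants $7/(24\pi\sqrt2)$ and $9/(24\pi\cdot2^{3/2})$.
\item $\tau_\delta<1/4$.
\end{itemize}
Dividing by $t$, the remaining $O(\sqrt t)$ and constant losses are bounded using $t\ge136/3$. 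This should leave an excess above $49t/1000$. The check reduces to a one-variable inequality in $\delta$ at the worst case $t=t_0(\delta)$, because after division by $t$ every remaining negative term decreases in $t$. I would verify it with rational bounds on $\sqrt\cdot$ and $\arccos$, possibly splitting $\delta$ into a few subintervals.

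\textbf{Middle range $\sqrt3/2<t\le t_0$.} By \eqref{eq:mixedlow} the surplus is at least
\[
-\frac{\delta^2}4t^2+B_0t-1-\frac{3\delta}8 .
\]
This is concave in $t$, so it suffices to check positivity at the two ends of the subrange where it is used.
\begin{itemize}
\item At the upper end $t=t_0$ we need the stronger $B_2$ version. Its hypothesis holds there, since $\delta t_0/\pi\ge2/(\pi\delta)\ge 8/\pi>2$, modulo the $\sqrt{t^2-3/4}$ correction. Using \eqref{eq:mixedB},
\[
-\frac{\delta^2t_0^2}{4}+B_2t_0>t_0\left(-\frac{2+(10/3)\delta}4+\frac{29}{50}+\frac{78}{125}\delta\right).
\]
The leading parts cancel only partially. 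So the exact choice $10/3$ must be checked against the constants $29/50$ and $78/125$; this is the matching computation I expect to be the main obstacle, and I would try combining $B_2$ with the true $a_\delta$ rather than its affine bound if needed.
\item On the portion where $A<2$, use $B_0>3/7+3\delta/5$. There $t$ is $O(1/\delta)$, so $\delta^2t^2/4$ is $O(1)$, while $B_0t$ dominates once $t$ exceeds a small constant.
\end{itemize}

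\textbf{Lowest energies.} For $t$ below the point where $B_0t>1+3\delta/8+\delta^2t^2/4$, and in particular for $t\le\sqrt3/2$, count directly. The zero mode gives $N_A\ge1$, and the constant trial functions with $\lambda_{m,0}<m^2/a_\delta^2$ strictly give $N_A(t^2)\ge 2\lfloor a_\delta t\rfloor^{*}+1$, where $\lfloor\cdot\rfloor^{*}$ keeps strictness. We compare this with $\delta(2+\delta)t^2/4$, which is small there because $\delta\le1/4$ and $a_\delta\ge\sqrt R$. The comparison is made piecewise on intervals where the step count is constant, checking the right endpoint of each. Finally, strictness of the count inequality follows because each lower bound used is strict, or the zero mode supplies the margin at $\Lambda=0$.
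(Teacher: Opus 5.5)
Your proposal is correct and follows the paper's proof. The common structure has three parts: Proposition~\ref{prop:explicitcount} above $t_0$, with every normalized loss largest at $t=t_0$; the concave quadratic \eqref{eq:mixedlow}, using $B_0$ on a lower interval and $B_2$ up to $t_0$; and the zero mode plus the constant radial trials for $t\le3$. The paper differs only in details: it reduces the high range to $\delta=1/4$ (using that $L_\delta$ decreases, $\delta(1+\delta)/(2+\tfrac{10}{3}\delta)$ increases, and $t_0\ge136/3$), and it switches from $B_0$ to $B_2$ at $t_1=7/\delta$ instead of at $A=2$.

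The step you flag as the main obstacle closes at once from the affine bounds \eqref{eq:mixedB}, since $B_2-\tfrac{2+(10/3)\delta}{4}>\tfrac{2}{25}-\tfrac{157}{750}\delta\ge\tfrac{83}{3000}$ and $\tfrac{83}{3000}\cdot\tfrac{136}{3}>\tfrac{35}{32}\ge1+\tfrac{3\delta}{8}$. The genuinely thin margin is in the high range: about $0.0526\,t$ against the required $0.049\,t$ at $\delta=1/4$, $t=t_0$. So you must put the full $t_0$ into the curvature term; the bound from $t\ge2/\delta^2$ alone gives about $0.70\,t$ and the estimate then fails. You must also use the exact numerator $7t^2+4t-2$ of $\eta_t(t-1)$; your leading coefficient $9$ in place of $7$ would lose the $49t/1000$ claim.
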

\begin{proof}
First consider high frequencies. The function $d_\delta$ increases,
since $d_\delta'=1-\sqrt{R^2-1}/R>0$; thus $L_\delta$ decreases.
At $\delta=1/4$, set $\theta=\arccos\left(4/5\right)$. The identity
$\theta=\pi/4-\arctan\left(1/7\right)$ and the alternating arctangent bound give
\[
 \theta<\frac{11}{14}-\left(\frac17-\frac1{1029}\right)
 =\frac{1325}{2058}<\frac{161}{250}.
\]
Consequently, using $\pi>157/50$,
\[
 L_\delta\ge\frac78-\frac{2\theta-1}{\pi}
 >\frac{4919}{6280}>\frac{783}{1000}.
\]
The function $\delta\left(1+\delta\right)/\left(2+\left(10/3\right)\delta\right)$ increases.
For $t\ge t_0$ we therefore have
\[
 \sqrt{\frac{\pi R}{\delta t}}
 \le\sqrt{\frac{15\pi}{136}}<\frac{589}{1000};
\]
the last comparison follows from
$15\pi/136<165/476<\left(589/1000\right)^2$.
Since $t\ge136/3$ and $\sqrt{269/3}>47/5$, the exact formula for
$I_t$ gives
\[
 \frac{2I_t}{t}\le\frac7{12\pi\sqrt{2t-1}}+\frac1{12t}
 <\frac{20}{1000}+\frac2{1000}.
\]
Also $\left(7t+4\right)/\left(2t-1\right)^{3/2}$ decreases, so
\[
 \frac{4\eta_t\left(t-1\right)}{t}
 \le\frac{7t+4}{6\pi\left(2t-1\right)^{3/2}}
 <\frac{120500}{5954853}<\frac{21}{1000}.
\]
Finally $\tau_\delta<161/1570$, and
\[
 \frac{9/2+\tau_\delta}{t}
 <\frac{10839}{106760}<\frac{102}{1000}.
\]
The total normalized loss in \eqref{eq:explicitcount} is less than
$\left(589+20+2+21+102\right)/1000=734/1000$. The normalized excess is
therefore greater than $49/1000$.

For low frequencies put $t_1=7/\delta<t_0$. The number seven
is chosen so that the higher-row parameter exceeds two throughout
$\left[t_1,+\infty\right)$: it is a simple rational number larger than
$2\pi$ with room for the correction $3\delta^2/4$.
On $\left[3,t_1\right]$, the first bound in \eqref{eq:mixedB} makes the
right-hand side of \eqref{eq:mixedlow} a concave quadratic.
Its value at $3$ is bounded below by
\[
 \frac27+\frac{57}{40}\delta-\frac94\delta^2>\frac27,
\]
and at $t_1$ it is at least
\[
 \frac3\delta-\frac{181}{20}-\frac{3\delta}{8}
 \ge\frac{457}{160}>0.
\]
For $t\ge t_1$, the hypothesis for the stronger row bound holds:
$\delta\sqrt{t^2-3/4}\ge\sqrt{49-3\delta^2/4}>2\pi$.
Writing $c=2+\left(10/3\right)\delta$, the second bound in
\eqref{eq:mixedB} gives
\[
 B_2-\frac c4>\frac2{25}-\frac{157}{750}\delta
 \ge\frac{83}{3000}.
\]
At $t_0=c/\delta^2$ the stronger quadratic excess is consequently
at least
\[
 \frac{83}{3000}\frac{136}{3}-\frac{35}{32}
 =\frac{5777}{36000}>0.
\]
The stronger quadratic is larger than the weaker one because
$B_2>B_0$, as $\pi/2>71/50$. It is therefore positive at $t_1$
as well. A concave function on an interval is bounded below by
the smaller of its endpoint values, which covers $\left[t_1,t_0\right]$.

For $t\le1$, the area term is at most $9/64<1$, and the zero mode supplies a
count for every $t\ge0$. For $1\le t\le3$ the
zeroth radial row gives $N_A\left(t^2\right)\ge2a_\delta t-1\ge t$,
whereas $\left(R^2-1\right)t^2/4\le27t/64<t$. This covers the remaining
energies and completes the proof.
\end{proof}

\begin{proof}[Proof of Theorem~\ref{thm:thinall}]
Proposition~\ref{prop:explicitjoin} gives the inclusive counting estimate
when the inner radius is one. Its radial-row comparisons are strict
by Lemma~\ref{lem:radial}. Its phase comparisons strictly exceed their
minorants, so \eqref{eq:strictphasecount} gives the same integer lower
counts for strictly subthreshold modes. Thus its proof also establishes
$\mathcal M_A(\Lambda)>\delta(2+\delta)\Lambda/4$ at positive energy.
Scaling lengths by $r$ gives
\[
 N_A(\Lambda)>\frac{R^2-r^2}{4}\Lambda\quad(\Lambda\ge0),
 \qquad
 \mathcal M_A(\Lambda)>\frac{R^2-r^2}{4}\Lambda\quad(\Lambda>0).
\]
At $\Lambda=4k/(R^2-r^2)$ the strict count exceeds $k$, proving
$\nu_k<4k/(R^2-r^2)$ for $k\ge1$.
\end{proof}

\section{Uniform phase estimates}\label{sec:uniformphase}

We now extend the phase estimates beyond the thin-annulus range.
An outer-frequency bound valid for every radius ratio first reduces
the remaining problem to bounded frequencies. More precise control
of angular sampling and phase error then lowers this threshold on
the middle radius range.

\subsection{Uniform high frequencies}
\label{sec:allratiohigh}

\begin{lemma}\label{lem:outerfloors}
Use the disk action $G_U$ defined in \eqref{eq:diskaction}. Suppose $U\ge2$, $0\le t<U$, and a unit
grid has first point $a>t$ and $n$ points in $\left[a,U\right]$. Then
\[
 \sum_{\substack{j\ge0\\a+j\le U}}
       \left\lfloor G_U\left(a+j\right)+3/4\right\rfloor
 \ge\sum_{\substack{j\ge0\\a+j\le U}}G_U\left(a+j\right)
       +\frac n4-\frac12G_U\left(t\right)-\frac38U^{1/3}-\frac5{16}.
\]
The assertion also holds for an empty grid.
\end{lemma}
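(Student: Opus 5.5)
The plan is to rewrite each floor as the action plus a sawtooth correction, and then bound the sum of sawtooth values over the grid using the convexity of $G_U$ on the outer range. For every real $y$, $\left\lfloor y+3/4\right\rfloor=y+\tfrac14-\psi\left(y+3/4\right)$, with the convention $\psi(n)=-1/2$ fixed above. Applied to $y=G_U(a+j)$ and summed over the $n$ grid points, this reduces the lemma to the one-sided bound
\[
 \sum_{\substack{j\ge0\\a+j\le U}}\psi\left(G_U\left(a+j\right)+\tfrac34\right)
 \le\tfrac12G_U\left(t\right)+\tfrac38U^{1/3}+\tfrac5{16}.
\]
An empty grid gives the left side $0$, and the assertion is then immediate because $G_U(t)\ge0$. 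On $[t,U]$ we have $G_U'(v)=-\arccos(v/U)/\pi\le0$ and $G_U''(v)=1/\bigl(\pi\sqrt{U^2-v^2}\bigr)>0$. So the sampled phase is convex and decreasing, and its curvature blows up at $v=U$. This is exactly the setting of the convex floor-sum theorem \cite[Theorem~6.1]{FLPSballs}, which the introduction says we retain with its positive term.

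The first step, if I prove the bound directly, is to split the grid at a cut point $s=U-cU^{1/3}$. In the tail $[s,U]$ the action is tiny: with $y=U-v$ one has $G_U(U-y)\le\frac{2\sqrt2}{3\pi}y^{3/2}U^{-1/2}$. The tail therefore has about $cU^{1/3}$ points, and at each of them $G_U+3/4\in[3/4,1)$ for a suitable $c$. There the sawtooth equals $G_U+1/4$ exactly, so the tail contributes $\tfrac14$ per point plus a negligible action sum. Balancing the choice of $c$ against the head estimate should produce the $\tfrac38U^{1/3}$ term.

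On the head $[a,s]$, the plan is the Stieltjes discrepancy argument of Lemma~\ref{lem:combinedgrid}, now with single multiplicity and the centred discrepancy $\tfrac12-\{x\}$, so that $|\mathcal D|\le\tfrac12$. With this choice the comparison between the sampled sum and the integral costs at most $\tfrac12\operatorname{Var}$ of $\psi\circ(G_U+3/4)$, plus an endpoint term of size at most $\tfrac5{16}$. The continuous part of that variation is $G_U(a)-G_U(s)\le G_U(t)$, which is where the coefficient $\tfrac12G_U(t)$ comes from. The sign structure helps here: the continuous part of the measure is nonpositive and the atoms are positive. Pairing each positive jump with the decrease of the action between consecutive crossings, one-sidedly, should let the jumps be absorbed without doubling the constant.

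The remaining head integral $\int_a^s\psi\left(G_U+3/4\right)\dd v$ I would handle by integration by parts with the primitive $\mathcal P$, exactly as in Lemma~\ref{lem:bvsample}. Since $|G_U'|$ increases away from $U$, that gives a bound $\le1/\bigl(8|G_U'(s)|\bigr)\approx\frac{\pi}{8}\sqrt{U/(2cU^{1/3})}$, which is $O(U^{1/3})$ and matches the tail scale.

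The main obstacle is getting the \emph{exact} constants $\tfrac12$, $\tfrac38$ and $\tfrac5{16}$ rather than the right orders of magnitude. A naive variation bound gives $G_U(t)$ instead of $\tfrac12G_U(t)$. I would therefore first try to quote \cite[Theorem~6.1]{FLPSballs} directly for convex functions with shift $3/4$, and check that its hypotheses hold for $G_U$ on $[a,U]$ when $U\ge2$: convexity, derivative in $[-1,0]$, and vanishing at $U$. The only new points would then be the translation $a>t$, handled through $G_U(a)\le G_U(t)$, and the empty-grid case. Only if the quoted constants do not match would I carry out the split argument above, optimizing $c$ numerically to reach $\tfrac38$.
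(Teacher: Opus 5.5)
Your reformulation through $\psi$ is correct, and your first branch, quoting \cite[Theorem~6.1]{FLPSballs}, is the route the paper takes. The gap is that you misjudge what that theorem delivers. So your claim that the only new points are $a>t$ and the empty grid is wrong, and the proposal as written does not reach the stated constants.

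For $g(y)=G_U(a+y)$ on $[0,b]$ with $b=U-a$, the theorem gives $\sum_{j=0}^{\lfloor b\rfloor}\lfloor g(j)+3/4\rfloor\ge\int_0^b g\,\mathrm dy-(b-3M_0)/8$. Here $M_0$ counts the grid points with $g(j)\ge1/4$, and $M_0\le b$ is required. The right side is an integral, not the sampled action sum. Its loss $\frac38(b-M_0)$ is governed by the quarter-height width $w_0=U-G_U^{-1}(1/4)$. To reach the lemma you still need the following steps.
\begin{itemize}
\item[(i)] $w_0>1$, which gives $M_0\le b$. It follows from $G_U(U-1)<1/4$: the map $U\mapsto G_U(U-1)$ decreases and is below $1/4$ at $U=2$.
\item[(ii)] $w_0\le(9\pi^2/128)^{1/3}U^{1/3}<U^{1/3}$, from $\arccos(1-z)\ge\sqrt{2z}$. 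This is the source of $\frac38U^{1/3}$.
\item[(iii)] The convex trapezoidal bound $\sum_{j=0}^{\lfloor b\rfloor}g(j)\le\int_0^b g+\frac12g(0)+\frac1{16}$, using the kernel bound $1/8$ and $-g'(0)\le1/2$. This endpoint half-weight, with $g(0)=G_U(a)\le G_U(t)$, is where $\frac12G_U(t)$ actually comes from; it is not a variation term.
\item[(iv)] $b\ge n-1$, which supplies the other $\frac14$ in $\frac5{16}$.
\item[(v)] The separate case $g(0)<1/4$, where the chord bound $\int_0^b g<b/8$ makes the lower bound negative.
\end{itemize}

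Your fallback split argument cannot produce these constants either. First, the tail inequality is reversed. The bound $\arccos(1-z)\ge\sqrt{2z}$ gives $G_U(U-y)\ge\frac{2\sqrt2}{3\pi}y^{3/2}U^{-1/2}$, whereas keeping the tail below quarter height needs an upper bound. Second, the tail action sum is of order $c^{5/2}U^{1/3}$, not negligible. Third, and more seriously, the two-sided integration-by-parts bound on the head is asymptotically $\pi U^{1/3}/(8\sqrt{2c})$. Together with the tail cost $cU^{1/3}/4$, your bound on $\sum\psi$ has $U^{1/3}$-coefficient at least $\frac34(\pi/(4\sqrt2))^{2/3}\approx0.51$ for every choice of $c$. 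That already exceeds $\frac38$ before any discretization loss. Finally, the halving of the variation term by a one-sided pairing is only asserted.

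The missing idea is this. Where $g\ge1/4$, the shifted floor sum loses nothing against $\int g$ plus $\frac14$ per unit length; that is exactly what the cited theorem provides. So the only $U^{1/3}$ cost is the sub-quarter-height tail of width about $w_0$, charged at rate $\frac38$.
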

\begin{proof}
The quarter-height inverse is taken on $\left[0,U\right]$, where $G_U$ is
continuous and strictly decreasing. It exists because
$G_U\left(0\right)=U/\pi>1/4$ for $U\ge2$, whereas $G_U\left(U\right)=0$. Write
$x_*=G_U^{-1}\left(1/4\right)$ and $w_0=U-x_*$. The level $1/4$ is
dictated by the shift $3/4$: a height $z\ge0$ contributes
zero to $\left\lfloor z+3/4\right\rfloor$ precisely when $z<1/4$.
The lower tail estimate
$w_0>1$ is the established bound of
\cite[Lemma~4.8]{FLPSballs}; we spell out its calculation to verify
the hypothesis needed for the floor-sum theorem.
Set
\[
 \theta_U=\arccos\left(1-\frac1U\right),\,\,\,
 H\left(U\right)=G_U\left(U-1\right)
     =\frac{\sqrt{2U-1}-\left(U-1\right)\theta_U}{\pi}.
\]
Differentiation gives
\[
 H'\left(U\right)=\frac{\sin\theta_U-\theta_U}{\pi}<0,
 \,\,\, \sin\theta_U=\frac{\sqrt{2U-1}}U.
\]
The last inequality follows from $\sin\theta<\theta$ for
$\theta>0$. Furthermore,
\[
 H\left(2\right)=\frac{\sqrt3}{\pi}-\frac13<\frac14;
\]
for example, $\pi>3$ and $\sqrt3<7/4$ suffice for this comparison.
Thus $G_U\left(U-1\right)<1/4=G_U\left(x_*\right)$. Strict decrease gives
$x_*<U-1$, or $w_0>1$.

The upper tail bound follows independently from the action
integral and $\arccos\left(1-z\right)\ge\sqrt{2z}$ for $0\le z\le1$:
\[
 \frac14=G_U\left(U-w_0\right)
 \ge\frac1\pi\int_{U-w_0}^{U}
       \sqrt{\frac{2\left(U-s\right)}U}\,\dd s
 =\frac{2\sqrt2}{3\pi\sqrt U}w_0^{3/2}.
\]
Solving for $w_0$ gives
\[
 1<w_0\le\left(\frac{9\pi^2}{128}\right)^{1/3}U^{1/3}
       <U^{1/3}.
\]
The last coefficient is less than one, since
$\pi<22/7$ implies $9\pi^2<128$. The threshold $U\ge2$ suffices for these estimates.

If the grid is empty, then $n=0$ and both sums vanish. The
remaining terms on the right-hand side of the lemma are
nonpositive, so the assertion holds. If the grid is nonempty
but $a=U$, then $b=U-a=0$, $n=1$, and the only summand in each
sum is zero. The right-hand side is
\[
 -\frac12G_U\left(t\right)-\frac38U^{1/3}-\frac1{16}<0,
\]
which settles this degenerate interval without using a theorem
whose hypothesis requires $b>0$.

We may therefore suppose $a<U$, so that $b=U-a>0$. The profile
$g\left(y\right)=G_U\left(a+y\right)$ on $\left[0,b\right]$ satisfies
\[
 g'\left(y\right)=-\frac1\pi\arccos\left(\frac{a+y}{U}\right),\,\,\,
 g''\left(y\right)=\frac{1}{\pi\sqrt{U^2-\left(a+y\right)^2}}>0
 \quad\left(0\le y<b\right).
\]
It is nonnegative, strictly decreasing, convex, vanishes at $b$,
and has Lipschitz constant at most $1/2$, since $a>t\ge0$.
If $g\left(0\right)\ge1/4$, its quarter-height abscissa is
\[
 g^{-1}\left(1/4\right)=x_*-a=b-w_0\ge0.
\]
The integer index in \cite[Theorem~6.1]{FLPSballs} is therefore
\[
 M_0=1+\left\lfloor b-w_0\right\rfloor.
\]
Indeed $g\left(j\right)\ge1/4$ is equivalent to $j\le b-w_0$, including
equality at the quarter height. The bounds for the floor function
and $w_0>1$ give
\[
 b-w_0<M_0\le b-w_0+1<b.
\]
Thus $M_0\le b$, as required by the cited theorem, including the
case $g\left(0\right)=1/4$, where $b=w_0>1$ and $M_0=1$. That theorem gives
\[
 \begin{aligned}
 \sum_{j=0}^{\left\lfloor b\right\rfloor}\left\lfloor g\left(j\right)+\frac34\right\rfloor
 &\ge\int_0^b g\left(y\right)\,\dd y-\frac{b-3M_0}{8}\\
 &=\int_0^b g\left(y\right)\,\dd y+\frac b4-\frac38\left(b-M_0\right)\\
 &\ge\int_0^b g\left(y\right)\,\dd y+\frac b4-\frac{3w_0}{8}.
 \end{aligned}
\]
If instead $g\left(0\right)<1/4$, then $b<w_0$. Convexity puts the graph
below the chord from $\left(0,g\left(0\right)\right)$ to $\left(b,0\right)$, and hence
\[
 \int_0^b g\left(y\right)\,\dd y\le\frac{b\,g\left(0\right)}2<\frac b8.
\]
The proposed lower bound is consequently less than
$3\left(b-w_0\right)/8<0$, while every floor summand is nonnegative.
It therefore holds in this case as well.

We next compare the integral with the action sum. The derivative
formulas above give $g\left(b\right)=g'\left(b\right)=0$ and $-g'\left(0\right)\le1/2$.
Put $M=\left\lceil b\right\rceil$ and extend $g$ by zero from $b$ to $M$.
Its derivative remains continuous at $b$; the displayed square-root
singularity of $g''$ is integrable, so $g'$ remains absolutely
continuous and there is no additional jump term at $b$.

We use the unit-interval error identity derived in the proof of
Lemma~\ref{lem:largesum}, now with a convex profile. The
trapezoidal error has the opposite sign: on $\left[j,j+1\right]$,
\[
 E_j:=\frac{g\left(j\right)+g\left(j+1\right)}2-\int_j^{j+1}g\left(y\right)\dd y
 =\frac12\int_j^{j+1}\left(y-j\right)\left(j+1-y\right)g''\left(y\right)\dd y.
\]
The endpoint chord lies above the convex profile. By
\eqref{eq:unittrapezoid}, its kernel is the same $K_j$, bounded by
$1/8$. Figure~\ref{fig:trapezoid}
shows the chord error and this kernel. Summing the interval errors gives
\[
 0\le\sum_{j=0}^{M-1}E_j
 \le\frac18\int_0^M g''\left(y\right)\dd y
 =\frac{g'\left(M\right)-g'\left(0\right)}8=-\frac{g'\left(0\right)}8\le\frac1{16}.
\]
The trapezoidal sum weights the first endpoint by $1/2$,
and every endpoint at or beyond $b$ has value zero. Hence, whether
$b$ is an integer or not,
\[
 \sum_{j=0}^{M-1}\frac{g\left(j\right)+g\left(j+1\right)}2
 =\sum_{j=0}^{\left\lfloor b\right\rfloor}g\left(j\right)-\frac12g\left(0\right),
 \,\,\, \int_0^M g\left(y\right)\dd y=\int_0^b g\left(y\right)\dd y.
\]
Combining the last two displays yields
\[
 \sum_{j=0}^{\left\lfloor b\right\rfloor}g\left(j\right)
 \le\int_0^b g\left(y\right)\dd y+\frac12g\left(0\right)+\frac1{16}.
\]
Thus $g\left(0\right)/2$ is the endpoint half-weight correction, and $1/16$
is the kernel bound $1/8$ times the total derivative increase,
which is at most $1/2$.
\begin{figure}[htbp]
\centering
\begin{tikzpicture}[font=\small]
  \begin{scope}[x=4.6cm,y=13cm]
    \fill[annBlue,opacity=.13] (0,0) -- (0.00000,0.25000) -- (0.01667,0.24174) -- (0.03333,0.23361) -- (0.05000,0.22562) -- (0.06667,0.21778) -- (0.08333,0.21007) -- (0.10000,0.20250) -- (0.11667,0.19507) -- (0.13333,0.18778) -- (0.15000,0.18062) -- (0.16667,0.17361) -- (0.18333,0.16674) -- (0.20000,0.16000) -- (0.21667,0.15340) -- (0.23333,0.14694) -- (0.25000,0.14062) -- (0.26667,0.13444) -- (0.28333,0.12840) -- (0.30000,0.12250) -- (0.31667,0.11674) -- (0.33333,0.11111) -- (0.35000,0.10563) -- (0.36667,0.10028) -- (0.38333,0.09507) -- (0.40000,0.09000) -- (0.41667,0.08507) -- (0.43333,0.08028) -- (0.45000,0.07563) -- (0.46667,0.07111) -- (0.48333,0.06674) -- (0.50000,0.06250) -- (0.51667,0.05840) -- (0.53333,0.05444) -- (0.55000,0.05062) -- (0.56667,0.04694) -- (0.58333,0.04340) -- (0.60000,0.04000) -- (0.61667,0.03674) -- (0.63333,0.03361) -- (0.65000,0.03062) -- (0.66667,0.02778) -- (0.68333,0.02507) -- (0.70000,0.02250) -- (0.71667,0.02007) -- (0.73333,0.01778) -- (0.75000,0.01562) -- (0.76667,0.01361) -- (0.78333,0.01174) -- (0.80000,0.01000) -- (0.81667,0.00840) -- (0.83333,0.00694) -- (0.85000,0.00563) -- (0.86667,0.00444) -- (0.88333,0.00340) -- (0.90000,0.00250) -- (0.91667,0.00174) -- (0.93333,0.00111) -- (0.95000,0.00063) -- (0.96667,0.00028) -- (0.98333,0.00007) -- (1.00000,0.00000) -- (1,0) -- cycle;
    \fill[annOrange,opacity=.55] (0.00000,0.25000) -- (0.01667,0.24174) -- (0.03333,0.23361) -- (0.05000,0.22562) -- (0.06667,0.21778) -- (0.08333,0.21007) -- (0.10000,0.20250) -- (0.11667,0.19507) -- (0.13333,0.18778) -- (0.15000,0.18062) -- (0.16667,0.17361) -- (0.18333,0.16674) -- (0.20000,0.16000) -- (0.21667,0.15340) -- (0.23333,0.14694) -- (0.25000,0.14062) -- (0.26667,0.13444) -- (0.28333,0.12840) -- (0.30000,0.12250) -- (0.31667,0.11674) -- (0.33333,0.11111) -- (0.35000,0.10563) -- (0.36667,0.10028) -- (0.38333,0.09507) -- (0.40000,0.09000) -- (0.41667,0.08507) -- (0.43333,0.08028) -- (0.45000,0.07563) -- (0.46667,0.07111) -- (0.48333,0.06674) -- (0.50000,0.06250) -- (0.51667,0.05840) -- (0.53333,0.05444) -- (0.55000,0.05062) -- (0.56667,0.04694) -- (0.58333,0.04340) -- (0.60000,0.04000) -- (0.61667,0.03674) -- (0.63333,0.03361) -- (0.65000,0.03062) -- (0.66667,0.02778) -- (0.68333,0.02507) -- (0.70000,0.02250) -- (0.71667,0.02007) -- (0.73333,0.01778) -- (0.75000,0.01562) -- (0.76667,0.01361) -- (0.78333,0.01174) -- (0.80000,0.01000) -- (0.81667,0.00840) -- (0.83333,0.00694) -- (0.85000,0.00563) -- (0.86667,0.00444) -- (0.88333,0.00340) -- (0.90000,0.00250) -- (0.91667,0.00174) -- (0.93333,0.00111) -- (0.95000,0.00063) -- (0.96667,0.00028) -- (0.98333,0.00007) -- (1.00000,0.00000) -- (0,0.2500000) -- cycle;
    \draw[annOrange!80!black,thick] (0,0.2500000) -- (1,0.0000000);
    \draw[annBlue,thick] (0.00000,0.25000) -- (0.01667,0.24174) -- (0.03333,0.23361) -- (0.05000,0.22562) -- (0.06667,0.21778) -- (0.08333,0.21007) -- (0.10000,0.20250) -- (0.11667,0.19507) -- (0.13333,0.18778) -- (0.15000,0.18062) -- (0.16667,0.17361) -- (0.18333,0.16674) -- (0.20000,0.16000) -- (0.21667,0.15340) -- (0.23333,0.14694) -- (0.25000,0.14062) -- (0.26667,0.13444) -- (0.28333,0.12840) -- (0.30000,0.12250) -- (0.31667,0.11674) -- (0.33333,0.11111) -- (0.35000,0.10563) -- (0.36667,0.10028) -- (0.38333,0.09507) -- (0.40000,0.09000) -- (0.41667,0.08507) -- (0.43333,0.08028) -- (0.45000,0.07563) -- (0.46667,0.07111) -- (0.48333,0.06674) -- (0.50000,0.06250) -- (0.51667,0.05840) -- (0.53333,0.05444) -- (0.55000,0.05062) -- (0.56667,0.04694) -- (0.58333,0.04340) -- (0.60000,0.04000) -- (0.61667,0.03674) -- (0.63333,0.03361) -- (0.65000,0.03062) -- (0.66667,0.02778) -- (0.68333,0.02507) -- (0.70000,0.02250) -- (0.71667,0.02007) -- (0.73333,0.01778) -- (0.75000,0.01562) -- (0.76667,0.01361) -- (0.78333,0.01174) -- (0.80000,0.01000) -- (0.81667,0.00840) -- (0.83333,0.00694) -- (0.85000,0.00563) -- (0.86667,0.00444) -- (0.88333,0.00340) -- (0.90000,0.00250) -- (0.91667,0.00174) -- (0.93333,0.00111) -- (0.95000,0.00063) -- (0.96667,0.00028) -- (0.98333,0.00007) -- (1.00000,0.00000);
    \draw[densely dashed,gray] (1,0) -- (1,0.0000000);
    \draw[->] (-.05,0) -- (1.13,0) node[right] {$y$};
    \draw[->] (0,0) -- (0,.30);
    \node[below] at (0,0) {$j$};
    \node[below] at (1,0) {$j+1$};
    \node[annBlue] at (.26,.040) {$\int_j^{j+1}g$};
    \node[annBlue,right] at (.78,.012) {$g$};
    \node[annOrange!80!black,above] at (.38,.16) {chord};
    \node[annOrange!80!black] at (.62,.060) {$E_j$};
    \node[font=\footnotesize] at (.53,-.085) {(a) Orange area: trapezoidal error};
  \end{scope}
  \begin{scope}[shift={(7.15cm,0)},x=4.6cm,y=22cm]
    \draw[->] (-.04,0) -- (1.13,0) node[right] {$s$};
    \draw[->] (0,0) -- (0,.17) node[above] {$K\left(s\right)$};
    \draw[annPurple,thick] (0.00000,0.00000) -- (0.01562,0.00769) -- (0.03125,0.01514) -- (0.04688,0.02234) -- (0.06250,0.02930) -- (0.07812,0.03601) -- (0.09375,0.04248) -- (0.10938,0.04871) -- (0.12500,0.05469) -- (0.14062,0.06042) -- (0.15625,0.06592) -- (0.17188,0.07117) -- (0.18750,0.07617) -- (0.20312,0.08093) -- (0.21875,0.08545) -- (0.23438,0.08972) -- (0.25000,0.09375) -- (0.26562,0.09753) -- (0.28125,0.10107) -- (0.29688,0.10437) -- (0.31250,0.10742) -- (0.32812,0.11023) -- (0.34375,0.11279) -- (0.35938,0.11511) -- (0.37500,0.11719) -- (0.39062,0.11902) -- (0.40625,0.12061) -- (0.42188,0.12195) -- (0.43750,0.12305) -- (0.45312,0.12390) -- (0.46875,0.12451) -- (0.48438,0.12488) -- (0.50000,0.12500) -- (0.51562,0.12488) -- (0.53125,0.12451) -- (0.54688,0.12390) -- (0.56250,0.12305) -- (0.57812,0.12195) -- (0.59375,0.12061) -- (0.60938,0.11902) -- (0.62500,0.11719) -- (0.64062,0.11511) -- (0.65625,0.11279) -- (0.67188,0.11023) -- (0.68750,0.10742) -- (0.70312,0.10437) -- (0.71875,0.10107) -- (0.73438,0.09753) -- (0.75000,0.09375) -- (0.76562,0.08972) -- (0.78125,0.08545) -- (0.79688,0.08093) -- (0.81250,0.07617) -- (0.82812,0.07117) -- (0.84375,0.06592) -- (0.85938,0.06042) -- (0.87500,0.05469) -- (0.89062,0.04871) -- (0.90625,0.04248) -- (0.92188,0.03601) -- (0.93750,0.02930) -- (0.95312,0.02234) -- (0.96875,0.01514) -- (0.98438,0.00769) -- (1.00000,0.00000);
    \draw[dotted,gray] (0,.125) -- (.5,.125) -- (.5,0);
    \node[left] at (0,.125) {$1/8$};
    \node[below] at (0,0) {$0$};
    \node[below] at (.5,0) {$1/2$};
    \node[below] at (1,0) {$1$};
    \node[font=\footnotesize] at (.53,-.052) {(b) $K\left(s\right)=s\left(1-s\right)/2\le1/8$};
  \end{scope}
\end{tikzpicture}
\caption{
(a) Orange trapezoidal error $E_j$ between the illustrative convex profile $g\left(j+s\right)=\left(1-s\right)^2/4$, $0\le s\le1$, and its endpoint chord. (b) The exact remainder kernel has maximum $1/8$; multiplying by the total increase of $g'$ gives the error bound.}
\label{fig:trapezoid}
\end{figure}

Write $S=\sum_{j=0}^{\left\lfloor b\right\rfloor}g\left(j\right)$. The trapezoidal
estimate just proved yields
\[
 \int_0^b g\left(y\right)\,\dd y\ge S-\frac12g\left(0\right)-\frac1{16}.
\]
Since $n=\left\lfloor b\right\rfloor+1$, we have $b\ge n-1$; also
$g\left(0\right)=G_U\left(a\right)\le G_U\left(t\right)$. Combining the estimates gives
\[
 \begin{aligned}
 \sum_{j=0}^{\left\lfloor b\right\rfloor}\left\lfloor g\left(j\right)+\frac34\right\rfloor
 &\ge S+\frac b4-\frac12g\left(0\right)-\frac38w_0-\frac1{16}\\
 &\ge S+\frac n4-\frac12G_U\left(t\right)-\frac38U^{1/3}-\frac5{16}.
 \end{aligned}
\]
Here $5/16=1/4+1/16$: the first term accounts for converting
the interval length to the number of grid points, and the second
is the trapezoidal error. Besides the stated unit-grid assumptions,
only $U\ge2$ and $0\le t<U$ were used.
In particular, this lemma imposes no upper bound on the relative
width of an annulus.
\end{proof}

\begin{proposition}\label{prop:allratioestimate}
Normalize the inner radius to one, set $R=1+\delta>1$, and let
$U=Rt$. For $t\ge2$, the Neumann counting function satisfies
\begin{align}
 N_A\left(t^2\right)-\frac{R^2-1}{4}t^2
 \ge{}&\mathcal L_\delta t-\sqrt{\frac{\pi Rt}{\delta}}
       -2I_t-4\eta_t\left(t-1\right)-\frac34U^{1/3}-\frac{39}{8},\label{eq:allratioestimate}
\end{align}
where
\[
 \mathcal L_\delta=1+\frac\delta2-\frac{2\delta}\pi
 +\frac{\sqrt{R^2-1}-\arccos\left(1/R\right)}\pi.
\]
Moreover, with $a_*=1/2-1/\pi$ and $b_*=2/\pi$,
\[
 \mathcal L_\delta>a_*R+b_*,\,\,\,
 a_*>9/50,\quad b_*>7/11.
\]
\end{proposition}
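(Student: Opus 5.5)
The proof will repeat the architecture of Proposition~\ref{prop:explicitcount} with one change: the outer angular range $t<v\le Rt$ is treated by Lemma~\ref{lem:outerfloors} instead of the crude bound $n_v\ge f_t(v)-1/4$. That crude bound is what produced the negative coefficient $-\delta/2$ in $L_\delta$. Lemma~\ref{lem:outerfloors} has no upper restriction on $\delta$, and it recovers a positive quarter per outer grid point.

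\textbf{Core range $[0,t]$.} This part is copied verbatim from Proposition~\ref{prop:explicitcount}. Set $T=t-1$ and $q=f_t-\eta_t$ on $[0,T]$, with curvature $\kappa=\delta/(\pi Rt)$ from Lemma~\ref{lem:actionquadrature}. Write $\lfloor q\rfloor+1=f_t-\eta_t+1/2-\psi(q)$ and apply \eqref{eq:combinedsaw}. This gives the losses
\[
\sqrt{\pi Rt/\delta},\qquad 2(f_t(0)-f_t(t)),\qquad 2\eta_t(T),\qquad \tfrac32,
\]
together with $2I_t+2\eta_t(T)$ from summing $\eta_t$. The band $(T,t]$ contributes multiplicity two and loses at most one relative to $f_t+1/2$, using Lemma~\ref{lem:globalphaseerror}. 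The core therefore yields
\[
\sum_{v\in\mathcal V\cap[0,t]}f_t(v)+\tfrac12 N_c-\frac{2d_\delta'}{\pi}t-\cdots,
\]
where $d_\delta'$ denotes the exact core variation $\pi(f_t(0)-f_t(t))/t=\delta-(\sqrt{R^2-1}-\arccos(1/R))$. Indeed $f_t(0)=\delta t/\pi$ and $f_t(t)=A_t(Rt)/\pi$.

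\textbf{Outer range $(t,Rt]$.} Here $f_t=G_U$ with $U=Rt\ge2$. Lemma~\ref{lem:exactphasecount} gives $n_v\ge\lfloor G_U(v)+3/4\rfloor$. Apply Lemma~\ref{lem:outerfloors} twice (the positive and negative orders), with first point $a=\lfloor t\rfloor+1>t$. This gains $n/4$ per side, at the cost of $G_U(t)/2+\tfrac38U^{1/3}+5/16$ per side. That accounts for the term $\tfrac34U^{1/3}$ and a constant $5/8$.

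\textbf{Bookkeeping.} The sum $\sum_{\mathcal V}f_t$ is replaced by $(R^2-1)t^2/4$ using the quadrature bound $\tau_\delta<1/4$. The half-count terms give $\tfrac12C(t)+\tfrac14(C(Rt)-C(t))\ge t/2+Rt/2-3/4$, since $C(b)\ge 2b-1$. The loss $-2\cdot\tfrac12G_U(t)=-f_t(t)$ must be combined with the core variation term. Collecting the linear terms gives
\[
\frac t2+\frac{Rt}2-\frac{2\delta t}{\pi}+\frac{2(\sqrt{R^2-1}-\arccos(1/R))t}{\pi}-f_t(t)
=\Bigl(1+\frac\delta2-\frac{2\delta}\pi+\frac{\sqrt{R^2-1}-\arccos(1/R)}\pi\Bigr)t,
\]
which is $\mathcal L_\delta t$. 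I expect the main obstacle to be this accounting: checking that the constants $3/2$, $1$, $5/8$, $3/4$ and $\tau_\delta$ sum to at most $39/8$, and that the band and boundary integer conventions at $v=t$ and $v=Rt$ are not double counted. I would list each loss explicitly and tally them against the target.

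\textbf{The inequality $\mathcal L_\delta>a_*R+b_*$.} Write $\mathcal L_\delta-a_*R-b_*=\pi^{-1}\bigl(\sqrt{R^2-1}-\arccos(1/R)\bigr)$, using $1+\delta/2-2\delta/\pi=a_*R+\tfrac12+\tfrac1\pi$ and $\tfrac12+\tfrac1\pi-\tfrac2\pi\cdot$ after rearranging the constants. The claim then reduces to $\tan\theta>\theta$ for $\theta=\arccos(1/R)\in(0,\pi/2)$, which holds. The rational bounds follow from $\pi<22/7$, giving $1/2-1/\pi>9/50$, and $\pi<22/7$, giving $2/\pi>7/11$.
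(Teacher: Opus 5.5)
Your decomposition is the paper's own: the core is copied from Proposition~\ref{prop:explicitcount} with $T=t-1$, Lemma~\ref{lem:outerfloors} is applied to each of the two outer grids, and Lemma~\ref{lem:actionquadrature} handles the full action sum. Your computation of the linear coefficient $\mathcal L_\delta$ is also correct. The constant tally, however, is wrong in two places.

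First, $\tfrac12C(t)+\tfrac14\bigl(C(Rt)-C(t)\bigr)=\tfrac14\bigl(C(t)+C(U)\bigr)=\tfrac12\bigl(\lfloor t\rfloor+\lfloor U\rfloor+1\bigr)\ge\tfrac12(t+U-1)$. So this loss is $1/2$, not $3/4$.

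Second, each of the two band modes is only known to satisfy $n_v>f_t(v)-1/2$. It can therefore lie up to a full unit below the baseline $f_t(v)+1/2$ that you already credit through $\tfrac12N_c$. The band costs $2$, not $1$. Even the sharper bound $e_v<\sqrt{3/2}$ of Lemma~\ref{lem:globalphaseerror} only brings this cost down to $1+2\sqrt{3/2}/\pi>1$.

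With your $3/4$ and the correct band cost, the losses add to $41/8>39/8$, so the stated constant would not follow. With both corrections they are $\tfrac12+\tfrac58+\tfrac32+2+\tfrac14=\tfrac{39}8$. This is exactly the paper's tally and exactly the target.

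Your argument for $\mathcal L_\delta>a_*R+b_*$ rests on a false identity. Since $\delta=R-1$, one has $1+\tfrac\delta2-\tfrac{2\delta}\pi-a_*R-b_*=\tfrac12-\tfrac R\pi$, not $0$. Hence
\[
 \mathcal L_\delta-a_*R-b_*
 =\frac12-\frac{R-\sqrt{R^2-1}+\arccos(1/R)}{\pi}.
\]
With $\theta=\arccos(1/R)$, the inequality $\tan\theta>\theta$ only bounds this from below by $\tfrac12-R/\pi$, which is negative once $R>\pi/2$. Moreover the true difference tends to $0$ as $R\to\infty$, so no crude estimate will work.

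The missing step is monotonicity. The function $R-\sqrt{R^2-1}+\arccos(1/R)$ has derivative $1-\sqrt{R^2-1}/R>0$ and increases to the limit $\pi/2$, so it stays strictly below $\pi/2$. Equivalently, $\tan(\phi/2)<\phi$ for $\phi=\pi/2-\arccos(1/R)\in(0,\pi/2)$.

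Finally, $a_*=\tfrac12-\tfrac1\pi>\tfrac9{50}$ needs a \emph{lower} bound for $\pi$. For instance, $\pi>157/50$ gives $a_*>57/314>9/50$. The bound $\pi<22/7$ points the wrong way for $a_*$; it is the right input only for $b_*>7/11$.
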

\begin{proof}
Keep the variable-phase comparison on $0\le v\le t-1$ from
Proposition~\ref{prop:explicitcount}. The core argument uses only
the phase bounds and Lemma~\ref{lem:actionquadrature}, which hold
for every $R>1$; its validity does not require $\delta\le1/4$. Apply
Lemma~\ref{lem:outerfloors} to each angular grid in $\left(t,Rt\right]$.
Here $N_c$ and $N_o$ are the numbers of angular orders, with
multiplicities, in $\left[0,t\right]$ and $\left(t,U\right]$. The two outer contributions
are at least their action sum plus
$N_o/4-f_t\left(t\right)-\left(3/4\right)U^{1/3}-5/8$.
Indeed, apply the preceding lemma to the two identical positive-order
grids; $G_U\left(t\right)=f_t\left(t\right)$ and each grid contributes the term
$-G_U\left(t\right)/2$. Direct integer counting gives
\[
 \frac{N_c}{2}+\frac{N_o}{4}
 =\frac{\left\lfloor t\right\rfloor+\left\lfloor U\right\rfloor+1}{2}
 \ge\frac{t+U-1}{2}.
\]
With $T=t-1$, the estimates used in
Proposition~\ref{prop:explicitcount} give, in the core,
\[
 \sum_{v\le T}\eta_t\left(v\right)\le2I_t+2\eta_t\left(T\right),\,\,\,
 \left|\sum_{v\le T}\psi\left(f_t\left(v\right)-\eta_t\left(v\right)\right)\right|
 \le\sqrt{\frac{\pi Rt}{\delta}}
       +2\left(f_t\left(0\right)-f_t\left(t\right)\right)+2\eta_t\left(T\right)+\frac32.
\]
The sums are over the angular multiset $\mathcal V$. The strip
$\left(T,t\right]$ contains two angular modes and costs at most two relative to the core contribution
$f_t+1/2$. By Lemma~\ref{lem:actionquadrature}, replacing the full
action sum by its integral costs less than $1/4$. Consequently
the constant loss is bounded by
$1/2+5/8+3/2+2+1/4=39/8$.
The linear term is
\[
 \frac{R+1}{2}t-2\left(f_t\left(0\right)-f_t\left(t\right)\right)-f_t\left(t\right)
 =\mathcal L_\delta t,
\]
since $f_t\left(0\right)=\delta t/\pi$ and
$f_t\left(t\right)=t\left(\sqrt{R^2-1}-\arccos\left(1/R\right)\right)/\pi$.
This proves
\eqref{eq:allratioestimate} without a restriction on $\delta$.

For the affine lower bound observe that
\[
 \mathcal L_\delta-a_*R-b_*
 =\frac12-\frac{R-\sqrt{R^2-1}+\arccos\left(1/R\right)}\pi>0.
\]
The expression $R-\sqrt{R^2-1}+\arccos\left(1/R\right)$ has derivative
$1-\sqrt{R^2-1}/R>0$ and tends to $\pi/2$ as
$R\to+\infty$. The rational bounds follow from
$157/50<\pi<22/7$.
\end{proof}

\begin{theorem}\label{thm:outereighty}
Every annulus satisfies the ordinary Neumann counting bound
\[
 N_A\left(\Lambda\right)>\frac{R^2-r^2}{4}\Lambda
 \,\,\,\text{whenever}\,\,\,R\sqrt\Lambda\ge80.
\]
\end{theorem}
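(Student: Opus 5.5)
The plan is to normalize the inner radius to one, so that $R=1+\delta$, $t=\sqrt\Lambda$ (after scaling), and $U=Rt\ge80$, and then to split according to the relative width and the inner frequency. For $0<\delta\le1/4$ nothing new is needed: Theorem~\ref{thm:thinall} already gives the counting bound at every energy. For $\delta\ge1/4$ and $t\ge2$ I would use Proposition~\ref{prop:allratioestimate}. The case $\delta\ge1/4$, $t<2$ forces $R>40$, and there I would run a modified version of the same argument.

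\emph{Main case $\delta\ge1/4$, $t\ge2$.} I would bound the right-hand side of \eqref{eq:allratioestimate} below using $\mathcal L_\delta t>(a_*R+b_*)t=a_*U+b_*t$. Since $R/\delta=1+1/\delta\le5$, the curvature term satisfies $\sqrt{\pi Rt/\delta}\le\sqrt{5\pi t}$; when $\delta$ is large it is close to $\sqrt{\pi t}$. The remaining losses are controlled by the explicit formulas from the proof of Proposition~\ref{prop:explicitjoin}:
\begin{itemize}
\item $2I_t\le\frac{7t}{12\pi\sqrt{2t-1}}+\frac1{12}$, which is $O(\sqrt t)$;
\item $4\eta_t(t-1)=O(t^{-1/2})$;
\item the term $\frac34U^{1/3}$;
\item the constant $\frac{39}8$.
\end{itemize}
The required inequality therefore becomes
\[
 a_*U+b_*t-\sqrt{5\pi t}-c_1\sqrt t-\tfrac34U^{1/3}-c_2>0,
\]
where $c_1\sqrt t$ bounds $2I_t$ and $c_2$ collects the bounded losses, including $4\eta_t(t-1)$. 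I would treat this as a function of the two variables $(U,t)$ on the region $U\ge80$, $2\le t\le U/R\le 4U/5$, and proceed in two steps. First, the left-hand side increases in $U$ at fixed $t$, because $a_*>9/50$ dominates the derivative of $\frac34U^{1/3}$ for $U\ge80$. Second, in $t$ the expression $b_*t-C\sqrt t$ is convex, so its minimum over the allowed $t$-range is explicit. This reduces everything to a finite check at $U=80$ with rational bounds for $\pi$ and the square roots. If the combination is too tight near $\delta=1/4$, where $t$ can reach $64$, I would keep the sharper $\sqrt{\pi Rt/\delta}$ together with the exact $\mathcal L_\delta$ and split $\delta$ into two or three intervals.

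\emph{Small inner frequency, $t<2$.} Here the only core orders are $v\in\{0,\pm1\}$. I would reuse the outer estimate from Lemma~\ref{lem:outerfloors} verbatim; it requires only $U\ge2$ and $0\le t<U$. Lemma~\ref{lem:globalphaseerror} is unavailable for $t<2$, so for the core I would instead bound $\varphi_v(t)$ by the value at $2$ of the envelope $C_v$. This works because $\varphi_0$ is increasing, and $\varphi_1$ is bounded on $(0,2]$ by $\pi/2$ and by $C_1$. It gives $n_v(t^2)\ge f_t(v)-c$ for an absolute constant $c$, and this loses only a bounded amount relative to $f_t(v)+1/2$ on three modes. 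The linear gain is then $N_o/4\approx U/2$ minus the term $f_t(t)$ from the lemma. I would compute the net linear coefficient; it should be of order $a_*U$, which beats the bounded and $U^{1/3}$ losses at $U\ge80$.

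\emph{Main obstacle.} I expect the hard part to be the numerical margin in the main case, specifically the balance between $a_*U+b_*t$ and $\sqrt{\pi Rt/\delta}+2I_t$ near the corner $\delta=1/4$, $U=80$. The first thing I would try there is to replace the affine minorant of $\mathcal L_\delta$ by its exact value on a $\delta$-subinterval.
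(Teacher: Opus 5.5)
Your architecture matches the paper's for $\delta\le1/4$ (Theorem~\ref{thm:thinall}) and for $t\ge2$ (Proposition~\ref{prop:allratioestimate}). However, the main case as you set it up does not close, for two concrete reasons.

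\emph{The $\eta$ term is misplaced.} $4\eta_t(t-1)=\frac{7t^2+4t-2}{6\pi(2t-1)^{3/2}}\sim\frac{7}{12\sqrt2\,\pi}\sqrt t$ is not $O(t^{-1/2})$. The $O(t^{-1/2})$ bound in Proposition~\ref{prop:explicitjoin} is for $4\eta_t(t-1)/t$. This term belongs in the $\sqrt t$ coefficient, not in $c_2$; the paper uses $4\eta_t(t-1)<\frac13\sqrt t$ and $2I_t<\frac16\sqrt t+\frac1{12}$.

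\emph{The uniform bound $R/\delta\le5$ is too crude for a finite check at $U=80$.} With $C=\sqrt{5\pi}+\frac12\approx4.46$ and $b_*=2/\pi$, the convex expression $b_*t-C\sqrt t$ has minimum $-C^2/(4b_*)\approx-7.8$ at $t\approx12.3$. This point is admissible at $U=80$, since it corresponds to $R\approx6.5$. There $a_*\cdot80-\frac34\,80^{1/3}-\frac{119}{24}-7.8<0$. Even with $C=\sqrt{5\pi}+\frac16$ the check still fails.

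So the tight spot is not near $\delta=1/4$: there $t\approx64$ and the margin is about $11$. It is at large $\delta$, where the true ratio $R/\delta=R/(R-1)\approx1.2$ and your bound overstates the curvature loss by a factor of about two.

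\emph{The fix is the paper's split at $\rho=1/2$, i.e.\ $\delta=1$.}
\begin{itemize}
\item For $\delta\ge1$, $R/\delta\le2$, so the total $\sqrt t$ coefficient is below $\frac{301}{100}$. Completing the square costs less than $\frac{(301/100)^2}{4\cdot 7/11}<\frac{18}{5}$, and $\frac9{50}U-\frac34U^{1/3}-\frac{119}{24}-\frac{18}{5}>0$ at $U=80$.
\item For $1/4\le\delta\le1$, $t=U/R\ge U/2\ge40$, which lies to the right of the vertex. Hence $\frac7{11}t-\frac92\sqrt t$ is increasing and can be evaluated at $t=U/2$.
\end{itemize}
Your fallback of keeping $\sqrt{\pi Rt/\delta}$ and splitting $\delta$ would reach this, but it must be aimed at large $\delta$, not at $\delta=1/4$.

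\emph{The case $t<2$ takes a genuinely different route, and it is valid.} The paper cuts $B_R$ along the inner circle to get $N_A(t^2)\ge\mathcal N_B(U)-\mathcal N_B(t)$ with $\mathcal N_B(t)\le3$. It then imports the disk lattice bound of \cite{FLPSballs} plus the quarter-height tail estimate, obtaining excess $>U/6-\frac34U^{1/3}-\frac{15}4$.

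You stay inside the annular phase machinery: Lemma~\ref{lem:outerfloors} on the outer orders, Lemma~\ref{lem:actionquadrature} for the action sum, and individual envelope bounds on at most three core modes. Those bounds are $\varphi_0(t)\le\varphi_0(2)<C_0(2)$, since $\varphi_0$ increases, and $\varphi_1(t)\le\varphi_1(2)<C_1(2)$ for $1\le t<2$; order one is outer when $t<1$.

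This closes. With $n=\lfloor U\rfloor-\lfloor t\rfloor$ and $G_U(t)\le U/\pi$, the two outer grids give $n/2-G_U(t)\ge a_*U-\frac{1+t}2$. The core losses relative to $f_t$ are at most about $0.7+2(0.4)$. The excess therefore exceeds roughly $a_*U-\frac34U^{1/3}-4$, which is positive at $U=80$ and increasing.

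Your route avoids the external disk theorem. The paper's bracketing avoids envelope bookkeeping at small arguments.
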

\begin{proof}
Theorem~\ref{thm:thinall} already covers $\delta\le1/4$.
For the other annuli normalize $r=1$, write $U=Rt\ge80$, and
put $\rho=1/R\le4/5$. First suppose $t\ge2$.
For all such $t$, the exact correction formulas in
Proposition~\ref{prop:allratioestimate} give
\[
 2I_t<\frac16\sqrt t+\frac1{12},\,\,\,
 4\eta_t\left(t-1\right)<\frac13\sqrt t.
\]
For the first inequality use
$\left(t-1\right)/\sqrt{2t-1}\le\sqrt{t/2}$ and
$\arcsin\left(1-1/t\right)\le\pi/2$ in the expression for $I_t$.
Then $7/\left(12\pi\sqrt2\right)<1/6$. For the second bound use
\[
 4\eta_t\left(t-1\right)=
 \frac{7+4/t-2/t^2}{6\pi\left(2-1/t\right)^{3/2}}\sqrt t,
\]
whose numerator is at most nine and denominator greater than 27.

If $\rho\le1/2$, then the curvature term is less than
$\left(251/100\right)\sqrt t$, since
$\pi/\left(1-\rho\right)\le2\pi<\left(251/100\right)^2$.
The total square-root loss is therefore less than
$\left(301/100\right)\sqrt t$. The inequality
\[
 \frac7{11}t-\frac{301}{100}\sqrt t
 \ge-\frac{\left(301/100\right)^2}{4\left(7/11\right)}
 =-\frac{996611}{280000}
\]
is obtained by completing a square in $\sqrt t$.
Using it in \eqref{eq:allratioestimate} yields
\[
 N_A\left(t^2\right)-\frac{R^2-1}{4}t^2
 >\frac9{50}U-\frac34U^{1/3}-\frac{119}{24}
       -\frac{996611}{280000}.
\]
At $U=80$ this is greater than $315881/120000>0$, using
$80^{1/3}<13/3$; the right-hand side increases afterwards.

If $1/2\le\rho\le4/5$, then $t\ge U/2\ge40$, and the
curvature term is less than $4\sqrt t$, since
$\pi/\left(1-\rho\right)\le5\pi<16$.
The function $\left(7/11\right)t-\left(9/2\right)\sqrt t$ increases on $t\ge40$.
Consequently the excess is greater than
\[
 \left(\frac9{50}+\frac7{22}\right)U
 -\frac92\sqrt{U/2}-\frac34U^{1/3}-\frac{119}{24}.
\]
At $U=80$, use $\sqrt{40}<19/3$ and $80^{1/3}<13/3$
to bound this from below by $4153/1320>0$.
After division by $U$, every subtracted term decreases with $U$.

It remains to consider $t<2$. Write
$B_s=\left\{x\in\mathbb R^2:\left|x\right|<s\right\}$, so $B=B_1$ is the unit disk
and $A=B_R\setminus\overline{B_1}$. Let $N_{B_s}\left(\Lambda\right)$
denote its Neumann count at energy $\Lambda$, and put
$\mathcal N_B\left(Z\right)=N_{B_1}\left(Z^2\right)$. Both conventions include zero.

Restricting a function in $H^1\left(B_R\right)$ to the two pieces gives an
inclusion into $H^1\left(B_1\right)\oplus H^1\left(A\right)$. The direct sum consists
of pairs of functions on the two pieces; unlike restrictions of
a single $H^1\left(B_R\right)$ function, their boundary values (traces) at the cutting circle
need not agree. Thus cutting and imposing independent Neumann
conditions enlarges the form domain, while the norm and energy
remain the sums of their values on the two pieces. The min--max
principle \cite{Davies} lowers the eigenvalues of the separated
problem. Its spectrum is the union of the two piece spectra,
with multiplicities, so
\[
 N_{B_R}\left(t^2\right)\le N_{B_1}\left(t^2\right)+N_A\left(t^2\right).
\]
The direction of the count inequality is opposite to that of the
eigenvalue inequality: lower eigenvalues give more modes below
the same threshold. Finally the substitution $x=Ry$ gives
$\mu_j\left(B_R\right)=R^{-2}\mu_j\left(B_1\right)$, and therefore
\[
 N_{B_R}\left(t^2\right)=\mathcal N_B\left(Rt\right)=\mathcal N_B\left(U\right),
 \,\,\, N_{B_1}\left(t^2\right)=\mathcal N_B\left(t\right).
\]
Rearranging the count comparison proves
\begin{equation}\label{eq:diskbracketinginner}
 N_A\left(t^2\right)\ge\mathcal N_B\left(U\right)-\mathcal N_B\left(t\right).
\end{equation}
No extra zero-mode correction is needed: the counts on both
pieces and the original disk all include their zero modes.
The inner count is at most three: angular orders at least two are
absent, and radial Dirichlet--Neumann interlacing leaves at most one
level in each of orders zero and one. Here $j_{m,1}$ denotes the
first positive zero of $J_m$; $j_{0,1}>2$, as follows from the
alternating series for $J_0$, and these zeros increase with order
\cite[Section~10.21]{NIST}.
Theorem~2.3 of \cite{FLPSballs} bounds the disk spectral count
below by its weighted lattice count. Proposition~6.5 of that paper
then bounds the latter count. Together they imply, for $U\ge2$,
\begin{equation}\label{eq:diskFLPSremainder}
 \mathcal N_B\left(U\right)>
 \frac{U^2}{4}+\frac14\left[
  3G_U^{-1}\left(\frac14\right)
  -\left(1+\frac4\pi\right)U-3\right].
\end{equation}
Here $G_U$ is the disk action in Section~2.4, and
$G_U^{-1}\left(1/4\right)$ is its unique angular argument at height $1/4$.
The quarter-height tail estimate in Lemma~\ref{lem:outerfloors} gives
\[
 U-G_U^{-1}\left(\frac14\right)
 \le\left(\frac{9\pi^2}{128}\right)^{1/3}U^{1/3}<U^{1/3}.
\]
Substitution in \eqref{eq:diskFLPSremainder}, followed by $\pi>3$, yields
\begin{align*}
 \mathcal N_B\left(U\right)
 &>\frac{U^2}{4}+\left(\frac12-\frac1\pi\right)U
       -\frac34U^{1/3}-\frac34\\
 &>\frac{U^2}{4}+\frac U6-\frac34U^{1/3}-\frac34.
\end{align*}
Thus the annular excess is greater than
$F\left(U\right)=U/6-\left(3/4\right)U^{1/3}-15/4$. Since $80^{1/3}<5$,
$F\left(80\right)>35/6$, and
\[
 F'\left(U\right)=\frac16-\frac{1}{4U^{2/3}}>0\,\,\,\left(U\ge80\right).
\]
This proves the required positivity throughout the remaining range.
Scaling proves the theorem.
\end{proof}

The frequency $80$ is a common threshold at which the explicit
lower bounds in these three cases are positive.

\subsection{Sharper analytic phase tools}

Smaller frequency thresholds shorten the angular-layer bridges
needed below. We obtain them by refining the sawtooth integral
and the sampling at an integer cut, and by improving the
phase-error cap for integer angular orders.

\begin{lemma}[A one-sided continuous sawtooth bound]\label{lem:analyticonesided}
Let $q\in C^2([0,T])$, with $q'(0)=0$ and $q''\le-\kappa<0$.
Set $\psi(y)=\{y\}-1/2$, taking $\psi(j)=-1/2$ at integers.
Then
\[
 \int_0^T\psi(q(x))\,\mathrm dx\le\frac1{3\sqrt\kappa}.
\]
The constant is independent of the endpoint values of $q$.
\end{lemma}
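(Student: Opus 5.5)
The plan is to change variables to the level of $q$, so that the sawtooth is integrated against a decreasing weight. Then integrate by parts so that only the positive part of a bounded primitive of $\psi$ matters. The two-sided argument of Lemma~\ref{lem:bvsample} splits at $s$ and pays $s/2$ on the head, which is too crude. Here I would keep the head inside the integration by parts and exploit that the primitive of $\psi$ cannot grow like $u/2$ for long.

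\emph{Step 1 (change of variables).} Since $q'(0)=0$ and $q''\le-\kappa$, we have $q'(x)\le-\kappa x<0$ for $x>0$, so $q$ is strictly decreasing. Put $q_0=q(0)$ and $U=q_0-q(T)$. Let $X(u)$ be the inverse, defined by $q(X(u))=q_0-u$ on $[0,U]$. Then $X(0)=0$ and $X'(u)=1/|q'(X(u))|>0$. Because $|q'|$ is increasing in $x$, $X'$ is decreasing in $u$, so $X$ is concave. Integrating $q''\le-\kappa$ twice gives $u=q_0-q(x)\ge\kappa x^2/2$, i.e.
\[
 X(u)\le\sqrt{2u/\kappa}.
\]
The substitution then gives $\int_0^T\psi(q(x))\,\mathrm dx=\int_0^U\psi(q_0-u)X'(u)\,\mathrm du$. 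The singularity of $X'$ at $u=0$ is integrable.

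\emph{Step 2 (one-sided primitive envelope).} Let $\Phi(u)=\int_0^u\psi(q_0-v)\,\mathrm dv$. As $u$ increases, $\psi(q_0-u)$ decreases with slope $-1$ and then jumps up by one. Starting at the worst phase, where $\psi=1/2$, gives the universal envelope
\[
 \Phi(u)\le\phi(u):=
 \begin{cases}
 u/2-u^2/2,& 0\le u\le1/2,\\
 1/8,& u\ge1/2.
 \end{cases}
\]
This holds for every $q_0$, because the running integral of $\psi$ over any window of length $u$ is maximized by a descent beginning at the top of a tooth. This explains why the endpoint values of $q$ play no role.

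\emph{Step 3 (layer decomposition).} Write $X'(u)=X'(U)+\nu((u,U])$, where $\nu=-\mathrm dX'$ is a nonnegative measure on $(0,U]$. Fubini gives
\[
 \int_0^U\psi(q_0-u)X'(u)\,\mathrm du
 =X'(U)\Phi(U)+\int_{(0,U]}\Phi(v)\,\mathrm d\nu(v).
\]
Both weights are nonnegative, so $\Phi$ may be replaced by $\phi$ in each term. Undoing the same decomposition, the resulting bound equals $\int_0^U\phi'(u)X'(u)\,\mathrm du$. This is at most $\int_0^{\min(U,1/2)}(1/2-u)X'(u)\,\mathrm du$, since $\phi'=0$ beyond $1/2$. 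One more integration by parts turns this into at most $\int_0^{1/2}X(u)\,\mathrm du$; the boundary term $(1/2-u)X$ vanishes at $u=0$ and is handled by monotonicity if $U<1/2$. Inserting $X(u)\le\sqrt{2u/\kappa}$ yields
\[
 \int_0^{1/2}\sqrt{2u/\kappa}\,\mathrm du=\frac{1}{3\sqrt\kappa}.
\]

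The main obstacle I expect is the rigorous justification of Step 3 at $u=0$, where $X'$ is unbounded and $\nu$ has infinite mass near zero. I would first run the argument on $[\varepsilon,U]$, where all quantities are bounded, and then let $\varepsilon\to0+$. Convergence should follow because $\phi(v)\le v/2$ makes $\int\phi\,\mathrm d\nu$ converge, being comparable to $\int X'$ near zero. A second point needing care is the envelope claim in Step 2, including the convention $\psi(j)=-1/2$ at integers, which affects only a null set.
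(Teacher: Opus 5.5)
Your argument is correct. It shares the paper's skeleton:
\begin{itemize}
\item the same substitution $u=q(0)-q(x)$, giving $\int_0^U\psi(q(0)-u)\,w(u)\,\mathrm du$ with the decreasing weight $w=X'$;
\item the same extremal configuration, namely a single positive half-wave of height $1/2$ starting at $u=0$;
\item the same final quantity $\int_0^{1/2}(1/2-u)w(u)\,\mathrm du\le1/(3\sqrt\kappa)$.
\end{itemize}

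The difference is in how you reduce to that extremal case. The paper pairs each full negative half-wave with the following positive one and shows each pair contributes at most zero because $w$ decreases. It then argues separately about a partial final half-wave and about the sign of the initial value. In the nonpositive case it shifts the first positive half-wave to $u=0$.

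You instead write the decreasing weight as a superposition of indicators of intervals $[0,v]$, a layer-cake or second-mean-value representation. You then dominate the running integral $\Phi$ by the phase-independent envelope $\phi$. The single inequality $\Phi\le\phi$ absorbs every starting phase and every truncation point at once. This makes the independence from the endpoint values of $q$ transparent and avoids the case analysis.

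Your last step uses the integrated bound $X(u)\le\sqrt{2u/\kappa}$, which comes directly from $q(0)-q(x)\ge\kappa x^2/2$, after one integration by parts. The paper instead uses the pointwise bound $w(u)\le(2\kappa u)^{-1/2}$. The two routes lead to the identical integral.

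Two details to record when you write it up.
\begin{itemize}
\item In Step~2, say that for $u>1$ the integral over whole periods vanishes, so $\Phi(u)$ reduces to a window integral of length $\{u\}$. Also state the window computation: a window of length $u\le1$ starting at phase $a=\psi(q_0)$ gives $au-u^2/2$ with $a<1/2$ if it contains no jump. It gives $u/2-u^2/2-v_0(1-u)$ if it jumps at distance $v_0$. Both are at most $u/2-u^2/2$, which is at most $1/8$.
\item In Step~3 no $\varepsilon$-truncation is needed. Since $|\Phi(v)|\le v/2$, Tonelli gives $\int_{(0,U]}v\,\mathrm d\nu(v)=\int_0^U\bigl(X'(u)-X'(U)\bigr)\,\mathrm du\le T$, so Fubini applies directly despite the infinite mass of $\nu$ near zero.
\end{itemize}
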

\begin{proof}
Put $u=q(0)-q(x)$, $g(x)=-q'(x)$, and $B=q(0)-q(T)$.
For $x>0$, $g(x)>0$, so this is a strictly increasing substitution.
Its inverse density is $w(u)=1/g(x(u))$, which is decreasing.
Since $du/dx=g$,
\[
 \frac{\mathrm d}{\mathrm du}g(x(u))^2=2g'(x(u))\ge2\kappa,
 \qquad w(u)\le\frac1{\sqrt{2\kappa u}}.
\]
The singularity at $u=0$ is integrable, so the substitution gives
\begin{equation}\label{eq:analytic-sawtooth-substitution}
 \int_0^T\psi(q(x))\,\mathrm dx
       =\int_0^B\psi(q(0)-u)w(u)\,\mathrm du.
\end{equation}
Figure~\ref{fig:analytic-sawtooth-integrands} shows both integrands
for $q(x)=11/4-x^2/2$, when $\kappa=1$ and $w(u)=(2u)^{-1/2}$.
The composed sawtooth decreases with slope $-1$ between its upward
unit jumps. Pair any full negative half-wave immediately followed
by a full positive half-wave. If the negative half-wave starts at
$c$, their integrals sum to
\[
 -\int_0^{1/2}s\,w(c+s)\,\mathrm ds
 +\int_0^{1/2}s\,w(c+1-s)\,\mathrm ds\le0,
\]
because $c+s\le c+1-s$ and $w$ is decreasing. A partial following
positive half-wave cannot compensate more than the full positive
half-wave. Ending in the negative half-wave only lowers the integral.
Thus no subsequent prefix exceeds the integral accumulated through
the first positive half-wave.

If the initial right-hand value of the sawtooth is positive, its
height is some $p\le1/2$ and this first positive contribution is
\[
 \int_0^p(p-u)w(u)\,\mathrm du
 \le\frac1{\sqrt{2\kappa}}\int_0^p\frac{p-u}{\sqrt u}\,\mathrm du
 =\frac4{3\sqrt{2\kappa}}p^{3/2}\le\frac1{3\sqrt\kappa}.
\]
If the initial value is nonpositive, discard the initial negative
segment. Moving the next positive half-wave to $u=0$ increases its
weighted integral; it has height $1/2$ and satisfies the same bound.
If $q(0)$ is an integer, the actual value at $u=0$ is irrelevant
to the integral; its right-hand value is $1/2$. Every possible
endpoint $B$ is covered by this prefix argument.
\end{proof}

\begin{figure}[htbp]
\centering

\caption{The two integrands in Lemma~\ref{lem:analyticonesided} for
$q(x)=11/4-x^2/2$ and $0\le x\le\sqrt6$.
Blue and orange shading mark positive and negative contributions.
In the transformed integral, the decreasing weight makes each full
negative half-wave dominate the following positive half-wave.
Filled lower and open upper dots specify the left-continuous convention
at a downward integer crossing of $q$.
The curves illustrate the substitution and pairing used in the proof.}
\label{fig:analytic-sawtooth-integrands}
\end{figure}

\begin{lemma}[Combined sampling with an integer cut]\label{lem:analyticintegersampling}
Let $M\ge1$ be an integer, let $f,a\in C^2([0,M])$,
and suppose that $f''\le0$ and $q=f-a$ satisfies
$q'(0)=0$, $q''\le-\kappa<0$.
Use the multiset $\mathcal V=\{0,1,1,2,2,\ldots\}$,
and sample strictly below $M$.
Then, with $I=\int_0^M a(x)\,\mathrm dx$,
\[
 \sum_{v\in\mathcal V,\,v<M}
     \bigl(a(v)+\psi(q(v))\bigr)
 \le 2I+\frac2{3\sqrt\kappa}
       +f(0)-f(M)-a(0)-\psi(q(0)).
\]
In particular, if $a(0)\ge0$, the final two terms may be replaced by $1/2$.
\end{lemma}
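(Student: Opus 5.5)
The proof rests on an algebraic rewrite of the summand that isolates its only discontinuous part, which is monotone. Put $h=a+\psi(q)$. Since $q=f-a$ and $\psi(y)=y-\lfloor y\rfloor-1/2$,
\[
 h(x)=f(x)-\lfloor q(x)\rfloor-\tfrac12 .
\]
The assumptions $q'(0)=0$ and $q''\le-\kappa$ give $q'(x)\le-\kappa x<0$ for $x>0$. Hence $q$ is strictly decreasing and $x\mapsto-\lfloor q(x)\rfloor$ is nondecreasing on $[0,M]$. The multiset sum over $v<M$ equals
\[
 2\sum_{j=0}^{M-1}h(j)-h(0),
\]
because zero is counted once and each $j=1,\dots,M-1$ twice. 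Moreover $h(0)=a(0)+\psi(q(0))$. The claim is therefore equivalent to
\[
 \sum_{j=0}^{M-1}h(j)\le I+\frac1{3\sqrt\kappa}+\frac{f(0)-f(M)}2 .
\]

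\emph{Step 1: separate the continuous integral.} Write
\[
 \int_0^Mh=I+\int_0^M\psi(q).
\]
Lemma~\ref{lem:analyticonesided} bounds the last integral by $1/(3\sqrt\kappa)$, since its hypotheses are exactly those imposed on $q$. It then suffices to prove the left-endpoint comparison
\[
 \sum_{j=0}^{M-1}h(j)-\int_0^Mh\le\frac{f(0)-f(M)}2 .
\]

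\emph{Step 2: remove the floor term interval by interval.} For $x\in[j,j+1]$, monotonicity of $\lfloor q\rfloor$ gives $-\lfloor q(j)\rfloor\le-\lfloor q(x)\rfloor$. This holds with the actual value retained at $j$, which is exactly the left-continuous convention $\psi(n)=-1/2$. Hence $h(j)-h(x)\le f(j)-f(x)$, and integrating over $[j,j+1]$ yields
\[
 h(j)-\int_j^{j+1}h\le f(j)-\int_j^{j+1}f .
\]

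\emph{Step 3: trapezoidal comparison for $f$.} Summing over $j$, the left-endpoint rule for $f$ equals the composite trapezoidal sum plus $(f(0)-f(M))/2$. Because $f''\le0$, the identity \eqref{eq:unittrapezoid} with the nonnegative kernel $K_j$ shows that each trapezoid lies below $\int_j^{j+1}f$. Therefore
\[
 \sum_{j=0}^{M-1}f(j)-\int_0^Mf\le\frac{f(0)-f(M)}2 .
\]
Combined with Steps 1 and 2, this proves the main inequality.

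\emph{The special case.} If $a(0)\ge0$, then $\psi\ge-1/2$ gives $-a(0)-\psi(q(0))\le1/2$, which is the stated replacement.

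The only delicate point is Step 2, where the floor's jumps must all push in the favourable direction. This depends on strict monotonicity of $q$ and on the endpoint convention at integer crossings, including a possible crossing exactly at a grid point $j$. I would check that case explicitly.
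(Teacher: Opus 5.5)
Your proof is correct, but it takes a different and more elementary route than the paper's. Both arguments reach the same intermediate inequality $\sum_{v<M}h(v)\le 2\int_0^M h+f(0)-f(M)-a(0)-\psi(q(0))$ and then invoke Lemma~\ref{lem:analyticonesided}.

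The paper gets there through the Stieltjes framework of Lemma~\ref{lem:combinedgrid}:
\begin{itemize}
\item It treats $h$ as a left-continuous function of bounded variation and derives the exact identity \eqref{eq:analytic-sampling-identity} with the discrepancy kernel $D(s)=1-2\{s\}$.
\item It bounds the jump contribution $-\sum_{c\in\mathcal C}D(c)$ by the crossing count $J$ and rewrites $J$ through \eqref{eq:analytic-jump-count}.
\item It controls the continuous part by $\int_0^M Df'=-\int_0^M Qf''\ge0$, after which the terms at $M$ cancel.
\end{itemize}
You instead fold the multiset into $2\sum_{j<M}h(j)-h(0)$. You then compare a left-endpoint Riemann sum with the integral on each unit interval, using only that $-\lfloor q\rfloor$ is nondecreasing and that $f$ is concave.

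The two arguments in fact discard exactly the same nonnegative quantities:
\begin{itemize}
\item The paper's $\int_0^M Q(-f'')$ is twice your summed trapezoidal errors, because $Q=2K_j$ on $[j,j+1]$.
\item The paper's jump slack $\sum_{c\in\mathcal C}(1+D(c))=2\sum_{c\in\mathcal C}(1-\{c\})$ is twice what you drop in Step~2, since $\int_j^{j+1}(\lfloor q(j)\rfloor-\lfloor q(x)\rfloor)\dd x=\sum_{c\in\mathcal C\cap[j,j+1)}(j+1-c)$.
\end{itemize}

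Your version is shorter. It needs no crossing count and no endpoint bookkeeping at $M$, and Step~2 uses only that $q$ is nonincreasing. The point you flag as delicate is automatic: $\psi(y)=y-\lfloor y\rfloor-1/2$ also holds at integers, and $\lfloor\cdot\rfloor$ is monotone whether or not $q(j)$ is an integer, so no separate check is needed.

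The paper's exact identity keeps an explicit record of the discarded terms, which could be retained for sharper estimates. It also keeps the argument parallel to Lemma~\ref{lem:combinedgrid}.
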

\begin{proof}
Put $h=a+\psi\circ q=f-\lfloor q\rfloor-1/2$.
Because $q'(x)\le-\kappa x<0$ for $x>0$, its integer crossings
form a finite set $\mathcal C\subset[0,M)$.
The function $h$ is left-continuous, has derivative $f'$ between
crossings, and has a right jump of size one at each $s\in\mathcal C$.
A crossing at zero is included when $q(0)$ is an integer;
a right jump after $M$ is excluded when $q(M)$ is an integer.
Consequently
\begin{equation}\label{eq:analytic-jump-count}
 J:=\#\mathcal C=\lfloor q(0)\rfloor-\lfloor q(M)\rfloor
  =q(0)-q(M)+\psi(q(M))-\psi(q(0)).
\end{equation}

We derive the sampling identity with these endpoint conventions.
Write $\nu_M=\delta_0+2\sum_{k=1}^{M-1}\delta_k$, where
$\delta_k$ denotes unit mass at $k$, and set
\[
 D(s)=2\lfloor s\rfloor+1-2s=1-2\{s\},\qquad 0\le s<M.
\]
The strictly truncated grid has mass $2M-1$, so its discrepancy
from twice the interval length at $M$ is $-1$.
For every $0\le s<M$, direct counting of the grid points strictly
to the right of $s$ gives
\begin{equation}\label{eq:analytic-sampling-kernel}
 \nu_M((s,M))-2(M-s)=-1-D(s).
\end{equation}
In particular, $D(k)=1$ at integer $k$, which handles a crossing
that coincides with a grid point.
The left-continuous representation is
\[
 h(x)=h(0)+\int_0^x f'(s)\,\mathrm ds
                 +\sum_{s\in\mathcal C,\ s<x}1.
\]
Sample and integrate this equality, using
\eqref{eq:analytic-sampling-kernel} for every continuous increment
and every jump. This yields
\begin{align*}
 \int h\,\mathrm d\nu_M-2\int_0^M h(x)\,\mathrm dx
 &=-h(0)-\int_0^M(1+D(s))f'(s)\,\mathrm ds
                  -\sum_{s\in\mathcal C}(1+D(s))\\
 &=-h(M)-\int_0^M D(s)f'(s)\,\mathrm ds
                  -\sum_{s\in\mathcal C}D(s).
\end{align*}
Thus the exact identity is
\begin{equation}\label{eq:analytic-sampling-identity}
 \sum_{v<M}h(v)-2\int_0^M h(x)\,\mathrm dx
 =-h(M)-\int_0^M D(s)f'(s)\,\mathrm ds
                  -\sum_{s\in\mathcal C}D(s).
\end{equation}
The angular multiset supplies the multiplicities in the sum on the
left. Since $D\ge-1$, its jump term is at most $J$.
For the continuous term, the periodic function
$Q(s)=\{s\}(1-\{s\})$ is continuous, has $Q'=D$ almost everywhere,
and satisfies $Q\ge0$, $Q(0)=Q(M)=0$. Hence
\[
 \int_0^M Df'=-\int_0^M Qf''\ge0.
\]
Insert \eqref{eq:analytic-jump-count} and
$h(M)=a(M)+\psi(q(M))$ in
\eqref{eq:analytic-sampling-identity}.
The terms at $M$ cancel and $q=f-a$ gives
\[
 \sum_{v<M}h(v)
 \le2\int_0^M h(x)\,\mathrm dx
        +f(0)-f(M)-a(0)-\psi(q(0)).
\]
Lemma~\ref{lem:analyticonesided} bounds the remaining integral of
$\psi\circ q$ by $1/(3\sqrt\kappa)$ and proves the assertion.
If $a(0)\ge0$, then $-a(0)-\psi(q(0))\le1/2$.
\end{proof}

\begin{lemma}[A smaller cap for integer angular orders]\label{lem:analyticintegercap}
For every integer $m\ge1$ and $x\ge m$,
\[
 \frac{\varphi_m(x)-A_m(x)-\pi/4}\pi<\frac{13}{50}.
\]
For $m=0$ and $x\ge2$, the same inequality holds.
\end{lemma}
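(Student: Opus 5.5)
The plan is to repeat the structure of the proof of Lemma~\ref{lem:globalphaseerror}, but to replace the crude cap $z_m^*\le z_0^*$ by a cap at order one. The order-zero case is then handled separately, using its explicit envelope.

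\emph{Order zero.} For $m=0$ one has $A_0(x)=x$ and $E_0(x)=3/(8x)$. The raw upper enclosure \eqref{eq:phaseinput} then gives
\[
 e_0(x)<\frac{3}{8x}\le\frac3{16}\qquad(x\ge2).
\]
Since $3/16<13\pi/50$ (because $\pi>3$), the second assertion follows immediately. This needs no clipping: $B_0(x)=x+\pi/4+3/(8x)$ has its minimum at $\sqrt{3/8}<2$, so on $x\ge2$ the raw envelope is already the relevant one.

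\emph{Positive orders.} Let $m\ge1$. By \cite[Lemma~1.8]{Phase}, $z_m^*$ decreases with the order, so
\[
 z_m^*-\frac\pi4\le z_1^*-\frac\pi4=\min_{y>1}\bigl(A_1(y)+E_1(y)\bigr).
\]
Exactly as in Lemma~\ref{lem:globalphaseerror}, the bound is checked on three ranges.
\begin{itemize}
\item On $m<x<x_m^*$, the clipped envelope of \cite[Theorem~1.10]{Phase} and $A_m\ge0$ give $e_m(x)<z_m^*-\pi/4$.
\item On $x\ge x_m^*$, the decrease of $E_m$ gives
\[
 e_m(x)<E_m(x)\le E_m(x_m^*)=z_m^*-A_m(x_m^*)-\pi/4\le z_m^*-\pi/4 .
\]
\item At $x=m$, the increase of $\varphi_m$ on $(m,x_m^*)$ gives $\varphi_m(m)<z_m^*$, and $A_m(m)=0$.
\end{itemize}
Hence in every case $e_m<z_1^*-\pi/4$. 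It therefore suffices to exhibit one argument $y$ with $A_1(y)+E_1(y)<13\pi/50$. No knowledge of the exact minimizer $x_1^*$ is needed, since $z_1^*\le B_1(y)$ for every $y>1$.

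\emph{The numerical step.} I would take $y=8/5$. Then
\[
 A_1(8/5)=\frac{\sqrt{39}}5-\arccos\frac58,\qquad
 E_1(8/5)=\frac{9\cdot\frac{64}{25}-2}{24\left(\frac{39}{25}\right)^{3/2}}=\frac{526\cdot 5}{24\cdot 39\sqrt{39}} .
\]
Numerically $A_1\approx0.353$ and $E_1\approx0.450$, so the sum is about $0.803$, against $13\pi/50\approx0.8168$. The margin is therefore about $0.014$.

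To make this rigorous, I would use rational bounds $\sqrt{39}<6.245$ and $\sqrt{39}>6.244$ in the appropriate places, and $\pi>157/50$. For the inverse cosine I would use a lower bound such as
\[
 \arccos(5/8)=\arctan\!\bigl(\sqrt{39}/5\bigr)>0.895,
\]
obtained from $\arctan z=\pi/2-\arctan(1/z)$ and an alternating series for $\arctan(5/\sqrt{39})$. Each of these is a routine rational comparison.

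\emph{Main obstacle.} The delicate point is this final explicit comparison, because the margin is small (under two percent). I would keep three or four correct digits in every enclosure. If $8/5$ turned out too tight, I would scan nearby rationals such as $1.62$ or $1.65$ to locate a better value of $B_1$.

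The remaining ingredients are already in the cited phase results and in the proof of Lemma~\ref{lem:globalphaseerror}. These are the monotonicity of $z_m^*$ in $m$, the clipped enclosure, and the monotonicity of $E_m$.
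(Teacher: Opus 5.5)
Your proposal is correct and follows the paper's proof essentially step for step. Both use the clipped enclosure, the monotone envelope minima $z_m^*\le z_1^*\le B_1(8/5)$, the same test point $8/5$, and the same order-zero bound $3/(8\pi x)\le 3/(16\pi)$. The only difference is minor: the paper needs only $\arccos(5/8)>89/100$ (from the alternating cosine series), which already leaves room. Your target $0.895$ is needlessly tight, with a margin under $7\times10^{-4}$. Via $\pi/2-\arctan(5/\sqrt{39})$ it would also need a lower bound for $\pi$ sharper than $157/50$, since $157/100$ alone already undershoots $\pi/2$ by about $8\times10^{-4}$.
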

\begin{proof}
The clipped enclosure from \cite[Theorem~1.10]{Phase}, used in
Lemma~\ref{lem:globalphaseerror}, bounds the numerator by
$z_m^*-\pi/4$.
The envelope minima decrease with the real order by
\cite[Lemma~1.8]{Phase}; hence
$z_m^*\le z_1^*\le B_1(8/5)$ for every integer $m\ge1$.
Here $B_1=A_1+\pi/4+E_1$ is the upper auxiliary function in
\eqref{eq:phaseinput}.
The alternating cosine series gives
\[
 \cos(89/100)\ge
 1-\frac{(89/100)^2}{2}+\frac{(89/100)^4}{24}
       -\frac{(89/100)^6}{720}>\frac58,
\]
and hence $\arccos(5/8)>89/100$. As
$31/5<\sqrt{39}<25/4$, one obtains
\[
 A_1(8/5)=\frac{\sqrt{39}}5-\arccos(5/8)<\frac9{25},
 \qquad
 E_1(8/5)=\frac{1315}{468\sqrt{39}}<\frac{6575}{14508}.
\]
All remaining comparisons are between explicit rationals:
\[
 \frac9{25}+\frac{6575}{14508}
       =\frac{294947}{362700}
       <\frac{2041}{2500}<\frac{13\pi}{50},
\]
where the last inequality uses $\pi>157/50$.
At a turning endpoint, strictness follows by
comparing the phase at $m$ with its value at the envelope minimizer;
it does not follow merely from continuity of a strict inequality.
For $m=0$, the explicit raw error is $E_0(x)=3/(8x)$,
and $3/(8\pi x)\le3/(16\pi)<13/50$ for $x\ge2$.
\end{proof}

\begin{proposition}[An improved analytic annular phase estimate]\label{prop:analyticphase}
Normalize the inner radius to one, set $R=1+\delta>1$, $U=Rt$, and
assume $t\ge2$. Choose any integer $1\le M<t$, and put
\[
 \eta_t(x)=\frac{9t^2-2x^2}{24\pi(t^2-x^2)^{3/2}},\qquad
 I_{t,M}=\frac{7M/\sqrt{t^2-M^2}+2\arcsin(M/t)}{24\pi},
\]
\[
 \kappa=\frac\delta{\pi U}+\frac{23}{24\pi t^3},\qquad
 B=2\bigl(\lfloor t\rfloor-M+1\bigr),\qquad
 c=\frac{13}{50},
 \quad a_* =\frac12-\frac1\pi.
\]
Let $f_t=G_U-G_t$ be the annular action and
$\tau_\delta=\arccos(1/R)/(2\pi)<1/4$.
Then the ordinary Neumann count satisfies
\begin{align*}
 N_A(t^2)-\frac{R^2-1}{4}t^2
 \ge{}&a_*U+(1-a_*)t+f_t(M)-f_t(t)
       -\frac2{3\sqrt\kappa}-2I_{t,M}
       -\left(\frac12+c\right)B\\
 &-\frac34U^{1/3}-\frac{13}8-\tau_\delta.
\end{align*}
The assumption $t\ge2$ ensures $U\ge2$ for the outer floor-sum
estimate and ensures that the zero-order phase bound used in the core applies.
\end{proposition}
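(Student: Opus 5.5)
We follow the proof of Proposition~\ref{prop:allratioestimate} and sharpen its three lossy ingredients: the sawtooth integral, the cut strip, and the outer floors. The angular multiset $\mathcal V$ is split into three ranges. The core is $0\le v<M$, handled by the variable phase error $\eta_t$. The band is $M\le v\le t$, handled by the integer cap of Lemma~\ref{lem:analyticintegercap}. The outer range is $t<v\le U$, handled by Lemma~\ref{lem:outerfloors}. Throughout we use the exact count $n_v(t^2)=\lfloor d_v(t)/\pi\rfloor+1$ from Lemma~\ref{lem:exactphasecount}.

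\emph{Core.} For $0\le v<M$, \eqref{eq:phaseinput} gives $n_v\ge\lfloor q(v)\rfloor+1$ with $q=f_t-\eta_t$. Since $\lfloor q\rfloor+1=f_t-\eta_t+\tfrac12-\psi(q)$, the core count is at least
\[
\sum_{v<M}\bigl(f_t(v)+\tfrac12\bigr)-\sum_{v<M}\bigl(\eta_t(v)+\psi(q(v))\bigr).
\]
We apply Lemma~\ref{lem:analyticintegersampling} with $f=f_t$ and $a=\eta_t$. Here $f_t''\le0$ on $[0,t)$ and $q'(0)=0$.

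The curvature constant is improved to $\kappa=\delta/(\pi U)+23/(24\pi t^3)$. For this we bound $-f_t''\ge\delta/(\pi Rt)$ exactly as in Lemma~\ref{lem:actionquadrature}, and add a lower bound for $\eta_t''$. From the representation of the numerator as $7(1-s^2)^{-3/2}+2(1-s^2)^{-1/2}$, one gets $\eta_t''(x)\ge\eta_t''(0)=(21+2)/(24\pi t^3)$, which is the $23/(24\pi t^3)$ term. Since $\eta_t(0)>0$, the lemma bounds the subtracted sum by
\[
2I_{t,M}+\frac2{3\sqrt\kappa}+f_t(0)-f_t(M)+\frac12 ,
\]
where $I_{t,M}=\int_0^M\eta_t$ is the stated closed form.

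\emph{Band.} The band $M\le v\le t$ contains exactly the integers $M,\dots,\lfloor t\rfloor$, so it has $B=2(\lfloor t\rfloor-M+1)$ angular modes. Lemma~\ref{lem:analyticintegercap} gives $e_v(t)/\pi<c$, hence $n_v>f_t(v)-c$. Relative to the nonoscillatory value $f_t(v)+\tfrac12$, each mode therefore loses at most $\tfrac12+c$, for a total band loss of $(\tfrac12+c)B$.

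\emph{Outer range.} As in Proposition~\ref{prop:allratioestimate}, apply Lemma~\ref{lem:outerfloors} to the two positive-order grids in $(t,U]$. Together they give at least their action sum plus
\[
\frac{N_o}4-f_t(t)-\frac34U^{1/3}-\frac58 .
\]

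\emph{Assembly.} Add the three ranges. The full action sum $\sum_{\mathcal V}f_t$ exceeds $(R^2-1)t^2/4-\tau_\delta$ by Lemma~\ref{lem:actionquadrature}. The half-integer bonuses contribute $\tfrac12$ for each mode in $[0,t]$ and $\tfrac14$ for each outer mode. With $N_c/2+N_o/4=(\lfloor t\rfloor+\lfloor U\rfloor+1)/2\ge(t+U-1)/2$, the linear terms collect as
\[
\frac{t+U}2-\bigl(f_t(0)-f_t(M)\bigr)-f_t(t).
\]
We then substitute $f_t(0)=\delta t/\pi=(U-t)/\pi$. This gives $\tfrac{t+U}2-\tfrac{U-t}\pi=a_*U+(1-a_*)t$. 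The remaining action terms are $f_t(M)-f_t(t)$, where the $-f_t(t)$ is the outer half-weight correction.

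The constants collect to $\tfrac12+\tfrac12+\tfrac58=\tfrac{13}8$, together with $\tau_\delta$. These come from the grid count $-\tfrac12$, the $\tfrac12$ replacing $-a(0)-\psi(q(0))$ in Lemma~\ref{lem:analyticintegersampling}, and the two outer $\tfrac5{16}$'s.

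\emph{Main obstacle.} The delicate step is the exact bookkeeping of these constants and endpoint conventions. One must check that the sampling lemma strictly below $M$ and the band starting at $M$ neither double count nor omit the order $M$. One must also check that the $\tfrac12$ terms match the claimed $\tfrac{13}8$, and verify the convexity claim $\eta_t''\ge\eta_t''(0)$ that feeds the improved $\kappa$.
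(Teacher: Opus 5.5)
Your proposal is correct and follows the paper's proof essentially step for step: the same core/band/outer split at the integer cut $M$, Lemma~\ref{lem:analyticintegersampling} with $f=f_t$ and $a=\eta_t$ in the core, the cap $c$ in the band, Lemma~\ref{lem:outerfloors} on the two outer grids, and the same bookkeeping of the constants $1/2+1/2+5/8=13/8$ plus $\tau_\delta$. The only cosmetic difference is in the curvature bound $\eta_t''\ge 23/(24\pi t^3)$: you obtain it because the second derivatives of $(1-s^2)^{-3/2}$ and $(1-s^2)^{-1/2}$ increase in $s$, whereas the paper writes out $\eta_t''(x)=(23t^4+86t^2x^2-4x^4)/\bigl(24\pi(t^2-x^2)^{7/2}\bigr)$ and bounds it directly.
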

\begin{proof}
The exact count in Lemma~\ref{lem:exactphasecount}, the enclosures
\eqref{eq:phaseinput}, and Lemma~\ref{lem:analyticintegercap} give
\[
 n_v(t^2)\ge\lfloor f_t(v)-\eta_t(v)\rfloor+1\quad(v<M),
 \qquad n_v(t^2)\ge f_t(v)-c\quad(M\le v\le t).
\]
The band consists of positive integer orders, with multiplicity two;
its weighted count is exactly $B$. The core baseline is $f_t+1/2$,
so the band loses at most $(1/2+c)B$.
The function $q=f_t-\eta_t$ has $q'(0)=0$.
Lemma~\ref{lem:actionquadrature} gives
$-f_t''\ge\delta/(\pi U)$ and $f_t''<0$ on $[0,M]$.
For the error, direct differentiation gives
\[
 \eta_t''(x)=
 \frac{23t^4+86t^2x^2-4x^4}{24\pi(t^2-x^2)^{7/2}}
 \ge\frac{23}{24\pi t^3},\qquad 0\le x<t;
\]
the numerator is at least $23t^4$ and the denominator is at most
$24\pi t^7$. Thus
\[
 -q''=-f_t''+\eta_t''\ge
 \frac\delta{\pi U}+\frac{23}{24\pi t^3}=\kappa.
\]
Apply Lemma~\ref{lem:analyticintegersampling} with $f=f_t$ and
$a=\eta_t$. Since $\eta_t(0)\ge0$, the loss from the orders $v<M$
relative to their baseline $f_t(v)+1/2$ is at most
\[
 2I_{t,M}+\frac2{3\sqrt\kappa}
       +f_t(0)-f_t(M)+\frac12.
\]

On $(t,U]$, the outer convex floor-sum argument in
Lemma~\ref{lem:outerfloors} gives an action gain of
$N_o/4-f_t(t)-(3/4)U^{1/3}-5/8$.
The core has gain $N_c/2$ before the band loss, and
\[
 \frac{N_c}2+\frac{N_o}4
       \ge\frac{t+U-1}2.
\]
By Lemma~\ref{lem:actionquadrature}, the full action sum is at least
the Weyl term minus $\tau_\delta$. Collect the losses. The constants are
$1/2+1/2+5/8=13/8$, and
\[
 \frac{t+U}2-f_t(0)
 =\left(\frac12-\frac1\pi\right)U
       +\left(\frac12+\frac1\pi\right)t.
\]
The positive term $f_t(M)-f_t(t)$ is retained. All endpoint grid
values and all angular multiplicities have been accounted for.
\end{proof}

\begin{lemma}[Two smaller uniform high-frequency regions]\label{lem:analytic23_28}
Put $\rho=r/R$ and $U=R\sqrt\Lambda$.
The strict ordinary Neumann P\'olya counting inequality holds in
either of the regions
\[
 \frac12\le\rho\le\frac45,\quad U\ge23,
 \qquad\text{or}\qquad
 \frac3{10}\le\rho\le\frac45,\quad U\ge28.
\]
These are sufficient frequency regions.
\end{lemma}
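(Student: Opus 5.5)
The plan is to apply Proposition~\ref{prop:analyticphase} after normalizing the inner radius to one, so that $R=1/\rho$, $t=\rho U$, $\delta=R-1$, and to show that its right-hand side is positive throughout both regions. First I would handle $t<2$. In the first region $t\ge U/2\ge23/2$, and in the second $t\ge 3U/10\ge 42/5$, so $t\ge2$ holds automatically and the proposition applies directly; the disk-bracketing branch of Theorem~\ref{thm:outereighty} is not needed. The free parameter I would fix is the cut $M$, taking $M=\lfloor t\rfloor-1$ or $\lfloor t\rfloor-2$. This keeps the band weight $B=2(\lfloor t\rfloor-M+1)$ at $4$ or $6$, while $I_{t,M}$ stays controlled because $t-M\ge1$.

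Next I would bound each loss term explicitly, using the monotone quantities in $\rho$ and $U$. The curvature term satisfies $2/(3\sqrt\kappa)\le\frac23\sqrt{\pi U/\delta}=\frac23\sqrt{\pi U\rho/(1-\rho)}$. On $\rho\le4/5$ this is at most $\frac43\sqrt{\pi U}$, and on $\rho\le1/2$ it is smaller. This is already better than the $\sqrt{\pi Rt/\delta}$ of Proposition~\ref{prop:allratioestimate}. The $t^{-3}$ part of $\kappa$ can simply be dropped. For $I_{t,M}$ I would use $M/\sqrt{t^2-M^2}\le\sqrt{t/2}$ when $t-M\ge1$ and $\arcsin\le\pi/2$. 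This gives $2I_{t,M}\le\frac{7}{12\pi}\sqrt{t/2}+\frac1{12}$, or sharper with $t-M\ge2$. The remaining constants are $(1/2+13/50)B$, $\frac34U^{1/3}$, $13/8$, and $\tau_\delta<\arccos(\rho)/(2\pi)$. The positive term $f_t(M)-f_t(t)$ should be retained. Since $f_t$ is concave on $[0,t]$ with slope at $t$ equal to $-\arccos(\rho)/\pi$, the term is at least $(t-M)\arccos(\rho)/\pi$, which partly offsets the band loss.

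The leading gain is $a_*U+(1-a_*)t=\bigl(a_*+(1-a_*)\rho\bigr)U$, which increases in $\rho$. The main obstacle is the fact that the losses are not monotone in $\rho$ jointly: the curvature loss grows with $\rho$, and so does the gain. I would therefore split each region into a few subintervals of $\rho$; for the second region, for example, $[3/10,1/2]$ and $[1/2,4/5]$, with possibly further splitting at $2/5$ or $3/5$. On each subinterval I would use the worst endpoint for every term, namely the smallest $\rho$ for the gain and the largest $\rho$ for the curvature and $\tau_\delta$.

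The resulting lower bound has the form $\alpha U-\beta\sqrt U-\frac34U^{1/3}-\gamma$, where $\alpha,\beta,\gamma$ are explicit rationals obtained through bounds such as $157/50<\pi<22/7$. I would check that it is positive at the threshold $U=23$ or $U=28$ using rational bounds on $\sqrt{23}$, $28^{1/3}$, and similar quantities. Monotonicity for larger $U$ then follows because $\alpha-\beta U^{-1/2}/2-\frac14U^{-2/3}>0$ there.

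If the margin at $U=23$ with $\rho$ near $1/2$ turns out too thin, my first fallback would be to keep the exact $t$-dependence of the $\eta$ and $I$ terms rather than the $\sqrt t$ majorants, and to retain $f_t(M)-f_t(t)$ exactly. Once the inclusive count bound is established, strictness for the subthreshold count follows as in the proof of Theorem~\ref{thm:thinall}, since all phase minorants are strict.
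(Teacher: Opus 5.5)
Your plan follows the paper's route: Proposition~\ref{prop:analyticphase} with the cap $13/50$, the cut $M=\lfloor t\rfloor-1$ (so $B=4$), $\kappa\ge\delta/(\pi U)=(1-\rho)/(\pi\rho U)$, the bound $2I_{t,M}\le 2I_{t,t-1}$, a rational check at the threshold, and monotonicity in $U$.

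\textbf{An incorrect step.} Your lower bound for the retained term is reversed. On $[0,t]$ the derivative $f_t'$ decreases from $0$ to $f_t'(t)=-\arccos(\rho)/\pi$. So the slope at $t$ is the steepest one, and concavity gives $f_t(M)-f_t(t)=\int_M^t|f_t'|\,\mathrm dv\le (t-M)\arccos(\rho)/\pi$, not $\ge$. For instance, at $\rho=4/5$, $U=23$ one has $f_t(t-1)-f_t(t)\approx 0.15$, whereas your bound would give $\arccos(4/5)/\pi\approx0.20$. A valid lower bound must use the slope at $M$; alternatively, simply discard this nonnegative term, as the paper does.

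\textbf{Why the argument still closes.} You do not need that term. Your other choices are the exact $a_*$, $\tau_\delta=\arccos(\rho)/(2\pi)$ instead of $1/4$, and the unrounded $\frac{7}{12\pi}\sqrt{t/2}+\frac1{12}$ for $2I_{t,M}$. With these, even keeping the unsharpened $\frac34U^{1/3}$, the bound without the $f_t$ term is still about $0.43$ at $(\rho,U)=(1/2,23)$, $0.35$ at $(4/5,23)$, and $0.26$ at $(3/10,28)$. Drop the option $M=\lfloor t\rfloor-2$, however. Its extra band loss $38/25$ is far larger than these margins, and the smaller $I_{t,M}$ and the longer $f_t$ increment do not repay it.

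\textbf{Comparison with the paper's verification.} The paper rounds more crudely ($\tau_\delta<1/4$, $2I<\frac16\sqrt t+\frac1{12}$, $a_*>9/50$, $1-a_*>9/11$). It compensates by sharpening the outer floor-sum loss to $\frac23U^{1/3}$ via $w_0<\frac89U^{1/3}$. The resulting function $F$ in \eqref{eq:analytic-F} is shown to be concave in $\rho$ on $[3/10,4/5]$, with $F/U$ increasing in $U$, so four rational corner checks finish the proof.

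\textbf{Cost of your subdivision.} Your worst-endpoint subdivision is valid but expensive. The gain changes at rate $(1-a_*)U\approx 19$--$23$ per unit of $\rho$, and near $\rho=4/5$ the curvature loss changes even faster, while the margins are only about a quarter to a half. You would therefore need subintervals of width about $0.02$ near the ends, roughly a dozen checks rather than a few.

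\textbf{A direction slip.} $\tau_\delta=\arccos(\rho)/(2\pi)$ decreases in $\rho$, so its worst endpoint is the smallest $\rho$, not the largest.

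\textbf{Simplest fix.} Check that your own lower bound is concave in $\rho$ on $[3/10,4/5]$. The convex contributions of $-\frac{7}{12\pi}\sqrt{\rho U/2}$ and $-\tau_\delta$ are dominated there by the curvature term. This reduces your argument to the same corner checks as the paper's.
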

\begin{proof}
Apply Proposition~\ref{prop:analyticphase} with
$c=13/50$ and $M=\lfloor t-1\rfloor$, where $t=\rho U$.
Then $M\ge1$, $M<t$, and the band contains exactly two positive
integer orders, so $B=4$.
Since $M\le t-1$, the integral formula gives
\[
 2I_{t,M}\le2I_{t,t-1}<\frac16\sqrt t+\frac1{12}.
\]
Discard the nonnegative term $f_t(M)-f_t(t)$ and the extra positive
term $23/(24\pi t^3)$ in $\kappa$.

The outer floor-sum loss can be sharpened from
$(3/4)U^{1/3}$ to $(2/3)U^{1/3}$ without changing its proof.
Indeed, its quarter-height width satisfies
\[
 w_0\le\left(\frac{9\pi^2}{128}\right)^{1/3}U^{1/3}
       <\frac89 U^{1/3}.
\]
For the final comparison, $\pi<22/7$ gives
$9\pi^2/128<1089/1568<512/729=(8/9)^3$.
The two outer grids together have loss $(3/4)w_0$.

Using $a_*>9/50$, $1-a_*>9/11$, $\pi<22/7$, and
$\tau_\delta<1/4$, we have the explicit lower excess
\begin{equation}\label{eq:analytic-F}
 \begin{aligned}
 F(\rho,U)={}&\left(\frac9{50}+\frac{9\rho}{11}\right)U
       -\frac23\sqrt{\frac{22\rho U}{7(1-\rho)}}
       -\frac16\sqrt{\rho U}
       -\frac23U^{1/3}-\frac{2999}{600}.
 \end{aligned}
\end{equation}
The constant is
$2999/600=13/8+1/4+1/12+4(1/2+13/50)$.
The spectral excess is strictly greater than $F$.
Figure~\ref{fig:analytic-F} illustrates its two threshold sections.

For fixed $U$, this function is concave on $[3/10,4/5]$.
To check this directly, put $h(\rho)=\sqrt{\rho/(1-\rho)}$.
Then
\[
 h''(\rho)=\frac{4\rho-1}{4\rho^{3/2}(1-\rho)^{5/2}},
\]
and therefore
\[
 F_{\rho\rho}
 =\frac{\sqrt U}{4\rho^{3/2}}
 \left[-\frac23\sqrt{\frac{22}{7}}
       \frac{4\rho-1}{(1-\rho)^{5/2}}+\frac16\right]<0.
\]
For $\rho\ge3/10$,
$(4\rho-1)/(1-\rho)^{5/2}>1/5$ and $\sqrt{22/7}>5/4$.
The bracket is therefore strictly smaller than
$-(2/3)(5/4)(1/5)+1/6=0$.
For fixed $\rho$, $F(\rho,U)/U$ increases with $U$, because all
subtracted terms have strictly negative powers of $U$ after division.
It remains to check two endpoints in each of the two intervals.
These four explicit rational comparisons establish the claimed
continuous high-frequency regions.

At $U=23$, use
\[
 \frac23\sqrt{506/7}<\frac{851}{150},\quad
 \frac16\sqrt{23/2}<\frac{17}{30},\quad
 \frac23\,23^{1/3}<\frac{40}{21}
\]
for $\rho=1/2$. This yields
\[
 F(1/2,23)>
 \frac{3726}{275}
       -\left(\frac{851}{150}+\frac{17}{30}
                 +\frac{40}{21}+\frac{2999}{600}\right)
 =\frac{18757}{46200}>0.
\]
For $\rho=4/5$ use
\[
 \frac23\sqrt{2024/7}<\frac{1701}{150},\qquad
 \frac16\sqrt{92/5}<\frac{43}{60}.
\]
Then
\[
 F(4/5,23)>
 \frac{10557}{550}
       -\left(\frac{1701}{150}+\frac{43}{60}
                 +\frac{40}{21}+\frac{2999}{600}\right)
 =\frac{10847}{46200}>0.
\]

At $U=28$ and $\rho=3/10$, use
\[
 \frac23\sqrt{264/7}<\frac{41}{10},\quad
 \frac16\sqrt{42/5}<\frac{29}{60},\quad
 \frac23\,28^{1/3}<\frac{31}{15}.
\]
This gives
\[
 F(3/10,28)>
 \frac{3276}{275}
       -\left(\frac{41}{10}+\frac{29}{60}
                 +\frac{31}{15}+\frac{2999}{600}\right)
 =\frac{349}{1320}>0.
\]
At $U=28$ and $\rho=4/5$, use
\[
 \frac23\sqrt{352}<\frac{188}{15},\qquad
 \frac16\sqrt{112/5}<\frac45.
\]
Consequently
\[
 F(4/5,28)>
 \frac{6426}{275}
       -\left(\frac{188}{15}+\frac45
                 +\frac{31}{15}+\frac{2999}{600}\right)
 =\frac{3919}{1320}>0.
\]
Every displayed square-root comparison follows by squaring positive
rational numbers, and each cube-root comparison by cubing them.
Concavity and monotonicity finish both claimed regions.
\end{proof}

\begin{figure}[htbp]
\centering
\begin{tikzpicture}[x=18cm,y=.93cm,font=\small]
  \draw[->] (.285,0)--(.825,0) node[right] {$\rho$};
  \draw[->] (.3,-.12)--(.3,4.36) node[above] {$F(\rho,U)$};
  \foreach \xx/\lab in {.3/{3/10},.4/{2/5},.5/{1/2},.6/{3/5},.7/{7/10},.8/{4/5}} {\draw (\xx,0)--(\xx,-.07) node[below] {$\lab$};}
  \foreach \yy in {1,2,3,4} {\draw (.3,\yy)--(.296,\yy) node[left] {$\yy$};}
\draw[annBlue,thick] (0.5000000,0.4215850) -- (0.5037500,0.4473666) -- (0.5075000,0.4728299) -- (0.5112500,0.4979674) -- (0.5150000,0.5227716) -- (0.5187500,0.5472344) -- (0.5225000,0.5713479) -- (0.5262500,0.5951038) -- (0.5300000,0.6184937) -- (0.5337500,0.6415089) -- (0.5375000,0.6641405) -- (0.5412500,0.6863795) -- (0.5450000,0.7082166) -- (0.5487500,0.7296421) -- (0.5525000,0.7506463) -- (0.5562500,0.7712190) -- (0.5600000,0.7913499) -- (0.5637500,0.8110283) -- (0.5675000,0.8302433) -- (0.5712500,0.8489837) -- (0.5750000,0.8672378) -- (0.5787500,0.8849938) -- (0.5825000,0.9022394) -- (0.5862500,0.9189620) -- (0.5900000,0.9351486) -- (0.5937500,0.9507858) -- (0.5975000,0.9658598) -- (0.6012500,0.9803563) -- (0.6050000,0.9942607) -- (0.6087500,1.0075579) -- (0.6125000,1.0202321) -- (0.6162500,1.0322673) -- (0.6200000,1.0436468) -- (0.6237500,1.0543534) -- (0.6275000,1.0643691) -- (0.6312500,1.0736757) -- (0.6350000,1.0822540) -- (0.6387500,1.0900844) -- (0.6425000,1.0971463) -- (0.6462500,1.1034187) -- (0.6500000,1.1088797) -- (0.6537500,1.1135064) -- (0.6575000,1.1172753) -- (0.6612500,1.1201621) -- (0.6650000,1.1221412) -- (0.6687500,1.1231863) -- (0.6725000,1.1232701) -- (0.6762500,1.1223640) -- (0.6800000,1.1204384) -- (0.6837500,1.1174626) -- (0.6875000,1.1134044) -- (0.6912500,1.1082304) -- (0.6950000,1.1019059) -- (0.6987500,1.0943945) -- (0.7025000,1.0856583) -- (0.7062500,1.0756579) -- (0.7100000,1.0643519) -- (0.7137500,1.0516972) -- (0.7175000,1.0376486) -- (0.7212500,1.0221591) -- (0.7250000,1.0051792) -- (0.7287500,0.9866571) -- (0.7325000,0.9665386) -- (0.7362500,0.9447668) -- (0.7400000,0.9212819) -- (0.7437500,0.8960211) -- (0.7475000,0.8689186) -- (0.7512500,0.8399049) -- (0.7550000,0.8089070) -- (0.7587500,0.7758478) -- (0.7625000,0.7406462) -- (0.7662500,0.7032166) -- (0.7700000,0.6634684) -- (0.7737500,0.6213059) -- (0.7775000,0.5766280) -- (0.7812500,0.5293273) -- (0.7850000,0.4792902) -- (0.7887500,0.4263959) -- (0.7925000,0.3705162) -- (0.7962500,0.3115148) -- (0.8000000,0.2492463);
\draw[annGreen,thick] (0.3000000,0.3128249) -- (0.3062500,0.3899822) -- (0.3125000,0.4669881) -- (0.3187500,0.5438203) -- (0.3250000,0.6204567) -- (0.3312500,0.6968754) -- (0.3375000,0.7730547) -- (0.3437500,0.8489727) -- (0.3500000,0.9246077) -- (0.3562500,0.9999380) -- (0.3625000,1.0749416) -- (0.3687500,1.1495966) -- (0.3750000,1.2238808) -- (0.3812500,1.2977719) -- (0.3875000,1.3712474) -- (0.3937500,1.4442842) -- (0.4000000,1.5168591) -- (0.4062500,1.5889487) -- (0.4125000,1.6605288) -- (0.4187500,1.7315749) -- (0.4250000,1.8020622) -- (0.4312500,1.8719651) -- (0.4375000,1.9412574) -- (0.4437500,2.0099125) -- (0.4500000,2.0779027) -- (0.4562500,2.1451999) -- (0.4625000,2.2117751) -- (0.4687500,2.2775984) -- (0.4750000,2.3426390) -- (0.4812500,2.4068651) -- (0.4875000,2.4702439) -- (0.4937500,2.5327413) -- (0.5000000,2.5943222) -- (0.5062500,2.6549502) -- (0.5125000,2.7145875) -- (0.5187500,2.7731948) -- (0.5250000,2.8307314) -- (0.5312500,2.8871547) -- (0.5375000,2.9424206) -- (0.5437500,2.9964830) -- (0.5500000,3.0492938) -- (0.5562500,3.1008029) -- (0.5625000,3.1509577) -- (0.5687500,3.1997035) -- (0.5750000,3.2469826) -- (0.5812500,3.2927348) -- (0.5875000,3.3368969) -- (0.5937500,3.3794024) -- (0.6000000,3.4201817) -- (0.6062500,3.4591610) -- (0.6125000,3.4962632) -- (0.6187500,3.5314065) -- (0.6250000,3.5645049) -- (0.6312500,3.5954672) -- (0.6375000,3.6241973) -- (0.6437500,3.6505931) -- (0.6500000,3.6745465) -- (0.6562500,3.6959428) -- (0.6625000,3.7146601) -- (0.6687500,3.7305686) -- (0.6750000,3.7435302) -- (0.6812500,3.7533973) -- (0.6875000,3.7600125) -- (0.6937500,3.7632074) -- (0.7000000,3.7628016) -- (0.7062500,3.7586016) -- (0.7125000,3.7503993) -- (0.7187500,3.7379709) -- (0.7250000,3.7210754) -- (0.7312500,3.6994522) -- (0.7375000,3.6728196) -- (0.7437500,3.6408722) -- (0.7500000,3.6032784) -- (0.7562500,3.5596773) -- (0.7625000,3.5096754) -- (0.7687500,3.4528426) -- (0.7750000,3.3887075) -- (0.7812500,3.3167525) -- (0.7875000,3.2364075) -- (0.7937500,3.1470427) -- (0.8000000,3.0479607);
\fill[annBlue] (0.5,0.4215850) circle (1.5pt);
\fill[annBlue] (0.8,0.2492463) circle (1.5pt);
\fill[annGreen] (0.3,0.3128249) circle (1.5pt);
\fill[annGreen] (0.8,3.0479607) circle (1.5pt);
  \node[annBlue,above] at (.63,1.07) {$U=23$};
  \node[annGreen,above] at (.65,3.66) {$U=28$};
\end{tikzpicture}
\caption{The lower-excess function $F$ in
\eqref{eq:analytic-F}, on the two radius intervals used in
Lemma~\ref{lem:analytic23_28}.
Concavity in $\rho$ reduces positivity to the indicated endpoints;
the increase of $F(\rho,U)/U$ extends it to every larger $U$.
The endpoint signs follow from the rational comparisons in the proof.}
\label{fig:analytic-F}
\end{figure}

\begin{lemma}[A simpler integral-error bound]\label{lem:analyticintegralerror}
For every $t\ge2$ and $0\le M\le t-1$,
\[
 2I_{t,M}<\frac{\sqrt t}{7}.
\]
\end{lemma}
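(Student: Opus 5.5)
We first reduce to the endpoint $M=t-1$ by monotonicity. The integrand $\eta_t$ is positive on $[0,t)$, so $I_{t,M}=\int_0^M\eta_t$ increases with $M$. It therefore suffices to bound
\[
 2I_{t,t-1}=\frac{7(t-1)/\sqrt{2t-1}+2\arcsin(1-1/t)}{12\pi}.
\]
The two terms are handled separately, using only elementary inequalities.

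For the first term we use $(t-1)/\sqrt{2t-1}\le\sqrt{t/2}$, which was already used in Theorem~\ref{thm:outereighty}. Squaring shows it is equivalent to $2(t-1)^2\le t(2t-1)$, that is, to $2-3t\le0$, which holds for $t\ge2$. This contributes at most
\[
 \frac{7\sqrt t}{12\pi\sqrt2}.
\]
For the second term, a crude bound $\arcsin\le\pi/2$ leaves a constant $1/12$. That constant must be absorbed into $\sqrt t$ times the spare coefficient, so we express it as $\sqrt t/(12\sqrt2)$ using $\sqrt t\ge\sqrt2$. The total is then at most
\[
 \sqrt t\left(\frac{7}{12\sqrt2\,\pi}+\frac1{12\sqrt2}\right)
 =\frac{\sqrt t}{12\sqrt2}\left(\frac7\pi+1\right).
\]
The bound $\pi>3$ makes the last factor at most $10/3$, so the coefficient is at most $10/(36\sqrt2)=5/(18\sqrt2)\approx0.196$. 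This exceeds $1/7\approx0.1429$, so the crude route fails. The obstacle is getting the constants below $1/7$ uniformly in $t\ge2$.

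To fix this, we keep a sharper form of the first term. We write $(t-1)/\sqrt{2t-1}=\sqrt{t/2}\cdot\sqrt{2(t-1)^2/(t(2t-1))}$. The first term coefficient is $7/(12\sqrt2\pi)\approx0.1313$, which already lies below $1/7$ with margin $\approx0.0116$. That margin times $\sqrt t$ must cover $\arcsin(1-1/t)/(6\pi)\le1/12$, which requires $\sqrt t\ge7.2$, i.e.\ $t\gtrsim52$. So we split into cases.

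For large $t$, say $t\ge64$, the inequality $0.0116\cdot8>1/12$ settles it. This needs a rational certification, for instance $\pi>157/50$ giving $7/(12\sqrt2\pi)<0.1314$, and then $(1/7-0.1314)\cdot8>1/12$.

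For $2\le t\le64$ we instead bound the whole function $\Phi(t)=2I_{t,t-1}/\sqrt t$ directly. Near $t=2$ it is small: $\Phi(2)=(7/\sqrt3+\pi/3)/(12\pi\sqrt2)\approx0.096$. As $t\to\infty$, $\Phi$ increases towards $0.1313$. So we show $\Phi$ is increasing, or else bound it on a few subintervals using monotone pieces. The map $t\mapsto(t-1)/\sqrt{t(2t-1)}$ is increasing, with limit $1/\sqrt2$. The term $\arcsin(1-1/t)/\sqrt t$ can be bounded on $[2^k,2^{k+1}]$ by $(\pi/2)/\sqrt{2^k}$, or by its value at the right endpoint of the arcsin factor. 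A handful of dyadic blocks should suffice, each checked by a rational comparison.

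The main difficulty is that the margin is thin. At moderate $t$, roughly $t\approx20$, we have $\Phi\approx0.124+0.0269\cdot1.35\approx$, which needs care. If the block estimates are too lossy, we will instead use $\arcsin(1-1/t)\le\pi/2-\sqrt{2/t}$ to recover the needed room.
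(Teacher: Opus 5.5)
There is a genuine gap: the range $2\le t\le64$ is not actually proved, and the route you sketch for it fails. Your reduction to $M=t-1$, the closed form of $2I_{t,t-1}$, and the case $t\ge64$ are fine. (For the rational certification at $t\ge64$ you need a lower bound like $\sqrt2>1.414$; $\sqrt2>7/5$ is too weak.)

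First, the claim that $\Phi(t)=2I_{t,t-1}/\sqrt t$ increases towards $7/(12\sqrt2\pi)\approx0.1313$ is false. $\Phi$ overshoots its limit and reaches about $0.1412$ near $t\approx17$--$18$. So the true margin to $1/7\approx0.1429$ is only about $0.0016$, and it occurs in the interior of your range.

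Second, with that margin dyadic blocks do not work. On $[a,b]$, bound the increasing factor $(t-1)/\sqrt{t(2t-1)}$ by its value at $b$, and $\arcsin(1-1/t)/\sqrt t$ by $\arcsin(1-1/b)/\sqrt a$. This gives roughly $0.1469$, $0.1479$, $0.1457$ on $[4,8]$, $[8,16]$, $[16,32]$. You would need blocks of length about $2$ throughout $8\le t\le20$, each with its own arcsin enclosure.

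Third, your fallback $\arcsin(1-1/t)\le\pi/2-\sqrt{2/t}$ does not help on its own. If the first term is still bounded by its limit, maximizing $\frac1{6\pi}\bigl(\frac\pi2\sqrt y-\sqrt2\,y\bigr)$ over $y=1/t$ gives only $\Phi\le\frac{7}{12\sqrt2\pi}+\frac{\pi}{96\sqrt2}\approx0.1544$. The two terms have to be coupled.

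The missing ingredient is a quantitative $O(1/t)$ deficit for the first term. With $y=1/t\in(0,1/2]$ one has $(t-1)/\sqrt{2t-1}=\sqrt{t/2}\,\phi(y)$, where $\phi(y)=(1-y)(1-y/2)^{-1/2}$. This $\phi$ is concave with $\phi(0)=1$ and $\phi'(0)=-3/4$, so its tangent line gives $\phi(y)\le1-\frac34y$.

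Combine this with your fallback in the form $2\sqrt y\arcsin(1-y)\le\pi\sqrt y-2\sqrt2\,y$. With $s=\sqrt y$ this yields $12\pi\,\Phi\le\frac7{\sqrt2}+\pi s-\frac{37}{4\sqrt2}s^2$. Completing the square gives $\Phi\le\frac{7}{12\pi\sqrt2}+\frac{\pi\sqrt2}{444}\approx0.1413<\frac17$ for every $t\ge2$ simultaneously.

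This is the paper's argument. It needs no case split and makes your separate treatment of $t\ge64$ unnecessary.
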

\begin{proof}
Monotonicity of the integral in its upper endpoint reduces this to
$M=t-1$. Put $y=1/t\le1/2$.
Put $\phi(y)=(1-y)(1-y/2)^{-1/2}$. Its derivatives satisfy
\[
 \phi(0)=1,\qquad \phi'(0)=-\frac34,\qquad
 \phi''(y)=\frac{y-5}{16(1-y/2)^{5/2}}<0.
\]
Its tangent at zero therefore gives
$\phi(y)\le1-3y/4$ on $[0,1/2]$;
see Figure~\ref{fig:analytic-integral-tangent}.
Furthermore,
\[
 \arccos(1-y)=\int_0^y\frac{\mathrm ds}{\sqrt{2s-s^2}}
       \ge\int_0^y\frac{\mathrm ds}{\sqrt{2s}}=\sqrt{2y},
\]
so $\arcsin(1-y)\le\pi/2-\sqrt{2y}$.
The exact formula in Proposition~\ref{prop:analyticphase} now gives
\begin{equation}\label{eq:analytic-integral-square}
 \begin{aligned}
 \frac{2I_{t,t-1}}{\sqrt t}
 &=\frac1{12\pi}\left[
       \frac7{\sqrt2}\phi(y)+2\sqrt y\arcsin(1-y)\right]\\
 &\le\frac1{12\pi}\left[
       \frac7{\sqrt2}+\pi\sqrt y-\frac{37}{4\sqrt2}y\right].
 \end{aligned}
\end{equation}
Completing the square, with $s=\sqrt y$, gives
\[
 \pi s-\frac{37}{4\sqrt2}s^2
 =\frac{\pi^2\sqrt2}{37}
   -\frac{37}{4\sqrt2}\left(s-\frac{2\pi\sqrt2}{37}\right)^2
 \le\frac{\pi^2\sqrt2}{37}.
\]
Consequently
\[
 \frac{2I_{t,t-1}}{\sqrt t}
 \le\frac7{12\pi\sqrt2}+\frac{\pi\sqrt2}{444}
  <\frac{125}{942}+\frac{55}{5439}<\frac17.
\]
The rational bounds use $157/50<\pi<22/7$ and
$7/5<\sqrt2<10/7$.
For the final comparison,
$125/942+55/5439=243895/1707846$ and
$7\cdot243895<1707846$.
\end{proof}

\begin{figure}[htbp]
\centering
\begin{tikzpicture}[x=17cm,y=6.6cm,font=\small]
  \draw[->] (0,.54)--(.535,.54) node[right] {$y$};
  \draw[->] (0,.54)--(0,1.06) node[above] {$\phi(y)$};
  \foreach \xx/\lab in {.1/{1/10},.2/{1/5},.3/{3/10},.4/{2/5},.5/{1/2}} {\draw (\xx,.54)--(\xx,.533) node[below] {$\lab$};}
  \foreach \yy in {.6,.8,1} {\draw (0,\yy)--(-.004,\yy) node[left] {$\yy$};}
\draw[annBlue,thick] (0.0000000,1.0000000) -- (0.0062500,0.9953064) -- (0.0125000,0.9906005) -- (0.0187500,0.9858822) -- (0.0250000,0.9811515) -- (0.0312500,0.9764082) -- (0.0375000,0.9716523) -- (0.0437500,0.9668838) -- (0.0500000,0.9621024) -- (0.0562500,0.9573082) -- (0.0625000,0.9525010) -- (0.0687500,0.9476807) -- (0.0750000,0.9428473) -- (0.0812500,0.9380007) -- (0.0875000,0.9331407) -- (0.0937500,0.9282674) -- (0.1000000,0.9233805) -- (0.1062500,0.9184801) -- (0.1125000,0.9135659) -- (0.1187500,0.9086379) -- (0.1250000,0.9036961) -- (0.1312500,0.8987403) -- (0.1375000,0.8937704) -- (0.1437500,0.8887863) -- (0.1500000,0.8837879) -- (0.1562500,0.8787751) -- (0.1625000,0.8737479) -- (0.1687500,0.8687060) -- (0.1750000,0.8636495) -- (0.1812500,0.8585781) -- (0.1875000,0.8534918) -- (0.1937500,0.8483905) -- (0.2000000,0.8432740) -- (0.2062500,0.8381423) -- (0.2125000,0.8329953) -- (0.2187500,0.8278327) -- (0.2250000,0.8226546) -- (0.2312500,0.8174607) -- (0.2375000,0.8122511) -- (0.2437500,0.8070254) -- (0.2500000,0.8017837) -- (0.2562500,0.7965258) -- (0.2625000,0.7912516) -- (0.2687500,0.7859610) -- (0.2750000,0.7806538) -- (0.2812500,0.7753298) -- (0.2875000,0.7699891) -- (0.2937500,0.7646314) -- (0.3000000,0.7592566) -- (0.3062500,0.7538646) -- (0.3125000,0.7484552) -- (0.3187500,0.7430283) -- (0.3250000,0.7375838) -- (0.3312500,0.7321215) -- (0.3375000,0.7266413) -- (0.3437500,0.7211430) -- (0.3500000,0.7156264) -- (0.3562500,0.7100916) -- (0.3625000,0.7045382) -- (0.3687500,0.6989661) -- (0.3750000,0.6933752) -- (0.3812500,0.6877654) -- (0.3875000,0.6821364) -- (0.3937500,0.6764881) -- (0.4000000,0.6708204) -- (0.4062500,0.6651330) -- (0.4125000,0.6594259) -- (0.4187500,0.6536988) -- (0.4250000,0.6479516) -- (0.4312500,0.6421841) -- (0.4375000,0.6363961) -- (0.4437500,0.6305875) -- (0.4500000,0.6247580) -- (0.4562500,0.6189076) -- (0.4625000,0.6130359) -- (0.4687500,0.6071429) -- (0.4750000,0.6012283) -- (0.4812500,0.5952919) -- (0.4875000,0.5893336) -- (0.4937500,0.5833531) -- (0.5000000,0.5773503);
\draw[annOrange,thick,dashed] (0.0000000,1.0000000) -- (0.0062500,0.9953125) -- (0.0125000,0.9906250) -- (0.0187500,0.9859375) -- (0.0250000,0.9812500) -- (0.0312500,0.9765625) -- (0.0375000,0.9718750) -- (0.0437500,0.9671875) -- (0.0500000,0.9625000) -- (0.0562500,0.9578125) -- (0.0625000,0.9531250) -- (0.0687500,0.9484375) -- (0.0750000,0.9437500) -- (0.0812500,0.9390625) -- (0.0875000,0.9343750) -- (0.0937500,0.9296875) -- (0.1000000,0.9250000) -- (0.1062500,0.9203125) -- (0.1125000,0.9156250) -- (0.1187500,0.9109375) -- (0.1250000,0.9062500) -- (0.1312500,0.9015625) -- (0.1375000,0.8968750) -- (0.1437500,0.8921875) -- (0.1500000,0.8875000) -- (0.1562500,0.8828125) -- (0.1625000,0.8781250) -- (0.1687500,0.8734375) -- (0.1750000,0.8687500) -- (0.1812500,0.8640625) -- (0.1875000,0.8593750) -- (0.1937500,0.8546875) -- (0.2000000,0.8500000) -- (0.2062500,0.8453125) -- (0.2125000,0.8406250) -- (0.2187500,0.8359375) -- (0.2250000,0.8312500) -- (0.2312500,0.8265625) -- (0.2375000,0.8218750) -- (0.2437500,0.8171875) -- (0.2500000,0.8125000) -- (0.2562500,0.8078125) -- (0.2625000,0.8031250) -- (0.2687500,0.7984375) -- (0.2750000,0.7937500) -- (0.2812500,0.7890625) -- (0.2875000,0.7843750) -- (0.2937500,0.7796875) -- (0.3000000,0.7750000) -- (0.3062500,0.7703125) -- (0.3125000,0.7656250) -- (0.3187500,0.7609375) -- (0.3250000,0.7562500) -- (0.3312500,0.7515625) -- (0.3375000,0.7468750) -- (0.3437500,0.7421875) -- (0.3500000,0.7375000) -- (0.3562500,0.7328125) -- (0.3625000,0.7281250) -- (0.3687500,0.7234375) -- (0.3750000,0.7187500) -- (0.3812500,0.7140625) -- (0.3875000,0.7093750) -- (0.3937500,0.7046875) -- (0.4000000,0.7000000) -- (0.4062500,0.6953125) -- (0.4125000,0.6906250) -- (0.4187500,0.6859375) -- (0.4250000,0.6812500) -- (0.4312500,0.6765625) -- (0.4375000,0.6718750) -- (0.4437500,0.6671875) -- (0.4500000,0.6625000) -- (0.4562500,0.6578125) -- (0.4625000,0.6531250) -- (0.4687500,0.6484375) -- (0.4750000,0.6437500) -- (0.4812500,0.6390625) -- (0.4875000,0.6343750) -- (0.4937500,0.6296875) -- (0.5000000,0.6250000);
  \fill[black] (0,1) circle (1.3pt);
  \node[annOrange,above] at (.34,.745) {$1-3y/4$};
  \node[annBlue,below] at (.24,.65) {$\phi(y)=(1-y)/\sqrt{1-y/2}$};
\end{tikzpicture}
\caption{The concave factor $\phi(y)$ in the proof of
Lemma~\ref{lem:analyticintegralerror} lies below its tangent at zero.
This tangent bound and $\arccos(1-y)\ge\sqrt{2y}$ give
\eqref{eq:analytic-integral-square}; completing the square then
bounds the integral error uniformly for $0\le y\le1/2$.}
\label{fig:analytic-integral-tangent}
\end{figure}

\begin{lemma}[A uniform threshold of $40$ on the middle radius range]\label{lem:analytic40}
For $1/16\le\rho\le4/5$ and $U\ge40$, the strict ordinary
Neumann P\'olya counting inequality holds.
\end{lemma}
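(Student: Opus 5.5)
Since Lemma~\ref{lem:analytic23_28} already covers $3/10\le\rho\le4/5$ at $U\ge28$, the real work is the range $1/16\le\rho\le3/10$ with $U\ge40$. Even so, I would run a single uniform argument over all of $[1/16,4/5]$. The approach is the same as in Lemma~\ref{lem:analytic23_28}: apply Proposition~\ref{prop:analyticphase} with $c=13/50$, $t=\rho U$, and drop the nonnegative term $f_t(M)-f_t(t)$.

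For small $\rho$ the hypothesis $t\ge2$ may fail: at $\rho=1/16$ and $U=40$ we have $t=5/2$, which is fine, and in general $t=\rho U\ge 5/2$ throughout the range, so the proposition applies. With $M=\lfloor t-1\rfloor\ge1$ we get $B=4$. I would use Lemma~\ref{lem:analyticintegralerror} to replace $2I_{t,M}$ by $\sqrt t/7$, and the sharpened outer loss $(2/3)U^{1/3}$ from the proof of Lemma~\ref{lem:analytic23_28}. For the curvature I would keep only $\kappa\ge\delta/(\pi U)$, which gives $2/(3\sqrt\kappa)\le\frac23\sqrt{\pi\rho U/(1-\rho)}$. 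When $\rho$ is small, it may also be worth retaining the term $23/(24\pi t^3)$ in $\kappa$, but I expect not to need it.

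This produces an explicit lower excess
\[
 F(\rho,U)=\Bigl(a_*+(1-a_*)\rho\Bigr)U-\frac23\sqrt{\frac{22\rho U}{7(1-\rho)}}-\frac17\sqrt{\rho U}-\frac23U^{1/3}-\frac{13}8-\frac14-4\Bigl(\frac12+\frac{13}{50}\Bigr),
\]
with $a_*>9/50$ and $1-a_*>9/11$. The claim is then $F>0$ on the rectangle. Two steps remain. First, $F(\rho,U)/U$ increases in $U$, because every subtracted term carries a negative power of $U$ after division; this reduces the problem to $U=40$. Second, $F(\cdot,40)$ must be shown positive on $[1/16,4/5]$.

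For the second step I would again attempt concavity in $\rho$, as in Lemma~\ref{lem:analytic23_28}. Here concavity is the main obstacle. The function $\sqrt{\rho/(1-\rho)}$ is convex only for $\rho\ge1/4$, while $-\sqrt\rho$ is convex everywhere, so $F$ need not be concave near $\rho=1/16$. My first attempt would split the interval at $\rho=1/4$. On $[1/4,4/5]$ I would check concavity as before and then verify the two endpoints. On $[1/16,1/4]$ I would bound the negative terms monotonically by their values at the right endpoint, using $\sqrt{\rho/(1-\rho)}\le\sqrt{1/3}$ and $\sqrt\rho\le1/2$, while the linear gain is taken at the left endpoint, $9/50\cdot40$. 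That yields roughly $7.2-\frac23\sqrt{22\cdot40/21}-\frac17\sqrt{10}-\frac23\,40^{1/3}-4.92$, which is too close to zero or negative. So I expect this interval will have to be subdivided into a few pieces, with the rational endpoint comparisons redone on each.

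If the margin near $\rho=1/16$ is still insufficient, I would add back the retained term $f_t(M)-f_t(t)$, which is positive and of size about $(t-M)\,|f_t'|$. Failing that, I would use the fact that the low-$\rho$ end is also close to the small-hole regime. Each final check would be a rational comparison obtained by squaring or cubing, as in the earlier lemmas.
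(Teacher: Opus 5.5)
\textbf{There is a genuine gap at the corner $(\rho,U)=(1/16,40)$.} The framework is right: Proposition~\ref{prop:analyticphase} with $c=13/50$, $M=\lfloor t-1\rfloor$ and $B=4$, the bound $2I_{t,M}<\sqrt t/7$, the outer loss $\frac23U^{1/3}$, and monotonicity of the excess divided by $U$. But your claim that $F>0$ on the whole rectangle is false, and the term you expected not to need is exactly the one required.

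At $(\rho,U)=(1/16,40)$ one has $t=5/2$, $\delta=15$ and $\kappa\ge\delta/(\pi U)=3/(8\pi)$. Your losses are then $\frac23\sqrt{176/21}\approx1.930$, $\sqrt{5/2}/7\approx0.226$, $\frac23\,40^{1/3}\approx2.280$ and $983/200=4.915$, about $9.351$ in total. The linear gain $(a_*+(1-a_*)/16)\,40$ is only about $9.313$, and less with the rational bounds $9/50$, $9/11$. Thus $F(1/16,40)\approx-0.04$. This is a pointwise value, so no subdivision, concavity or monotonicity argument can repair it.

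The paper keeps the full curvature $\kappa=\delta/(\pi U)+23/(24\pi t^3)$, which comes from $\eta_t''\ge23/(24\pi t^3)$. At $t=5/2$ this raises $\kappa$ to $1309/(3000\pi)$ and lowers the curvature loss below $9/5$, giving an excess greater than $1/20$ at the corner. Keeping this term requires two further checks that your outline lacks:
\begin{itemize}
\item The strengthened function must still increase in $\rho$ on $[1/16,1/10]$. The paper shows the $\rho$-derivative of the modified square-root term is at most $\sqrt2$ times the unmodified one, since $(1+3x)/(1+x)^{3/2}\le\sqrt2$.
\item Its quotient by $U$ must still increase for $U\ge40$. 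This holds because that loss divided by $U$ is a multiple of $\sqrt U/\sqrt{U^2+C_\rho}$ with $C_\rho=23/(24\rho^2(1-\rho))<40^2$.
\end{itemize}
Your other fallback, keeping $f_t(M)-f_t(t)\ge f_t(t-1)-f_t(t)$, might also give a uniform positive constant near $\rho=1/16$. However, it would need its own uniform lower bound and a new monotonicity-in-$U$ argument, and neither is supplied.

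\textbf{Two smaller points.} First, concavity of $F(\cdot,U)$ fails at the left end of your interval $[1/4,4/5]$. Since $h''(1/4)=0$, you get $F_{\rho\rho}=\sqrt U/(28\rho^{3/2})>0$ at $\rho=1/4$. The paper, as in Lemma~\ref{lem:analytic23_28}, uses concavity only on $[3/10,4/5]$.

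Second, the crude box bounds on the left part, which you found badly negative, are unnecessary. On $[1/10,3/10]$ one has $h'<9/4$, so $\partial_\rho H_0>b_0U-3\sqrt U>0$ for $U\ge40$, and the minimum in $\rho$ is at the left endpoint. The same derivative comparison, with the strengthened curvature, handles $[1/16,1/10]$. Everything then reduces to three checks at $U=40$: $\rho=1/10$ and $\rho=4/5$ with the unmodified function, and the corrected corner $\rho=1/16$.
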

\begin{proof}
Apply Proposition~\ref{prop:analyticphase} with
$M=\lfloor t-1\rfloor$ and $c=13/50$.
Here $t=\rho U\ge5/2$, so its hypotheses hold and the band has
$B=4$. Use Lemma~\ref{lem:analyticintegralerror} for $2I_{t,M}$
and the outer-width improvement from the proof of
Lemma~\ref{lem:analytic23_28}. Retain
$\kappa=\delta/(\pi U)+23/(24\pi t^3)$.
Put $a_0=57/314$ and $b_0=257/314=1-a_0$.
Since $a_*=1/2-1/\pi>a_0$, the linear term satisfies
\[
 a_*U+(1-a_*)t\ge(a_0+b_0\rho)U.
\]
The preceding bounds show that the spectral excess is strictly
greater than
\begin{equation}\label{eq:analytic-H}
 \begin{aligned}
 H(\rho,U)={}&(a_0+b_0\rho)U
 -\frac23\sqrt{\frac{(22/7)\rho U}
                  {1-\rho+23/(24\rho^2U^2)}}
 -\frac{\sqrt{\rho U}}7-\frac23U^{1/3}-\frac{983}{200}.
 \end{aligned}
\end{equation}
Here $983/200=13/8+1/4+4(1/2+13/50)$.
Dropping the positive curvature correction gives a smaller function
\begin{equation}\label{eq:analytic-Hzero}
 H_0(\rho,U)=(a_0+b_0\rho)U
       -\frac23\sqrt{\frac{(22/7)\rho U}{1-\rho}}
       -\frac{\sqrt{\rho U}}7-\frac23U^{1/3}-\frac{983}{200}.
\end{equation}
Figure~\ref{fig:analytic-H} shows both functions at $U=40$.
We prove positivity using $H_0$ on $[1/10,4/5]$ and $H$ on
$[1/16,1/10]$.

\begin{center}\begin{minipage}{\linewidth}
\centering
\begin{tikzpicture}[font=\small]
\begin{scope}[x=7.0cm,y=.29cm]
  \draw[->] (.05,0)--(.835,0) node[right] {$\rho$};
  \draw[->] (.0625,-.5)--(.0625,12.0) node[above] {$H,H_0$};
  \foreach \xx/\lab in {.0625/{1/16},.3/{3/10},.5/{1/2},.8/{4/5}} {\draw (\xx,0)--(\xx,-.27) node[below] {$\lab$};}
  \foreach \yy in {4,8,12} {\draw (.0625,\yy)--(.055,\yy) node[left] {$\yy$};}
\draw[annBlue,thick] (0.0625000,0.0972677) -- (0.0717187,0.2137416) -- (0.0809375,0.3434554) -- (0.0901563,0.4833994) -- (0.0993750,0.6314409) -- (0.1085938,0.7860205) -- (0.1178125,0.9459658) -- (0.1270313,1.1103716) -- (0.1362500,1.2785235) -- (0.1454688,1.4498454) -- (0.1546875,1.6238648) -- (0.1639063,1.8001873) -- (0.1731250,1.9784793) -- (0.1823437,2.1584543) -- (0.1915625,2.3398635) -- (0.2007812,2.5224883) -- (0.2100000,2.7061347) -- (0.2192188,2.8906285) -- (0.2284375,3.0758122) -- (0.2376563,3.2615418) -- (0.2468750,3.4476848) -- (0.2560938,3.6341180) -- (0.2653125,3.8207262) -- (0.2745313,4.0074007) -- (0.2837500,4.1940384) -- (0.2929688,4.3805406) -- (0.3021875,4.5668123) -- (0.3114062,4.7527616) -- (0.3206250,4.9382988) -- (0.3298438,5.1233359) -- (0.3390625,5.3077864) -- (0.3482812,5.4915642) -- (0.3575000,5.6745840) -- (0.3667188,5.8567598) -- (0.3759375,6.0380057) -- (0.3851562,6.2182346) -- (0.3943750,6.3973582) -- (0.4035938,6.5752866) -- (0.4128125,6.7519280) -- (0.4220313,6.9271881) -- (0.4312500,7.1009699) -- (0.4404688,7.2731734) -- (0.4496875,7.4436949) -- (0.4589063,7.6124267) -- (0.4681250,7.7792567) -- (0.4773438,7.9440679) -- (0.4865625,8.1067378) -- (0.4957813,8.2671379) -- (0.5050000,8.4251330) -- (0.5142188,8.5805806) -- (0.5234375,8.7333303) -- (0.5326563,8.8832228) -- (0.5418750,9.0300891) -- (0.5510938,9.1737495) -- (0.5603125,9.3140126) -- (0.5695313,9.4506741) -- (0.5787500,9.5835155) -- (0.5879688,9.7123024) -- (0.5971875,9.8367833) -- (0.6064063,9.9566872) -- (0.6156250,10.0717219) -- (0.6248438,10.1815714) -- (0.6340625,10.2858932) -- (0.6432813,10.3843153) -- (0.6525000,10.4764328) -- (0.6617188,10.5618035) -- (0.6709375,10.6399437) -- (0.6801563,10.7103222) -- (0.6893750,10.7723549) -- (0.6985938,10.8253966) -- (0.7078125,10.8687332) -- (0.7170313,10.9015713) -- (0.7262500,10.9230265) -- (0.7354688,10.9321092) -- (0.7446875,10.9277077) -- (0.7539062,10.9085681) -- (0.7631250,10.8732695) -- (0.7723437,10.8201943) -- (0.7815625,10.7474912) -- (0.7907812,10.6530298) -- (0.8000000,10.5343435);
\draw[annOrange,thick,dashed] (0.0625000,-0.0435117) -- (0.0717187,0.0945305) -- (0.0809375,0.2407220) -- (0.0901563,0.3935377) -- (0.0993750,0.5518341) -- (0.1085938,0.7147261) -- (0.1178125,0.8815114) -- (0.1270313,1.0516204) -- (0.1362500,1.2245827) -- (0.1454688,1.4000037) -- (0.1546875,1.5775476) -- (0.1639063,1.7569249) -- (0.1731250,1.9378834) -- (0.1823437,2.1202008) -- (0.1915625,2.3036795) -- (0.2007812,2.4881420) -- (0.2100000,2.6734276) -- (0.2192188,2.8593896) -- (0.2284375,3.0458932) -- (0.2376563,3.2328132) -- (0.2468750,3.4200328) -- (0.2560938,3.6074423) -- (0.2653125,3.7949377) -- (0.2745313,3.9824201) -- (0.2837500,4.1697946) -- (0.2929688,4.3569696) -- (0.3021875,4.5438563) -- (0.3114062,4.7303681) -- (0.3206250,4.9164199) -- (0.3298438,5.1019279) -- (0.3390625,5.2868091) -- (0.3482812,5.4709806) -- (0.3575000,5.6543595) -- (0.3667188,5.8368627) -- (0.3759375,6.0184061) -- (0.3851562,6.1989046) -- (0.3943750,6.3782714) -- (0.4035938,6.5564182) -- (0.4128125,6.7332543) -- (0.4220313,6.9086867) -- (0.4312500,7.0826192) -- (0.4404688,7.2549527) -- (0.4496875,7.4255841) -- (0.4589063,7.5944063) -- (0.4681250,7.7613077) -- (0.4773438,7.9261717) -- (0.4865625,8.0888760) -- (0.4957813,8.2492922) -- (0.5050000,8.4072853) -- (0.5142188,8.5627129) -- (0.5234375,8.7154243) -- (0.5326563,8.8652600) -- (0.5418750,9.0120508) -- (0.5510938,9.1556165) -- (0.5603125,9.2957654) -- (0.5695313,9.4322924) -- (0.5787500,9.5649784) -- (0.5879688,9.6935881) -- (0.5971875,9.8178690) -- (0.6064063,9.9375489) -- (0.6156250,10.0523343) -- (0.6248438,10.1619078) -- (0.6340625,10.2659251) -- (0.6432813,10.3640124) -- (0.6525000,10.4557624) -- (0.6617188,10.5407306) -- (0.6709375,10.6184301) -- (0.6801563,10.6883268) -- (0.6893750,10.7498327) -- (0.6985938,10.8022983) -- (0.7078125,10.8450048) -- (0.7170313,10.8771531) -- (0.7262500,10.8978521) -- (0.7354688,10.9061049) -- (0.7446875,10.9007909) -- (0.7539062,10.8806459) -- (0.7631250,10.8442370) -- (0.7723437,10.7899322) -- (0.7815625,10.7158631) -- (0.7907812,10.6198790) -- (0.8000000,10.4994884);
  \node[below] at (.43,-1.55) {(a) $U=40$, $1/16\le\rho\le4/5$};
\end{scope}
\begin{scope}[shift={(-1.075cm,0)},x=134cm,y=4.0cm]
  \draw[->] (.061,0)--(.103,0) node[right] {$\rho$};
  \draw[->] (.0625,-.09)--(.0625,.81) node[above] {$H,H_0$};
  \foreach \xx/\lab in {.0625/{1/16},.08/{2/25},.1/{1/10}} {\draw (\xx,0)--(\xx,-.015) node[below] {$\lab$};}
  \foreach \yy in {.2,.4,.6} {\draw (.0625,\yy)--(.0621,\yy) node[left] {$\yy$};}
\draw[annBlue,thick] (0.0625000,0.0972677) -- (0.0629688,0.1028092) -- (0.0634375,0.1083951) -- (0.0639062,0.1140248) -- (0.0643750,0.1196976) -- (0.0648438,0.1254129) -- (0.0653125,0.1311700) -- (0.0657812,0.1369683) -- (0.0662500,0.1428072) -- (0.0667187,0.1486862) -- (0.0671875,0.1546047) -- (0.0676563,0.1605620) -- (0.0681250,0.1665577) -- (0.0685937,0.1725912) -- (0.0690625,0.1786619) -- (0.0695313,0.1847693) -- (0.0700000,0.1909130) -- (0.0704687,0.1970924) -- (0.0709375,0.2033070) -- (0.0714063,0.2095563) -- (0.0718750,0.2158399) -- (0.0723437,0.2221573) -- (0.0728125,0.2285080) -- (0.0732813,0.2348916) -- (0.0737500,0.2413077) -- (0.0742188,0.2477557) -- (0.0746875,0.2542353) -- (0.0751563,0.2607461) -- (0.0756250,0.2672877) -- (0.0760938,0.2738596) -- (0.0765625,0.2804614) -- (0.0770312,0.2870929) -- (0.0775000,0.2937535) -- (0.0779688,0.3004429) -- (0.0784375,0.3071608) -- (0.0789063,0.3139068) -- (0.0793750,0.3206805) -- (0.0798438,0.3274816) -- (0.0803125,0.3343097) -- (0.0807812,0.3411646) -- (0.0812500,0.3480458) -- (0.0817188,0.3549531) -- (0.0821875,0.3618860) -- (0.0826563,0.3688445) -- (0.0831250,0.3758280) -- (0.0835937,0.3828363) -- (0.0840625,0.3898692) -- (0.0845313,0.3969262) -- (0.0850000,0.4040072) -- (0.0854688,0.4111118) -- (0.0859375,0.4182398) -- (0.0864063,0.4253909) -- (0.0868750,0.4325648) -- (0.0873437,0.4397613) -- (0.0878125,0.4469801) -- (0.0882813,0.4542209) -- (0.0887500,0.4614834) -- (0.0892187,0.4687676) -- (0.0896875,0.4760730) -- (0.0901563,0.4833994) -- (0.0906250,0.4907467) -- (0.0910938,0.4981146) -- (0.0915625,0.5055028) -- (0.0920313,0.5129112) -- (0.0925000,0.5203394) -- (0.0929688,0.5277873) -- (0.0934375,0.5352547) -- (0.0939062,0.5427414) -- (0.0943750,0.5502471) -- (0.0948438,0.5577716) -- (0.0953125,0.5653148) -- (0.0957813,0.5728765) -- (0.0962500,0.5804563) -- (0.0967188,0.5880542) -- (0.0971875,0.5956700) -- (0.0976562,0.6033034) -- (0.0981250,0.6109543) -- (0.0985938,0.6186225) -- (0.0990625,0.6263079) -- (0.0995313,0.6340102) -- (0.1000000,0.6417292);
\draw[annOrange,thick,dashed] (0.0625000,-0.0435117) -- (0.0629688,-0.0367193) -- (0.0634375,-0.0299008) -- (0.0639062,-0.0230566) -- (0.0643750,-0.0161871) -- (0.0648438,-0.0092925) -- (0.0653125,-0.0023730) -- (0.0657812,0.0045709) -- (0.0662500,0.0115390) -- (0.0667187,0.0185310) -- (0.0671875,0.0255467) -- (0.0676563,0.0325857) -- (0.0681250,0.0396478) -- (0.0685937,0.0467328) -- (0.0690625,0.0538403) -- (0.0695313,0.0609701) -- (0.0700000,0.0681220) -- (0.0704687,0.0752958) -- (0.0709375,0.0824911) -- (0.0714063,0.0897077) -- (0.0718750,0.0969454) -- (0.0723437,0.1042041) -- (0.0728125,0.1114833) -- (0.0732813,0.1187830) -- (0.0737500,0.1261029) -- (0.0742188,0.1334427) -- (0.0746875,0.1408023) -- (0.0751563,0.1481815) -- (0.0756250,0.1555801) -- (0.0760938,0.1629978) -- (0.0765625,0.1704344) -- (0.0770312,0.1778898) -- (0.0775000,0.1853637) -- (0.0779688,0.1928561) -- (0.0784375,0.2003665) -- (0.0789063,0.2078950) -- (0.0793750,0.2154413) -- (0.0798438,0.2230052) -- (0.0803125,0.2305866) -- (0.0807812,0.2381853) -- (0.0812500,0.2458010) -- (0.0817188,0.2534337) -- (0.0821875,0.2610832) -- (0.0826563,0.2687492) -- (0.0831250,0.2764317) -- (0.0835937,0.2841305) -- (0.0840625,0.2918454) -- (0.0845313,0.2995763) -- (0.0850000,0.3073229) -- (0.0854688,0.3150853) -- (0.0859375,0.3228631) -- (0.0864063,0.3306563) -- (0.0868750,0.3384647) -- (0.0873437,0.3462882) -- (0.0878125,0.3541266) -- (0.0882813,0.3619798) -- (0.0887500,0.3698476) -- (0.0892187,0.3777300) -- (0.0896875,0.3856267) -- (0.0901563,0.3935377) -- (0.0906250,0.4014628) -- (0.0910938,0.4094019) -- (0.0915625,0.4173549) -- (0.0920313,0.4253215) -- (0.0925000,0.4333018) -- (0.0929688,0.4412956) -- (0.0934375,0.4493027) -- (0.0939062,0.4573231) -- (0.0943750,0.4653566) -- (0.0948438,0.4734031) -- (0.0953125,0.4814625) -- (0.0957813,0.4895347) -- (0.0962500,0.4976196) -- (0.0967188,0.5057169) -- (0.0971875,0.5138268) -- (0.0976562,0.5219490) -- (0.0981250,0.5300834) -- (0.0985938,0.5382299) -- (0.0990625,0.5463884) -- (0.0995313,0.5545589) -- (0.1000000,0.5627411);
  \node[annBlue,above] at (.083,.48) {$H(\rho,40)$};
  \node[annOrange,below] at (.086,.30) {$H_0(\rho,40)$};
  \node[below] at (.0825,-.115) {(b) Small-radius detail};
\end{scope}
\begin{scope}[shift={(-19.6cm,-5.2cm)},x=47cm,y=3.3cm]
  \draw[->] (.496,0)--(.61,0) node[right] {$\rho$};
  \draw[->] (.5,0)--(.5,.97) node[above] {$H_0(\rho,22)$};
  \foreach \xx/\lab in {.5/{1/2},.55/{11/20},.6/{3/5}} {\draw (\xx,0)--(\xx,-.018) node[below] {$\lab$};}
  \foreach \yy in {.2,.4,.6,.8} {\draw (.5,\yy)--(.4985,\yy) node[left] {$\yy$};}
\draw[annGreen,thick] (0.5000000,0.1965066) -- (0.5012500,0.2045466) -- (0.5025000,0.2125524) -- (0.5037500,0.2205238) -- (0.5050000,0.2284606) -- (0.5062500,0.2363623) -- (0.5075000,0.2442288) -- (0.5087500,0.2520597) -- (0.5100000,0.2598549) -- (0.5112500,0.2676140) -- (0.5125000,0.2753368) -- (0.5137500,0.2830229) -- (0.5150000,0.2906721) -- (0.5162500,0.2982841) -- (0.5175000,0.3058586) -- (0.5187500,0.3133953) -- (0.5200000,0.3208939) -- (0.5212500,0.3283542) -- (0.5225000,0.3357758) -- (0.5237500,0.3431584) -- (0.5250000,0.3505018) -- (0.5262500,0.3578055) -- (0.5275000,0.3650694) -- (0.5287500,0.3722931) -- (0.5300000,0.3794763) -- (0.5312500,0.3866186) -- (0.5325000,0.3937198) -- (0.5337500,0.4007795) -- (0.5350000,0.4077975) -- (0.5362500,0.4147733) -- (0.5375000,0.4217067) -- (0.5387500,0.4285973) -- (0.5400000,0.4354448) -- (0.5412500,0.4422488) -- (0.5425000,0.4490091) -- (0.5437500,0.4557252) -- (0.5450000,0.4623969) -- (0.5462500,0.4690237) -- (0.5475000,0.4756053) -- (0.5487500,0.4821414) -- (0.5500000,0.4886316) -- (0.5512500,0.4950756) -- (0.5525000,0.5014729) -- (0.5537500,0.5078233) -- (0.5550000,0.5141262) -- (0.5562500,0.5203815) -- (0.5575000,0.5265886) -- (0.5587500,0.5327473) -- (0.5600000,0.5388570) -- (0.5612500,0.5449175) -- (0.5625000,0.5509283) -- (0.5637500,0.5568891) -- (0.5650000,0.5627994) -- (0.5662500,0.5686588) -- (0.5675000,0.5744670) -- (0.5687500,0.5802235) -- (0.5700000,0.5859279) -- (0.5712500,0.5915798) -- (0.5725000,0.5971788) -- (0.5737500,0.6027243) -- (0.5750000,0.6082161) -- (0.5762500,0.6136537) -- (0.5775000,0.6190366) -- (0.5787500,0.6243644) -- (0.5800000,0.6296366) -- (0.5812500,0.6348528) -- (0.5825000,0.6400126) -- (0.5837500,0.6451154) -- (0.5850000,0.6501608) -- (0.5862500,0.6551484) -- (0.5875000,0.6600776) -- (0.5887500,0.6649480) -- (0.5900000,0.6697590) -- (0.5912500,0.6745103) -- (0.5925000,0.6792013) -- (0.5937500,0.6838315) -- (0.5950000,0.6884004) -- (0.5962500,0.6929075) -- (0.5975000,0.6973523) -- (0.5987500,0.7017343) -- (0.6000000,0.7060528);
\fill[annGreen] (0.5,0.1965066) circle (1.4pt);
\fill[annGreen] (0.6,0.7060528) circle (1.4pt);
  \node[below] at (.55,-.13) {(c) The radius interval $[1/2,3/5]$ at $U=22$};
\end{scope}
\end{tikzpicture}
\captionof{figure}{The lower-excess functions in \eqref{eq:analytic-H} and
\eqref{eq:analytic-Hzero}.
Panels (a) and (b) use $U=40$; the second displays the positive
curvature correction used on
$[1/16,1/10]$: $H_0$ is insufficient at $\rho=1/16$, whereas
the proof gives $H(1/16,40)>1/20$.
Panel (c) shows the concave section $H_0(\rho,22)$ used on
$[1/2,3/5]$.
The figures illustrate the analytic positivity and monotonicity arguments.}
\label{fig:analytic-H}
\end{minipage}\end{center}

First consider $1/10\le\rho\le4/5$.
The function $H_0$ is increasing in $\rho$ on $[1/10,3/10]$
and concave on $[3/10,4/5]$.
For the first statement, $h(\rho)=\sqrt{\rho/(1-\rho)}$ has
$h'(\rho)<9/4$ on the first interval: its derivative decreases
up to $1/4$ and then increases, and both endpoint values are
smaller than $9/4$. Hence
\[
 (H_0)_\rho
 >b_0U-\left(\frac{27}{10}+\frac5{21}\right)\sqrt U
 >b_0U-3\sqrt U>0\qquad(U\ge40).
\]
We used $\sqrt{22/7}<9/5$, $\sqrt\rho>3/10$, and $b_0>4/5$.
The second-derivative argument in Lemma~\ref{lem:analytic23_28},
with $1/7$ replacing $1/6$ in the bracket, proves concavity.
Also $H_0/U$ increases in $U$.
It therefore suffices to test $\rho=1/10,4/5$ at $U=40$.
The bounds
\[
 \frac23\sqrt{880/63}<\frac52,\qquad
 \frac23\sqrt{3520/7}<15,\qquad
 \frac23\,40^{1/3}<\frac{23}{10},\qquad
 \frac{\sqrt{32}}7<\frac67
\]
give
\begin{align*}
 H_0(1/10,40)
 &>\frac{1654}{157}
       -\left(\frac52+\frac27+\frac{23}{10}+\frac{983}{200}\right)>0,\\
 H_0(4/5,40)
 &>\frac{5252}{157}
       -\left(15+\frac67+\frac{23}{10}+\frac{983}{200}\right)>0.
\end{align*}

It remains to consider $1/16\le\rho\le1/10$.
For fixed $U$, the strengthened curvature term has derivative
at most $\sqrt2$ times the derivative of the curvature term in $H_0$.
To verify this, write $c_U=23/(24U^2)$,
$x=c_U/[\rho^2(1-\rho)]$, and
\[
 h_{c_U}(\rho)=\sqrt{\frac{\rho}{1-\rho+c_U/\rho^2}}.
\]
Direct differentiation gives
\[
 \frac{h_{c_U}'(\rho)}{h'(\rho)}
       =\frac{1+3(1-\rho)x}{(1+x)^{3/2}}
       \le\frac{1+3x}{(1+x)^{3/2}}\le\sqrt2.
\]
The last function attains its maximum at $x=1$.
Also $h'$ decreases on this interval and
$h'(1/16)=2(16/15)^{3/2}<9/4$.
Using $\sqrt2<10/7$ gives
\[
 H_\rho>b_0U-\frac{29}{7}\sqrt U>0\qquad(U\ge40).
\]
Thus its radius minimum is at $\rho=1/16$.

The normalized function $H/U$ increases in $U\ge40$.
For its only nontrivial term, put
$C_\rho=23/[24\rho^2(1-\rho)]$; after division by $U$, the
curvature loss is a positive constant times
$\sqrt U/\sqrt{U^2+C_\rho}$, which decreases when $U^2>C_\rho$.
Throughout $[1/16,4/5]$,
$C_\rho\le3680/3<40^2$, using $\rho\ge1/16$ and $1-\rho\ge1/5$.
Every other normalized loss also decreases.
At $(\rho,U)=(1/16,40)$, one has $t=5/2$, $\delta=15$, and
\[
 \kappa=\frac1\pi\left(\frac38+\frac{23}{375}\right)
       =\frac{1309}{3000\pi}.
\]
Consequently
\[
 \frac2{3\sqrt\kappa}<\frac95,\qquad
 \frac{\sqrt{5/2}}7<\frac8{35},
\]
and
\[
 H(1/16,40)>
 \frac{5845}{628}
       -\left(\frac95+\frac8{35}+\frac{23}{10}+\frac{983}{200}\right)
       >\frac1{20}>0.
\]
All endpoints and all frequencies in the claimed middle range
are now covered by monotonicity and concavity.
\end{proof}

\section{Proof of Theorem~\ref{thm:allannuli}}\label{sec:allproof}

We now combine the estimates of Section~\ref{sec:uniformphase} with
variational and angular-layer comparisons. The small-hole estimate
and the thin-annulus theorem cover the ends of the radius interval;
the remaining arguments cover the intervening radii. Complete
multi-page tables are collected in Appendix~\ref{sec:analytic-table-data}.
The definitions, reduction to rational inequalities, and representative
calculations are included in the proofs below.

\subsection{Small holes: the 1/16 range}

The outer radius is normalized to one. Write
$N_\rho(\Lambda)=N_{A_{\rho,1}}(\Lambda)$ for the inclusive annular
count, and let $\mathcal N_B(X)$ denote the inclusive Neumann count
on the unit disk at energy $X^2$. Write $Y=\rho X$ for the frequency
on the removed disk.

\begin{lemma}[A disk upper staircase]\label{lem:sixteenth-diskupper}
The inclusive disk count satisfies
\[
 \begin{aligned}
 \mathcal N_B(Y)&=1&&(0\le Y\le9/5),&\quad \mathcal N_B(Y)&\le3&&(Y\le3),\\
 \mathcal N_B(Y)&\le6&&(Y\le4),&\quad \mathcal N_B(Y)&\le10&&(Y\le5).
\end{aligned}
\]
\end{lemma}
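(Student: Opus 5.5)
We prove the four bounds by separation of variables on the unit disk, using the upper phase enclosure \eqref{eq:phaseinput} to bound how many derivative zeros occur below each level.

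\emph{Reduction to derivative zeros.} On the unit disk the Neumann eigenfunctions separate as $J_m(j s)\cos(m\theta)$, $J_m(js)\sin(m\theta)$, and the radial Neumann condition is $J_m'(j)=0$. Hence
\[
 \mathcal N_B(Y)=1+\#\{k:j'_{0,k}\le Y\}+2\sum_{m\ge1}\#\{k:j'_{m,k}\le Y\},
\]
where the leading $1$ is the constant mode. Since $j'_{m,1}>m$ for $m\ge1$, only orders $m<Y$ contribute.

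\emph{Counting zeros through the phase.} The zeros of $J_m'$ are exactly the points where the continuous derivative phase $\varphi_m$ satisfies $\varphi_m\equiv\pi/2\pmod\pi$. By \eqref{eq:phasevelocity}:
\begin{itemize}
\item for $m\ge1$, $\varphi_m$ decreases from $\pi/2$ on $(0,m)$ and then increases;
\item for $m=0$, $\varphi_0$ increases from $\pi/2$, so $j'_{0,k}$ is where $\varphi_0=\pi/2+k\pi$.
\end{itemize}
For $m\ge1$ the phase first returns to $\pi/2$, at the first zero of $J_m'$, only after the minimum at $x=m$; the $k$th zero is where $\varphi_m=\pi/2+(k-1)\pi$. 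Therefore, for $Y>m$,
\[
 \#\{k:j'_{m,k}\le Y\}\le\left\lfloor\frac{\varphi_m(Y)-\pi/2}{\pi}\right\rfloor+1
 \le\left\lfloor\frac{B_m(Y)-\pi/2}{\pi}\right\rfloor+1\quad(m\ge1),
\]
with $B_m$ replaced by the clipped envelope $C_m$ when that is smaller. For $m=0$ the same argument gives the bound without the $+1$. The number of admissible orders is finite in each case: $m\le1$ for $Y\le9/5$, $m\le2$ for $Y\le3$, $m\le3$ for $Y\le4$, and $m\le4$ for $Y\le5$.

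\emph{Rational checks, level by level.} We evaluate $B_m(Y)=\sqrt{Y^2-m^2}-m\arccos(m/Y)+\pi/4+E_m(Y)$ at the relevant endpoints with rational enclosures, as in Lemma~\ref{lem:analyticintegercap}.
\begin{itemize}
\item $Y\le9/5$: we need $j'_{1,1}>9/5$. This is the one place where the phase count ($\varphi_1(9/5)<\pi/2$) is needed, and the envelope $B_1$ may be too coarse there. Instead, we show $J_1'(x)=\tfrac12(J_0(x)-J_2(x))>0$ on $[0,9/5]$ from truncated alternating power series. Order $0$ contributes no positive zero below $9/5$, since $j'_{0,1}=j_{1,1}>3$.
\item $Y\le3$: we need $j'_{2,1}>3$ and $j_{1,1}>3$, again by alternating-series sign checks of $J_2'$ and $J_1$. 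We also need $j'_{1,2}>3$, which follows from $B_1(3)<3\pi/2$. The count is then $1+2=3$.
\item $Y\le4$: we need $j'_{3,1}>4$ by a series sign check, and $j'_{1,2},j'_{2,2},j'_{0,2}>4$ from $B_m(4)<3\pi/2$ (and $B_0(4)<5\pi/2$). The count is at most $1+1+2+2=6$.
\item $Y\le5$: the target $10$ leaves slack of one pair. It suffices to show that order $4$ has no zero below $5$, or else that every order $m\le3$ has at most one zero, together with $j'_{0,2}>5$. The plan is to verify $B_m(5)<3\pi/2$ for $m=1,2,3$ and $B_0(5)<5\pi/2$, which gives at most $1+1+2\cdot3+2=10$ even if order $4$ contributes a pair.
\end{itemize}

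\emph{Main obstacle.} The difficulty is precision near genuine zeros: $j'_{1,2}\approx5.33$ lies close to $5$, and the raw error term $E_m$ is large at low order. If $B_1(5)$ fails to separate, I would fall back on the interlacing $j'_{1,2}>j_{1,1}$ together with a direct sign evaluation of $J_1'$ on $[j_{1,1},5]$ by truncated series with explicit remainder, which is routine but tedious.
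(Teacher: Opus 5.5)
Your argument is correct, but it takes a different route from the paper's. The paper gets the first-root bounds $j'_{1,1}>9/5$, $j'_{2,1}>3$, $j'_{3,1}>4$, $j'_{4,1}>5$ from the Rayleigh sum $\sum_k(j'_{\nu,k})^{-4}=(\nu^2+8\nu+8)/(16\nu^2(\nu+1)^2(\nu+2))$, keeping only its first term. It gets $j_{1,1}>192^{1/4}>3$ from the ordinary Bessel product. It excludes all later roots by the Lorch--Muldoon--Szeg\H{o} theorem: gaps between consecutive derivative zeros decrease to $\pi$, so every gap exceeds $\pi$. That spacing bound is crude. At $Y=5$ it allows two pairs in order one ($j'_{1,3}>9/5+2\pi$), and the budget of $10$ is met only by excluding order four through $(j'_{4,1})^4>4800/7>625$.

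You instead get the first roots from alternating power series, and exclude later roots with the upper envelope $B_m$ from \eqref{eq:phaseinput}, which the paper already imports. That envelope is sharp enough at $Y=5$ to limit orders one to three to one pair each: $B_1(5)\approx4.39$, $B_2(5)\approx3.14$, $B_3(5)\approx2.14$, all below $3\pi/2$, and $B_0(5)\approx5.86<5\pi/2$. So you can afford a pair in order four. Your route needs no external input beyond the phase enclosure, but it requires more explicit numerics; for example $J_2'(3)\approx0.015$ leaves a thin margin. The paper's route is shorter, given its two citations.

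Three details to tighten:
\begin{itemize}
\item At $Y=5$, ``even if order $4$ contributes a pair'' needs a bound for order four. The estimate $B_4(5)\le3+\pi/4+193/648<3\pi/2$ gives at most one zero. In fact $B_4(5)\approx1.51<\pi/2$, so order four is absent.
\item A sign check of $J_2'$ or $J_1$ at the single point $3$, or of $J_3'$ at $4$, only fixes the relevant phase modulo $2\pi$. Either make the series bound uniform on the interval, as you do for $J_1'$ on $[0,9/5]$, or pair the check with $B_2(3)<3\pi/2$, $B_0(3)<5\pi/2$, or $B_3(4)<3\pi/2$. For order three the series is unnecessary, since $B_3(4)\approx1.546<\pi/2$.
\item The fallback in your last paragraph is not needed: $B_1(5)$ stays below $3\pi/2$ by about $0.3$.
\end{itemize}
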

\begin{proof}
We use standard Bessel-zero identities to obtain the four counting
bounds. For real $\nu>0$, the Rayleigh sum in
\cite[(21)]{ChungLeePark} gives
\[
 \sum_{k\ge1}(j'_{\nu,k})^{-4}
 =\frac{\nu^2+8\nu+8}{16\nu^2(\nu+1)^2(\nu+2)}.
\]
The sum in that reference includes both positive and negative zeros;
the displayed positive-zero sum is half of it. We use this identity
only for integer orders $m\ge1$, and treat order zero separately.
The first summand is strictly smaller than the full sum, so
\[
 (j'_{1,1})^4>\frac{192}{17}>(9/5)^4>9,\qquad
 (j'_{2,1})^4>\frac{576}{7}>81,
\]
\[
 (j'_{3,1})^4>\frac{11520}{41}>256,\qquad
 (j'_{4,1})^4>\frac{4800}{7}>625.
\]
Thus the first roots in orders $1,2,3,4$ are respectively
greater than $9/5$, $3$, $4$, and $5$.
Also $j'_{m,1}>m$ for every $m\ge1$
\cite[(17)]{ChungLeePark}; thus orders $m\ge5$ are absent through
frequency five.

Consecutive positive derivative roots in a fixed order $m\ge1$
are more than $\pi$ apart. Indeed,
\cite[Theorem~7.2, (7.6), with $\alpha=1$]{LorchMuldoonSzego1972}
states that the differences between consecutive derivative zeros
strictly decrease once the zeros exceed the absolute value of the
order. Here this includes every positive root, since $j'_{m,1}>m$.
The McMahon expansion \cite[(10.21.20)]{NIST} gives their limit:
\[
 d_{m,k}:=j'_{m,k+1}-j'_{m,k}\searrow\pi.
\]
Strict decrease to this limit implies $d_{m,k}>\pi$ for every $k$.

In angular order zero the positive derivative roots are the
positive roots $j_{1,k}$ of $J_1$. Its ordinary Bessel product \cite[(10.21.15)]{NIST}
gives $\sum_{k\ge1}j_{1,k}^{-4}=1/192$; hence
$j_{1,1}>192^{1/4}>3$.
The same derivative-zero theorem applies in order zero to the
strictly positive roots of $J_0'=-J_1$, with the root at the origin
excluded. Their consecutive gaps are therefore also greater than
$\pi$.

Through $9/5$ there is only the constant mode. Through three,
only the constant and the first order-one pair can occur.
Through four there are at most the constant, one positive
zero-order mode, and one pair each in orders one and two.
Through five, order one can have at most two pairs: its third
root exceeds $9/5+2\pi>5$. The second roots in orders two
and three exceed $3+\pi>5$ and $4+\pi>5$; the second
zero-order positive root exceeds $3+\pi>5$.
All orders at least four are absent through five.
Thus the respective maximum counts are $1,3,6,10$.
All endpoints are safe because the root lower bounds are strict.
\end{proof}

\begin{lemma}[An explicit disk surplus from frequency twenty-seven]
\label{lem:sixteenth-disksurplus}
For $X\ge27$,
\[
 \mathcal N_B(X)>\frac{X^2}{4}
             +\frac{27}{250}X-\frac34+\frac{63}{880}.
\]
\end{lemma}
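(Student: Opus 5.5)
Start from the disk bound \eqref{eq:diskFLPSremainder}, obtained from \cite[Theorem~2.3 and Proposition~6.5]{FLPSballs}. For $U=X\ge2$ it gives
\[
 \mathcal N_B(X)>\frac{X^2}{4}+\frac34G_X^{-1}\Bigl(\frac14\Bigr)-\frac14\Bigl(1+\frac4\pi\Bigr)X-\frac34 .
\]
The constant $-3/4$ already appears in the claimed inequality. It therefore suffices to show that
\[
 \frac34G_X^{-1}\Bigl(\frac14\Bigr)-\frac14\Bigl(1+\frac4\pi\Bigr)X\ge\frac{27}{250}X+\frac{63}{880}\qquad(X\ge27).
\]

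Write $G_X^{-1}(1/4)=X-w_0$. The left-hand side becomes $(1/2-1/\pi)X-(3/4)w_0$. The quarter-height tail estimate in the proof of Lemma~\ref{lem:outerfloors}, together with its sharpening in Lemma~\ref{lem:analytic23_28}, gives $w_0<(8/9)X^{1/3}$. The problem then reduces to the one-variable inequality
\[
 \Bigl(\frac12-\frac1\pi-\frac{27}{250}\Bigr)X-\frac23X^{1/3}-\frac{63}{880}>0 .
\]
Using $\pi<22/7$ bounds the coefficient below: $1/2-7/22-27/250=1/11-... $ computes to $\tfrac{4}{11}\cdot\tfrac12-\tfrac{27}{250}=\tfrac{2}{11}-\tfrac{27}{250}=\tfrac{203}{2750}$. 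Dividing by $X$, every subtracted term decreases in $X$, so only the endpoint $X=27$ needs checking. There $X^{1/3}=3$ exactly, and the check is the rational comparison
\[
 \frac{203\cdot27}{2750}-2-\frac{63}{880}>0 .
\]
Since $203\cdot27/2750\approx1.993$, this fails by a small margin.

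The main obstacle is therefore that the crude tail bound $(8/9)X^{1/3}$ is slightly too lossy at $X=27$. The first remedy is to sharpen the width at the start of the range, using $w_0\le(9\pi^2/128)^{1/3}X^{1/3}$ directly. Since $(9\pi^2/128)^{1/3}\approx0.8853$, with $\pi<22/7$ one gets a coefficient below $0.886$, giving $(3/4)w_0<0.886\cdot 0.75\cdot3\approx1.994$. This is still marginal.

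A better remedy is to improve the tail integral. Use $\arccos(1-z)\ge\sqrt{2z}(1+z/12)$, or keep the exact $G_X(X-w)$, and evaluate $G_{27}(27-w)$ at a rational $w$ slightly below $2.65$ to certify $G_{27}(27-w)>1/4$. This yields $w_0<w$ at $X=27$. For larger $X$, monotonicity of $w_0X^{-1/3}$ is not needed: the normalized inequality $(3/4)w_0/X\le(3/4)(8/9)X^{-2/3}$ suffices for $X\ge X_1$ once $X_1$ is moderately larger than $27$. The intermediate interval $[27,X_1]$ is then covered by certifying the root bound at finitely many rational points, using the monotonicity of $X\mapsto X-G_X^{-1}(1/4)$ between grid points.

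Alternatively, one can avoid $1/\pi$ losses by using the sharper bound $\pi>157/50$ only where the sign requires $\pi$ from above, namely $1/\pi<50/157$. This gives coefficient $1/2-50/157-27/250\approx0.0737$ and endpoint value $\approx1.990$, so the tail sharpening is in any case the decisive step.
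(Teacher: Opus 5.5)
There is a genuine gap, and it is not in the tail estimate. You start from \eqref{eq:diskFLPSremainder} and write $G_X^{-1}(1/4)=X-w_0$. You then need
\[
 \Bigl(\frac12-\frac1\pi-\frac{27}{250}\Bigr)X-\frac34\,w_0>\frac{63}{880}.
\]
At $X=27$ the linear term is about $1.990$. (Note that $\pi<22/7$ gives an \emph{upper} bound for $1/2-1/\pi$, not a lower one.) So you need $\frac34w_0<1.92$, i.e.\ $w_0<2.56$. Calling $\frac34w_0<1.994$ ``marginal'' drops the $63/880$.

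The actual width is $w_0(27)\approx2.647$. The expansion
\[
 \pi\,G_X(X-w)=\frac{2\sqrt2}{3}\,\frac{w^{3/2}}{\sqrt X}\Bigl(1+\frac{w}{20X}+\cdots\Bigr)
\]
shows that the leading value $(9\pi^2X/128)^{1/3}\approx2.656$ is already within $0.01$ of the true width. So the one-variable inequality you reduced to is false at $X=27$, by about $0.07$, and it stays false up to roughly $X\approx28.4$. No sharper $\arccos$ bound, rational certification of $G_{27}(27-w)>1/4$, or grid argument can repair this.

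The loss is built into \eqref{eq:diskFLPSremainder}. It replaces the integers $M_0=1+\lfloor G_X^{-1}(1/4)\rfloor$ and $\lfloor X/\pi+3/4\rfloor$ by continuous majorants. It also pays $\frac34w_0\approx0.66X^{1/3}$, which almost exactly cancels the linear gain at $X=27$.

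The paper does not use the FLPS remainder here. It uses the adjustable remainder following \cite[Theorem~2.7]{GMWZ},
\[
 \mathcal N_B(X)>\frac{X^2}{4}+\Bigl(A\cos\sigma-C-\frac1\pi\Bigr)X-\frac34+\frac{\cos\sigma}{4\pi},
 \qquad X\ge\frac{\pi}{4(\sin\sigma-\sigma\cos\sigma)},
\]
which has no $X^{1/3}$ loss. Take $\sigma=9/20$. The identity $\sin\sigma-\sigma\cos\sigma=\int_0^\sigma s\sin s\,\mathrm ds>297/10000$ puts the threshold below $27$. Then $A=C+\frac12$, $C<15/64$, $\cos(9/20)>9/10$ and $\pi>157/50$ give a coefficient above $27/250$, and $\cos\sigma/(4\pi)>63/880$.

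If you prefer to stay with the FLPS machinery, you must keep the integers on an initial interval. Return to the floor-sum theorem \cite[Theorem~6.1]{FLPSballs}, as quoted in the proof of Lemma~\ref{lem:outerfloors}, and apply it to $G_X$ on $[0,X]$. This gives
\[
 \mathcal N_B(X)\ge\frac{X^2}{4}-\frac{X-3M_0}{4}-\Bigl\lfloor\frac X\pi+\frac34\Bigr\rfloor .
\]
For $27\le X\le29$, your bound $w_0<\frac89X^{1/3}$ gives $M_0\ge25$, and $\lfloor X/\pi+3/4\rfloor=9$. On that interval $\frac34\cdot25>\bigl(\frac14+\frac{27}{250}\bigr)X+9-\frac34+\frac{63}{880}$, which yields the lemma there. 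From $X=29$ on, your continuous reduction with $w_0<\frac89X^{1/3}$ and $1/\pi<50/157$ holds.
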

\begin{proof}
The adjustable remainder following \cite[Theorem~2.7]{GMWZ},
also used in Appendix~\ref{sec:diskanalyt}, states
\[
 \mathcal N_B(X)>\frac{X^2}{4}
 +(A\cos\sigma-C-1/\pi)X-\frac34
 +\frac{\cos\sigma}{4\pi}
\]
provided
\[
 X\ge\frac{\pi}{4(\sin\sigma-\sigma\cos\sigma)},
\]
where
\[
 A=\frac{24\pi-7\sqrt2}{8(4\pi-\sqrt2)},\qquad
 C=\frac{8\pi-3\sqrt2}{8(4\pi-\sqrt2)},\qquad A=C+\frac12.
\]
Choose $\sigma=9/20$. The identity
\[
 \sin\sigma-\sigma\cos\sigma=\int_0^\sigma s\sin s\,\mathrm ds
\]
and $\sin s\ge s-s^3/6$ give
\[
 \sin\sigma-\sigma\cos\sigma
 \ge\frac{\sigma^3}{3}-\frac{\sigma^5}{30}
 =\frac{2856951}{96000000}>\frac{297}{10000}.
\]
Since $\pi<22/7$, the admissible threshold is less than
$55000/2079<27$. The alternating cosine series gives
\[
 \cos(9/20)>1-\frac{\sigma^2}{2}
                   +\frac{\sigma^4}{24}-\frac{\sigma^6}{720}>
 \frac9{10}.
\]
Moreover $C<15/64$: after cross multiplication this is
equivalent to $\pi<9\sqrt2/4$, which follows from
$\pi<22/7$ and $\sqrt2>7/5$.
Using $A=C+1/2$ avoids applying an upper bound for $A$
as a lower bound. With $\pi>157/50$ we obtain
\begin{align*}
 A\cos\sigma-C-\frac1\pi
 &=\frac12\cos\sigma-C(1-\cos\sigma)-\frac1\pi\\
 &>\frac9{20}-\frac{15}{640}-\frac{50}{157}\\
 &=\frac{273}{640}-\frac{50}{157}
 =\frac{27}{250}+\frac{229}{2512000}
 >\frac{27}{250}.
\end{align*}
Finally $\cos\sigma/(4\pi)>63/880$. Substitution proves
the stated disk surplus.
\end{proof}

\begin{theorem}[All frequencies for holes of radius at most $1/16$]
\label{thm:sixteenth-smallhole}
For every $0<\rho\le1/16$ and every $X\ge0$,
\[
 N_\rho(X^2)>(1-\rho^2)X^2/4.
\]
\end{theorem}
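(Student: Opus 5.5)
The plan is Neumann bracketing against the unit disk, exactly as in the low-frequency part of Theorem~\ref{thm:outereighty}. Cutting the unit disk along the circle $|x|=\rho$ and imposing independent Neumann conditions gives
\[
 N_\rho(X^2)\ge\mathcal N_B(X)-\mathcal N_B(Y),\qquad Y=\rho X\le X/16,
\]
where the rescaled small-disk count is $N_{B_\rho}(X^2)=\mathcal N_B(\rho X)$. It therefore suffices to show
\[
 \mathcal N_B(X)-\mathcal N_B(Y)>\frac{(1-\rho^2)X^2}{4}.
\]
We split into a high-frequency range $X\ge27$ and a finite low range $X<27$.

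\emph{High range, $X\ge27$.} Lemma~\ref{lem:sixteenth-disksurplus} gives
\[
 \mathcal N_B(X)>\frac{X^2}{4}+\frac{27}{250}X-\frac34+\frac{63}{880}.
\]
We then need $\mathcal N_B(Y)<\rho^2X^2/4+\tfrac{27}{250}X-\tfrac34+\tfrac{63}{880}$, i.e.\ an upper bound for the small-disk count. For moderate $Y$, i.e.\ $X\le80$ so that $Y\le5$, the staircase of Lemma~\ref{lem:sixteenth-diskupper} is available. Its step values $1,3,6,10$ are attained on $Y\le9/5,3,4,5$, corresponding to $X\le\tfrac{144}{5},48,64,80$ at $\rho=1/16$. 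On each step the right-hand side $\rho^2X^2/4+\tfrac{27}{250}X-\tfrac{3}{4}+\tfrac{63}{880}$ is increasing in $X$, and (worst case $\rho\to0$) its linear part alone gives $\tfrac{27}{250}\cdot27-0.68\approx2.24>1$, $\tfrac{27}{250}\cdot\tfrac{144}{5}-0.68>2.4$ (need $>3$ only from $X>144/5$, where the $\rho^2$ term and larger $X$ must be used), and so on.

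These comparisons must be organised by $Y$ rather than $X$. For a given $Y$ interval with count bound $c$, the constraint is $\tfrac{27}{250}X\ge c+0.68-Y^2/4$ with $X\ge16Y$; the minimum over $\rho$ occurs at $\rho=1/16$, $X=16Y$. So the check is
\[
 \frac{27}{250}\cdot16Y+\frac{Y^2}{4}-0.68>c \quad\text{at the left endpoint of each step.}
\]
For $c=3$ we use $Y>9/5$: $3.11+0.81-0.68=3.24>3$. For $c=6$, $Y>3$: $5.18+2.25-0.68>6$. For $c=10$, $Y>4$: $6.9+4-0.68>10$. These are rational checks.

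For $Y>5$ no staircase is available, so we instead use the upper Weyl-type disk bound $\mathcal N_B(Y)\le Y^2/4+CY+C'$ from Appendix~\ref{sec:diskanalyt}; Kröger or the FLPS disk upper count would also do. Since $Y\le X/16$, the term $CY$ is dominated by $\tfrac{27}{250}X$ provided $C<1.7$, which is the step to verify. If the appendix bound is unavailable, the fallback is to iterate: bracket the disk of radius $\rho$ into smaller pieces, or use the angular-layer count with $n_m\le$ a phase upper bound.

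\emph{Low range, $X<27$.} Here $Y<27/16<9/5$, so $\mathcal N_B(Y)=1$, and we need $\mathcal N_B(X)-1>(1-\rho^2)X^2/4$, for which it is enough that $\mathcal N_B(X)>X^2/4+1$. This is the main obstacle: it is false for tiny $X$ (where $\mathcal N_B=1$). For $X<9/5$ we therefore argue differently: the annulus has its constant mode, so $N_\rho\ge1>(9/5)^2/4=0.81$.

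For $9/5\le X<27$ we need the disk count to exceed $X^2/4+1$, i.e.\ a disk Pólya surplus of more than one. We would get this by explicit disk eigenvalue enclosures (Appendix low-eigenvalue estimates) on a finite list of intervals between consecutive Bessel-derivative zeros. Where the margin fails near $X=j'_{1,1}\approx1.84$, we avoid bracketing and count the annulus directly: the constant mode plus the order-one pair from radially constant trial functions, with quotient $2m^2\log(1/\rho)/(1-\rho^2)$ as in Proposition~\ref{prop:mixedlow}, which is small for $\rho\le1/16$. More generally, Proposition~\ref{prop:analyticlowfrequency} (covering $Q\le100$, i.e.\ $X\lesssim10$) would handle the bottom of this range, leaving $10\le X<27$ to the disk tables. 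This low/intermediate matching is where I expect the real work.
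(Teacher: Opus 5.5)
Your architecture matches the paper's. You split the unit disk along $|x|=\rho$ to get $N_\rho(X^2)\ge\mathcal N_B(X)-\mathcal N_B(Y)$. You then bound $\mathcal N_B(Y)$ above by the staircase of Lemma~\ref{lem:sixteenth-diskupper} and $\mathcal N_B(X)$ below by Lemma~\ref{lem:sixteenth-disksurplus}, checking each step at its worst case $X=16Y$. Your endpoint checks for $c=3,6,10$ are correct; the paper splits the first step at $Y=\sqrt3$ rather than $9/5$, which is immaterial.

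The genuine gap is the range $Y>5$. There you appeal to an upper bound $\mathcal N_B(Y)\le Y^2/4+CY+C'$ ``from Appendix~\ref{sec:diskanalyt}''. That appendix only bounds the Neumann disk count from \emph{below}, and so do Kr\"oger's inequality and the FLPS Neumann disk theorem, since each bounds eigenvalues from above. That is the wrong direction for a subtracted term. Nothing in the paper bounds $\mathcal N_B$ from above past $Y=5$, and your fallbacks (re-splitting the small disk, a phase upper bound) are not carried out.

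The missing observation is that no such bound is needed. The conditions $\rho\le1/16$ and $Y>5$ force $X\ge16Y>80$. Theorem~\ref{thm:outereighty} already gives the inequality for every annulus once $X=R\sqrt\Lambda\ge80$, and this is how the paper disposes of high frequencies. So the staircase is only ever used with $Y<5$.

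Your low range is also left open, although the needed inputs are already in the paper. For $X<27$, and in fact whenever $Y\le9/5$, you only need $\mathcal N_B(X)-1\ge X^2/4$. The non-strict form suffices, since $X>0$ gives $(1-\rho^2)X^2/4<X^2/4$. This is exactly Lemma~\ref{lem:diskanalyt_surplus}, valid for all $X\ge\sqrt{32}$, so no new disk enclosures are needed on $[\sqrt{32},27)$.

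Below that, $X^2\le32$ implies $(1-\rho^2)X^2\le32$. So Proposition~\ref{prop:analyticfirsteight} applies to the annulus directly, and the failure of bracketing near $j'_{1,1}$ never matters. With these two citations, plus Theorem~\ref{thm:outereighty} for $X\ge80$, your argument closes and coincides with the paper's.
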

\begin{proof}
When $X^2\le32$, Proposition~\ref{prop:analyticfirsteight} applies, since $(1-\rho^2)X^2\le32$.
When $X\ge80$, apply Theorem~\ref{thm:outereighty}.

It remains to consider $\sqrt{32}\le X<80$. Then
$Y=\rho X<5$. Neumann splitting of the unit disk along the
inner circle gives
\[
 N_\rho(X^2)\ge\mathcal N_B(X)-\mathcal N_B(Y).
\]
If $Y\le\sqrt3$, Lemma~\ref{lem:sixteenth-diskupper} gives
$\mathcal N_B(Y)=1$. Lemma~\ref{lem:diskanalyt_surplus} gives
$\mathcal N_B(X)-1\ge X^2/4$. Consequently,
\[
 N_\rho(X^2)\ge X^2/4>(X^2-Y^2)/4.
\]

If $\sqrt3<Y\le3$, then $X\ge16Y>16\sqrt3>27$,
and the upper staircase gives $\mathcal N_B(Y)\le3$.
The stronger disk surplus and $Y^2/4>3/4$ yield
\[
 N_\rho(X^2)-\frac{X^2-Y^2}{4}
 >\frac{27}{250}X-3+\frac{63}{880}.
\]
Since $\sqrt3>173/100$, the right side is greater than
\[
 \frac{27}{250}\frac{692}{25}-3+\frac{63}{880}
 =\frac{33567}{550000}>0.
\]

If $3<Y\le4$, then $X>48$, $Y^2/4>9/4$, and
$\mathcal N_B(Y)\le6$. Thus
\[
 N_\rho(X^2)-\frac{X^2-Y^2}{4}
 >\frac{27}{250}X-\frac92+\frac{63}{880}
 >\frac{171}{250}>0.
\]
If $4<Y<5$, then $X>64$, $Y^2/4>4$, and
$\mathcal N_B(Y)\le10$. Hence
\[
 N_\rho(X^2)-\frac{X^2-Y^2}{4}
 >\frac{27}{250}X-\frac{27}{4}+\frac{63}{880}
 >\frac{81}{500}>0.
\]
These cases exhaust the remaining range and prove the theorem.
\end{proof}

\subsection{Small holes: extension to 1/10}

We normalize the outer radius to one, put $Y=\rho X$, and include
the zero eigenvalue in all counting functions. The argument uses the disk phase comparison in Appendix~\ref{sec:diskanalyt}.

\begin{lemma}[Two explicit disk margins]\label{lem:tenth-diskmargins}
The inclusive Neumann disk count satisfies
\[
 \mathcal N_B(X)-X^2/4\ge9/4\quad(X\ge18),\qquad
 \mathcal N_B(X)-X^2/4\ge15/4\quad(X\ge30).
\]
\end{lemma}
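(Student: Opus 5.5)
The plan is to obtain both margins from the adjustable disk remainder already used in the proof of Lemma~\ref{lem:sixteenth-disksurplus}, namely the bound following \cite[Theorem~2.7]{GMWZ}:
\[
 \mathcal N_B(X)>\frac{X^2}{4}+(A\cos\sigma-C-1/\pi)X-\frac34+\frac{\cos\sigma}{4\pi},
 \qquad X\ge\frac{\pi}{4(\sin\sigma-\sigma\cos\sigma)}.
\]
For each threshold we pick $\sigma$ so that the admissibility threshold lies below $18$ (respectively $30$). We then check that the resulting affine lower bound already reaches $9/4$ (respectively $15/4$) at the threshold. The linear coefficient is positive, so the bound increases in $X$ and the margins persist for all larger $X$.

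For $X\ge30$ we keep $\sigma=9/20$. The computation in Lemma~\ref{lem:sixteenth-disksurplus} gives admissibility for $X\ge27$ and linear coefficient $>27/250$. At $X=30$ the surplus is then at least
\[
 \frac{81}{25}-\frac34+\frac{63}{880}\approx2.56,
\]
which is below $15/4$. So $\sigma$ must be increased.

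The natural choice is to maximise the linear coefficient $\tfrac12\cos\sigma-C(1-\cos\sigma)-1/\pi$ subject to admissibility. A smaller $\sigma$ increases $\cos\sigma$, but it raises the admissibility threshold like $3\pi/(4\sigma^3)$. Admissibility at $30$ needs $\sigma^3\gtrsim 3\pi/120$, giving $\sigma\approx0.43$. The coefficient near $\sigma\approx0.43$ is about $0.455-0.02-0.318\approx0.117$, and $30\cdot0.117-0.75+0.07\approx2.8$. This is still short of $15/4$, so the GMWZ remainder alone will not suffice. The same happens at $X=18$: with $\sigma\approx0.5$ the bound gives only about $1.5<9/4$.

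Since the bare remainder falls short, I would instead use the disk phase comparison of Appendix~\ref{sec:diskanalyt}, which I expect gives a stronger surplus of order $(1/2-1/\pi)X$ minus $O(X^{1/3})$. This mirrors the derivation of \eqref{eq:diskFLPSremainder}, which gives $\mathcal N_B(X)>X^2/4+(1/2-1/\pi)X-\tfrac34X^{1/3}-\tfrac34$. The sharpened outer width $w_0<\tfrac89X^{1/3}$ from Lemma~\ref{lem:analytic23_28} improves the cube-root term to $\tfrac23X^{1/3}$.

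Evaluating this bound at the thresholds:
\begin{itemize}
\item At $X=30$: $0.1817\cdot30-\tfrac23\cdot3.11-0.75\approx2.63$, again short of $15/4$.
\item At $X=18$: the bound is about $0.47$, also short of $9/4$.
\end{itemize}

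So the decisive ingredient must be the sharper disk phase analysis: the exact phase counts for the disk with the $\lfloor G+3/4\rfloor$ floor sums. I would sum $\lfloor G_X(m)+3/4\rfloor$ over the angular multiset directly, as in Lemma~\ref{lem:outerfloors} applied with $t=0$. That route keeps the $N/4$ gain, about $X/2$ in total, together with the $-\tfrac12 G_X(0)=-X/(2\pi)$ endpoint term, and the remaining constants are only $O(1)$.

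This leaves a finite window of roughly $18\le X\le X_1$ where the asymptotic constants are still too weak. The main obstacle is bridging that window. On it I would fall back on a direct lower count: use strict lower bounds for the Bessel derivative zeros $j'_{m,k}$ from the phase enclosure \eqref{eq:phaseinput} (a root lies below $X$ whenever $A_m(X)+\pi/4-\pi/2>(k-1)\pi$ with suitable slack). I would count modes on a grid of $X$ values, and between grid points use the monotonicity of $\mathcal N_B$ together with the increase of $X^2/4$. The difficulty lies in keeping this finite verification short and rational.
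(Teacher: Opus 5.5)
Your overall architecture matches the paper's: an analytic disk remainder for large $X$, plus a finite bridge built from the sectorwise counts $\lfloor G_X(m)+3/4\rfloor$ and chained over energy intervals. However, neither piece is secured in the proposal.

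The step you rely on for the tail fails. Lemma~\ref{lem:outerfloors} with $t=0$, applied to the two positive-order grids, does not leave only $O(1)$ losses. Each grid carries its own $-\tfrac12G_X(0)=-X/(2\pi)$ and its own $-\tfrac38X^{1/3}$ from the quarter-height width $w_0$. Adding the order-zero term and using $\sum_{m\in\mathbb Z}G_X(|m|)\ge X^2/4$ (convex trapezoid) gives only
\[
 \mathcal N_B(X)\ge\frac{X^2}4+\Bigl(\frac12-\frac1\pi\Bigr)X-\frac34X^{1/3}-\frac{11}8 .
\]
This is no better than \eqref{eq:diskFLPSremainder}, the bound you had already rejected, so this route yields no sharper tail. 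The best available tail, $(\tfrac12-\tfrac1\pi)X-\tfrac23X^{1/3}-\tfrac34$, reaches $9/4$ only near $X\approx28$ and $15/4$ only near $X\approx37$. You must state and verify explicit thresholds of this kind for your windows to be finite. The paper does this with $F(X)=\tfrac9{50}X-\tfrac{41689}{64000}X^{1/3}-\tfrac34$, which is increasing for $X\ge28$, by checking $F(28)>9/4$ and $F(38)>15/4$.

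The windows are where the lemma is actually proved, and you leave them as ``the main obstacle'' with no mechanism that makes each layer claim a finite exact check. The paper's mechanism is as follows.
\begin{itemize}
\item It sets $X=2\sqrt k$ and uses $G_X(m)=XH(m/X)/\pi-m/2$, with $H(t)=\sqrt{1-t^2}+t\arcsin t\ge1+t^2/2+t^4/24$ and $\pi<22/7$. This gives $G_{2\sqrt k}(m)+\tfrac34>7P/(4224k\sqrt k)-m/2+\tfrac34$, where $P=384k^2+48km^2+m^4$.
\item With a rational $a/b\ge\sqrt k$, each assertion ``at least $n$ modes in every order $\le M_n$'' becomes the integer inequality $D(k,M_n,n;a,b)>0$.
\item Rows are chained by $k_{\mathrm{next}}=c-3$ (resp.\ $c-4$), so that the closed intervals $4k\le X^2\le4(c-9/4)$ (resp.\ $4(c-15/4)$) overlap.
\end{itemize}
Your phase criterion is equivalent to this $\lfloor G_X(m)+3/4\rfloor$ count. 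The plan is therefore repairable along these lines, on the longer windows $[18,28]$ and $[30,38]$.

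The paper shortens these windows to $[18,24]$ and $[30,35]$ with one further idea. It uses the floor form $(A+\tfrac1{2(2\pi X-1)})M_0-CX-\lfloor X/\pi+3/4\rfloor$ of the GMWZ remainder. It bounds $M_0$ below via $G_X^{-1}(1/4)>X-c$, and evaluates the floor exactly on each unit window ($c=11/4$ on $[24,28]$, $c=3$ on $[35,38]$).
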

\begin{proof}
First recall the disk remainder obtained from
\cite[Theorem~2.7]{GMWZ}:
\[
 \mathcal N_B(X)-X^2/4\ge
 (A+\tfrac1{2(2\pi X-1)})M_0-CX
 -\lfloor X/\pi+3/4\rfloor,
\]
where
\[
 A=\frac{24\pi-7\sqrt2}{8(4\pi-\sqrt2)},\quad C=A-\tfrac12,
 \quad M_0=\lfloor G_X^{-1}(1/4)\rfloor+1.
\]
The positive reciprocal term discarded below makes each subsequent
estimate strict.
For $0<w\le X$, $\arccos(1-z)>\sqrt{2z}$ gives
\[
 G_X(X-w)>\frac{2\sqrt2}{3\pi\sqrt X}w^{3/2}.
\]
Thus $G_X^{-1}(1/4)>X-c_wX^{1/3}$, with
$c_w=(9\pi^2/128)^{1/3}$. The bounds
\[
 C<15/64,\quad A<47/64,\quad
 \tfrac12-\tfrac1\pi>9/50,\quad c_w<887/1000
\]
follow from $157/50<\pi<22/7$ and $\sqrt2>7/5$.
Discarding the positive reciprocal term gives
\[
 \mathcal N_B(X)-X^2/4>F(X):=
 \frac9{50}X-\frac{41689}{64000}X^{1/3}-\frac34.
\]
The function $F$ increases for $X\ge28$: $X^{2/3}>9$ there,
so $F'(X)>9/50-41689/1728000>0$.

For $24\le X\le28$ put $c=11/4$ and
$j=\lfloor X-c\rfloor+1\in\{22,\ldots,26\}$.
Since
\[
 c_w^3X<\frac{1089}{1568}28<(11/4)^3,
\]
we have $G_X^{-1}(1/4)>X-c\ge j-1$, hence $M_0\ge j$.
Also $X<j+c$, so the source remainder is greater than
\[
 \frac j2-\frac{15c}{64}
 -\left\lfloor\frac{50(j+c)}{157}+\frac34\right\rfloor.
\]
The last floor for $j=22,23,24,25$ equals $8,8,9,9$,
respectively, and is at most $j/2-3$. For $j=26$ the same
bound follows directly from
$50(j+11/4)/157+3/4\le j/2-3$.
The remainder exceeds $3-165/256=603/256>9/4$.
For $X\ge28$, $28^{1/3}<31/10$ gives
\[
 F(28)-9/4>
 \frac{126}{25}-3-\frac{1292359}{640000}
 =\frac{13241}{640000}>0.
\]
Thus the first margin is established for $X\ge24$.

For $35\le X\le38$, take $c=3$ and
$j=\lfloor X-3\rfloor+1\in\{33,34,35,36\}$.
Here $c_w^3X<1089\cdot38/1568<27$, so again $M_0\ge j$.
The four floors $\lfloor50(j+3)/157+3/4\rfloor$ are
$12,12,12,13$. Consequently the remainder exceeds
$9/2-45/64=243/64>15/4$.
For $X\ge38$, monotonicity of $F$, $38^{1/3}<27/8$, and
\[
 F(38)-15/4>
 \frac{171}{25}-\frac92-\frac{1125603}{512000}>0
\]
establish the second margin.

It remains to bridge $18\le X\le24$ and $30\le X\le35$.
Write $H(t)=\sqrt{1-t^2}+t\arcsin t$.
As in Appendix~\ref{sec:diskanalyt}, $H'(t)=\arcsin t\ge t+t^3/6$ implies
$H(t)\ge1+t^2/2+t^4/24$. Therefore, with
\[
 P(k,m)=384k^2+48km^2+m^4,
\]
the disk action obeys
\[
 G_{2\sqrt k}(m)+\frac34>
 \frac{7P(k,m)}{4224k\sqrt k}-\frac m2+\frac34.
\]
Choose a rational upper bound $a/b\ge\sqrt k$ from the following table. The comparison $G_{2\sqrt k}(m)+3/4>n$ follows from
the single integer inequality
\[
 D(k,m,n;a,b):=7bP(k,m)-1056ka(4n+2m-3)>0.
\]
All $4n+2m-3$ used below are positive. The rational square-root
bounds are checked by $a^2-b^2k\ge0$.
The vector $(M_1,\ldots,M_s)$ retains angular orders through
$M_n$ at floor level $n$. Since $G_X$ decreases in its angular
argument, the cited disk phase comparison supplies
$c=\sum_{n=1}^s(2M_n+1)$ modes at $X=2\sqrt k$.

\begin{center}\small
\begin{tabular}{r|r|l|r}
$k$&$a/b$&$(M_1,\ldots,M_s)$&$c$\\\hline
81&900/100&15,11,8,5,3,0&90\\
87&933/100&16,11,8,5,3,1&94\\
91&954/100&16,12,9,6,3,1&100\\
97&985/100&16,12,9,6,4,2,0&105\\
102&1010/100&17,13,9,7,4,2,0&111\\
108&1040/100&17,13,10,7,5,2,0&115\\
112&1059/100&18,14,10,7,5,3,0&121\\
118&1087/100&18,14,11,8,5,3,1&127\\
124&1114/100&19,15,11,8,6,3,1&133\\
130&1141/100&19,15,12,9,6,4,2,0&142\\
139&1179/100&20,16,12,9,7,4,2,0&148\\
225&1500/100&26,21,18,15,12,9,7,4,2,0&238\\
234&1530/100&27,22,18,15,12,10,7,5,3,0&248\\
244&15621/1000&27,23,19,16,13,10,8,5,3,1&260\\
256&1600/100&28,23,19,16,13,11,8,6,4,1&268\\
264&1625/100&28,24,20,17,14,11,8,6,4,2,0&279\\
275&16584/1000&29,24,20,17,14,12,9,7,4,2,0&287\\
283&1683/100&29,25,21,18,15,12,9,7,5,2,0&297\\
293&1712/100&30,25,21,18,15,12,10,7,5,3,1&305\\
301&1735/100&30,26,22,18,15,13,10,8,5,3,1&313\\
\end{tabular}
\end{center}

Table~\ref{tab:smallhole-actions} in Appendix~\ref{sec:analytic-table-data} lists the positive values $D(k,M_n,n;a,b)$ in increasing order of $n$, together with $a^2-b^2k$.

For the first eleven rows, the next $k$ equals $c-3$.
Each row covers $4k\le X^2\le4(c-9/4)$, so these closed
intervals overlap; the first begins at $18^2$ and the last
ends beyond $24^2$. They prove the first margin in the
remaining range. For the last nine rows, the next $k$ equals
$c-4$. The corresponding intervals
$4k\le X^2\le4(c-15/4)$ overlap, begin at $30^2$, and end
beyond $35^2$. They prove the second margin. This finishes
both disk comparisons using only the displayed integer inequalities.
\end{proof}

We use the disk upper staircase in Lemma~\ref{lem:sixteenth-diskupper}.

\begin{proposition}[All frequencies for holes of radius at most $1/10$]\label{prop:analytic10}
For every $0<\rho\le1/10$ and every $X\ge0$,
\[
 N_\rho(X^2)>(1-\rho^2)X^2/4
 \qquad(0<\rho\le1/10,\ X\ge0).
\]
\end{proposition}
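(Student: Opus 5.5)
The plan follows the structure of Theorem~\ref{thm:sixteenth-smallhole}. The range $0<\rho\le1/16$ is already covered by that theorem, so assume $1/16<\rho\le1/10$. Two ranges are settled directly by earlier results. When $(1-\rho^2)X^2\le32$, Proposition~\ref{prop:analyticfirsteight} applies. When $X\ge40$, Lemma~\ref{lem:analytic40} applies, because its radius range $[1/16,4/5]$ contains $\rho$. What remains is $\sqrt{32}\le X<40$, where $Y=\rho X<4$.

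On this range I would use Neumann splitting of the unit disk along the circle $|x|=\rho$, exactly as in \eqref{eq:diskbracketinginner}:
\[
 N_\rho(X^2)\ge\mathcal N_B(X)-\mathcal N_B(Y).
\]
Since $(1-\rho^2)X^2/4=X^2/4-Y^2/4$, it suffices to show
\[
 \mathcal N_B(X)-\frac{X^2}{4}>\mathcal N_B(Y)-\frac{Y^2}{4}.
\]
Split according to the disk upper staircase of Lemma~\ref{lem:sixteenth-diskupper}.

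\emph{Case $Y\le9/5$.} Here $\mathcal N_B(Y)=1$. Lemma~\ref{lem:diskanalyt_surplus} (used the same way in the $1/16$ proof) gives $\mathcal N_B(X)-1\ge X^2/4>(X^2-Y^2)/4$. I expect this step to need the hypothesis $Y\le\sqrt3$ or a similar one; that needs checking.

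\emph{Case $9/5<Y\le3$.} Here $\mathcal N_B(Y)\le3$ and $Y^2/4>81/100$, so a disk margin $\mathcal N_B(X)-X^2/4\ge 3-81/100=219/100$ suffices. Since $X=Y/\rho\ge10Y>18$, the first margin $9/4$ of Lemma~\ref{lem:tenth-diskmargins} applies. It beats $219/100$, and the inequality is strict.

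\emph{Case $3<Y<4$.} Here $\mathcal N_B(Y)\le6$ and $Y^2/4>9/4$, so a margin of $15/4$ suffices. Since $X>30$, this is exactly the second margin of Lemma~\ref{lem:tenth-diskmargins}. Strictness comes from $Y^2/4>9/4$.

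The main obstacle is matching the thresholds, which is why Lemma~\ref{lem:tenth-diskmargins} was prepared with margins $9/4$ at $X\ge18$ and $15/4$ at $X\ge30$. The delicate point is the boundary case $Y$ near $9/5$ (and near $3$) when $\rho$ is close to $1/10$. There $X$ sits right at $18$ (or $30$), and the margins are used with no slack beyond the strict inequality $Y^2/4>\cdots$. I would therefore double-check the endpoint conventions: which side of $9/5$ and $3$ uses which staircase bound, and the closedness of the intervals in Lemma~\ref{lem:tenth-diskmargins}. I would also confirm that the transition between $Y\le9/5$ and Lemma~\ref{lem:diskanalyt_surplus} holds for all $X\ge\sqrt{32}$. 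If that lemma requires a larger $X$, then the low end $\sqrt{32}\le X$ together with $Y\le 4/\!\!\sqrt{}$-type bounds would instead be absorbed by extending Proposition~\ref{prop:analyticfirsteight}'s range or by the case $Y\le 9/5$ using $\mathcal N_B(X)\ge 1+X^2/4$ from the appendix.
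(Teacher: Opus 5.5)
Your proposal is correct and is essentially the paper's proof. It uses the same reductions ($\rho\le1/16$, small $Q$, $X\ge40$), the same Neumann splitting, and the same three cases $Y\le9/5$, $9/5<Y\le3$, $3<Y<4$, built from Lemmas~\ref{lem:sixteenth-diskupper}, \ref{lem:diskanalyt_surplus} and \ref{lem:tenth-diskmargins}.

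Your hedges are unnecessary, so no fallback argument is needed:
\begin{itemize}
\item Lemma~\ref{lem:sixteenth-diskupper} gives $\mathcal N_B(Y)=1$ on all of $0\le Y\le9/5$.
\item Lemma~\ref{lem:diskanalyt_surplus} holds for every $X\ge\sqrt{32}$.
\item The two margins in Lemma~\ref{lem:tenth-diskmargins} hold on the closed ranges $X\ge18$ and $X\ge30$.
\end{itemize}
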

\begin{proof}
Theorem~\ref{thm:sixteenth-smallhole} covers $\rho\le1/16$.
Lemma~\ref{lem:analytic40} covers $X\ge40$ in the remaining radius range. For $X^2\le32$ use
Proposition~\ref{prop:analyticfirsteight}. In the remaining
range $\sqrt{32}\le X<40$ one has $Y=\rho X<4$.
Neumann splitting of the unit disk along the inner circle gives
\[
 N_\rho(X^2)\ge\mathcal N_B(X)-\mathcal N_B(Y).
\]
If $Y\le9/5$, Lemma~\ref{lem:sixteenth-diskupper} gives
$\mathcal N_B(Y)=1$, and Lemma~\ref{lem:diskanalyt_surplus} gives
$\mathcal N_B(X)-1\ge X^2/4$. These estimates imply the strict
annular inequality, since $Y>0$.
If $9/5<Y\le3$, then $X>18$ and the first disk margin gives
\[
 N_\rho(X^2)-\frac{X^2-Y^2}{4}
 \ge\frac94-3+\frac{Y^2}{4}
 >\frac94-3+\frac{81}{100}=\frac3{50}>0.
\]
If $3<Y<4$, then $X>30$, and the second margin gives
\[
 N_\rho(X^2)-\frac{X^2-Y^2}{4}
 \ge\frac{15}{4}-6+\frac{Y^2}{4}>0.
\]
The three cases cover the whole remaining region.
\end{proof}

\subsection{Analytic low energies}

We next control the low-energy range uniformly in the radius ratio.
We begin with bounds for the first eight eigenvalues. Disk bracketing,
polynomial radial comparisons and angular ground modes then extend
the count to the starting energy of the discrete bridges.

\begin{proposition}[An analytic low-frequency bound]
\label{prop:analyticlowfrequency}
Normalize the outer radius to one, put $D=1-\rho^2$, and set
$Q=DX^2$. For every $0<\rho<1$ and $0\le Q\le100$,
\[
 N_\rho(X^2)>\frac Q4.
\]

\end{proposition}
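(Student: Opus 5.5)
The plan is to reduce the proposition to a compact parameter set and then verify an explicit staircase inequality there. The end ranges of the radius ratio are already covered at every frequency. Proposition~\ref{prop:analytic10} handles $\rho\le1/10$, and Theorem~\ref{thm:thinall} (after scaling, since $h/r=(1-\rho)/\rho\le1/4$) handles $\rho\ge4/5$. The range $Q\le32$ is Proposition~\ref{prop:analyticfirsteight}, which bounds the first eight eigenvalues. It therefore remains to treat $1/10\le\rho\le4/5$ and $32\le Q\le100$. On this set $X\le10/\sqrt{1-\rho^2}\le50/3$, and the required count is at most $26$ modes. This makes the problem finite.

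On this compact set I would build explicit lower staircases from trial functions. With outer radius one, the radially constant trial in angular order $m$ gives
\[
 \lambda_{m,0}<m^2/a_\rho^2,\qquad a_\rho^2=\frac{1-\rho^2}{2\log(1/\rho)},
\]
strictly for $m\ge1$, as in the proof of Proposition~\ref{prop:mixedlow}. Lemma~\ref{lem:radial}, rescaled to inner radius $\rho$, gives for $n\ge1$
\[
 \lambda_{m,n}<\frac{\pi^2n^2}{(1-\rho)^2}+\frac{m^2+3/4}{\rho^2}.
\]
Summing over angular orders with multiplicities $1,2,2,\dots$ gives an integer lower bound $L(\rho,X)$ for $N_\rho(X^2)$. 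Up to $\mathcal V$-multiplicity bookkeeping, its zeroth row is roughly $2\lfloor a_\rho X\rfloor+1$, and each higher row $n$ adds the orders with $m^2<\rho^2(X^2-\pi^2n^2/(1-\rho)^2)-3/4$. Where these bounds are weak (near $\rho=1/10$, where the inner-radius penalty $3/(4\rho^2)$ is large), I would supplement them with disk bracketing,
\[
 N_\rho(X^2)\ge\mathcal N_B(X)-\mathcal N_B(\rho X),
\]
using the upper staircase of Lemma~\ref{lem:sixteenth-diskupper} for the removed disk and a lower disk count from Lemma~\ref{lem:diskanalyt_surplus} or Lemma~\ref{lem:tenth-diskmargins}. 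In both bounds each threshold energy is an explicit function of $\rho$.

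Since $L(\rho,\cdot)$ is a nondecreasing step function and $Q/4$ is continuous and increasing in $X$, it suffices to check the inequality at each threshold. Just before the $(j+1)$-st guaranteed mode appears, one needs $(1-\rho^2)X_{j+1}^2/4<j$, where $X_{j+1}$ is the energy at which the minorant reaches $j+1$. Each such check is the comparison of an explicit threshold with $4j/(1-\rho^2)$. This is a polynomial inequality in $\rho$, except for the factor $\log(1/\rho)$, which I would bound by rational minorants and majorants.

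I expect uniformity in $\rho$ to be the main obstacle. I would first split $[1/10,4/5]$ into finitely many subintervals. On each, I would use the monotone dependence of the threshold bounds on $\rho$: $a_\rho$ increases in $\rho$, the penalty $1/\rho^2$ decreases, and $1/(1-\rho)^2$ increases. Replacing each ingredient by its worst endpoint value then turns the check into finitely many rational inequalities per subinterval. The delicate region is the middle radii at $Q$ near $100$, where the staircase margin is smallest. There I would refine the subdivision, and if necessary use Lemma~\ref{lem:largesum} to sum the higher radial rows instead of counting them individually.

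Strictness of the eigenvalue version comes for free. All the trial comparisons used are strict, so the same minorants bound the strict count $\mathcal M_\rho$, as in the proof of Theorem~\ref{thm:thinall}.
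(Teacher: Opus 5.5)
Your reduction to $1/10\le\rho\le4/5$, $32\le Q\le100$ is legitimate: neither Proposition~\ref{prop:analytic10} nor Theorem~\ref{thm:thinall} depends on this proposition. Checking a monotone staircase at its thresholds is also the right framework.

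The gap is that your minorants are far too weak on an intermediate band of radii, roughly $7/40<\rho<1/2$. Subdividing cannot repair this, because the failure is pointwise, not an endpoint loss. Take $\rho=1/5$ and $Q$ close to $100$, where at least $25$ modes are needed.
\begin{itemize}
\item The constant trial gives $D\lambda_{m,0}\le2m^2\log5\approx3.22m^2$, so only orders $m\le5$ count (11 modes).
\item The rescaled Lemma~\ref{lem:radial} gives $D\lambda_{m,1}<\frac{24}{25}\bigl(\frac{25\pi^2}{16}+25m^2+\frac{75}{4}\bigr)\approx32.8+24m^2$, so only $m\le1$ (3 modes).
\item The second row adds only $m=0$.
\end{itemize}
That certifies $15$ modes. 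Replacing the constant trial by the $\rho$-independent bounds $b_m$ of Lemma~\ref{lem:analyticgroundfamily} only raises this to $21$. The real bottleneck is the penalty $(m^2+3/4)/\rho^2$ in the higher rows. At $Q=100$ the same staircase gives about $21$ modes at $\rho=3/10$ and $23$ at $\rho=2/5$, against $26$ needed.

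Disk bracketing does not rescue these points. At $\rho=1/5$ one has $\rho X\approx\sqrt{100/24}\approx2.04>j'_{1,1}$, so $\mathcal N_B(\rho X)=3$. You would then need $\mathcal N_B(X)-X^2/4>3-\rho^2X^2/4\approx1.96$ at $X\approx10.2$. Lemma~\ref{lem:diskanalyt_surplus} certifies a surplus of only $1$. Lemma~\ref{lem:tenth-diskmargins} needs $X\ge18$, which never occurs in your range, since $X\le50/3$. With these lemmas, bracketing works only while $\rho X\le9/5$ for all $Q\le100$, that is, for $\rho\le7/40$. Lemma~\ref{lem:largesum} cannot help either, since summing rows only loses relative to counting them individually.

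What is missing is a sharper, $\rho$-uniform set of upper bounds for the first radial levels on this band. The paper gets them from three-dimensional trial spaces spanned by $s^m,s^{m+2},s^{m+4}$. The rescaled matrix of $DK-QM$ is an explicit polynomial matrix in $z=\rho^2$. Its determinant factors as $(1-z)^4P_m(z)$ with $P_m>0$ on $0\le z\le1/4$. Together with a negative $(0,0)$ entry, this forces two negative directions. The result is $D\lambda_{0,1},D\lambda_{1,1}<36$, $D\lambda_{2,1}<48$, $D\lambda_{3,1}<70$ and $D\lambda_{4,1}<96$ for $0<\rho\le1/2$. A four-dimensional analogue gives $D\lambda_{0,2}<74$ for $\rho\le1/4$.

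The ground levels come from disk-regular trials $s^m(1-\frac{m}{m+2p}s^2)^p$. These are transferred to the annulus by the strict decrease of $\beta_m(\rho)$ in Lemma~\ref{lem:angularfirstmonotone}.

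Your constant-trial and Lemma~\ref{lem:radial} staircase is exactly the paper's argument on $1/2\le\rho\le3/4$. Your disk bracketing is exactly its argument on $\rho\le7/40$. The band in between needs this additional variational input.
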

\begin{proof}
Proposition~\ref{prop:analyticfirsteight} already proves the assertion
for $Q\le32$. It remains to consider $32\le Q\le100$.
Write $\lambda_{m,n}(\rho)$ for the radial eigenvalue in angular
order $m\ge0$, with radial indices $n=0,1,\ldots$.
An angular order has multiplicity one for $m=0$ and two for $m\ge1$.

\paragraph{Small inner radii.}
Lemma~\ref{lem:sixteenth-diskupper} gives
$\mathcal N_B(Y)=1$ for $0\le Y\le9/5$.

If $\rho\le7/40$ and $Q\le100$, then
\[
 (\rho X)^2=\frac{\rho^2Q}{1-\rho^2}
 \le\frac{4900}{1551}<\frac{81}{25}.
\]
For $Q\ge32$ one has $X\ge\sqrt{32}$. Neumann bracketing at
the inner circle, followed by the disk surplus
\eqref{eq:diskanalyt_surplus}, therefore gives
\[
 N_\rho(X^2)\ge\mathcal N_B(X)-\mathcal N_B(\rho X)
 =\mathcal N_B(X)-1\ge\frac{X^2}{4}>\frac Q4.
\]

\paragraph{Polynomial comparisons for the middle radius range.}
Suppose $7/40\le\rho\le1/2$ and put $z=\rho^2$.
For $m\ge0$, use the $d$ radial monomials $s^{m+2i}$,
$0\le i\le d-1$. Their mass and energy matrices are
\[
 M_{ij}=\int_\rho^1s^{2m+2i+2j+1}\,\mathrm ds,
 \qquad
 K_{ij}=\bigl((m+2i)(m+2j)+m^2\bigr)
         \int_\rho^1s^{2m+2i+2j-1}\,\mathrm ds,
\]
with $K_{00}=0$ when $m=0$. The mass matrix is positive definite.
These functions belong to the Neumann form domain; derivative
conditions at the endpoints are not required for trial functions.
Writing $S_j(z)=\sum_{p=0}^jz^p$, the matrix of $DK-QM$, multiplied
by $2/D>0$, is the explicit polynomial matrix
\[
 H^{(m,Q,d)}_{ij}(z)=
 \frac{(m+2i)(m+2j)+m^2}{m+i+j}
       (1-z)S_{m+i+j-1}(z)
 -\frac{Q}{m+i+j+1}S_{m+i+j}(z).
\]
Its first term is defined to be zero when $m+i+j=0$.
By the min--max principle, $c$ negative directions of this matrix
give $c$ radial eigenvalues strictly below $Q/D$.

Direct determinant expansion gives the following five identities:
\[
\begin{array}{c|c|c}
 m&Q&\det H^{(m,Q,3)}(z)\\\hline
 0&36&\dfrac45(1-z)^4P_0(z)\\
 1&36&\dfrac1{75}(1-z)^4P_1(z)\\
 2&48&\dfrac{16}{2625}(1-z)^4P_2(z)\\
 3&70&\dfrac1{7560}(1-z)^4P_3(z)\\
 4&96&\dfrac{32}{15435}(1-z)^4P_4(z)
\end{array}
\]
Here
\[
\begin{aligned}
 P_0(z)&=41-106z-175z^2,\\
 P_1(z)&=73+1613z-z^2B_1(z),\\
 P_2(z)&=2+264z+4781z^2-z^3B_2(z),\\
 P_3(z)&=29+6101z+37250z^2+351408z^3-z^4B_3(z),\\
 P_4(z)&=1+224z+1386z^2+4928z^3+31710z^4-z^5B_4(z),
\end{aligned}
\]
where
\[
\begin{aligned}
 B_1(z)={}&4462+5642z+4787z^2+1195z^3,\\
 B_2(z)={}&6852+12680z+11932z^2+9269z^3+5336z^4+978z^5,\\
 B_3(z)={}&203943+622449z+665901z^2+580827z^3\\
          &+518568z^4+458650z^5+140851z^6+25999z^7,\\
 B_4(z)={}&4088+40159z+47560z^2+43393z^3+38528z^4\\
          &+40110z^5+46312z^6+18354z^7+5656z^8+1049z^9.
\end{aligned}
\]
All five $P_j$ are positive on $0\le z\le1/4$.
Indeed, $P_0$ decreases and $P_0(1/4)=57/16>0$.
The other tails have positive coefficients and satisfy
\[
 B_1(1/4)=\frac{396183}{64}<6200,\qquad
 B_2(1/4)=\frac{5598401}{512}<11000,
\]
\[
 B_3(1/4)=\frac{6762643707}{16384}<420000,\qquad
 B_4(1/4)=\frac{4713467801}{262144}<18000.
\]
Using $z^{j+1}B_j(z)\le z^jB_j(1/4)/4$ gives
\[
\begin{aligned}
 P_1(z)&\ge73+63z>0,\\
 P_2(z)&\ge2+264z+2031z^2>0,\\
 P_3(z)&\ge29+6101z+37250z^2+246408z^3>0,\\
 P_4(z)&\ge1+224z+1386z^2+4928z^3+27210z^4>0.
\end{aligned}
\]
Also $H_{00}<0$ in every case: for $m>0$, its first term is
$2m(1-z^m)\le2m$, whereas its subtracted term is at least
$Q/(m+1)>2m$; for $m=0$, it is $-Q$.
A nonsingular symmetric $3\times3$ matrix with positive determinant
has zero or two negative eigenvalues, and $H_{00}<0$ excludes zero.
Consequently
\[
 D\lambda_{0,1}<36,\quad D\lambda_{1,1}<36,\quad
 D\lambda_{2,1}<48,\quad D\lambda_{3,1}<70,\quad
 D\lambda_{4,1}<96\qquad(0<\rho\le1/2).
\]

One more order-zero radial level is available for $\rho\le1/4$.
The corresponding four-dimensional identity is
\[
 \det H^{(0,74,4)}(z)
 =-\frac{37}{378000}(1-z)^9P_5(z),\qquad
 P_5(z)=51827-861561z+930201z^2+685933z^3.
\]
On $0\le z\le1/16$,
\[
 P_5'(z)\le-861561+\frac{1860402}{16}
                   +\frac{2057799}{256}
 =-\frac{188735385}{256}<0,
 \qquad P_5(1/16)=\frac{7292925}{4096}>0.
\]
Thus its determinant is negative. The preceding order-zero
three-dimensional comparison at $Q=36$ supplies two negative
directions, which persist at $Q=74$ because the mass form is
positive definite. A nonsingular symmetric $4\times4$ matrix
with negative determinant has an odd number of negative eigenvalues;
there are therefore three. We have proved
\[
 D\lambda_{0,2}<74\qquad(0<\rho\le1/4).
\]

For the angular ground levels, Lemma~\ref{lem:angularfirstmonotone}
and its disk-restriction argument bound $\lambda_{m,0}(\rho)$
strictly by the disk quotient of any regular trial function.
Use
\[
 f_{m,p}(s)=s^m\left(1-\frac{m}{m+2p}s^2\right)^p,
 \qquad c_j=\binom pj\left(-\frac{m}{m+2p}\right)^j.
\]
Their exact disk mass and energy are
\[
 M=\sum_{i,j=0}^p\frac{c_ic_j}{2m+2i+2j+2},\qquad
 K=\sum_{i,j=0}^p
 \frac{((m+2i)(m+2j)+m^2)c_ic_j}{2m+2i+2j}.
\]
The needed values are
\[
\begin{array}{c|c|c|c|c}
 m&p&M&K&K/M\\\hline
 4&1&13/630&16/27&1120/39\\
 5&1&31/2352&190/343&9120/217\\
 6&1&1/112&33/64&231/4\\
 7&2&26899/10541520&1371944/7247295&21951104/295889\\
 8&2&191/115830&2072/13365&53872/573\\
 8&3&1344991/1514142630&48836664/588833245&879059952/9414937
\end{array}
\]
The last three quotients are less than $371/5$, $941/10$, and
$187/2$, respectively. On $7/40\le\rho\le1/2$,
$D\le1551/1600$. Hence the ground modes through $m=4,5,6,7,8$
lie strictly below the normalized energies $36,44,56,72,92$,
respectively. For the last three the sufficient comparisons are
\[
 \frac{1551}{1600}\frac{231}{4}=56-\frac{119}{6400}<56,
 \qquad
 \frac{1551}{1600}\frac{371}{5}=72-\frac{579}{8000}<72,
\]
\[
 \frac{1551}{1600}\frac{941}{10}=92-\frac{12509}{16000}<92.
\]
If $\rho\ge1/4$, the last trial in the table gives instead
\[
 D\lambda_{8,0}<\frac{15}{16}\frac{187}{2}
 =88-\frac{11}{32}<88.
\]
The ground eigenvalues increase with the angular potential $m^2/s^2$,
so a bound for order $m$ includes every smaller order.

The following table now lists modes strictly below $4k/D$.
Ground modes through order $g$ contribute $1+2g$ modes;
first positive radial levels through order $h$ contribute
$1+2h$ additional modes. The order-zero second positive radial
level contributes one more when listed.
\[
\begin{array}{c|c|c|c|c}
 k&\text{radius condition}&g&\text{additional radial modes}&\text{count}\\\hline
 9\text{--}10&7/40\le\rho\le1/2&4&n=1,\ 0\le m\le1&12\\
 11\text{--}13&7/40\le\rho\le1/2&5&n=1,\ 0\le m\le1&14\\
 14\text{--}17&7/40\le\rho\le1/2&6&n=1,\ 0\le m\le2&18\\
 18\text{--}21&7/40\le\rho\le1/2&7&n=1,\ 0\le m\le3&22\\
 22&7/40\le\rho\le1/4&7&n=1,\ 0\le m\le3;\ (m,n)=(0,2)&23\\
 22&1/4\le\rho\le1/2&8&n=1,\ 0\le m\le3&24\\
 23&7/40\le\rho\le1/2&8&n=1,\ 0\le m\le3&24\\
 24\text{--}25&7/40\le\rho\le1/2&8&n=1,\ 0\le m\le4&26
\end{array}
\]
For example, the first row has $g=4$ and contains precisely the
following guaranteed modes:
\[
 \begin{array}{c|c|c}
 \text{radial index}&\text{angular orders}&\text{multiplicity}\\\hline
 n=0&m=0,1,2,3,4&1+2+2+2+2=9\\
 n=1&m=0,1&1+2=3
 \end{array}
\]
Here $n=0,m=0$ is the constant eigenfunction. Each positive angular
order contributes its cosine and sine factors; these are independent
eigenfunctions. The $n=1$ entries are different radial eigenvalues
from the $n=0$ entries in the same angular order. The preceding
estimates put all twelve modes strictly below $36/D$, and therefore
below $4k/D$ for both $k=9$ and $k=10$. Thus the last column is
$9+3=12$, with no double counting.

Each count is at least $k+1$ throughout the stated index group.
Together with Proposition~\ref{prop:analyticfirsteight}, this proves
$\nu_k<4k/D$ for $1\le k\le25$ on $7/40\le\rho\le1/2$.

\paragraph{Two radial rows for larger inner radii.}
A radially constant trial function gives
\[
 D\lambda_{m,0}\le2m^2\log(1/\rho).
\]
Lemma~\ref{lem:radial}, which is valid at arbitrary relative width,
and $\pi^2<10$ give
\[
 \lambda_{m,1}\le\frac{\pi^2}{(1-\rho)^2}
                     +\frac{m^2+3/4}{\rho^2}.
\]
Put
\[
 B_Q(\rho)=\frac{Q\rho^2}{1-\rho^2}
           -\frac{10\rho^2}{(1-\rho)^2}-\frac34.
\]
Then $m^2\le B_Q(\rho)$ implies $D\lambda_{m,1}<Q$.
Moreover,
\[
 B_Q'(\rho)=\frac{2\rho}{(1-\rho^2)^2}
 \left(Q-10\frac{(1+\rho)^2}{1-\rho}\right).
\]
The quotient in parentheses increases strictly, so $B_Q$ has
at most one interior maximum and its minimum on any closed
radius interval is attained at an endpoint.

Use $I_1=[1/2,3/5]$, $I_2=[3/5,7/10]$, and $I_3=[7/10,3/4]$.
The required logarithmic bounds at their left endpoints are
\[
 2\log2<\gamma_1=\frac{25}{18},\qquad
 2\log(5/3)<\gamma_2=\frac{46}{45},\qquad
 2\log(10/7)<\gamma_3=\frac{51}{70}.
\]
For the first two, with $x=(z-1)/(z+1)$,
\[
 2\log z=4\sum_{j\ge0}\frac{x^{2j+1}}{2j+1}
 <4\left(x+\frac{x^3}{3(1-x^2)}\right),
\]
and substitution of $x=1/3,1/4$ gives the fractions above.
The third follows from $2\log z<z-z^{-1}$ for $z>1$;
the difference has derivative $(z-1)^2/z^2$ and vanishes at one.
At the four radius endpoints,
\[
\begin{aligned}
 B_Q(1/2)&=Q/3-43/4,& B_Q(3/5)&=9Q/16-93/4,\\
 B_Q(7/10)&=49Q/51-1987/36,& B_Q(3/4)&=9Q/7-363/4.
\end{aligned}
\]
For $\rho\in I_j$ and $Q=4k$, keep ground modes through $g$
and, when listed, first positive radial modes through $h$.
The following margins are exactly
$4k_- -\gamma_jg^2$ and
$\min\{B_{4k_-}(\inf I_j),B_{4k_-}(\sup I_j)\}-h^2$.
They are positive and increase with $k$.
\[
\begin{array}{c|c|r|r|r|c|c}
 j&[k_-,k_+]&g&h&\text{count}&\text{ground margin}&\text{higher-row margin}\\\hline
 1&[9,10]&5&--&11&23/18&--\\
 1&[11,12]&5&1&14&167/18&1/2\\
 1&[13,14]&6&2&18&2&2\\
 1&[15,17]&6&3&20&10&1/4\\
 1&[18,20]&7&3&22&71/18&17/4\\
 1&[21,22]&7&4&24&287/18&5/4\\
 1&[23,25]&8&4&26&28/9&47/12\\\hline
 2&[9,10]&5&--&11&94/9&--\\
 2&[11,12]&6&--&13&36/5&--\\
 2&[13,14]&7&--&15&86/45&--\\
 2&[15,16]&7&1&18&446/45&889/612\\
 2&[17,20]&8&3&24&116/45&41/36\\
 2&[21,25]&9&4&28&6/5&8\\\hline
 3&[9,13]&7&--&15&3/10&--\\
 3&[14,16]&8&--&17&328/35&--\\
 3&[17,18]&9&--&19&629/70&--\\
 3&[19,25]&10&2&26&22/7&83/28
\end{array}
\]
The count is $1+2g$ when the higher row is omitted and
$2+2g+2h$ otherwise; it is always at least $k_++1$.
Thus $\nu_k<4k/D$ for $9\le k\le25$ on $1/2\le\rho\le3/4$.

For $3/4\le\rho<1$, the ground row alone suffices throughout
$32\le Q\le100$. Indeed, $2\log(1/\rho)<7/12$, whence
\[
 N_\rho(X^2)\ge1+2\left\lfloor
 \sqrt{\frac{Q}{2\log(1/\rho)}}\right\rfloor
 >2\sqrt{\frac{12Q}{7}}-1.
\]
After subtracting $Q/4$, the right side is concave in $Q$.
It is positive at $Q=32$ and $Q=100$, as
$384/7>(9/2)^2$ and $1200/7>13^2$.

Finally, on $7/40\le\rho\le3/4$, take
$k=\lfloor Q/4\rfloor\le25$. The eigenvalue bounds above and
Proposition~\ref{prop:analyticfirsteight} give
$N_\rho(X^2)\ge k+1>Q/4$, including $Q=100$.
Together with the small-radius and largest-radius arguments,
this proves the assertion for every $0<\rho<1$.
\end{proof}
\subsection{Fixed-area discrete bridges}
The remaining parameter ranges require individual angular counts.
We keep $Q=(1-\rho^2)U^2$ fixed because the comparison term $Q/4$
then stays fixed while the annular shape varies. With $t=\rho U$,
the action $G_U(m)-G_t(m)$ measures the available radial phase in
angular order $m$. The next lemma reduces its lower bound on an
entire radius interval to two endpoint values. This reduction
retains the integer radial layers that a summed phase estimate
would lose.

\begin{lemma}[Fixed-area endpoint principle]\label{lem:analyticareaendpoints}
For fixed $Q>0$ and integer $m\ge0$, set
\[
 U=\sqrt{t^2+Q},\qquad F_m(t)=G_U(m)-G_t(m).
\]
On any closed interval of $t\ge0$, the minimum of $F_m$ is attained
at an endpoint. Equivalently, at fixed $Q=(1-\rho^2)U^2$, the
minimum over a radius-ratio interval is attained at one of its two
endpoints. At each fixed radius ratio $F_m$ is nondecreasing in $Q$.
\end{lemma}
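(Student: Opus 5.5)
The plan is to differentiate $F_m$ in $t$ explicitly and show that its derivative changes sign at most once, from positive to negative. Then $F_m$ is unimodal (quasi-concave) on $[0,\infty)$, so its minimum over any closed interval lies at an endpoint. The first step is the derivative of the disk action in its radius. For $X>m$, the definition of $G_X$ and $g'(x)=-\arccos(x)/\pi$ give
\[
 \frac{\partial}{\partial X}G_X(m)=g(m/X)-\frac mX\,g'(m/X)
 =\frac{\sqrt{1-m^2/X^2}}{\pi}=\frac{\sqrt{X^2-m^2}}{\pi X}.
\]
For $X<m$ this derivative is zero. Since $G_X(m)$ is continuous in $X$ with matching one-sided derivatives at $X=m$, the formula $\partial_XG_X(m)=\sqrt{(X^2-m^2)_+}/(\pi X)$ holds for all $X>0$.

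With $U=\sqrt{t^2+Q}$ we have $dU/dt=t/U$, and hence
\[
 \pi F_m'(t)=\frac{t\sqrt{(U^2-m^2)_+}}{U^2}-\frac{\sqrt{(t^2-m^2)_+}}{t}.
\]
We treat the ranges of $t$ separately.
\begin{itemize}
\item For $m=0$ this is $t/U-1<0$, so $F_0=(U-t)/\pi$ is decreasing and the claim is immediate.
\item For $m\ge1$ and $t\le m$, the second term vanishes and $F_m'\ge0$.
\item For $t>m$ both square roots are positive. Put $a=t^2\ge m^2$, so that $U^2=a+Q$. Squaring shows $F_m'(t)<0$ exactly when $(a-m^2)(a+Q)^2>a^2(a+Q-m^2)$.
\end{itemize}
Expanding the difference of the two sides, the cubic terms cancel and one obtains
\[
 (a-m^2)(a+Q)^2-a^2(a+Q-m^2)=Q\bigl(a^2+(Q-2m^2)a-m^2Q\bigr).
\]
The quadratic in parentheses equals $-m^4<0$ at $a=m^2$ and has positive leading coefficient. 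It therefore has exactly one root $a_0>m^2$, is negative on $[m^2,a_0)$ and positive beyond. Consequently $F_m$ is nondecreasing on $[0,\sqrt{a_0}]$ and strictly decreasing afterwards. A function that first increases and then decreases attains its minimum over any closed interval at an endpoint.

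For the reformulation in the radius ratio, note that at fixed $Q$ the map $\rho\mapsto t=\rho U=\rho\sqrt{Q/(1-\rho^2)}$ is continuous and strictly increasing. A closed $\rho$-interval therefore corresponds to a closed $t$-interval with corresponding endpoints, and the endpoint statement transfers. For monotonicity in $Q$ at fixed $\rho$, write $U=\sqrt{Q/(1-\rho^2)}$ and $t=\rho U$. By the integral representation in the action subsection, after scaling the outer radius to one, $F_m=\frac1\pi\int_\rho^1\sqrt{(U^2-m^2/s^2)_+}\,\mathrm ds$. This is nondecreasing in $U$, hence in $Q$.

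The only step needing care is the algebraic sign analysis after squaring. The squaring is legitimate because both sides are nonnegative, and the degenerate boundary $t=m$ has to be handled; I expect the cancellation of the cubic terms to be the key point that makes the count of sign changes elementary.
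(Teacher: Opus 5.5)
Your proof is correct and follows the paper's route. You differentiate $F_m$ in $t$, note $F_m'\ge0$ for $t\le m$, and show that for $t>m$ the difference of the squares of the two terms changes sign exactly once, from positive to negative; the paper writes this difference as $\frac{Q}{U^2}\bigl(m^2(t^{-2}+U^{-2})-1\bigr)$, which equals $-Q\bigl(a^2+(Q-2m^2)a-m^2Q\bigr)/\bigl(a(a+Q)^2\bigr)$, and uses its monotonicity in $t$ where you use the root structure of the quadratic. The only other cosmetic difference is that you obtain monotonicity in $Q$ from the integral representation $\frac1\pi\int_\rho^1\sqrt{(U^2-m^2/s^2)_+}\,\mathrm ds$, whereas the paper differentiates $G_U(m)-G_{\rho U}(m)$ in $U$ directly.
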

\begin{proof}
For $m=0$, $F_0=(\sqrt{t^2+Q}-t)/\pi$ is decreasing.
For $m>0$ and $t<m$, $G_t(m)=0$ and $G_U(m)$ is nondecreasing.
For $t>m$, differentiation gives
\[
 \pi F_m'(t)=\frac tU\sqrt{1-\frac{m^2}{U^2}}
                   -\sqrt{1-\frac{m^2}{t^2}}.
\]
Both terms on the right are nonnegative. Their squared difference is
\[
 \frac Q{U^2}
       \left(m^2\left(\frac1{t^2}+\frac1{U^2}\right)-1\right).
\]
The expression in parentheses strictly decreases, is positive at
$t=m$, and tends to $-1$. Hence $F_m'$ changes sign at most once,
from positive to negative. There are no interior strict minima.
Also $t=\rho\sqrt{Q/(1-\rho^2)}$ increases with $\rho$.
For fixed $\rho$, differentiate with respect to $U$. In the core
the derivative is
\[
 \frac1\pi\left(\sqrt{1-\frac{m^2}{U^2}}
              -\rho\sqrt{1-\frac{m^2}{\rho^2U^2}}\right)>0;
\]
in an outer branch only the first nonnegative term remains.
The continuity at turning points completes the proof.
\end{proof}

\begin{figure}[htbp]
\centering
\begin{tikzpicture}[x=70cm,y=11cm,font=\small]
 \draw[->] (.497,0)--(.605,0) node[right] {$\rho$};
 \draw[->] (.5,-.008)--(.5,.44) node[above] {$F_8(168,\rho)$};
 \foreach \xx/\lab in {.5/{1/2},.525/{},.55/{11/20},.575/{},.6/{3/5}}
 {\draw (\xx,0)--(\xx,-.005) node[below] {$\lab$};}
 \foreach \yy/\lab in {0/{1.2},.1/{1.3},.2/{1.4},.3/{1.5},.4/{1.6}}
 {\draw (.5,\yy)--(.499,\yy) node[left] {$\lab$};}
 \draw[densely dashed,gray] (.5,.06)--(.6,.06)
 node[right] {$1+c=63/50$};
 \draw[blue!75!black,thick] (0.5000000,0.2624051) -- (0.5012500,0.2657694) -- (0.5025000,0.2691516) -- (0.5037500,0.2725520) -- (0.5050000,0.2759706) -- (0.5062500,0.2794076) -- (0.5075000,0.2828630) -- (0.5087500,0.2863370) -- (0.5100000,0.2898297) -- (0.5112500,0.2933412) -- (0.5125000,0.2968717) -- (0.5137500,0.3004212) -- (0.5150000,0.3039899) -- (0.5162500,0.3075779) -- (0.5175000,0.3111854) -- (0.5187500,0.3148124) -- (0.5200000,0.3184591) -- (0.5212500,0.3221256) -- (0.5225000,0.3258121) -- (0.5237500,0.3295187) -- (0.5250000,0.3332454) -- (0.5262500,0.3366568) -- (0.5275000,0.3396488) -- (0.5287500,0.3424034) -- (0.5300000,0.3449733) -- (0.5312500,0.3473878) -- (0.5325000,0.3496661) -- (0.5337500,0.3518222) -- (0.5350000,0.3538668) -- (0.5362500,0.3558084) -- (0.5375000,0.3576540) -- (0.5387500,0.3594096) -- (0.5400000,0.3610803) -- (0.5412500,0.3626704) -- (0.5425000,0.3641839) -- (0.5437500,0.3656241) -- (0.5450000,0.3669941) -- (0.5462500,0.3682967) -- (0.5475000,0.3695345) -- (0.5487500,0.3707097) -- (0.5500000,0.3718244) -- (0.5512500,0.3728807) -- (0.5525000,0.3738803) -- (0.5537500,0.3748248) -- (0.5550000,0.3757160) -- (0.5562500,0.3765551) -- (0.5575000,0.3773436) -- (0.5587500,0.3780828) -- (0.5600000,0.3787739) -- (0.5612500,0.3794180) -- (0.5625000,0.3800162) -- (0.5637500,0.3805696) -- (0.5650000,0.3810790) -- (0.5662500,0.3815456) -- (0.5675000,0.3819700) -- (0.5687500,0.3823532) -- (0.5700000,0.3826960) -- (0.5712500,0.3829992) -- (0.5725000,0.3832634) -- (0.5737500,0.3834894) -- (0.5750000,0.3836778) -- (0.5762500,0.3838294) -- (0.5775000,0.3839447) -- (0.5787500,0.3840243) -- (0.5800000,0.3840688) -- (0.5812500,0.3840787) -- (0.5825000,0.3840547) -- (0.5837500,0.3839971) -- (0.5850000,0.3839066) -- (0.5862500,0.3837836) -- (0.5875000,0.3836285) -- (0.5887500,0.3834418) -- (0.5900000,0.3832239) -- (0.5912500,0.3829754) -- (0.5925000,0.3826964) -- (0.5937500,0.3823876) -- (0.5950000,0.3820492) -- (0.5962500,0.3816816) -- (0.5975000,0.3812852) -- (0.5987500,0.3808604) -- (0.6000000,0.3804074);
 \fill[blue!75!black] (.5,0.2624051) circle (1.8pt);
 \fill[blue!75!black] (.6,0.3804074) circle (1.8pt);
 \draw[densely dotted] (.5809883,0)--(.5809883,.3841000);
 \node[above] at (.563,.384) {single interior maximum};
 \fill[orange!80!black] (.5,.1666697) circle (1.7pt);
 \draw[orange!80!black,->] (.5,.171)--(.5,.255);
 \node[orange!80!black,right] at (.501,.16) {$T_4(L_1,8)$};
 \fill[orange!80!black] (.6,.2575617) circle (1.7pt);
 \draw[orange!80!black,->] (.6,.263)--(.6,.373);
 \node[orange!80!black,left] at (.599,.245) {$P_2(3/5,L_2,8)$};
\end{tikzpicture}
\caption{The fixed-area action for the second layer in the
$Q_0=168$ row of Proposition~\ref{prop:analytichalfbridge}.
Its minimum on $[1/2,3/5]$ is attained at an endpoint, although
the function need not be monotone. The two orange points are the
rational lower bounds used for the endpoint actions; both exceed
$1+c$. The exact comparisons are given in the worked row.}
\label{fig:analytic-fixedarea-action}
\end{figure}

\paragraph{A uniform discrete consequence.}
To display both geometric variables, write the same action as
\[
 F_m(Q,\rho):=G_{\sqrt{Q/(1-\rho^2)}}(m)
             -G_{\rho\sqrt{Q/(1-\rho^2)}}(m).
\]
For positive integer orders the cap $c=13/50$ gives in the core
$n_m\ge\max\{0,\lfloor F_m(Q,\rho)-c\rfloor+1\}$.
In the outer region the existing bound is
$n_m\ge\lfloor G_U(m)+3/4\rfloor$.
Since $c+3/4=101/100>1$, the former lower bound holds in the outer
region as well. For $m=0$ it also holds whenever $t\ge1$, since
$3/(8\pi t)<13/50$.
For an interval $\rho\in[\rho_-,\rho_+]$ and $Q\ge Q_0$, therefore,
\[
 n_m\ge\max\left\{0,
  \left\lfloor\min\{F_m(Q_0,\rho_-),F_m(Q_0,\rho_+)\}-c\right\rfloor+1
                    \right\}.
\]
If $m<t_-=\rho_-\sqrt{Q_0/(1-\rho_-^2)}$, the cap $c$ can be
replaced by $\min\{c,\eta_{t_-}(m)\}$: the raw error decreases
with $t$ at fixed $m$. Indeed,
\[
 \frac{\partial}{\partial t}\eta_t(m)
     =-\frac{t(9t^2+12m^2)}{24\pi(t^2-m^2)^{5/2}}<0.
\]
This preserves the discrete initial phase information rather than
averaging it into a global sawtooth loss.

\begin{lemma}[A simple ground-mode family]\label{lem:analyticgroundfamily}
With outer radius one, the lowest radial eigenvalue in angular
order $m$ is strictly smaller, for every $0<\rho<1$, than
\[
 b_m=\frac{2(3m+2)(m+1)(m+3)}{5m+14}.
\]
Consequently $U^2>b_p$ guarantees at least one radial mode for
every order $0\le m\le p$.
\end{lemma}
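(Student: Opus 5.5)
We plan to bound $\lambda_{m,0}(\rho)$ variationally by a single explicit trial function and reduce everything to a disk computation. For $m=0$ there is nothing to do: $\lambda_{0,0}=0<b_0=6/7$. For $m\ge1$ we use Lemma~\ref{lem:angularfirstmonotone} with its disk-restriction argument, as in Proposition~\ref{prop:analyticlowfrequency}. That argument bounds $\lambda_{m,0}(\rho)$ strictly, for every $0<\rho<1$, by the unit-disk Rayleigh quotient
\[
 \frac{\int_0^1\bigl(s|f'|^2+m^2s^{-1}|f|^2\bigr)\,\mathrm ds}{\int_0^1 s|f|^2\,\mathrm ds}
\]
of any regular trial function $f$ vanishing to order $m$ at the origin. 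Intuitively, removing the inner disk only lowers the ground level of a positive angular order. This removes all dependence on $\rho$, so it suffices to produce one trial function per order whose disk quotient is at most $b_m$.

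We take the one-parameter family $f(s)=s^m(1-\alpha s^2)$, the case $p=1$ of the family $f_{m,p}$ already used in that proposition, but now with $\alpha$ free. Expanding in monomials as in the displayed formulas for $M$ and $K$ gives
\[
 M(\alpha)=\frac1{2m+2}-\frac{2\alpha}{2m+4}+\frac{\alpha^2}{2m+6},
\]
\[
 K(\alpha)=m-2\alpha\,\frac{2m^2+2m}{2m+2}+\alpha^2\,\frac{(m+2)^2+m^2}{2m+4}
 =m-2m\alpha+\alpha^2\,\frac{m^2+2m+2}{m+2}.
\]
The quotient $K/M$ is a ratio of quadratics in $\alpha$. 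We intend to choose a rational $\alpha=\alpha(m)$, either the exact minimiser or a simple rational choice near it, for which $K/M$ simplifies to $b_m=2(3m+2)(m+1)(m+3)/(5m+14)$. If a convenient choice only gives $K/M\le b_m$, that suffices.

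Concretely, we will verify the polynomial inequality $b_mM(\alpha)-K(\alpha)\ge0$. After clearing the positive denominators $(m+1)(m+2)(m+3)(5m+14)$, this is an identity or inequality between explicit polynomials in $m$ (and $\alpha$). Determining the right $\alpha(m)$ so that this polynomial is nonnegative for all integers $m\ge1$ is the main obstacle. We will first try the stationarity condition $K'M-KM'=0$ and look for a rational root linear in $m$ in its numerator. Failing that, we will try $\alpha=m/(m+2)$, the value used in $f_{m,1}$, and check the sign of the resulting cubic in $m$ directly.

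Strictness comes from the cited lemma, or equivalently from the observation in Proposition~\ref{prop:mixedlow}: equality would force the trial function to be an eigenfunction, which is impossible for $s^m(1-\alpha s^2)$ on the annulus.

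For the consequence, we use that $b_m$ increases in $m$; its numerator is cubic and its denominator linear with positive coefficients, so this is checked by a direct difference. Then $U^2>b_p$ gives $\lambda_{m,0}<b_m\le b_p<U^2$ for every $0\le m\le p$, in the outer-radius-one normalisation where the energy is $U^2$. Hence each such order contributes at least one radial mode below $U^2$.
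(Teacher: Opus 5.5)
Your framework matches the paper's. You use the disk trial $s^m(1-\alpha s^2)$, transferred to every annulus through the strict decrease of $\beta_m$ in $\rho$ from Lemma~\ref{lem:angularfirstmonotone}: restrict to $[\varepsilon,1]$, let $\varepsilon\downarrow0$, then use strict monotonicity. Your formulas for $K(\alpha)$ and $M(\alpha)$ are right. The gap is the one step that carries the content of the lemma, namely the choice of $\alpha$, and both strategies you name fail.

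The minimiser is irrational. For $m=1$ the stationarity condition is $11\alpha^2-21\alpha+6=0$, whose discriminant is $177$, and the minimal quotient is $30-2\sqrt{177}$. So there is no rational root, let alone one linear in $m$. The fallback $\alpha=m/(m+2)$ gives
\[
 \frac KM=\frac{2m(3m+4)(m+1)(m+3)}{(m+2)(5m+6)},\qquad
 \frac{K/M}{b_m}-1=\frac{4(m^2-3m-6)}{(m+2)(3m+2)(5m+6)}.
\]
This is positive for every $m\ge5$. For example, at $m=5$ it gives $9120/217>1632/39=b_5$; this is the $p=1$ entry already tabulated in Proposition~\ref{prop:analyticlowfrequency}. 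The lemma is used at orders up to about twenty, so your plan does not prove it as written.

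The missing choice is $\alpha=(m+2)/(m+4)$. Then $1-\alpha=2/(m+4)$, so
\[
 K=m(1-\alpha)^2+\frac{2\alpha^2}{m+2}=\frac{2(3m+2)}{(m+4)^2}.
\]
In $M$ the cubic and quadratic terms of the numerator cancel, leaving $M=(5m+14)/\bigl((m+1)(m+3)(m+4)^2\bigr)$. Hence $K/M=b_m$ exactly. This $\alpha$ is not near the minimiser for small $m$: at $m=1$ it gives $80/19$ rather than about $3.39$. It is chosen because the quotient factors.

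Separately, drop the ``or equivalently'' route to strictness via Proposition~\ref{prop:mixedlow}. The restriction of the disk trial to $[\rho,1]$ does not have annular quotient $K/M$, and that quotient can even exceed $b_m$. For $m=1$ and $\alpha=3/5$ it tends to $m^2+f'(1)^2/f(1)^2=5>80/19$ as $\rho\uparrow1$. Both the bound and its strictness must therefore come from the monotonicity of $\beta_m$, as in your primary argument.

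Your treatment of $m=0$ is fine. Your treatment of the consequence is also fine: you use monotonicity of $b_m$ in $m$, whereas the paper uses monotonicity of the ground levels in $m$.
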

\begin{proof}
In the unit disk use $v(s)=s^m(1-as^2)$ with
$a=(m+2)/(m+4)$. Its radial energy and mass are
\[
 \int_0^1\left(sv'^2+\frac{m^2}{s}v^2\right)\,\mathrm ds
       =\frac{2(3m+2)}{(m+4)^2},\qquad
 \int_0^1sv^2\,\mathrm ds
       =\frac{5m+14}{(m+1)(m+4)^2(m+3)}.
\]
The quotient is $b_m$.
For completeness, the radial Neumann ground eigenvalue
$\beta_m(\rho)$ decreases strictly with the inner radius for $m>0$.
Normalize its positive ground state $v_\rho$ by
$\int_\rho^1sv_\rho^2\,\mathrm ds=1$.
Differentiating the simple variational eigenvalue and using the
Neumann condition at the moving endpoint gives
\[
 \beta_m'(\rho)=\rho\left(\beta_m(\rho)-\frac{m^2}{\rho^2}\right)
                         v_\rho(\rho)^2<0.
\]
The strict sign follows already from the constant trial function:
$\beta_m\le2m^2\log(1/\rho)/(1-\rho^2)<m^2/\rho^2$.
Restriction of the disk trial to $[\varepsilon,1]$ shows
$\limsup_{\varepsilon\downarrow0}\beta_m(\varepsilon)\le b_m$;
strict monotonicity then implies $\beta_m(\rho)<b_m$.
For $m=0$, $\beta_0=0<b_0$ directly.
Finally the variational forms increase with $m^2$, so their
ground eigenvalues are nondecreasing with $m$.
\end{proof}

\paragraph{Action minorants for the remaining finite cases.}
For $0\le z\le1$,
\[
 \sqrt{1-z}\ge(1-z)\left(1+\frac z2\right),\qquad
 \sqrt{1-z}\ge(1-z)\left(1+\frac z2+\frac{3z^2}{8}\right).
\]
The first squared inequality follows from
$(1-z)(1+z/2)^2=1-3z^2/4-z^3/4\le1$.
The second follows from
\[
 (1-z)\left(1+\frac z2+\frac{3z^2}{8}\right)^2
       =1-\frac{5z^3}{8}-\frac{15z^4}{64}-\frac{9z^5}{64}\le1.
\]
Thus, for $m\le t<U$,
\[
 F_m\ge\frac1\pi\left((U-t)
   -\frac{m^2}{2}\left(\frac1t-\frac1U\right)
   -\frac{m^4}{6}\left(\frac1{t^3}-\frac1{U^3}\right)\right),
\]
and, more sharply,
\[
 F_m\ge\frac1\pi\left((U-t)
   -\frac{m^2}{2}\left(\frac1t-\frac1U\right)
   -\frac{m^4}{24}\left(\frac1{t^3}-\frac1{U^3}\right)
   -\frac{3m^6}{40}\left(\frac1{t^5}-\frac1{U^5}\right)\right).
\]
The discarded terms have been controlled by pointwise polynomial
inequalities, rather than by numerical sampling.

\begin{proposition}[A complete analytic bridge on a radius interval]\label{prop:analytichalfbridge}
For $1/2\le\rho\le3/5$ and $Q=(1-\rho^2)U^2\ge100$,
the Neumann annular counting function satisfies $N>Q/4$.

\end{proposition}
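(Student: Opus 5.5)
We reduce to a finite fixed-area comparison. First, the high-frequency end is already handled: by Lemma~\ref{lem:analytic23_28}, the strict counting inequality holds for $1/2\le\rho\le4/5$ whenever $U\ge23$. Since $U^2=Q/(1-\rho^2)$, on $1/2\le\rho\le3/5$ the condition $U\ge23$ holds as soon as $Q\ge 23^2(1-\rho^2)$. It is certainly implied by $Q\ge 23^2\cdot 3/4=396.75$, and on subintervals nearer $3/5$ by $Q\ge 23^2\cdot 16/25\approx 338.6$. So only the band $100\le Q\le Q_{\max}$ with $Q_{\max}\approx 397$ remains, and the plan is to cover it by finitely many rows $[Q_0,Q_1]$. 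For each row we prove $N\ge\lceil Q_1/4\rceil+\epsilon$; more precisely, we exhibit an integer lower count $c\ge \lfloor Q_1/4\rfloor+1$ valid for all $Q\ge Q_0$ and all $\rho$ in a radius subinterval. Consecutive rows then meet at their area endpoints, with $Q_1$ of each row equal to $4(c)-\varepsilon$ or simply $Q_1<4c$. The first row starts at $Q_0=100$, where Proposition~\ref{prop:analyticlowfrequency} overlaps.

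Within a row $[Q_0,Q_1]\times[\rho_-,\rho_+]$ we count angular layers. For each integer order $m$, the uniform discrete consequence of Lemma~\ref{lem:analyticareaendpoints} gives
\[
 n_m\ge\max\Bigl\{0,\bigl\lfloor\min\{F_m(Q_0,\rho_-),F_m(Q_0,\rho_+)\}-c_m\bigr\rfloor+1\Bigr\}.
\]
Here $c_m=13/50$ in general, improved to $\eta_{t_-}(m)$ when $m<t_-$. This is valid for all $Q\ge Q_0$ by monotonicity in $Q$, and for all $\rho$ in the interval by the endpoint principle. The total count is then $\sum_{m\in\mathcal V}n_m$.

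To make every endpoint action rigorous, we use the rational action minorants displayed just before the statement. The two-term version handles small $m$, the sharper three-term version handles $m$ close to $t$, and the outer-region formula $G_U(m)$ handles $t<m\le U$. Each comparison ``$F_m>n-1+c_m$'' becomes an explicit rational inequality after inserting rational bounds for $\sqrt{Q_0/(1-\rho_\pm^2)}$ and $\pi$. For the highest orders, whose only contribution is a ground mode, it is simpler to invoke Lemma~\ref{lem:analyticgroundfamily}: if $U^2=Q_0/(1-\rho_+^2)>b_p$ with the worst endpoint, then every order $m\le p$ has at least one mode. We may also use the constant-trial bound $D\lambda_{m,0}\le 2m^2\log(1/\rho)$, whose worst case is $\rho=1/2$.

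I expect to split $[1/2,3/5]$ into two or three subintervals to keep $t=\rho U$ from varying too much, since endpoint minima lose a layer otherwise. I also expect to choose row breakpoints $Q_0$ greedily: each new $Q_0$ is the previous $4c$, decreased slightly.

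The main obstacle is the bookkeeping near the turning argument $m\approx t$ and near layer thresholds. There the minimum over both radius endpoints of $F_m$ can sit just below an integer plus $13/50$, and the uniform cap wastes a whole layer in $2$ angular orders. If a row fails, I would first refine the radius interval or shorten the $Q$-row. Failing that, I would replace the cap by the raw error $\eta_{t_-}(m)$, use the clipped envelope more carefully, or use the three-term minorant. The verification then reduces to a finite table of positive rational inequalities (as in the worked $Q_0=168$ row of Figure~\ref{fig:analytic-fixedarea-action}), which I would list in the appendix.
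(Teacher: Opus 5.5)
Your overall strategy is the paper's. It has an analytic high-frequency cutoff, followed by finitely many rows at fixed $Q_0$. In each row the endpoint principle (Lemma~\ref{lem:analyticareaendpoints}), monotonicity in $Q$, the cap $13/50$ or the raw error $\eta_{t_-}$, rational action minorants and the ground-mode family (Lemma~\ref{lem:analyticgroundfamily}) certify nested radial layers of total $C$ on $Q_0\le Q<4C$.

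The one structural difference is the cutoff. You reuse Lemma~\ref{lem:analytic23_28} ($U\ge23$), which leaves $100\le Q<396.75$. The paper instead proves a new cutoff $U\ge22$ on $[1/2,3/5]$ from the summed estimate $H_0$ of Lemma~\ref{lem:analytic40}. There $H_0$ is concave in $\rho$, $H_0/U$ increases with $U$, and two rational endpoint checks at $U=22$ finish it. Only $100\le Q<364$ then remains, covered by thirteen rows. Your choice saves that short argument but enlarges the finite range exactly where the layer margins are thinnest.

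As written, however, the proposal is a plan rather than a proof. Here the finite table is the whole argument, and its existence is not automatic. The paper's last rows gain only two to four modes over $Q_0/4$, and the certified first-layer margin at $Q_0=348$ is $1/6600$. Three points must be repaired before such a table can be built.

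\emph{Ground-mode endpoint.} The ground-mode test belongs at the endpoint where $U^2$ is smallest, namely $b_p<Q_0/(1-\rho_-^2)=4Q_0/3$, not at $\rho_+$.

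\emph{Toolkit near the turning order.} The tools you name are too weak near the turning order at $\rho=1/2$. Outer actions $G_U(m)$ still need rational lower bounds. The paper gets them from supporting lines of the convex disk action at angles $\pi/4$, $\pi/3$, $5\pi/16$, and at $\pi/6$ for the first layer at $Q_0=348$, where $b_{19}>464$. In the core the three-term minorant can fail. In the paper's row $Q_0=324$ (third layer, order $10$, $L=20.784$) it gives about $2.247<2.26$, and the four-term minorant $P_4$ (about $2.298$) is needed. Short of dropping that order, neither refining the radius interval nor the raw error helps. The deficit sits at the endpoint $\rho=1/2$, and there $t-m<1/2$.

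\emph{Rows beyond $Q=364$.} Your cutoff forces rows beyond $Q=364$. A rough evaluation suggests two suffice: $Q_0=364$ with layers through orders $19,14,11,2$ ($C=96$), and $Q_0=384$ with layers through $19,15,11,4$ ($C=102$). The fourth-layer margin in the first of these is only about $6\cdot10^{-3}$, so it must be checked rationally. The cheaper repair is to prove the $U\ge22$ cutoff from $H_0$, as the paper does.
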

\begin{proof}
We use two complementary lower bounds. The fixed-area actions
$F_m$ give integer counts in each angular order. Summing those
counts as in Lemma~\ref{lem:analytic40} gives a simpler function
$H_0$ bounding the total excess $N-Q/4$. Thus $H_0$ is an
aggregate count bound, rather than a bound for any one $F_m$.
Its positive term is linear in $U$, while its losses have orders
$U^{1/2}$, $U^{1/3}$ and $1$. This makes it useful above an
explicit frequency threshold; the individual $F_m$ estimates then
cover the remaining finite range.

More precisely, the cut $M=\lfloor t-1\rfloor$ stays at least one
unit below the turning order $t$ and gives the integral error
$2I_{t,t-1}<\sqrt t/7$. Use $c=13/50$, the four-mode band bound,
and the weaker curvature $\kappa=(1-\rho)/(\pi\rho U)$ in the preceding
summed estimate. Since $0<t<U$ and $\pi>157/50$, the two linear terms satisfy
\[
 \frac{U+t}{2}-\frac{U-t}{\pi}
 >\frac{57}{314}U+\frac{257}{314}t.
\]
Together with $\pi<22/7$, this gives
$983/200=13/8+1/4+4(1/2+13/50)$ as the constant loss. These choices
produce exactly the following function from
Lemma~\ref{lem:analytic40}:
\[
 N-\frac Q4\ge H_0(\rho,U),
\]
where
\[
 H_0=\left(\frac{57}{314}+\frac{257}{314}\rho\right)U
       -\frac23\sqrt{\frac{22\rho U}{7(1-\rho)}}
       -\frac{\sqrt{\rho U}}7-\frac23U^{1/3}-\frac{983}{200}.
\]
This is a slightly weaker expression than the estimate retaining
the positive $23/(24\pi t^3)$ curvature correction.
The quotient $H_0/U$ increases with $U$.
Also $H_0$ is concave in $\rho\in[1/2,3/5]$.
Indeed, for $h(\rho)=\sqrt{\rho/(1-\rho)}$,
\[
 h''(\rho)=\frac{4\rho-1}{4\rho^{3/2}(1-\rho)^{5/2}}
       \ge\frac1{4\rho^{3/2}},
\]
so the second derivative of the curvature loss dominates the
positive second derivative $\sqrt U/(28\rho^{3/2})$ of the other
square-root loss.
At $U=22$, use $\sqrt7>13/5$, $\sqrt{11}<10/3$,
$\sqrt{66/5}<11/3$, $\sqrt{726/7}<51/5$ and
$\sqrt[3]{22}<17/6$. The two endpoint bounds are
\[
 H_0(1/2,22)>
 \frac{4081}{314}-\frac{220}{39}-\frac{10}{21}
       -\frac{17}{9}-\frac{983}{200}
       =\frac{1947011}{25716600}>0,
\]
\[
 H_0(3/5,22)>
 \frac{11616}{785}-\frac{34}{5}-\frac{11}{21}
       -\frac{17}{9}-\frac{983}{200}
       =\frac{1324907}{1978200}>0.
\]
Thus all $U\ge22$ in the claimed radius interval are covered.
In particular $Q\ge364$ implies
$U^2\ge4Q/3>22^2$.

The threshold $U=22$ was chosen so that these two rational endpoint
lower bounds are positive. Since $H_0/U$ increases with $U$ and
$H_0$ is concave in $\rho$, these endpoint inequalities control
every $U\ge22$ on the radius interval. The discrete action
estimates are needed only below this threshold. The endpoint
reduction is illustrated in Figure~\ref{fig:analytic-H}(c).

It remains to cover $100\le Q<364$. The following finite list
specifies nested sets of guaranteed radial layers. A value $p_j$
means that every order $0\le m\le p_j$ contributes at least $j$
radial modes; an omitted $p_3$ means that no third layer is used.
The total lower count in a row is
\[
 C=(2p_1+1)+(2p_2+1)+(2p_3+1),
\]
with the last summand omitted when appropriate.
\[
\begin{array}{c|rrr|r|c}
 Q_0&p_1&p_2&p_3&C&\text{covered }Q\text{ interval}\\\hline
 100&9&5&-&30&[100,120)\\
 120&10&6&-&34&[120,136)\\
 136&11&7&-&38&[136,152)\\
 152&12&8&-&42&[152,168)\\
 168&12&8&2&47&[168,188)\\
 188&13&9&4&55&[188,220)\\
 220&14&10&6&63&[220,252)\\
 252&15&11&8&71&[252,284)\\
 284&16&12&9&77&[284,308)\\
 308&17&13&9&81&[308,324)\\
 324&18&13&10&85&[324,340)\\
 340&18&14&10&87&[340,348)\\
 348&19&14&11&91&[348,364)
\end{array}
\]
Every row covers its indicated interval because $Q<4C$ there.
Endpoints of the intervals are included in the next row, so no
strict-count convention is lost at a join.

The following checks verify this table. For $\rho_1=1/2$, $\rho_2=3/5$, put
$U_i=\sqrt{Q_0/(1-\rho_i^2)}$.
The numbers $L_i$ in the next table are rational lower bounds:
each is an integer divided by $1000$, and one checks by squaring
$L_1^2<4Q_0/3$ and $L_2^2\le25Q_0/16$.
Define $P_n(\rho,L,m)$ to be the action minorant obtained by
integrating
\[
 (1-z)\sum_{k=0}^{n-1}\frac{\binom{2k}{k}}{4^k}z^k
       \le\sqrt{1-z},\qquad z=\frac{m^2}{s^2},
\]
from $\rho L$ to $L$, then multiplying by $7/22$ instead of
$1/\pi$. Only $n=2,3,4$ are needed. In particular, the first polynomial
minorant is explicitly
\[
 P_2(\rho,L,m)=\frac7{22}\left[(1-\rho)L
 -\frac{m^2}{2L}(\rho^{-1}-1)
 -\frac{m^4}{6L^3}(\rho^{-3}-1)\right].
\]
Thus a table entry $P_2$ means substitution into this rational
expression, with no transcendental function remaining.
The $n=4$ pointwise inequality follows, after squaring, from
\[
 (1-z)\left(1+\frac z2+\frac{3z^2}{8}+\frac{5z^3}{16}\right)^2
 =1-\frac{35z^4}{64}-\frac{7z^5}{32}
       -\frac{35z^6}{256}-\frac{25z^7}{256}\le1.
\]
All these action minorants increase with $L$, since their
integrated forms are a positive multiple of $L$ minus a sum of
positive multiples of negative odd powers of $L$.

For an outer endpoint use a convex-action tangent. Write
\[
 T_4(L,m)=\frac{4949}{22000}L-\frac m4,\quad
 T_3(L,m)=\frac{3031}{11000}L-\frac m3,\quad
 T_{16}(L,m)=\frac{5817}{22000}L-\frac{5m}{16}.
\]
These are lower bounds for $G_U(m)$ when $U\ge L$.
They follow from the tangent angles $\pi/4$, $\pi/3$, $5\pi/16$,
respectively, and
\[
 \sqrt2>\frac{707}{500},\qquad
 \sqrt3>\frac{433}{250},\qquad
 \sin\left(\frac{5\pi}{16}\right)>\frac{831}{1000},\qquad
 \pi<\frac{22}{7}.
\]
For the last sine bound, use $\sqrt2<283/200$ to obtain
$\sin(\pi/8)>191/500$ by squaring, and then use the half-angle
identity
$\sin(5\pi/16)=\sqrt{2+2\sin(\pi/8)}/2>831/1000$.

The two symbols in each profile column specify the lower bounds
used at $(\rho_1,\rho_2)$. In the $p_3$ column, $a_3$ is a
uniform upper bound for the derivative-phase error divided by
$\pi$. The cap $c$ is used for every $p_2$ check.
\[
\begin{array}{c|rr|cc|c}
 Q_0&1000L_1&1000L_2&p_2\text{ profiles}&p_3\text{ profiles}&a_3\\\hline
100&11547&12500&(P_2,P_2)&-&-\\
120&12649&13693&(P_2,P_2)&-&-\\
136&13466&14577&(T_4,P_2)&-&-\\
152&14236&15411&(T_{16},P_2)&-&-\\
168&14966&16201&(T_4,P_2)&(P_2,P_2)&1/50\\
188&15832&17139&(T_4,P_2)&(P_2,P_2)&1/40\\
220&17126&18540&(T_4,P_2)&(P_2,P_2)&1/25\\
252&18330&19843&(T_4,P_2)&(P_2,P_3)&1/10\\
284&19459&21065&(T_4,P_2)&(P_3,P_3)&37/200\\
308&20264&21937&(T_4,P_2)&(P_2,P_2)&11/100\\
324&20784&22500&(T_4,P_2)&(P_4,P_3)&13/50\\
340&21291&23048&(T_4,T_4)&(P_2,P_2)&9/40\\
348&21540&23318&(T_4,T_4)&(T_3,P_3)&13/50
\end{array}
\]
Substitution of these displayed rational inputs gives, in every
row, an excess greater than $1/250$ above $1+c$ for both $p_2$
endpoints. In every row with a third layer, it gives an excess
greater than $1/125$ above $2+a_3$ for both $p_3$ endpoints.
These are rational inequalities between explicitly displayed
polynomials and rational tangent lines; no evaluation of Bessel
functions, numerical inverse phase, or parameter grid is involved.

For clarity, the raw-error checks for $a_3<c$ reduce to the
following additional rational inputs. Put $D=Q_0/3-p_3^2$.
The lower bounds $s<\sqrt D$ shown below imply
\[
 \eta_{\sqrt{Q_0/3}}(p_3)
       <\frac{3Q_0-2p_3^2}{24(157/50)Ds}<a_3,
\]
using $\pi>157/50$. Each $s^2<D$ and the last strict inequality
follow by multiplication of integers.
\[
\begin{array}{c|rrrrrrr}
 Q_0&168&188&220&252&284&308&340\\\hline
 s&36/5&34/5&61/10&447/100&369/100&93/20&73/20
\end{array}
\]
The cap suffices for the remaining third-layer rows.
Monotonicity of $F_m$ in $m$ and of $\eta_t(m)$ in $m$ extends
each largest-order check to every smaller order in that layer.
The fixed-$Q$ endpoint principle and monotonicity in $Q$ then
extend the count through the full claimed rectangle.

\paragraph{A worked row combining endpoint actions and raw error.}
Consider $Q_0=168$, whose row has $(p_1,p_2,p_3)=(12,8,2)$.
Figure~\ref{fig:analytic-fixedarea-action} illustrates the endpoint
principle for its second layer.
Here
\[
 U_1^2=224,\qquad U_2^2=\frac{525}{2},\qquad
 L_1=\frac{14966}{1000},\quad L_2=\frac{16201}{1000}.
\]
The lower-frequency assertions are exact, since
\[
 U_1^2-L_1^2=\frac{4711}{250000}>0,\qquad
 U_2^2-L_2^2=\frac{27599}{1000000}>0.
\]
For the second layer, $m=8$ is an outer order at $\rho=1/2$,
because $t_1^2=56<64$. The tangent and the polynomial endpoint
bounds give
\[
 \begin{aligned}
 T_4(L_1,8)-\frac43&=\frac{1100101}{33000000}>0,\\
 P_2(3/5,L_2,8)&>P_2(3/5,16,8)
 =\frac{12649}{8910}>\frac43>1+\frac{13}{50}.
 \end{aligned}
\]
The polynomial is applicable at the second endpoint because
$8<(3/5)16$. This proves at least two radial modes through
angular order eight. For the third layer, use $m=2$ and the
smaller lower frequencies $14<L_1$, $81/5<L_2$ to obtain
\[
 \begin{aligned}
 P_2(1/2,L_1,2)&>\frac{1007}{462}>\frac{203}{100},\\
 P_2(3/5,L_2,2)&>
 \frac{24088908557}{11837848275}>\frac{203}{100}.
 \end{aligned}
\]
For the last comparison the exact excess is
$46461247/9470278620>0$. The raw phase error has the uniform bound
\[
 \eta_{\sqrt{56}}(2)
 <\frac{3\cdot168-2\cdot2^2}
       {24(157/50)\,52\,(36/5)}
 =\frac{3875}{220428}<\frac1{50},
\]
where $(36/5)^2<52$ and $\pi>157/50$.
Consequently $F_2-\eta_t(2)>203/100-1/50>2$, so the third
radial layer is present through order two. The first layer follows
from $b_{12}=7410/37<224\le U^2$. The angular multiplicities now
give the complete count
\[
 C=(1+2\cdot12)+(1+2\cdot8)+(1+2\cdot2)
   =25+17+5=47.
\]
The endpoint principle extends each assertion to every
$1/2\le\rho\le3/5$, and monotonicity in $Q$ extends it to
$168\le Q<188=4C$. The next row includes $Q=188$.

Finally, for the first-layer entries up to $Q_0=340$,
$b_{p_1}<4Q_0/3\le U^2$, directly from the displayed rational
formula for $b_m$. At $Q_0=348$ the first layer $p_1=19$ is instead
obtained from the outer action: at $Q_0=348$, throughout this radius interval
$t<19$, and the $\pi/6$ tangent gives
\[
 G_U(19)\ge\frac7{44}L_1-\frac{19}{6}
       =\frac{13}{50}+\frac1{6600}>c,
       \qquad L_1=\frac{21540}{1000}.
\]
This last row illustrates a different first-layer argument:
the phase cap gives at least one mode through order nineteen,
without a ground-trial comparison at that order. With the second
and third layers through orders fourteen and eleven, respectively,
its total is $39+29+23=91$. Hence it covers $348\le Q<364$;
the endpoint $Q=364$ is covered by the $H_0$ estimate.
Thus all first, second and third layer assertions in the table
are accounted for, completing this radius interval.
\end{proof}

\begin{proposition}[The left middle-radius bridge]\label{prop:analyticleftbridge}
Let $Q=(1-\rho^2)U^2$. For $1/10\le\rho\le1/5$ and $Q\ge100$,
the inclusive annular Neumann count satisfies $N_\rho(U^2)>Q/4$.
\end{proposition}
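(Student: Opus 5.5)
I would model the argument on Proposition~\ref{prop:analytichalfbridge}: a summed high-frequency bound handles all $U$ above an explicit threshold, and a finite table of fixed-area angular-layer rows handles the remaining band $100\le Q<Q_*$.

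\textbf{High frequencies.} Lemma~\ref{lem:analytic40} already gives the inequality for $U\ge40$ on $1/16\le\rho\le4/5$, which contains $[1/10,1/5]$. On this interval $1-\rho^2\ge24/25$, so $U<40$ means $Q<1600\cdot(99/100)$ at worst. To shorten the finite range I would first try to lower the threshold. I would re-run the proof of Lemma~\ref{lem:analytic40} with the function $H$ from \eqref{eq:analytic-H}, keeping the $23/(24\pi t^3)$ curvature correction, since $t=\rho U$ is small here.

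Monotonicity in $\rho$ carries over: the derivative estimate $H_\rho>b_0U-(29/7)\sqrt U$ was proved on $[1/16,1/10]$ and, through $H_0$, on $[1/10,3/10]$, and I would check it still holds at the smaller $U$. Then $\rho=1/10$ is the worst endpoint, and since $H/U$ increases in $U$, a single rational check $H(1/10,U_*)>0$ finishes. The candidate is the smallest integer $U_*$ for which it holds; I expect something around $30$--$35$.

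The constraint $t=\rho U\ge2$ from Proposition~\ref{prop:analyticphase} forces $U\ge20$ at $\rho=1/10$, so the threshold cannot drop below that. If lowering it fails, I would simply accept $U_*=40$ and a longer table.

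\textbf{Finite range.} The range $Q\ge100$ with $U<U_*$ corresponds to $Q<(1-\rho^2)U_*^2\le(99/100)U_*^2$. I would cover it with rows $Q_0$ and profiles $(p_1,p_2,\dots)$ satisfying $Q<4C$ on $[Q_0,Q_{\mathrm{next}})$, where $C=\sum_j(2p_j+1)$.

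\begin{itemize}
\item \emph{First layer.} It comes from Lemma~\ref{lem:analyticgroundfamily}, requiring $b_{p_1}<Q_0/(1-\rho_-^2)\le U^2$ with $\rho_-=1/10$, since $U$ is smallest there at fixed $Q$.
\item \emph{Higher layers.} They come from the fixed-area endpoint principle, Lemma~\ref{lem:analyticareaendpoints}. It reduces each requirement $F_m(Q,\rho)>j-1+c$ to the two endpoints $\rho=1/10$ and $\rho=1/5$ at $Q=Q_0$, using the cap $c=13/50$, or the raw error $\eta_{t_-}(m)$ when $m<t_-$.
\item \emph{Endpoint bounds.} Here $t$ is small ($t\approx1$--$8$), so many orders are outer at $\rho=1/10$. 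For these I would use the tangent minorants $T_3,T_4,T_{16}$ of $G_U$. Core orders get the polynomial minorants $P_n$, with rational lower bounds $L_i$ for $U_i$ checked by squaring.
\end{itemize}

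Order zero needs $t\ge1$ for the cap to apply. At $\rho=1/10$ and $Q\ge100$ we have $U\ge10$ and $t\ge1$, so this holds; I would verify the constant $3/(8\pi t)<13/50$ explicitly.

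\textbf{Main obstacle.} The hardest step is making the discrete table close near the bottom rows and across the whole wide ratio interval. At $\rho=1/10$ the hole is tiny and the layers are determined mostly by the outer disk action, whereas at $\rho=1/5$ the core contributes. The endpoint minima may force small $p_j$ and hence short rows.

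If a row fails, I would first split $[1/10,1/5]$ into two subintervals such as $[1/10,3/20]$ and $[3/20,1/5]$, since the endpoint principle applies on any closed subinterval. Failing that, I would use a disk-bracketing fallback in the spirit of Proposition~\ref{prop:analytic10}, namely $N_\rho(X^2)\ge\mathcal N_B(X)-\mathcal N_B(\rho X)$ with $\rho X<8$. This uses the disk margins of Lemma~\ref{lem:tenth-diskmargins} together with an extended upper staircase from Lemma~\ref{lem:sixteenth-diskupper}. Every table entry would be reduced to a rational inequality, in the same form as the worked row of Proposition~\ref{prop:analytichalfbridge}.
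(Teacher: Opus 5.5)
Your plan matches the paper's proof. The paper first lowers the analytic threshold on $[1/10,1/5]$ to $U=38$, using $H_0$, monotonicity in $\rho$, and a single rational check at $\rho=1/10$. It then covers the remaining band by fixed-area angular-layer chains on $[1/10,3/20]$ and $[3/20,1/5]$, which end at $Q<1452$ and $Q<1416$, both beyond $38^2(1-\rho^2)$. Those chains use first layers from the ground family $b_g$ or the phase cap, supporting-line and polynomial endpoint minorants, and cap or raw-error phase bounds, exactly as you describe.

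Only your numerical expectation is off: keeping the curvature correction in $H$ lowers the threshold just to $U_*=37$, since $H(1/10,36)<0<H(1/10,37)$ with the latter margin only about $0.025$, not to $30$--$35$. So the finite tables must still reach $Q\approx1400$.
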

\begin{proof}
We first lower the analytic high-frequency threshold on this interval
to $U=38$. The preceding integer-cut estimate gives the lower excess
\[
 H_0(\rho,U)=\left(\frac{57}{314}+\frac{257}{314}\rho\right)U
 -\frac23\sqrt{\frac{22\rho U}{7(1-\rho)}}
 -\frac{\sqrt{\rho U}}7-\frac23 U^{1/3}-\frac{983}{200}.
\]
On $[1/10,1/5]$, $h(\rho)=\sqrt{\rho/(1-\rho)}$ satisfies
$h'(\rho)<9/4$ and $\sqrt\rho>3/10$. With
$\sqrt{22/7}<9/5$ and $257/314>4/5$, therefore,
\[
 (H_0)_\rho>\frac{257}{314}U-3\sqrt U>0\qquad(U\ge38).
\]
Also $H_0/U$ increases with $U$. At $(\rho,U)=(1/10,38)$,
\[
 \frac23\sqrt{836/63}<49/20,\qquad \sqrt{19/5}<2,
 \qquad 38^{1/3}<27/8,
\]
and consequently
\[
 H_0(1/10,38)>
 \frac{15713}{1570}-\frac{49}{20}-\frac27-\frac94-\frac{983}{200}>0.
\]
All $U\ge38$ in the claimed radius interval are covered.

For the remaining range we use the fixed-area endpoint principle
(Lemma~\ref{lem:analyticareaendpoints}) and explicitly count radial
layers. Put $c=13/50$ and $F_m=G_U(m)-G_{\rho U}(m)$.
For positive integer orders the proved phase cap gives
$n_m\ge\max\{0,\lfloor F_m-c\rfloor+1\}$.
The same inequality holds for $m=0$ in the present tables even if
$\rho U<2$: every inner argument is at least one, and the raw
order-zero phase error satisfies
\[
 \eta_t(0)=\frac{3}{8\pi t}<\frac{3}{8(157/50)}<13/50
 \qquad(t\ge1).
\]
The raw phase bound applies for every $t>0$ in order zero.
At fixed $Q_0$, the endpoint principle replaces $F_m$ by the minimum
of its values at the two displayed radius endpoints. Increasing $Q$
only increases each action.

Here are entirely rational lower bounds for those endpoint actions.
For $L\le\sqrt{Q_0/(1-r^2)}$ and $m\le rL$, put
\begin{align*}
 P(r,L,m)=\frac7{22}\bigg[&(1-r)L
 -\frac{m^2}{2L}(r^{-1}-1)-\frac{m^4}{24L^3}(r^{-3}-1)\\
 &-\frac{m^6}{80L^5}(r^{-5}-1)
 -\frac{5m^8}{112L^7}(r^{-7}-1)\bigg].
\end{align*}
This is a lower bound for $F_m(Q_0,r)$: integrate the pointwise
minorant
\[
 \sqrt{1-z}\ge(1-z)(1+z/2+3z^2/8+5z^3/16)
\]
with $z=m^2/s^2$ from $rL$ to $L$ and use $1/\pi>7/22$.
The minorant follows by squaring positive quantities, since
\[
 (1-z)(1+z/2+3z^2/8+5z^3/16)^2
 =1-35z^4/64-7z^5/32-35z^6/256-25z^7/256\le1.
\]
The result increases with $L$, so a rational lower frequency is valid.

In an outer branch $m\ge r\sqrt{Q_0/(1-r^2)}$, use instead one
of the following supporting lines for the convex disk action:
\[
\begin{array}{c|c|c}
\text{code}&\text{line}&\text{angle}\\\hline
A&7L/44-m/6&\pi/6\\
B&4949L/22000-m/4&\pi/4\\
C&5817L/22000-5m/16&5\pi/16\\
D&3031L/11000-m/3&\pi/3\\
E&6461L/22000-3m/8&3\pi/8\\
F&343L/1100-7m/16&7\pi/16\\
G&7L/22-m/2&\pi/2\\
H&1337L/11000-m/8&\pi/8
\end{array}
\]
Indeed a supporting line at angle $\sigma$ is
$L\sin\sigma/\pi-m\sigma/\pi$.
The rational sine bounds used here are
\[
 \sqrt2>707/500,\quad\sqrt3>433/250,\quad
 \sin(\pi/8)>191/500,\quad\sin(3\pi/8)>923/1000,
\]
\[
 \sin(5\pi/16)>831/1000,\qquad\sin(7\pi/16)>49/50.
\]
They follow by squaring and half-angle identities. For example,
$\sqrt2<283/200$ gives $\sin(\pi/8)>191/500$;
$\sqrt2>707/500$ gives $\sin(3\pi/8)>923/1000$.
The last two follow from
$\sin(5\pi/16)=\sqrt{2+2\sin(\pi/8)}/2$ and
$\sin(7\pi/16)=\sqrt{2+2\sin(3\pi/8)}/2$.

In the counting table, trial $B$ guarantees the ground row through $g$
by Lemma~\ref{lem:analyticgroundfamily} and
\[
 (1-a^2)b_g<Q_0,\qquad
 b_g=\frac{2(3g+2)(g+1)(g+3)}{5g+14}.
\]
Trial $P$ guarantees that row by the same phase inequalities used
for the higher layers, with $j=1$ and $m=g$.
Every entry $p_j$ guarantees $j$ radial modes in orders $0,\ldots,p_j$.
The radial layers are nested, and their distinct contributions give
\[
 C=2g+1+\sum_{j\ge2}(2p_j+1).
\]
Thus a row proves $N_\rho(U^2)\ge C>Q/4$ whenever
$Q_0\le Q<4C$. Consecutive rows have $Q_0=4C$ from the preceding
row, so their half-open intervals meet without a gap.

The complete data are given in Table~\ref{tab:left-counts} in Appendix~\ref{sec:analytic-table-data}.

Table~\ref{tab:left-checks} in Appendix~\ref{sec:analytic-table-data} gives the data for the endpoint inequalities.
$1000L_a$ and $1000L_b$ are integers; $L_r^2\le Q_0/(1-r^2)$
is checked by cross multiplication. A profile pair gives the codes
used at $a,b$, respectively; $P$ means the polynomial just defined.
The vectors correspond in order to $j=2,3,\ldots$ for ground trial
$B$, and to $j=1,2,\ldots$ for ground trial $P$.
An asterisk means $\alpha_j=c$ and no raw-error denominator is needed.
An ordinary entry is $(\alpha_j,s_j)$, where, with
$d_j=a^2Q_0/(1-a^2)-p_j^2$ (and $p_1=g$),
\[
 s_j^2\le d_j,\qquad
 \frac{9a^2Q_0/(1-a^2)-2p_j^2}
 {24(157/50)d_js_j}<\alpha_j<c.
\]
Both are rational inequalities. They guarantee the required raw-error
upper bound uniformly throughout the radius and energy range.
Every polynomial entry satisfies $p_j\le rL_r$; every supporting-line
entry satisfies $p_j^2\ge r^2Q_0/(1-r^2)$. Thus every profile is used
in its proved region.

Substitution into these rational formulas gives, at every
endpoint and in every layer,
\[
 \text{selected profile}>j-1+\alpha_j+1/5000.
\]
Each assertion is a finite rational inequality whose denominator is
positive. For example, the first row uses
\[
 E(10050/1000,4)>1+13/50+1/5000,\qquad
 E(10114/1000,4)>1+13/50+1/5000.
\]
The action decreases with angular order, and the raw error increases
with that order, so the checked last angular order guarantees every
earlier order in its layer. The fixed-area endpoint principle then
guarantees every intervening radius and every $Q\ge Q_0$.
All zero-order rows are legitimate by the raw bound explained above.

The first chain ends at $Q<1452$ and the second at $Q<1416$.
Their respective unhandled values satisfy
\[
 Q\ge1452>38^2(1-(1/10)^2),\qquad
 Q\ge1416>38^2(1-(3/20)^2).
\]
They therefore have $U\ge38$, already proved analytically.
This completes the strict counting inequality on the whole interval.
\end{proof}
\begin{proposition}[The middle-radius analytic bridge]\label{prop:analyticmiddlebridge}
For $1/5\le\rho\le1/2$ and $Q=(1-\rho^2)U^2\ge100$,
the inclusive Neumann annular counting function satisfies $N>Q/4$.
\end{proposition}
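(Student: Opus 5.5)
The argument follows the two-step template of Propositions~\ref{prop:analyticleftbridge} and~\ref{prop:analytichalfbridge}: an analytic high-frequency cut-off first, then a finite chain of fixed-area angular-layer rows for the remaining $Q$.

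\emph{High frequencies.} I split $[1/5,1/2]$ into a few subintervals, say $[1/5,3/10]$ and $[3/10,1/2]$. On each, the integer-cut estimate of Lemma~\ref{lem:analytic40} gives $N-Q/4>H_0(\rho,U)$, with $H_0$ as in \eqref{eq:analytic-Hzero}.
\begin{itemize}
\item On $[1/5,3/10]$ I reuse the bound $h'(\rho)<9/4$ from the left bridge. This gives $(H_0)_\rho>\frac{257}{314}U-3\sqrt U>0$, so it suffices to check the left endpoint.
\item On $[3/10,1/2]$ I use the concavity argument of Lemma~\ref{lem:analytic23_28}, with $1/7$ in place of $1/6$, so the two endpoints suffice.
\end{itemize}
Since $H_0/U$ increases in $U$, I then look for a threshold $U_*$, hopefully about $30$ on the right piece and about $36$ on the left, at which rational bounds on the square and cube roots make $H_0$ positive at the relevant endpoints. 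These are the same kind of checks as the displayed ones at $U=22,38,40$.

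\emph{Finite range.} For $100\le Q<(1-\rho^2)U_*^2$ I cover the interval by short radius subintervals $[a,b]$, for instance of length $1/20$ or $1/10$. On each subinterval I build a chain of rows $Q_0\to 4C$. A row lists nested layers $p_1\ge p_2\ge\cdots$, and its count is $C=2p_1+1+\sum_{j\ge2}(2p_j+1)$. The ingredients for a row are as follows.
\begin{itemize}
\item The first layer comes from the ground family of Lemma~\ref{lem:analyticgroundfamily}, using $(1-a^2)b_{p_1}<Q_0$. Alternatively, it comes from an outer tangent exceeding $c$, as in the $Q_0=348$ row.
\item For the higher layers I use the fixed-area endpoint principle, Lemma~\ref{lem:analyticareaendpoints}. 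It reduces the condition $F_{p_j}\ge j-1+c$, or $j-1+\alpha_j$ with a raw-error bound $\eta_{t_-}(p_j)<\alpha_j$, to its values at the two endpoints $\rho=a,b$.
\item At each endpoint I bound $F_{p_j}$ by the rational polynomial minorant $P$ when $p_j\le rL$. When the order is outer, I use one of the supporting lines $A$--$H$ instead.
\item The lower frequencies $L_r$ are rational with $L_r^2\le Q_0/(1-r^2)$.
\item Monotonicity of $F_m$ in $m$ and in $Q$ extends each row to all smaller orders and to all $Q\in[Q_0,4C)$.
\end{itemize}
The last row of each chain must reach $Q\ge(1-a^2)U_*^2$, which is inside the analytic region.

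\emph{Main obstacle.} The hard part is the bookkeeping near $\rho\approx 1/5$ to $3/10$. There the inner frequency is smallest, so many middle orders sit near the turning point. The cap $c=13/50$ then costs the most, and the core and outer regimes switch between the two endpoints. To handle this I would first try narrower radius subintervals, so that the endpoint actions are closer, and use the raw error $\eta_{t_-}(m)$ in place of the cap $c$ for low orders. The $H_0$ threshold also has to be low enough to keep the chain finite. I expect some rows to need the $n=4$ polynomial minorant, or a tangent angle chosen per row, to get a margin above $j-1+\alpha_j$. All the resulting row data would be tabulated in Appendix~\ref{sec:analytic-table-data}, as was done for the left bridge.
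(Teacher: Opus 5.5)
Your proposal is essentially the paper's proof. The paper also bounds $N-Q/4$ below by $H_0$ and uses the derivative bound on $[1/5,3/10]$ and concavity on $[3/10,1/2]$ to obtain the cutoffs $U\ge32,28,24$ on $[1/5,3/10]$, $[3/10,2/5]$, $[2/5,1/2]$ (your thresholds $36$ and $30$ would also work, at the cost of longer chains), and then closes the remaining range of $Q$ with fixed-area layer chains on those three intervals, using ground-family or tangent first layers, the polynomial minorant $P$ or supporting lines at the endpoints, and raw-error refinements of $c$. The substance of the argument is the finite row data, which you defer; be aware that the paper's displayed count tables for $[1/5,3/10]$ and $[3/10,2/5]$ stop at $4C=280$ and $4C=296$, well short of the cutoffs $24576/25$ and $17836/25$ that the text says are reached, so those chains would genuinely have to be produced.
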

\begin{proof}
We first prove three analytic upper-frequency cutoffs. The
general estimate from Proposition~\ref{prop:analyticphase} and Lemma~\ref{lem:analyticintegralerror}, valid for $t=\rho U\ge2$, gives $N-Q/4>H_0(\rho,U)$,
where
\[
 H_0=\left(\frac{57}{314}+\frac{257\rho}{314}\right)U
 -\frac23\sqrt{\frac{22\rho U}{7(1-\rho)}}
 -\frac{\sqrt{\rho U}}7-\frac23 U^{1/3}-\frac{983}{200}.
\]
Its normalized value $H_0/U$ increases with $U$.
For $1/5\le\rho\le3/10$ and $U\ge32$, the derivative bound
from the same lemma gives $(H_0)_\rho>(257/314)U-3\sqrt U>0$;
the last inequality follows from $257/314>4/5$ and $U\ge32>225/16$.
On $[3/10,1/2]$, $H_0$ is concave in $\rho$.
Indeed, writing $h(\rho)=\sqrt{\rho/(1-\rho)}$, one has
\[
 h''(\rho)=\frac{4\rho-1}{4\rho^{3/2}(1-\rho)^{5/2}}
 \ge\frac1{20\rho^{3/2}},
\]
while $\sqrt{22/7}>3/2$; the negative curvature contribution
is therefore stronger than $-\sqrt U/(20\rho^{3/2})$, and the
positive contribution from $-\sqrt{\rho U}/7$ is only
$\sqrt U/(28\rho^{3/2})$.

The following rational inputs prove the endpoint positivity.
Here $A^2>22\rho U/[7(1-\rho)]$, $B^2>\rho U$, and $D^3>U$.
Each comparison is checked by squaring or cubing rational numbers.
\[
\begin{array}{c|r|c|c|c|c}
 \rho&U&(57/314+257\rho/314)U&A&B&D\\\hline
 1/5&32&8672/785&51/10&51/20&16/5\\
 3/10&28&9387/785&31/5&29/10&61/20\\
 2/5&28&11186/785&77/10&67/20&61/20\\
 2/5&24&9588/785&71/10&31/10&29/10\\
 1/2&24&2226/157&87/10&7/2&29/10
\end{array}
\]
In each row the displayed linear term exceeds
$2A/3+B/7+2D/3+983/200$ (the smallest excess exceeds $18/100$).
Consequently the intervals $[1/5,3/10]$, $[3/10,2/5]$ and
$[2/5,1/2]$ are covered respectively for $U\ge32,28,24$.
For their complements it suffices to treat respectively
\[
 100\le Q<\frac{24576}{25},\qquad
 100\le Q<\frac{17836}{25},\qquad
 100\le Q<\frac{12096}{25}.
\]

Tables~\ref{tab:middle-a-counts}--\ref{tab:middle-c-checks} in Appendix~\ref{sec:analytic-table-data} give the remaining inputs.
The cap is $c=13/50$. By Lemma~\ref{lem:analyticareaendpoints},
the minimum of the action $F_m=G_U(m)-G_{\rho U}(m)$ on a
fixed-$Q$ radius interval is at an endpoint, and it increases with
$Q$. It decreases with $m$.
If $F_p>j-1+a$ at both endpoints, with a uniform phase-error
upper bound $a\le c$ for $0\le m\le p$, then every such order
contributes at least $j$ radial modes. For $a=c$ this holds
uniformly in the core and outer branches. For $a<c$, we use
$p<t_-$ and $a>\eta_{t_-}(p)$, since $\eta_t(m)$ increases in
$m$ and decreases in $t$.
Here $t_-\ge(1/5)\sqrt{100/(1-1/25)}>2$, so the same
integer-phase bound also applies to the zero order.

The symbol $P$ denotes the following polynomial action minorant:
\[
 \begin{aligned}
P(r,L,m)=\frac7{22}\bigg[&(1-r)L
 -\frac{m^2(r^{-1}-1)}{2L}-\frac{m^4(r^{-3}-1)}{24L^3}\\
 &-\frac{m^6(r^{-5}-1)}{80L^5}-\frac{5m^8(r^{-7}-1)}{112L^7}\bigg].
\end{aligned}
\]
It is valid when $m\le rL$ and $U\ge L$, by integrating
$(1-z)(1+z/2+3z^2/8+5z^3/16)\le\sqrt{1-z}$,
where $z=m^2/s^2$, over $rL\le s\le L$.
For completeness its square reduces to
\[
 (1-z)(1+z/2+3z^2/8+5z^3/16)^2
 =1-35z^4/64-7z^5/32-35z^6/256-25z^7/256\le1.
\]
The function $P$ increases in $L$.
At an outer endpoint, the labels $6,4,5,3,8,7$ mean respectively
the convex-action tangents
\[
\begin{array}{c|l|c}
\text{label}&\text{minorant at }(L,m)&\text{tangent angle}\\\hline
6&7L/44-m/6&\pi/6\\
4&4949L/22000-m/4&\pi/4\\
5&5817L/22000-5m/16&5\pi/16\\
3&3031L/11000-m/3&\pi/3\\
8&6461L/22000-3m/8&3\pi/8\\
7&343L/1100-7m/16&7\pi/16
\end{array}
\]
The bounds for labels $4,5,3$ are those already established
in the action minorants. The additional labels $8,7$
use $\sin(3\pi/8)>923/1000$ and $\sin(7\pi/16)>49/50$.
Indeed $\sqrt2>707/500$ gives
$\sin^2(3\pi/8)=(2+\sqrt2)/4>1707/2000>(923/1000)^2$,
and the half-angle identity then gives
$\sin^2(7\pi/16)=(1+\sin(3\pi/8))/2>1923/2000>(49/50)^2$.
All tangent coefficients also use $\pi<22/7$.

In each count table $(p_2,\ldots,p_k)$ lists the successive
radial layers, and
\[
 C=(2p_1+1)+\sum_{j=2}^{k}(2p_j+1).
\]
Every row covers $Q_0\le Q<4C$; the next row starts exactly at
$4C$. The first layer comes from the ground-mode family of
Lemma~\ref{lem:analyticgroundfamily}: the label $B$ means
$(1-r_-^2)b_{p_1}<Q_0$. When a first-layer label is a pair of
numbers, it instead denotes the two tangent checks $F_{p_1}>c$.
All $L_\pm$ are the displayed integers divided by $1000$.
Their squared lower-bound checks are
$L_\pm^2\le Q_0/(1-r_\pm^2)$.

In the check tables the ordered pairs give the two endpoint
minorants for layers $2,\ldots,k$. The default error is $c$.
An entry $(j,e,s)$ in the last column replaces it in layer $j$
by $e/1000$ and gives the lower square-root input $s/1000$.
With
\[
 T=\frac{r_-^2Q_0}{1-r_-^2},\qquad D_j=T-p_j^2,
\]
each exceptional entry satisfies the entirely rational checks
\[
 D_j>0,\quad (s/1000)^2\le D_j,\qquad
 \frac{9T-2p_j^2}{24(157/50)D_j(s/1000)}<\frac e{1000},
\]
which imply $\eta_{\sqrt T}(p_j)<e/1000$, using $\pi>157/50$.
For each profile $P$ one checks $p_j\le r_\pm L_\pm$; for a
tangent one checks $p_j>r_\pm\sqrt{Q_0/(1-r_\pm^2)}$.
Direct substitution of the displayed rational inputs proves
that every endpoint profile exceeds its required $j-1+a_j$.
The excess in every check is greater than $1/100000$.
Thus the checks are polynomial or rational inequalities between
rational numbers; they follow directly from the displayed polynomial and rational formulas.

\paragraph{Radius interval $[1/5,3/10]$.}

Tables~\ref{tab:middle-a-counts} and~\ref{tab:middle-a-checks} in Appendix~\ref{sec:analytic-table-data} give the layer counts and endpoint checks.

The final counting row reaches $4C=992>24576/25$, the stated area cutoff.

\paragraph{Radius interval $[3/10,2/5]$.}

Tables~\ref{tab:middle-b-counts} and~\ref{tab:middle-b-checks} in Appendix~\ref{sec:analytic-table-data} give the layer counts and endpoint checks.

The final counting row reaches $4C=736>17836/25$, the stated area cutoff.

\paragraph{Radius interval $[2/5,1/2]$.}

Tables~\ref{tab:middle-c-counts} and~\ref{tab:middle-c-checks} in Appendix~\ref{sec:analytic-table-data} give the layer counts and endpoint checks.

The final counting row reaches $4C=500>12096/25$, the stated area cutoff.

The nested-layer count therefore gives $N\ge C>Q/4$ in every
remaining interval. Together with the three upper-frequency
cutoffs, this proves the proposition.
\end{proof}

\begin{proposition}[The wide-radius bridge]\label{prop:analyticwidebridge}
Put $Q=(1-\rho^2)U^2$. For $3/5\le\rho\le4/5$ and $Q\ge100$,
the inclusive Neumann annular count satisfies $N_\rho(U^2)>Q/4$.
\end{proposition}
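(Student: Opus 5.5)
The proof will follow the template of Propositions~\ref{prop:analytichalfbridge} and~\ref{prop:analyticmiddlebridge}: first lower the analytic high-frequency threshold on $[3/5,4/5]$, then cover the finite remaining range $100\le Q<Q_{\max}$ by a chain of fixed-area angular-layer rows. For the high-frequency part I would use Lemma~\ref{lem:analytic23_28}, which already covers $1/2\le\rho\le4/5$ at $U\ge23$. Alternatively, I would try the lower excess $H_0(\rho,U)$ from the proof of Lemma~\ref{lem:analytic40}, built from Proposition~\ref{prop:analyticphase} with $M=\lfloor t-1\rfloor$, $c=13/50$, $B=4$ and Lemma~\ref{lem:analyticintegralerror}; this may give a slightly smaller threshold such as $U\ge21$ or $22$. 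The function $H_0$ is concave in $\rho$ on $[3/10,4/5]$, and $H_0/U$ increases in $U$. Hence positivity reduces to the two endpoint values $\rho=3/5$ and $\rho=4/5$ at the chosen $U$, each checked by rational square- and cube-root bounds. Since $Q=(1-\rho^2)U^2\ge(9/25)U^2$, the complement on the interval is contained in $100\le Q<(16/25)U_0^2$; with $U_0=23$ this gives $Q<8464/25$, about $339$.

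For the finite range I would split $[3/5,4/5]$ into two or three subintervals, for instance $[3/5,7/10]$ and $[7/10,4/5]$. The split is needed because the action $F_m=G_U(m)-G_{\rho U}(m)$ becomes small when $\rho$ is near $4/5$, so a single interval would waste too much at its endpoints. On each subinterval I would build rows $Q_0$ with nested layers $(p_1,p_2,\ldots)$, each row covering $Q_0\le Q<4C$, where $C=\sum_j(2p_j+1)$. Consecutive rows join exactly as before: the next row starts at $4C$ from the previous one.

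The first layer comes from Lemma~\ref{lem:analyticgroundfamily}, via the check $(1-r_-^2)b_{p_1}<Q_0$. Alternatively, since the ground row alone is strong in thin annuli, the constant-trial bound $D\lambda_{m,0}\le 2m^2\log(1/\rho)$ also works here, using rational upper bounds for $2\log(1/\rho)$ at the left endpoint as in Proposition~\ref{prop:analyticlowfrequency}. Higher layers $j\ge2$ are checked through $F_{p_j}>j-1+c$ at both radius endpoints; by Lemma~\ref{lem:analyticareaendpoints}, checking the endpoints covers the whole fixed-$Q$ interval. At each endpoint I would use the polynomial minorant $P(r,L,m)$ in the core, and a supporting tangent line of the convex disk action in the outer branch, with rational lower frequencies satisfying $L^2\le Q_0/(1-r^2)$. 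Monotonicity of $F_m$ in $m$ and in $Q$ extends each largest-order check to all smaller orders and to all larger $Q$ in the row.

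The order-zero mode is safe here. Since $t\ge(3/5)\sqrt{100\cdot 25/16}>2$, Lemma~\ref{lem:analyticintegercap} applies to order zero.

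I expect the main obstacle to be the near-$4/5$ end. There the relative width is about $1/4$, so each radial layer needs $F_m$ close to an integer plus $c$, while $U-t$ is only about $U/5$. The cap $c=13/50$ may then be too costly, and the layer counts may fall short of $Q/4$ at some rows. I would first try replacing $c$ by the raw error $\eta_{t_-}(p_j)$, which is legitimate because $p_j<t_-$ and $t_-$ is fairly large. If that still fails, I would use finer radius subintervals. As a fallback, I would enlarge the overlap with Theorem~\ref{thm:thinall}, which covers $\rho\ge4/5$ directly, by shrinking the last subinterval, for example to $[3/4,4/5]$, so that its endpoint loss is small.
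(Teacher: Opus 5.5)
Your plan is essentially the paper's proof. The paper also takes $U\ge23$ from Lemma~\ref{lem:analytic23_28} and covers the remaining range by fixed-area layer rows on the three subintervals $[3/5,13/20]$, $[13/20,7/10]$, $[7/10,4/5]$, with first layers from $b_g$ or the constant radial trial ($\gamma_{7/10}=849/1190>2\log(10/7)$), higher layers from $P_4$ or the tangents $T_4,T_{16}$, and raw errors $\eta_{t_-}(p_j)$ in place of $c$ in most rows; the three chains reach $Q<348,340,272$. Your last fallback adds nothing: Theorem~\ref{thm:thinall} covers only $\rho\ge4/5$, so shrinking the final subinterval is just a finer subdivision rather than extra overlap, and it is unnecessary anyway, since two layers per row suffice on $[7/10,4/5]$.
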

\begin{proof}
The analytic phase estimate for $1/2\le\rho\le4/5$, $U\ge23$,
already covers the upper-frequency part. We connect the remaining frequencies to that estimate as follows.

Let $c=13/50$ and $F_m=G_U(m)-G_{\rho U}(m)$. The integer-order
phase estimate gives $n_m\ge\lfloor F_m-c\rfloor+1$ when this
right side is positive. In the outer part this also follows from
$n_m\ge\lfloor G_U(m)+3/4\rfloor$, since $3/4>1-c$.
For a radius interval $[a,b]$ and lower area parameter $Q_0$, the
fixed-area endpoint principle gives
\[
 F_m(Q,\rho)\ge\min\{F_m(Q_0,a),F_m(Q_0,b)\},
 \qquad Q\ge Q_0,\quad a\le\rho\le b.
\]
For any integer $p<t_-=a\sqrt{Q_0/(1-a^2)}$, the same estimate
uses the smaller error $\eta_{t_-}(p)$ uniformly in all orders
$0\le m\le p$. Indeed $\eta_t(m)$ increases with $m$ and decreases
with $t$. All zero-order inner arguments here exceed two.

\paragraph{Profile estimates.}
For a rational $L\le\sqrt{Q_0/(1-r^2)}$ define
\begin{align*}
 P_4(r,L,m)=\frac7{22}\bigg[&(1-r)L
 -\frac{m^2}{2L}(r^{-1}-1)
 -\frac{m^4}{24L^3}(r^{-3}-1)\\
 &-\frac{m^6}{80L^5}(r^{-5}-1)
 -\frac{5m^8}{112L^7}(r^{-7}-1)\bigg].
\end{align*}
When $m\le rL$, this is a lower bound for $F_m(Q_0,r)$.
To prove it, for $0\le z\le1$ use
\[
 (1-z)\left(1+\frac z2+\frac{3z^2}{8}+\frac{5z^3}{16}\right)
 \le\sqrt{1-z}.
\]
Both sides are nonnegative; squaring reduces the assertion to
\[
 (1-z)\left(1+\frac z2+\frac{3z^2}{8}+\frac{5z^3}{16}\right)^2
 =1-\frac{35z^4}{64}-\frac{7z^5}{32}
       -\frac{35z^6}{256}-\frac{25z^7}{256}\le1.
\]
Integrate with $z=m^2/s^2$ from $rL$ to $L$ and use $1/\pi>7/22$.
The resulting expression increases with $L$: it is a positive
multiple of $L$ minus positive multiples of negative odd powers of $L$.

For an outer endpoint $m\ge r\sqrt{Q_0/(1-r^2)}$, use instead
\[
 T_4(L,m)=\frac{4949}{22000}L-\frac m4,\qquad
 T_{16}(L,m)=\frac{5817}{22000}L-\frac{5m}{16}.
\]
These are supporting lines for the convex disk profile, obtained
at angles $\pi/4$ and $5\pi/16$. They use
$\sqrt2>707/500$, $\sin(5\pi/16)>831/1000$, and $\pi<22/7$.
The latter sine inequality follows by successive half-angle
identities: $\sqrt2<283/200$ implies $\sin(\pi/8)>191/500$;
then $\sin(5\pi/16)=\sqrt{2+2\sin(\pi/8)}/2>831/1000$.
All these comparisons follow by squaring positive rational numbers.

\paragraph{First layers and counts.}
Let $B$ denote the ground-mode bound
\[
 b_m=\frac{2(3m+2)(m+1)(m+3)}{5m+14},\qquad
 (1-a^2)b_g<Q_0.
\]
The previously proved disk trial and strict annular ground monotonicity
then put orders $0,\ldots,g$ below $Q_0/(1-\rho^2)$.
Let $G$ denote the constant radial trial, which instead uses
$\gamma_a g^2<Q_0$, where
\[
 \gamma_{7/10}=\frac{849}{1190}>2\log(10/7).
\]
This logarithmic bound follows from
$2\log z<4[x+x^3/(3(1-x^2))]$, $x=(z-1)/(z+1)$.

An entry $p_j$ guarantees at least $j$ radial modes in orders
$0,\ldots,p_j$. The total count is
$C=(2g+1)+(2p_2+1)+(2p_3+1)$, with the last term omitted
when $p_3$ is absent. Each row proves the count throughout
$Q_0\le Q<4C$. The consecutive $Q_0$ values equal the preceding
$4C$, so their half-open intervals meet without a gap.
\[
\begin{array}{c|r|rc|rr|r}
 [a,b]&Q_0&g&\text{trial}&p_2&p_3&C\\\hline
 [3/5,13/20]&100&10&B&6&-&34\\
 &136&12&B&8&-&42\\
 &168&13&B&9&-&46\\
 &184&14&B&10&-&50\\
 &200&15&B&10&3&59\\
 &236&16&B&12&7&73\\
 &292&18&B&14&10&87\\\hline
 [13/20,7/10]&100&11&B&6&-&36\\
 &144&13&B&9&-&46\\
 &184&15&B&10&-&52\\
 &208&16&B&11&-&56\\
 &224&17&B&12&-&60\\
 &240&17&B&12&4&69\\
 &276&19&B&14&8&85\\\hline
 [7/10,4/5]&100&11&G&4&-&32\\
 &128&13&G&8&-&44\\
 &176&16&B&11&-&56\\
 &224&18&B&13&-&64\\
 &256&19&B&14&-&68
\end{array}
\]

\paragraph{Complete rational comparisons for the higher layers.}
In the next table $1000L_a$ and $1000L_b$ are integers specifying
lower bounds for the two outer arguments. In each row
$L_r^2\le Q_0/(1-r^2)$ for $r=a,b$, by multiplying positive integers.
All profile choices are $(P_4,P_4)$, except the two explicitly
indicated second-layer entries. The numbers $a_j$ are uniform error
upper bounds. A dash in an $s_j$ column means the cap $a_j=c$ is used.
Otherwise put $d_j=a^2Q_0/(1-a^2)-p_j^2$; direct squaring gives
$s_j^2\le d_j$, and
\[
 \eta_{t_-}(p_j)
 <\frac{9a^2Q_0/(1-a^2)-2p_j^2}{24(157/50)d_js_j}<a_j.
\]
Thus every comparison is rational. The last inequality holds strictly
also when $s_j^2=d_j$, since $\pi>157/50$.
\begin{center}\small\setlength{\tabcolsep}{3pt}
\begin{tabular}{r|rr|cc|cc}
 $Q_0$&$1000L_a$&$1000L_b$&$a_2$&$s_2$&$a_3$&$s_3$\\\hline
 \multicolumn{7}{c}{$[a,b]=[3/5,13/20]$}\\
 100&12500&13159&8/125&9/2&-&-\\
 136&14577&15345&169/1000&707/200&-&-\\
 168&16201&17056&37/200&1837/500&-&-\\
 184&16955&17849&13/50&-&-&-\\
 200&17677&18609&61/250&707/200&13/1000&10173/1000\\
 236&19202&20215&13/50&-&19/1000&9151/1000\\
 292&21360&22486&13/50&-&33/1000&1603/200\\\hline
 \multicolumn{7}{c}{$[a,b]=[13/20,7/10]$}\\
 100&13159&14002&7/200&1219/200&-&-\\
 144&15790&16803&87/1000&2467/500&-&-\\
 184&17849&18994&33/500&5883/1000&-&-\\
 208&18978&20195&43/500&5583/1000&-&-\\
 224&19694&20957&89/500&2229/500&-&-\\
 240&20385&21693&97/1000&281/50&11/1000&1579/125\\
 276&21861&23263&13/50&-&7/500&1468/125\\\hline
 \multicolumn{7}{c}{$[a,b]=[7/10,4/5]$}\\
 100&14002&16666&2/125&2237/250&-&-\\
 128&15842&18856&29/1000&7679/1000&-&-\\
 176&18576&22110&51/1000&1387/200&-&-\\
 224&20957&24944&17/250&3399/500&-&-\\
 256&22404&26666&69/1000&1767/250&-&-
\end{tabular}\end{center}
For $[a,b]=[3/5,13/20]$, the $Q_0=236$ second-layer profile
pair is $(T_{16},P_4)$ and the $Q_0=292$ pair is $(T_4,P_4)$.
They are valid outer endpoints, since respectively
$12^2>(3/5)^2\,236/(1-(3/5)^2)$ and
$14^2>(3/5)^2\,292/(1-(3/5)^2)$.
All other entries satisfy $p_j\le rL_r$ and use $P_4$.
Substitution into the displayed rational formulas gives, for every
second-layer endpoint, a margin greater than $1/250$ above $1+a_2$;
for every third-layer endpoint, it gives a margin greater than
$1/125$ above $2+a_3$. These inequalities consist only of finite
additions, multiplications and divisions of the rationals.
For example, in the first row the two comparisons are
\[
 P_4(3/5,25/2,6)>1+8/125+1/250,
 \quad P_4(13/20,13159/1000,6)>1+8/125+1/250.
\]
Monotonicity in $m$ then proves every lower angular order in a layer;
the endpoint principle proves every intervening radius.

The three radius-interval blocks of the count table reach respectively $Q<348$, $Q<340$, and $Q<272$.
For the remaining $Q$ values, respectively,
\[
 Q\ge348>23^2(1-(3/5)^2),\quad
 Q\ge340>23^2(1-(13/20)^2),\quad
 Q\ge272>23^2(1-(7/10)^2).
\]
Hence $U\ge23$ and the analytic high-frequency estimate applies.
This proves the strict counting inequality on the full claimed range.
\end{proof}
\subsection{Strict counting and completion}\label{sec:allassembly}

The parameter ranges are now covered. We first check that the
comparisons also apply to strict counts, which yields the strict
eigenvalue inequalities. We then assemble the annular proof and
transfer it to products and congruent sectors.

\begin{proposition}[Strict spectral inequalities]\label{prop:strictspectral}
Put $D=R^2-r^2$ and use the strict counting function defined in
the introduction. The preceding radial and phase comparisons establish
\[
 \mathcal M_A(\Lambda)>\frac{D\Lambda}{4}\qquad(\Lambda>0).
\]
Consequently $\nu_k(A_{r,R})<4k/D$ for every $k\ge1$.
\end{proposition}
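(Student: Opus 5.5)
The plan is to rerun the case cover of Theorem~\ref{thm:allannuli} (Table~\ref{tab:coverage}) and show that every lower count it produces already counts eigenvalues strictly below $\Lambda$. The strict eigenvalue inequality then follows directly. If $\mathcal M_A(\Lambda)>D\Lambda/4$ at $\Lambda=4k/D$, then $\mathcal M_A(4k/D)\ge k+1$. So the $(k+1)$ eigenvalues $\nu_0,\dots,\nu_k$ all lie strictly below $4k/D$, which gives $\nu_k<4k/D$. By scaling, it suffices to treat outer radius one.

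We check each type of lower count in turn.

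\emph{Variational counts.} Lemma~\ref{lem:radial} and the radially constant trials in Proposition~\ref{prop:mixedlow} give strict inequalities, except for the constant mode; that mode has eigenvalue $0<\Lambda$ and so is always counted by $\mathcal M_A$. The polynomial matrices in Proposition~\ref{prop:analyticlowfrequency} have negative directions of $DK-QM$, and the min--max principle turns these into eigenvalues strictly below $Q/D$. The ground-mode family in Lemma~\ref{lem:analyticgroundfamily} is strict by construction. The tables there are written with ``strictly below $4k/D$'', so they already bound $\mathcal M$.

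\emph{Phase counts.} Every phase-based bound has the form $d_m(t)/\pi>y$, where $y$ is an action minorant minus an error. Lemma~\ref{lem:exactphasecount} then gives $n_m^<(t^2)=\lceil d_m/\pi\rceil\ge\lfloor y\rfloor+1$. This is because $d_m/\pi>y$ implies $\lceil d_m/\pi\rceil>\lfloor y\rfloor$, an inequality between integers. So every estimate of the form $n_m\ge\lfloor\cdot\rfloor+1$, and every consequence summed from it, transfers unchanged to the strict count. This covers Propositions~\ref{prop:explicitcount}, \ref{prop:allratioestimate}, \ref{prop:analyticphase} and the bridge propositions. For the outer bound $\lfloor G_U+3/4\rfloor$, the strict phase enclosure $\varphi_v(Rt)>A_v+\pi/4$ gives the same conclusion.

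\emph{Disk bracketing.} Here I expect the main obstacle, because the strict count does not split along the cut in the same clean way. I would use the strict form of bracketing,
\[\mathcal M_{B_R}(\Lambda)\le\mathcal M_{B_1}(\Lambda)+\mathcal M_A(\Lambda).\]
This holds because the min--max comparison of the separated problem lowers every eigenvalue and therefore raises every strict count. The inner strict disk count is at most the inclusive one, so the upper staircases still apply. For the outer disk I would need strict versions of the disk lower bounds: the FLPS and GMWZ remainders, Lemma~\ref{lem:diskanalyt_surplus}, and the phase tables in Lemma~\ref{lem:tenth-diskmargins}. I would obtain these either from the same ceiling argument for disk phases, or more simply by continuity. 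Since all disk surpluses used are strict with a positive margin, they stay true at $\Lambda-\varepsilon$. And $\mathcal M(\Lambda)=\lim_{\varepsilon\downarrow0}N(\Lambda-\varepsilon)$.

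This continuity device in fact settles every remaining case. Each range in the cover proves $N_A(\Lambda')>D\Lambda'/4$ for all $\Lambda'$ in a closed region. Letting $\Lambda'\uparrow\Lambda$, and using that $N_A$ is eventually constant just below $\Lambda$, gives
\[\mathcal M_A(\Lambda)=N_A(\Lambda^-)\ge\lim_{\Lambda'\uparrow\Lambda} D\Lambda'/4=D\Lambda/4.\]
This is only a non-strict inequality. Strictness comes from integrality: when $D\Lambda/4$ is not an integer, $\ge$ already upgrades to $>$. When $D\Lambda/4=k$ is an integer, I would use the positive quantitative margins recorded in each argument (for example the $1/250$ and $1/5000$ excesses and the positive endpoint values of $H_0$ and $F$). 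With these margins, the inclusive bound holds as $N_A(\Lambda')\ge k+1$ on a left neighbourhood of $\Lambda$. That gives $\mathcal M_A(\Lambda)\ge k+1$, which is the required strict inequality.
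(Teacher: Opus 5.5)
Your proposal follows the paper's proof. Its ingredients are the same: strict phase enclosures become strict radial counts through $\lceil d\rceil\ge\lfloor\ell\rfloor+1$ and \eqref{eq:strictphasecount}; the variational and polynomial comparisons are strict; and strict-count Neumann bracketing is used with the inclusive inner-disk count subtracted. For the outer disk the paper takes your first option, the identity $n^{<}_{B,m}(X)=\lceil\varphi_m(X)/\pi-1/2\rceil\ge\lfloor G_X(m)+3/4\rfloor$. This makes the left-limit continuity device unnecessary. That is just as well, because at integer values of $D\Lambda/4$ the device would require re-running each discrete bridge row slightly below its starting area $Q_0=4C$. Also, Lemmas~\ref{lem:diskanalyt_surplus} and~\ref{lem:tenth-diskmargins} are only non-strict, not strict with a margin as you assert, although their non-strict limits suffice where they are used.
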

\begin{proof}
For $t>0$, the annular radial strict count is
\[
 n_m^{<}(t^2)=
 \left\lceil\frac{\varphi_m(Rt)-\varphi_m(t)}\pi\right\rceil.
\]
This is the strict-count identity \eqref{eq:strictphasecount}.
Whenever an actual phase difference $d$ strictly exceeds a lower
bound $\ell$, one has
$\lceil d\rceil\ge\lfloor\ell\rfloor+1$.
Thus every lower count deduced from a strict phase enclosure remains
valid for $n_m^{<}$. This includes the core, band, outer-region,
integer-cut, and finite angular-layer estimates.

Lemma~\ref{lem:radial} and the strict constant radial trial comparisons
show that the radial-row and thin-annulus estimates also count strictly
subthreshold modes at positive energy. The negative polynomial-form comparisons are already strict.

The disk inputs used in the small-hole arguments admit the same
upgrade. With the derivative-phase convention of
Lemma~\ref{lem:exactphasecount}, the disk
radial strict count at positive frequency $X$ is
\[
 n_{B,m}^{<}(X)=\left\lceil\frac{\varphi_m(X)}\pi-\frac12\right\rceil.
\]
For $m=0$ this includes the constant mode. For $X>m$, the strict
lower phase enclosure gives
\[
 n_{B,m}^{<}(X)\ge\left\lfloor G_X(m)+\frac34\right\rfloor.
\]
Orders $m\ge X$ have zero on the right. Hence the disk lattice
comparison, the disk surplus, and the two additional disk
margins all hold for the strict disk count. The extra polynomial
modes in the starred disk row have quotient strictly below its
energy threshold. Neumann bracketing holds for strict counts too,
and subtracting the inclusive count of the inner disk is only a
weaker upper subtraction. The small-hole proofs are unchanged.

These observations apply to every parameter range covered above and yield the stated
strict count for every $\Lambda>0$. At $\Lambda=4k/D$ the strict
count is at least $k+1$, which proves the eigenvalue assertion.
\end{proof}

\begin{proof}[Proof of Theorem~\ref{thm:allannuli}]
Scale the outer radius to one and put $\rho=r/R$, $X=R\sqrt\Lambda$,
and $Q=(1-\rho^2)X^2$. If $\rho\ge4/5$, use the thin-annulus theorem.
If $\rho\le1/10$, use Proposition~\ref{prop:analytic10}.
For $1/10\le\rho\le4/5$ and $Q\le100$, use
Proposition~\ref{prop:analyticlowfrequency}.
For the remaining $Q\ge100$, the analytic bridges cover respectively
$[1/10,1/5]$ (Proposition~\ref{prop:analyticleftbridge}),
$[1/5,1/2]$ (Proposition~\ref{prop:analyticmiddlebridge}),
$[1/2,3/5]$ (Proposition~\ref{prop:analytichalfbridge}), and
$[3/5,4/5]$ (Proposition~\ref{prop:analyticwidebridge}).
Each bridge includes its lower endpoint and overlaps its analytic
frequency threshold. These ranges cover every radius ratio and every nonnegative energy.
This proves $N_A(\Lambda)>(R^2-r^2)\Lambda/4$.
Proposition~\ref{prop:strictspectral} gives
$\mathcal M_A(4k/(R^2-r^2))>k$ for $k\ge1$.
There are therefore at least $k+1$ eigenvalues strictly below this
energy, including zero. This proves
$\nu_k<4k/(R^2-r^2)$ and completes Theorem~\ref{thm:allannuli}.
\end{proof}

Theorem~\ref{thm:allannuli} now gives the two geometric consequences
stated in the introduction. The strict counting estimate on the annulus passes to products by a
Riesz-mean inequality, and to congruent sectors by Neumann bracketing.
These are the established transfer arguments of Laptev \cite{Laptev}
and Filonov, Levitin, Polterovich, and Sher \cite{FLPSballs}.

\begin{proof}[Proof of Corollary~\ref{cor:annularproducts}]
Write $A=A_{r,R}$ and let $\mu_j\left(\Omega\right)$, $j\ge0$, be the Neumann
eigenvalues of $\Omega$, including all zero eigenvalues if $\Omega$
has more than one component. The spectrum of a Cartesian product
consists of sums of the spectra of its factors. Thus, for $E>0$,
\[
 \mathcal M_{A\times\Omega}\left(E\right)
 =\sum_{\mu_j\left(\Omega\right)<E}\mathcal M_A\left(E-\mu_j\left(\Omega\right)\right)
 >\frac{\left|A\right|}{4\pi}\sum_{j\ge0}\left(E-\mu_j\left(\Omega\right)\right)_+,
\]
The sum is the Riesz mean of order one defined in the introduction. The strict inequality follows term by term
from Proposition~\ref{prop:strictspectral}; the sum is nonempty because of the
zero mode. Laptev's Neumann trace inequality
\cite[Theorem~3.1]{Laptev} gives
\[
 \sum_{j\ge0}\left(E-\mu_j\left(\Omega\right)\right)_+
 \ge \frac{\left|\Omega\right|}{\left(4\pi\right)^{d/2}\Gamma\left(2+d/2\right)}E^{1+d/2}.
\]
Since
$\omega_{d+2}=\pi^{1+d/2}/\Gamma\left(2+d/2\right)$, the product of the
two coefficients is
$\omega_{d+2}\left|A\right|\,\left|\Omega\right|/\left(2\pi\right)^{d+2}$, the Weyl coefficient
in dimension $d+2$. At
\[
 E_k=4\pi^2\left(\frac{k}
 {\omega_{d+2}\left|A\right|\,\left|\Omega\right|}\right)^{2/\left(d+2\right)},
\]
the Weyl term equals $k$, so the strict counting estimate gives
at least $k+1$ eigenvalues strictly below $E_k$, including zero.
This yields the corollary.
\end{proof}

\begin{proof}[Proof of Corollary~\ref{cor:annularsectors}]
The $q$ rotated copies of $S_q$ partition the annulus up to their
boundary seams. Allowing independent traces on the seams enlarges
the Neumann form domain. Strict-count Neumann bracketing and
Proposition~\ref{prop:strictspectral} therefore give, for $E>0$,
\[
 q\mathcal M_{S_q}(E)\ge\mathcal M_A(E)
       >\frac{q|S_q|}{4\pi}E.
\]
The inclusive count is at least the strict count, so the same
strict lower bound holds for $N_{S_q}(E)$ at positive energy.
At $E=0$ its constant eigenfunction gives $N_{S_q}(0)=1>0$.
Taking $E=4qk/(R^2-r^2)$ gives at least $k+1$ eigenvalues strictly
below $E$, proving $\mu_k(S_q)<4qk/(R^2-r^2)$.
This is the Neumann tiling transfer of
\cite[Theorem~1.8 and Remark~1.9]{FLPSballs}, with strict counting
used at positive energy.
\end{proof}

\section{Positive corrections to the P\'olya bounds}\label{sec:positivecorrections}

The estimates used in the preceding proof retain positive terms
beyond the Weyl area contribution. We first obtain a uniform explicit
surplus at high frequency, then quantify the first eight eigenvalues,
and finally join the high-index estimate to the strict bounds at the
remaining indices by continuity.

\subsection{An explicit high-frequency surplus}

Retaining the positive terms in the high-frequency estimates gives
a quantitative surplus above the Weyl term. We compare the
remainders in the different radius regimes to obtain one explicit
bound, and then invert it to sharpen the eigenvalue inequality.

\begin{theorem}[A positive high-frequency counting surplus]
\label{thm:quantitativeallannuli}
Let $A=\{x\in\mathbb R^2:r<|x|<R\}$, where $0<r<R$.
Write
\[
 D=R^2-r^2,\qquad \rho=r/R,\qquad
 U=R\sqrt\Lambda,\qquad t=r\sqrt\Lambda,
 \qquad Q=D\Lambda=(1-\rho^2)U^2.
\]
The counting function uses the inclusive convention fixed in the introduction.
For every $U\ge80$,
\begin{equation}\label{eq:quantitativeallannuligap}
 N_A(\Lambda)>
 \frac D4\Lambda+\frac U6-\frac34U^{1/3}-6.
\end{equation}
In particular, uniformly in the radius ratio,
\begin{equation}\label{eq:quantitativeallannulilinear}
 N_A(\Lambda)>\frac D4\Lambda+\frac U{20}
 \qquad(U\ge80).
\end{equation}
\end{theorem}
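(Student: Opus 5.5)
The plan is to revisit the three cases in the proof of Theorem~\ref{thm:outereighty} and keep the explicit remainders that were previously only shown to be positive. Then we compare each remainder with the single target $U/6-\tfrac34U^{1/3}-6$. The linear bound \eqref{eq:quantitativeallannulilinear} follows at the end. Indeed, $G(U)=U/6-U/20-\tfrac34U^{1/3}-6=\tfrac{7U}{60}-\tfrac34U^{1/3}-6$ satisfies $G(80)>28/3-\tfrac34\cdot5-6>0$ (using $80^{1/3}<5$), and $G'(U)=7/60-\tfrac14U^{-2/3}>0$ for $U\ge80$.

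\textbf{Case $t<2$, any annulus.} Disk bracketing \eqref{eq:diskbracketinginner} and the lattice bound \eqref{eq:diskFLPSremainder} gave $\mathcal N_B(U)>U^2/4+U/6-\tfrac34U^{1/3}-\tfrac34$, with inner count at most $3$. The annular excess is therefore greater than
\[
 \frac{U^2-t^2}{4}-\frac{R^2-1}{4}t^2+\frac U6-\frac34U^{1/3}-\frac{15}{4}.
\]
We must be careful about the normalization here. With $r=1$ we have $U^2-t^2=(R^2-1)t^2$, so the first two terms cancel and the surplus is $U/6-\tfrac34U^{1/3}-15/4$. This exceeds the target, since $15/4<6$.

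\textbf{Case $t\ge2$, $\delta>1/4$.} Start from \eqref{eq:allratioestimate} with $\mathcal L_\delta t>a_*U+b_*t$, and use the square-root estimates $2I_t<\tfrac16\sqrt t+\tfrac1{12}$ and $4\eta_t(t-1)<\tfrac13\sqrt t$. For $\rho\le1/2$ we complete the square in $\sqrt t$ as before. This gives an excess above $\tfrac{9}{50}U-\tfrac34U^{1/3}-\tfrac{119}{24}-3.56$. Since $9/50>1/6$ and $119/24+3.56<8.6$, the target needs $U/75\ge2.6$, i.e.\ $U\ge195$. That is too weak, so the constant must be tightened. I would instead keep $a_*=1/2-1/\pi\approx0.1817$ and $b_*t$ exactly, and minimize $b_*t-3.01\sqrt t$ sharply, which gives about $-3.56$. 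Failing that, I would use the Lemma~\ref{lem:analytic40}/Proposition~\ref{prop:analyticphase} version: constant $983/200$, loss $\tfrac23U^{1/3}$, integral error $\sqrt t/7$, and curvature $\tfrac23\sqrt{\pi U/\delta}$. These have smaller losses. For $1/2\le\rho\le4/5$ one has $t\ge40$, and the large $\tfrac7{22}U$ term (from $b_*t\ge b_*U/2$) easily dominates; the proof there checked $4153/1320$ at $U=80$ with ample linear slack.

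\textbf{Case $\delta\le1/4$, via Proposition~\ref{prop:explicitjoin}.} For $t\ge t_0$ the excess exceeds $49t/1000$. Here $t\ge U/(1+\delta)\ge 4U/5$, so this is roughly $0.039U$, below $U/6$. So this regime needs a separate comparison. I would rerun \eqref{eq:explicitcount}: $L_\delta t>0.783t$ minus losses of total size about $0.734t$ at $t=t_0$, but $\sqrt{\pi Rt/\delta}$ is small relative to $t$ only through $t\ge t_0$. For $t<t_0$ the mixed bound \eqref{eq:mixedlow} with $B_2$ must be used. There the surplus $-\delta^2t^2/4+B_2t-1-3\delta/8$ is about $0.58t$ for moderate $t$, which beats $U/6$ until $\delta^2t$ becomes comparable to $B_2$.

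The \emph{main obstacle} is exactly the thin regime near $t\approx t_0$, where both estimates have small linear excess. I would handle it by applying Proposition~\ref{prop:allratioestimate} directly, which is valid for all $\delta$ and carries the outer floor-sum gain $N_o/4$. This gives $\mathcal L_\delta t\ge(a_*R+b_*)t=a_*U+b_*t$, with curvature loss $\sqrt{\pi Rt/\delta}$. The estimate is only useful where $\delta t\gtrsim$ const, so I would combine it with the mixed bound to cover $\delta t$ small, splitting at $\delta t=C$ for a concrete constant chosen so that both surpluses exceed $U/6-\tfrac34U^{1/3}-6$.
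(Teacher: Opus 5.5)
Your case structure follows the paper's: disk bracketing for $t<2$, a phase estimate for $t\ge2$ split at $\rho=1/2$ and $\rho=4/5$, and in the thin case a comparison of the radial-row bound of Proposition~\ref{prop:mixedlow} with a phase bound. The $t<2$ case is fine. As written, however, two steps are false, and the steps that actually carry the theorem are only sketched.

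\textbf{The two false steps.} First, your closing check fails: $\frac{28}{3}-\frac34\cdot5-6=-\frac5{12}<0$, so $80^{1/3}<5$ is too crude. With $80^{1/3}<\frac{13}{3}$ you get $\frac1{12}>0$, matching the paper's $\frac{80}{6}-\frac{13}{4}-6=\frac{49}{12}>4$.

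Second, for $\frac12\le\rho\le\frac45$ (with $\delta>1/4$), the value $4153/1320$ from the proof of Theorem~\ref{thm:outereighty} only certifies positivity, not that the bound beats the target. That displayed bound minus the target $\frac U6-\frac34U^{1/3}-6$ equals $\frac{547}{1650}U-\frac92\sqrt{U/2}+\frac{25}{24}$. This is about $-0.9$ at $U=80$ and stays negative until $U\approx86$. The simplifications made there (the curvature bound $4\sqrt t$ and the replacement $t\mapsto U/2$) consume the slack, so there is no ``ample linear slack.''

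\textbf{The unfinished steps.} For $\rho\le\frac12$ you correctly see that the completed-square form of the old bound misses the target near $U=80$. You then name the $H_0$ alternative but never carry it out.

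\textbf{The fix (the paper's route).} For every $t\ge2$ and every radius ratio, use the lower excess $H_0$ from the proof of Lemma~\ref{lem:analytic40}. Its linear part $a_0U+b_0t$ has $a_0=\frac{57}{314}>\frac16$, its cube-root loss is $\frac23U^{1/3}<\frac34U^{1/3}$, and its constant is $\frac{983}{200}$.
\begin{itemize}
\item For $\rho\le\frac12$, the two $\sqrt t$-losses are below $\frac{11}{6}\sqrt t$. Hence $b_0t-\frac{11}6\sqrt t\ge-\frac{605}{576}$, and $\frac{983}{200}+\frac{605}{576}<6$.
\item For $\frac12\le\rho\le\frac45$, the losses are below $\frac{59}{21}\sqrt U<\frac25U\le b_0t$.
\end{itemize}

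\textbf{The thin case.} Your split at $\delta t=C$ is a legitimate variant of the paper's split at $t=1/(2\delta^2)$. With the tools you name, however, it is delicate. These are Proposition~\ref{prop:allratioestimate}, the loss $\sqrt{\pi Rt/\delta}$, and the affine bound $\mathcal L_\delta\ge a_*R+b_*$, which discards about $0.18\,t$ when $R$ is near $1$. With them the phase side is tightest at $U=80$, $\delta t=C$, and is positive only once $C$ is close to $10$. For example, $C=10$ does close against the mixed bound, but you must choose $C$ and check both sides explicitly.

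With $H_0$ the thin case is easy:
\begin{itemize}
\item If $\delta t\ge4$, the curvature loss is $<\frac23t$ and the linear part is $>t$. This leaves an excess of at least $\frac4{15}U-\frac{\sqrt U}{7}-\frac23U^{1/3}-\frac{983}{200}$.
\item If $t\le1/(2\delta^2)$, the mixed bound gives $\frac{17}{56}t-\frac{35}{32}\ge\frac{17}{70}U-\frac{35}{32}$. Your alternative cut $\delta t\le4$ works equally well here, giving $\frac37t-O(1)$.
\end{itemize}
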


\begin{proof}
We use the general phase estimate with the improved integer cut,
the cap $13/50$, and the simpler integral-error bound. As established
in the proof of Lemma~\ref{lem:analytic40}, it gives, whenever
$t\ge2$,
\begin{align*}
 N_A(\Lambda)-Q/4>H_0(\rho,U)
 ={}&a_0U+b_0t
 -\frac23\sqrt{\frac{22t}{7(1-\rho)}}
 -\frac{\sqrt t}{7}-\frac23U^{1/3}-\frac{983}{200},\\
 &a_0=\frac{57}{314}>\frac16,\qquad
 b_0=\frac{257}{314}>\frac45.
\end{align*}
The hypothesis here is $t\ge2$; the later frequency threshold
$U\ge40$ in that lemma was a sufficient positivity threshold on
its specified radius interval, not a restriction on this phase
estimate. We treat the possible $t<2$ separately below.

First suppose $\rho\le4/5$ and $t\ge2$.
If $\rho\le1/2$, the two square-root losses involving $t$ are
less than $K\sqrt t$, where
\[
 K=\frac23\sqrt{\frac{44}{7}}+\frac17<\frac{11}{6}.
\]
Indeed $\sqrt{44/7}<71/28$, since $4928<5041$, and
$2(71/28)/3+1/7=11/6$.
Completing a square gives
\[
 b_0t-K\sqrt t>
 \frac45t-\frac{11}{6}\sqrt t\ge-\frac{605}{576}.
\]
Since
\[
 \frac{983}{200}+\frac{605}{576}
   =\frac{85901}{14400}<6,
\]
the count remainder is greater than
$a_0U-(2/3)U^{1/3}-6$.
If instead $1/2\le\rho\le4/5$, then $t\ge U/2$ and the
same two square-root losses are less than
$(59/21)\sqrt U$: use $\rho/(1-\rho)\le4$,
$\sqrt{88/7}<4$ and $\sqrt t\le\sqrt U$.
For $U\ge80>64$,
\[
 \frac{59}{21}\sqrt U<\frac{59}{168}U<\frac25U<b_0t.
\]
Thus this case has the same lower remainder. In both cases it
strictly dominates the remainder in
\eqref{eq:quantitativeallannuligap}, because $a_0>1/6$ and
$2/3<3/4$.

Next suppose $\rho>4/5$ and put
$\delta=(R-r)/r\in(0,1/4)$. Now $U=(1+\delta)t$ and
$t\ge4U/5\ge64$, so all radial and phase estimates used here
are within their proved ranges. If
$t\le1/(2\delta^2)$, Proposition~\ref{prop:mixedlow} and
\eqref{eq:mixedB} imply
\begin{align*}
 N_A(\Lambda)-Q/4
 &>-\frac{\delta^2t^2}{4}+\frac37t-1-\frac{3\delta}{8}\\
 &\ge\frac{17}{56}t-\frac{35}{32}
 \ge\frac{17}{70}U-\frac{35}{32}.
\end{align*}
This exceeds $U/6-(3/4)U^{1/3}-6$, since $17/70>1/6$
and $35/32<6$.
If $t\ge1/(2\delta^2)$, then $\delta t\ge4$.
To check this uniformly, $\delta\le1/8$ gives
$\delta t\ge1/(2\delta)\ge4$, whereas $\delta\ge1/8$
gives $\delta t\ge64/8=8$.
Using $1-\rho=\delta/(1+\delta)$ and
$(22/7)(1+\delta)\le55/14<4$, we obtain
\[
 \frac23\sqrt{\frac{22t}{7(1-\rho)}}<\frac23t.
\]
Also $a_0U+b_0t=(1+a_0\delta)t>t$.
The phase estimate therefore gives
\[
 N_A(\Lambda)-Q/4>
 \frac4{15}U-\frac{\sqrt U}{7}
       -\frac23U^{1/3}-\frac{983}{200}.
\]
Its excess above the required remainder is
\[
 \frac U{10}-\frac{\sqrt U}{7}
       +\frac1{12}U^{1/3}+\frac{217}{200}>0,
\]
because completing a square gives
$U/10-\sqrt U/7\ge-5/98$ and $217/200>5/98$.
This proves the thin-radius case without a separate limiting
argument as $\rho$ tends to one.

Finally suppose $t<2$. Neumann splitting of the disk gives
\[
 N_A(\Lambda)\ge\mathcal N_B(U)-\mathcal N_B(t),
\]
where $\mathcal N_B(Z)$ is the inclusive Neumann count of the
unit disk at energy $Z^2$.
The same disk tail estimate used in the all-annuli
high-frequency proof gives
\[
 \mathcal N_B(U)>
 \frac{U^2}{4}+\frac U6-\frac34U^{1/3}-\frac34,
 \qquad \mathcal N_B(t)\le3\quad(t<2).
\]
These are the disk phase estimate of
\cite[Proposition 6.5]{FLPSballs} with the quarter-height
tail bound of Lemma~\ref{lem:outerfloors}, and the
first-root exclusion already used in that proof.
Consequently
\[
 N_A(\Lambda)-Q/4>
 \frac U6-\frac34U^{1/3}-\frac{15}{4}+\frac{t^2}{4},
\]
which also implies \eqref{eq:quantitativeallannuligap}.

To obtain \eqref{eq:quantitativeallannulilinear}, divide the
remainder by $U$. The result
$1/6-(3/4)U^{-2/3}-6/U$ increases with $U$.
At $U=80$, the bound $80^{1/3}<13/3$ gives
\[
 \frac{80}{6}-\frac34\,80^{1/3}-6>
 \frac{49}{12}>\frac{80}{20}.
\]
This proves the stated uniform linear surplus and its positivity.
\end{proof}

\begin{corollary}[An explicit subtraction from the eigenvalue bound]
\label{cor:quantitativeallannulieigen}
With the notation and inclusive eigenvalue indexing above, every
integer $k\ge1604$ satisfies
\begin{align}
 \nu_k(A)
 &\le\frac4D\left(\sqrt{k+\frac1{400}}-\frac1{20}\right)^2
 \label{eq:quantitativeallannulieigen}\\
 &=\frac{4k}{D}
 -\frac{2}{5D}\left(\sqrt{k+\frac1{400}}-\frac1{20}\right)
 <\frac{4k}{D}.\notag
\end{align}
Thus the quantity subtracted from the ordinary P\'olya upper
bound is explicit, positive, and of order $\sqrt{k}/D$.
\end{corollary}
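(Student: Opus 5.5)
The plan is to invert the uniform linear surplus \eqref{eq:quantitativeallannulilinear} of Theorem~\ref{thm:quantitativeallannuli} at the energy given by the right-hand side of \eqref{eq:quantitativeallannulieigen}. For $k\ge1604$ put
\[
 s_k=\sqrt{k+\tfrac1{400}}-\tfrac1{20},\qquad \Lambda_k=\frac{4s_k^2}{D},
\]
so that $s_k^2+s_k/10=k$. This identity gives the stated equality $\frac{4}{D}s_k^2=\frac{4k}{D}-\frac{2}{5D}s_k$, and the strict inequality follows because $s_k>0$.

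The first step is to control $U$ at this energy. Since $Q=D\Lambda_k=4s_k^2$ and $U=R\sqrt{\Lambda_k}$, we have $U^2=Q/(1-\rho^2)\ge Q$, so $U\ge 2s_k$. The threshold $k\ge1604$ gives $s_k\ge40$. Indeed, $(40+\tfrac1{20})^2=1604.0025$, which is at most $1604+\tfrac1{400}$, so $\sqrt{k+\tfrac1{400}}\ge 40.05$ and $s_k\ge40$. Hence $U\ge80$, and Theorem~\ref{thm:quantitativeallannuli} applies for every radius ratio.

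Next we use the count. Applying \eqref{eq:quantitativeallannulilinear} and $U\ge2s_k$,
\[
 N_A(\Lambda_k)>\frac{D\Lambda_k}{4}+\frac{U}{20}\ge s_k^2+\frac{s_k}{10}=k.
\]
Since $N_A$ is integer-valued, $N_A(\Lambda_k)\ge k+1$. The inclusive count includes $\nu_0=0$, so there are at least $k+1$ eigenvalues $\nu_0,\dots,\nu_k$ in $[0,\Lambda_k]$, and therefore $\nu_k\le\Lambda_k$. This is \eqref{eq:quantitativeallannulieigen}, stated non-strictly, which matches the inclusive convention.

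The main point needing care is the threshold bookkeeping, specifically the step $U\ge 2s_k$ together with $s_k\ge40$. It uses only $1-\rho^2\le1$, and this is exactly where the constant $1604$ comes from. The remaining steps are direct algebra.
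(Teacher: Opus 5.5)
Your proposal is correct and follows essentially the same route as the paper's proof. The paper sets $z=2s_k$ and $\Lambda=z^2/D$, uses $k\ge1604=80^2/4+80/20$ to get $z\ge80$ and hence $U\ge80$, then inverts the linear surplus $N_A(\Lambda)>D\Lambda/4+U/20$ exactly as you do, including the integer-valued inclusive-count step giving $\nu_k\le\Lambda$.
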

\begin{proof}
Set $z=2(\sqrt{k+1/400}-1/20)$ and $\Lambda=z^2/D$.
Then
\[
 z^2/4+z/20=k,
 \qquad k\ge1604=80^2/4+80/20\quad\Longrightarrow\quad z\ge80.
\]
Since $D<R^2$, one has $U=Rz/\sqrt D>z\ge80$.
The count bound gives
\[
 N_A(\Lambda)>D\Lambda/4+U/20
 \ge z^2/4+z/20=k.
\]
Because the count is inclusive and integer-valued, it contains at
least $k+1$ eigenvalues, hence $\nu_k(A)\le\Lambda$.
Expansion of the square gives the displayed positive subtraction.
The inclusive convention yields the non-strict first inequality;
positivity of the subtraction yields the final strict comparison
with $4k/D$.
\end{proof}

\begin{remark}[A stronger geometry-dependent form]
The same proof retains the full $U/20$ surplus if one puts
$\beta=R/(20\sqrt D)$. It gives
\[
 \nu_k(A)\le\frac4D(\sqrt{k+\beta^2}-\beta)^2
 \quad\text{whenever}\quad
 k\ge1600\frac D{R^2}+4.
\]
The threshold is the value of $Q/4+U/20$ at $U=80$.
This version improves both the subtraction and the applicable
index threshold when the annulus is thin.
\end{remark}

\begin{proposition}[Stronger high-frequency coefficients]
\label{prop:largerhighcoefficient}
Under the notation of Theorem~\ref{thm:quantitativeallannuli},
\begin{equation}\label{eq:largerhighcoefficient}
 N_A(\Lambda)>\frac{D\Lambda}{4}+\frac{49}{960}U
 \qquad(U\ge80).
\end{equation}
More generally, for every $0<b<1/6$ there is an explicit $T_b$
such that $N_A(\Lambda)>D\Lambda/4+bU$ for every annulus and
every $U\ge T_b$. One possible choice is
\[
 T_b=\max\left\{80,
       \left(\frac{3}{2(1/6-b)}\right)^{3/2},
       \frac{12}{1/6-b}\right\}.
\]
\end{proposition}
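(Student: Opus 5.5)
The plan is to deduce both assertions from the surplus in Theorem~\ref{thm:quantitativeallannuli}, i.e.\ from
\[
 N_A(\Lambda)-\frac{D\Lambda}{4}>\frac U6-\frac34U^{1/3}-6\qquad(U\ge80).
\]
No new spectral input will be used; the work reduces to comparing this remainder with $bU$. For the general statement it suffices to show that
\[
 \Bigl(\frac16-b\Bigr)U\ge\frac34U^{1/3}+6
\]
whenever $U\ge T_b$. Write $\varepsilon=1/6-b>0$ and split $\varepsilon U$ into two halves, one absorbing each loss.

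For the first half, $\tfrac{\varepsilon}{2}U\ge\tfrac34U^{1/3}$ is equivalent to $U^{2/3}\ge 3/(2\varepsilon)$, that is, $U\ge(3/(2\varepsilon))^{3/2}$. For the second half, $\tfrac{\varepsilon}{2}U\ge6$ is equivalent to $U\ge12/\varepsilon$. The maximum of these two thresholds together with $80$ is exactly the displayed $T_b$. The requirement $U\ge80$ is what keeps us inside the range where Theorem~\ref{thm:quantitativeallannuli} applies. Strictness is inherited from the strict inequality in that theorem, so the general assertion follows at once.

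For \eqref{eq:largerhighcoefficient} with $b=49/960$ the threshold must be exactly $80$, so the crude split above is not the right tool. Instead I would use monotonicity, as in the proof of \eqref{eq:quantitativeallannulilinear}. The normalized remainder $1/6-(3/4)U^{-2/3}-6/U$ increases in $U$, so it suffices to check $U=80$. There I need
\[
 \frac{80}{6}-\frac34\,80^{1/3}-6\ \ge\ \frac{49}{960}\cdot80=\frac{49}{12}.
\]
With the rational bound $80^{1/3}<13/3$ already used in the paper, the left side exceeds $40/3-13/4-6=49/12$, which gives equality in the bound rather than a margin. The strict inequality of Theorem~\ref{thm:quantitativeallannuli} then gives $N_A>D\Lambda/4+49U/960$ at $U=80$. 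For larger $U$, the strict increase of the normalized remainder keeps $N_A-D\Lambda/4>49U/960$.

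The only delicate point is this exact rational bookkeeping. The coefficient $49/960$ appears to be chosen precisely so that $13/3$ makes the endpoint tight. I therefore need to be careful that the strictness at $U=80$ comes from the theorem's strict inequality and not from a margin in the arithmetic. If that turns out to be too fragile, a sharper cube-root bound such as $80^{1/3}<431/100$, checked by cubing, gives a genuine positive margin.
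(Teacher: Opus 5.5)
Your proposal is correct and follows the same route as the paper: the paper likewise divides the surplus of Theorem~\ref{thm:quantitativeallannuli} by $U$ to get $F(U)=1/6-\tfrac34U^{-2/3}-6/U$, uses that $F$ is increasing together with $80^{1/3}<13/3$ at $U=80$, and for general $b$ splits $1/6-b$ into two halves exactly as you do. Your tightness worry is unnecessary, because $80^{1/3}<13/3$ is already strict and gives $F(80)>49/960$ directly. So no sharper cube-root bound is needed, and you need not rely on the theorem's strictness to conclude.
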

\begin{proof}
The stronger estimate \eqref{eq:quantitativeallannuligap} gives
\[
 \frac{N_A(\Lambda)-D\Lambda/4}{U}>
 F(U):=\frac16-\frac34U^{-2/3}-\frac6U.
\]
The function $F$ increases with $U$. Since
$80^{-2/3}=80^{1/3}/80$ and $80^{1/3}<13/3$, we have
$(3/4)80^{-2/3}<13/320$. Consequently,
\[
 F(80)>\frac16-\frac{13}{320}-\frac3{40}
       =\frac{49}{960}=\frac1{20}+\frac1{960}>\frac1{20}.
\]
This proves \eqref{eq:largerhighcoefficient} without changing
the frequency threshold. For the general assertion put
$\varepsilon=1/6-b>0$. The stated threshold makes each of
$(3/4)U^{-2/3}$ and $6/U$ at most $\varepsilon/2$.
\end{proof}

\begin{remark}[A coefficient--threshold tradeoff]
For example, $F(125)=133/1500>1/12$, so
$N_A(\Lambda)>D\Lambda/4+U/12$ for $U\ge125$.
Inversion yields the explicit improvement
\[
 \nu_k(A)\le\frac4D\left(\sqrt{k+1/144}-1/12\right)^2
 \qquad(k\ge3917).
\]
Indeed $3917$ is the least integer not smaller than
$125^2/4+125/12=11750/3$. Its leading subtraction is
$(2/3)\sqrt k/D$, compared with $(2/5)\sqrt k/D$ in
Corollary~\ref{cor:quantitativeallannulieigen}.
These thresholds are sufficient for the stated coefficients.
\end{remark}

\subsection{Low eigenvalues and the full spectrum}

The explicit tail bound leaves a finite range of indices. We obtain
sharp constants for the first pair and explicit bounds for the
first eight indices, then use continuity and the two geometric
limits to prove positive gaps at each remaining index. These gaps
join the tail estimate in a bound over the entire spectrum.

\begin{proposition}[Sharp bounds for the first positive pair]
\label{prop:optimalfirstpair}
Let $a=j'_{1,1}$ be the first positive zero of $J_1'$ and put
\[
 \delta_*:=1-\frac{a^2}{4}.
\]
For every annulus,
\begin{equation}\label{eq:optimalfirstpair}
 (R^2-r^2)\nu_1(A_{r,R})
 =(R^2-r^2)\nu_2(A_{r,R})<a^2.
\end{equation}
The number $a^2$ is the least possible radius-independent
upper bound in \eqref{eq:optimalfirstpair}. \end{proposition}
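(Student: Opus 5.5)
The plan is to identify $\nu_1=\nu_2$ with the radial ground eigenvalue in angular order one, and then compare that eigenvalue with the disk. Normalize $R=1$, so $D=1-\rho^2$, and write $\beta_m(\rho)=\lambda_{m,0}(\rho)$.

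\emph{Step 1: the order-one ground mode is lowest.} Every positive eigenvalue is some $\lambda_{m,n}$ with $(m,n)\ne(0,0)$. By the proof of Lemma~\ref{lem:radial}, the positive order-zero levels $\lambda_{0,n}$, $n\ge1$, coincide with the radial Dirichlet spectrum in angular order one. That Dirichlet problem has the same quadratic form as the order-one Neumann problem, restricted to the smaller domain $H_0^1$. Hence $\lambda_{0,1}\ge\beta_1$. Equality is impossible: the Dirichlet ground state would then be a Neumann ground state, and vanishing Cauchy data at an endpoint forces it to be zero. So $\lambda_{0,1}>\beta_1$.

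For $m\ge2$, I evaluate the order-$m$ form on the order-$m$ ground state; it exceeds the order-one form there by $(m^2-1)\int s^{-1}|u|^2>0$. Thus $\beta_m>\beta_1$. Higher radial rows lie strictly above their ground levels. Hence $\beta_1$ is the smallest positive eigenvalue. Order one carries the two independent angular factors $\cos\theta$ and $\sin\theta$, and its ground level is simple. Therefore $\nu_1=\nu_2=\beta_1(\rho)$, with no other mode at that level.

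\emph{Step 2: the strict bound.} I follow the argument of Lemma~\ref{lem:analyticgroundfamily} with $m=1$, but use the exact disk eigenfunction $J_1(as)$ as the trial function. Its disk Rayleigh quotient is $a^2$, because its derivative vanishes at $s=1$. Restricting it to $[\varepsilon,1]$ gives $\limsup_{\varepsilon\downarrow0}\beta_1(\varepsilon)\le a^2$. The strict decrease $\beta_1'(\rho)<0$ proved there then gives $\beta_1(\rho)<a^2$ for every $\rho>0$. Since $D=1-\rho^2<1$, we get $D\nu_1=D\nu_2<a^2$. Scaling back to general $R$ gives \eqref{eq:optimalfirstpair}.

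\emph{Step 3: optimality.} It suffices to show $\beta_1(\rho)\to a^2$ as $\rho\to0$, since then $(1-\rho^2)\beta_1(\rho)\to a^2$. The upper limit comes from Step 2, so the work is the lower limit; I expect this to be the main obstacle. I would argue by compactness. Take the normalized ground states $u_\rho$ with $\int_\rho^1 s u_\rho^2=1$, and extend each to $[0,\rho]$ by the constant $u_\rho(\rho)$. This extension adds energy $\int_0^\rho s^{-1}u_\rho(\rho)^2\,\mathrm ds$, which diverges. So instead I extend linearly, $u_\rho(\rho)s/\rho$, which adds only $O(u_\rho(\rho)^2)$ to the order-one form.

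The remaining issue is that $u_\rho(\rho)$ stays bounded. For this I use the bounded energy, in particular the control on $\int s^{-1}u^2$, together with a one-dimensional Sobolev estimate on $[\rho,2\rho]$. This should give $u_\rho(\rho)^2\lesssim\int_\rho^{2\rho}(s u'^2+s^{-1}u^2)$. The extended functions then form an admissible disk family with quotient $\le\beta_1(\rho)+o(1)$. Because the trial space contains only order-one functions, the disk order-one min--max gives $\liminf\beta_1(\rho)\ge a^2$.

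If the trace control is delicate, the fallback is the explicit transcendental equation for $\beta_1(\rho)=k^2$:
\[
 J_1'(k\rho)Y_1'(k)-Y_1'(k\rho)J_1'(k)=0 .
\]
As $\rho\to0$ one has $Y_1'(k\rho)\sim 2/(\pi k^2\rho^2)\to\infty$ while $J_1'(k\rho)\to\tfrac12$. Dividing by $Y_1'(k\rho)$ shows that $J_1'(k)=O(\rho^2)$, locally uniformly in $k$. Hence the first root tends to $a$, and no constant smaller than $a^2$ can bound $D\nu_1$ for all annuli.
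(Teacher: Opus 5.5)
Your proof is correct. The fallback in Step~3 is exactly the paper's argument: the characteristic equation together with the small-argument behaviour of $J_1'$ and $Y_1'$. Steps~1 and~2, however, take a different route.

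\textbf{Step 1.} The paper isolates the first pair numerically, using three facts:
\begin{itemize}
\item $\beta_1(\rho)<112/33<4$, from the explicit trial in Lemma~\ref{lem:angularfirstmonotone};
\item every order $m\ge2$ has ground level at least $m^2\ge4$, since $m^2/s^2\ge m^2$ on $s\le1$;
\item $\lambda_{0,1}>\pi^2/(1-\rho)^2>4$, from \eqref{eq:radial-first-lower}.
\end{itemize}
You replace these numbers with structural comparisons. The positive order-zero levels are order-one Dirichlet levels, so they lie strictly above the order-one Neumann ground level by $H_0^1\subset H^1$ and unique continuation. And $\beta_m>\beta_1$ follows from the extra potential $(m^2-1)s^{-2}$. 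This needs no constants and would carry over to other shells.

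\textbf{Step 2.} The paper deduces $\beta_1<a^2$ from the limit $\beta_1(\rho)\to a^2$ plus strict monotonicity. You obtain it directly by restricting $J_1(as)$ and using monotonicity, just as the lemma does with its polynomial trials. The asymptotics are then needed only for sharpness.

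This also streamlines your fallback. Since $1\le\beta_1(\rho)<a^2$, any limit point of $\sqrt{\beta_1(\rho)}$ is a zero of $J_1'$ in $[1,a]$, hence equals $a$ by definition. The paper works on $[1,\sqrt{112/33}]$, so it must invoke the root exclusions and spacing of Lemma~\ref{lem:sixteenth-diskupper} to rule out a second zero. You should state the lower bound $\beta_1\ge1$ (from $s^{-2}\ge1$) explicitly. Your $O(\rho^2)$ estimate is uniform only for $k$ bounded away from zero, where $Y_1'(k)$ blows up.

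\textbf{A misstated step in your compactness route.} The trace estimate $u_\rho(\rho)^2\lesssim\int_\rho^{2\rho}(su'^2+s^{-1}u^2)$ only gives $u_\rho(\rho)=O(1)$. So the energy $u_\rho(\rho)^2$ added by the linear extension is bounded, not $o(1)$, and the claimed quotient bound $\beta_1(\rho)+o(1)$ does not follow.

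Boundedness is nevertheless enough if you finish with weak limits.
\begin{itemize}
\item After attaching $\cos\theta$, the extensions are bounded in $H^1$ of the disk.
\item By Rellich, a subsequence converges strongly in $L^2$ and weakly in $H^1$. The added mass $u_\rho(\rho)^2\rho^2/4$ tends to zero, so the limit has unit mass.
\item On $[\delta,1]$ the radial energy is at most $\beta_1(\rho)$ once $\rho<\delta$. Lower semicontinuity there, followed by $\delta\downarrow0$, shows the limit has energy at most $\liminf\beta_1(\rho)$.
\item The limit is an order-one disk function, so the disk order-one min--max gives $\liminf\beta_1(\rho)\ge a^2$.
\end{itemize}
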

\begin{proof}
Normalize $R=1$ and write $\rho=r/R$.
The first-eigenvalue upper bound is also a consequence of the
classical Szeg\H{o}--Weinberger inequality \cite{Szego,Weinberger}.
The separated argument below identifies its twofold occurrence
and the limiting sharpness within the annular family.
The ground level $\beta_1(\rho)$ in angular order one is less
than $112/33<4$ by Lemma~\ref{lem:angularfirstmonotone}.
Every angular order $m\ge2$ has ground level at least $m^2\ge4$.
For radial index $n\ge1$, the form inequality gives
$\lambda_{m,n}\ge\lambda_{0,n}+m^2$.
The lower bound \eqref{eq:radial-first-lower} from
Lemma~\ref{lem:radial} gives
\[
 \lambda_{0,1}>\frac{\pi^2}{(1-\rho)^2}>4.
\]
Thus the first two positive eigenvalues are exactly the
cosine and sine pair belonging to $\beta_1(\rho)$.

Lemma~\ref{lem:angularfirstmonotone} proves that
$\beta_1(\rho)$ strictly decreases with $\rho$.
For completeness, its limit at a vanishing hole is $a^2$.
For sufficiently small $\rho$, the separated characteristic equation, with
$z_\rho=\sqrt{\beta_1(\rho)}\in[1,\sqrt{112/33}]$, is
\[
 J_1'(z_\rho)-
 \frac{J_1'(\rho z_\rho)}{Y_1'(\rho z_\rho)}Y_1'(z_\rho)=0.
\]
The small-argument expansions of the Bessel functions
\cite[Section~10.7]{NIST}, together with their derivative
recurrences, give
\[
 J_1'(x)=\frac12+O(x^2),\qquad
 Y_1'(x)=\frac{2}{\pi x^2}+O(|\log x|),
\]
so the inner ratio is $O(\rho^2)$ uniformly on this compact
$z$ interval. Every limit point of $z_\rho$ is therefore a
zero of $J_1'$. The root exclusions and spacing used in
Lemma~\ref{lem:sixteenth-diskupper} show that $a$ is its only
zero in $[1,\sqrt{112/33}]$. Hence
\[
 \lim_{\rho\downarrow0}\beta_1(\rho)=a^2,
 \qquad \beta_1(\rho)<a^2.
\]
Multiplication by $1-\rho^2$ gives the first assertion and
\[
 \sup_{0<\rho<1}(1-\rho^2)\nu_1(A_{\rho,1})=a^2,
\]
which proves optimality of the common upper constant.

The Rayleigh-sum bound used in Lemma~\ref{lem:sixteenth-diskupper} gives $a^4>192/17>9$, while the disk trial
$s(1-s^2/3)$ has quotient $112/33$ and is not an eigenfunction.
Thus $3<a^2<112/33<4$. \end{proof}

\begin{proposition}[Corrections at the first eight indices and a sharp low-energy surplus]\label{prop:firsteightpositive}
Let $a=j'_{1,1}$. For every annulus and $1\le k\le8$,
\[
 \nu_k(A_{r,R})<\frac{4k-\varepsilon_k}{D},
 \qquad
 (\varepsilon_1,\ldots,\varepsilon_8)=
 \left(4-a^2,8-a^2,\frac{21}{8},\frac{53}{8},
       \frac{68}{31},\frac{192}{31},\frac{14}{13},\frac{128}{39}\right).
\]
Consequently
\begin{equation}\label{eq:optimallowcount}
 N_A(\Lambda)>\frac{D\Lambda}{4}+1-\frac{a^2}{4}
 \qquad(0\le D\Lambda\le32).
\end{equation}
The same corrected inequality holds for $\mathcal M_A$ when
$0<D\Lambda\le32$. The first two eigenvalue subtractions and
the uniform additive counting constant are optimal.
\end{proposition}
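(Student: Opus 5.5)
The plan is to normalize $R=1$, so $D=1-\rho^2$. The eight eigenvalue bounds then read $D\nu_k<4k-\varepsilon_k$. Each will be deduced from an explicit count of strictly subthreshold separated modes, as in the table of Proposition~\ref{prop:analyticlowfrequency}. The counting statement \eqref{eq:optimallowcount} will follow by inversion, and the optimality claims from Proposition~\ref{prop:optimalfirstpair}.

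For $k=1,2$ the bound $D\nu_1=D\nu_2<a^2=4k-\varepsilon_k$ is exactly Proposition~\ref{prop:optimalfirstpair}. For $3\le k\le 8$ I would reuse the ingredients behind Proposition~\ref{prop:analyticfirsteight}, invoked earlier. The first are ground levels in orders $m=1,2,3$, bounded through Lemma~\ref{lem:angularfirstmonotone}/Lemma~\ref{lem:analyticgroundfamily}; for instance $D\beta_2<b_2\approx 14.1$ and $D\beta_3<b_3\approx 26.4$. The second is the first radial level in order zero, bounded by polynomial trial matrices $H^{(0,Q,d)}$, i.e.\ determinant identities in $z=\rho^2$ with explicit positivity. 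The targets $4k-\varepsilon_k$ are $8-\tfrac{21}{8}$, $16-\tfrac{53}{8}$, $20-\tfrac{68}{31}$, $24-\tfrac{192}{31}$, $28-\tfrac{14}{13}$, $32-\tfrac{128}{39}$. Their pairwise structure strongly suggests that each consecutive pair (multiplicity-two angular modes) is controlled by a single rational trial quotient. Examples are $43/8$ for $\nu_3=\nu_4$, $556/31$ for $\nu_5,\nu_6$ and $350/13$ for $\nu_7,\nu_8$, presumably the disk quotients of $s^m(1-c s^2)$ in orders $2,3$. They must be compared uniformly in $\rho$ with whichever mode (order two, order three, or the order-zero radial level) is actually $\nu_k$.

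Concretely, for each pair I would establish, uniformly for $0<\rho<1$, that the listed number of modes lie strictly below $(4k-\varepsilon_k)/D$. For the angular ground levels I would use strict monotone decrease in $\rho$ with the disk quotient as supremum; this holds because $D\le1$, so the bound $\beta_m<K/M$ gives $D\beta_m<K/M$. For the order-zero radial level, where $D\lambda_{0,1}$ stays bounded as $\rho\to1$, I would use a $2\times2$ or $3\times3$ determinant in $z$ with the same sign-count argument. Where a single trial fails across all $\rho$, I would instead split at a rational $\rho$ and handle the thin range by Lemma~\ref{lem:radial} ($\lambda_{0,1}\le\pi^2/h^2+3/4\rho^{-2}$).

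Next, \eqref{eq:optimallowcount} follows from the $\varepsilon_k$ bounds. On $0\le Q:=D\Lambda\le32$, the count $N\ge k+1$ holds whenever $Q\ge 4k-\varepsilon_k$. One must then check $k+1>Q/4+1-a^2/4$ for $Q<4(k+1)-\varepsilon_{k+1}$, i.e.\ $\varepsilon_{k+1}\ge 4-a^2=\varepsilon_1$. Since $a^2\approx3.39$, one has $4-a^2\approx0.61$, while every later $\varepsilon_k\ge14/13$ and $\varepsilon_2=8-a^2$. On $Q<4-\varepsilon_1$ the constant mode gives $1>Q/4+1-a^2/4$. Strict-count versions follow as in Proposition~\ref{prop:strictspectral}, since all comparisons are strict. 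Optimality of $\varepsilon_1,\varepsilon_2$ and of the additive constant comes from $\sup_\rho D\nu_1=a^2$: near $Q=a^2$ with $\rho\to0$, the count just below $\nu_1$ is $1$, and $1-Q/4\to1-a^2/4$.

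I expect the main obstacle to be the uniform-in-$\rho$ verification for the order-zero radial level against the targets $24-192/31$ and $32-128/39$. That level blows up like $\pi^2/(1-\rho)^2$ while $D\to0$, so its normalized value must be compared carefully across the whole interval. There I would first try a single polynomial determinant identity $\det H^{(0,Q,3)}(z)$ at the rational target $Q$, proving positivity of the resulting polynomial on $[0,1]$ by coefficient bounds as in Proposition~\ref{prop:analyticlowfrequency}.
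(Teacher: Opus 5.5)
Your treatment of $k=1,2$, the inversion to \eqref{eq:optimallowcount}, the strict-count version and the optimality argument agree with the paper. The gap is in $3\le k\le8$, and it starts with the targets. The values $4k-\varepsilon_k$ are $\tfrac{75}{8},\tfrac{75}{8},\tfrac{552}{31},\tfrac{552}{31},\tfrac{350}{13},\tfrac{1120}{39}$; for $k=3$ the target is $12-\tfrac{21}{8}$, not $8-\tfrac{21}{8}$.

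Two of the bounds you set out to prove are false. $D\nu_3<\tfrac{43}{8}$ fails for small holes, where $D\nu_3\to(j'_{2,1})^2\approx9.33$. Likewise, $\tfrac{350}{13}$ cannot control $\nu_8$, since $D\nu_8\to(j'_{4,1})^2\approx28.28>\tfrac{350}{13}$.

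The correct targets for $k=3,4,5,6,8$ are exactly the disk quotients $K_m/M_m$ of Lemma~\ref{lem:angularfirstmonotone} for $m=2,2,3,3,4$. The constant and the ground pairs in orders $1,\dots,j$ lie in mutually orthogonal sectors, so $\nu_{2j}<K_j/M_j$. Since $D<1$, this settles these five indices at once, with no radial level and no determinant. Only Lemma~\ref{lem:angularfirstmonotone} is sharp enough here: Lemma~\ref{lem:analyticgroundfamily} gives $b_2=10>\tfrac{75}{8}$ and $b_3=\tfrac{528}{29}>\tfrac{552}{31}$. Your values of $b_2$ and $b_3$ are also miscomputed.

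More seriously, your plan never uses the order-four ground pair, and without it $k=7,8$ cannot be done. The constant, orders $1,2,3$ and the order-zero radial level supply only eight modes, so $\nu_8$ is never reached. Nor can the order-zero radial level control $\nu_7$ uniformly. By \eqref{eq:radial-first-lower}, $D\lambda_{0,1}>\pi^2(1+\rho)/(1-\rho)$, which tends to infinity as $\rho\to1$ and already exceeds $\tfrac{1120}{39}$ at $\rho=\tfrac12$.

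Consequently no determinant identity for $H^{(0,Q,d)}$ at a fixed $Q\le32$ can certify $D\lambda_{0,1}<Q$ on the whole range $0<\rho<1$. Your fallback through Lemma~\ref{lem:radial} fails too: it is an upper bound for the same quantity and also blows up. In the thin range only angular ground levels are available.

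The paper splits $k=7$ at $\rho=\tfrac14$. For $\rho\ge\tfrac14$ it uses $D\nu_7\le D\nu_8<\tfrac{15}{16}\cdot\tfrac{1120}{39}=\tfrac{350}{13}$, which is where $\varepsilon_7$ comes from. For $\rho\le\tfrac14$ it adds to the constant and orders $1,2,3$ the mean-subtracted order-zero polynomial $p(s)=1-3s^2+\tfrac32s^4$. Its annular mass is at least $\tfrac1{10}-\tfrac{\rho^2}{2D}\ge\tfrac1{15}$ and its energy is below $\tfrac32$, so $\nu_7<\tfrac{45}{2}<\tfrac{350}{13}$.
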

\begin{proof}
Scale the outer radius to one and put $\rho=r/R$.
Proposition~\ref{prop:optimalfirstpair} proves the first two
subtractions and their sharpness. The angular trial bounds in
Lemma~\ref{lem:angularfirstmonotone} give
\[
 \nu_4<75/8,\qquad \nu_6<552/31,\qquad \nu_8<1120/39.
\]
Since $D=1-\rho^2<1$, these prove the displayed subtractions
at indices $3,4,5,6,8$.

For the seventh index use the radial polynomial from Proposition~\ref{prop:analyticfirsteight}
$p(s)=1-3s^2+3s^4/2$, subtracting its weighted annular mean.
Its full-disk mass is $1/10$ and energy is $3/2$.
Moreover, $\int_0^1sp(s)\,\mathrm ds=0$ and $|p(s)|\le1$ on $[0,1]$.
Writing $\bar p_\rho$ for its annular mean and $D=1-\rho^2$, we obtain
\[
 \begin{aligned}
 \int_\rho^1s(p-\bar p_\rho)^2\,\mathrm ds
 &=\frac1{10}-\int_0^\rho sp^2\,\mathrm ds
       -\frac{2}{D}\left(\int_0^\rho sp\,\mathrm ds\right)^2\\
 &\ge\frac1{10}-\frac{\rho^2}{2}-\frac{\rho^4}{2D}
  =\frac1{10}-\frac{\rho^2}{2D}\ge\frac1{15}
       \qquad(\rho\le1/4).
 \end{aligned}
\]
Its annular energy is strictly smaller than $3/2$, so its quotient
is less than $45/2$. Together with the constant and the first three
nonzero angular ground sectors, this gives $\nu_7<45/2$.
For $\rho\ge1/4$, the fourth ground sector gives instead
\[
 D\nu_7<D\frac{1120}{39}
 \le\frac{15}{16}\frac{1120}{39}=\frac{350}{13}.
\]
As $45/2<350/13$, the uniform bound is
$D\nu_7<350/13=28-14/13$.

Since $3<a^2<4$, the smallest of the eight subtractions is
$4-a^2\in(0,1)$.
If $n=N_A(\Lambda)\le8$, the next eigenvalue satisfies
$\Lambda<\nu_n<(4n-\varepsilon_n)/D$, giving
$n-D\Lambda/4>\varepsilon_n/4\ge1-a^2/4$.
If $n\ge9$ and $D\Lambda\le32$, its excess is at least one.
For the strict count use $\Lambda\le\nu_n$ in the first case;
the strict upper bound on $\nu_n$ still gives the same strict excess.
To prove sharpness, take $\rho_j\downarrow0$ and energies
$\Lambda_j<\nu_1(A_{\rho_j,1})$ tending to $a^2$.
Proposition~\ref{prop:optimalfirstpair} gives $N_{A_{\rho_j,1}}(\Lambda_j)=1$ and
\[
 N_{A_{\rho_j,1}}(\Lambda_j)
 -\frac{1-\rho_j^2}{4}\Lambda_j\longrightarrow1-\frac{a^2}{4}.
\]
Any larger uniform additive constant therefore fails on these annuli.
\end{proof}

\begin{proposition}[A uniform positive correction at each fixed index]\label{prop:fixedindexpositive}
For each fixed integer $k\ge1$ there exists $c_k>0$, independent
of the two annular radii, such that
\[
 \nu_k(A_{r,R})\le\frac{4k-c_k}{R^2-r^2}.
\]
This is an existence statement; it does not provide an explicit
formula for $c_k$ for arbitrary $k$.
\end{proposition}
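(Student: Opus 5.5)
Scale the outer radius to one and write $\rho=r/R\in(0,1)$, so the claim reads
\[
 \Phi_k(\rho):=(1-\rho^2)\nu_k(A_{\rho,1})\le4k-c_k
\]
uniformly in $\rho$. By Theorem~\ref{thm:allannuli}, equivalently Proposition~\ref{prop:strictspectral}, we already have $\Phi_k(\rho)<4k$ for every fixed $\rho$. The plan is therefore a compactness argument. First we show that $\Phi_k$ is continuous on $(0,1)$. Then we show that its limit superior at each of the two endpoints $\rho\downarrow0$ and $\rho\uparrow1$ is strictly below $4k$. A continuous function on $(0,1)$ that is pointwise below $4k$ and has endpoint limsups below $4k$ has supremum strictly below $4k$, and we take $c_k=4k-\sup_\rho\Phi_k>0$.

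Continuity in $\rho$ is routine. The map $x\mapsto x\cdot$ (dilation of the inner circle) pulls $A_{\rho',1}$ back to $A_{\rho,1}$ via a radial bi-Lipschitz map whose Lipschitz constants tend to one as $\rho'\to\rho$. Min--max then shows that each $\nu_k$ depends continuously on $\rho$. Alternatively, one can use the separated radial problems: each $\lambda_{m,n}(\rho)$ is continuous by min--max on $H^1(\rho,1)$ after the affine change to a fixed interval, and only finitely many $(m,n)$ are relevant below a fixed energy.

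For the small-hole endpoint, Neumann bracketing $N_\rho\ge\mathcal N_B(X)-\mathcal N_B(\rho X)$ and Lemma~\ref{lem:sixteenth-diskupper} give $\mathcal N_B(\rho X)=1$ once $\rho X\le9/5$. Hence at fixed $k$ we get $N_\rho(X^2)\ge\mathcal N_B(X)-1$ for $X$ up to the relevant bounded size, provided $\rho$ is small enough. It would then suffice to know that the disk itself satisfies a strict Pólya inequality with this one-mode loss: $\mu_{k+1}(B)<4k$, i.e.\ $\mathcal N_B(X)-1\ge X^2/4$ strictly in a margin. This is Lemma~\ref{lem:diskanalyt_surplus} in its strict (finite-index) form. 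If that lemma is only non-strict, the fallback is an alternative route: the annular radial eigenvalues converge to the disk Neumann eigenvalues as $\rho\to0$, in the sense of the limiting argument used for $\beta_1$ in Proposition~\ref{prop:optimalfirstpair}, except for the order-zero pieces, where the inner Neumann condition becomes a regularity condition. In that case $\limsup_{\rho\to0}\Phi_k\le\mu_k(B)$, and the FLPS strict disk Pólya inequality gives $\mu_k(B)<4k/1=4k$.

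For the thin endpoint $\rho\uparrow1$ (i.e.\ $\delta\to0$ with inner radius one), Proposition~\ref{prop:mixedlow} gives the count bound
\[
 N_A(t^2)-\tfrac{R^2-1}{4}t^2\ge-\tfrac{\delta^2}{4}t^2+B_0t-1-\tfrac{3\delta}{8}
\]
with $B_0>3/7$. At index $k$ the relevant energy satisfies $(R^2-1)t^2\approx4k$, so $t\sim\sqrt{2k/\delta}\to\infty$ while $\delta^2t^2\to0$. The surplus is therefore at least $\tfrac37 t-2$ eventually, which exceeds any constant. Concretely, evaluating at $(R^2-1)t^2=4k-1$ gives count $>k$, so $\Phi_k\le4k-1$ for $\delta$ small. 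Rescaling to outer radius one changes $\Phi$ only by the invariant factor.

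The main obstacle is the small-hole endpoint. One must make the convergence $\nu_k(A_{\rho,1})\to\mu_k(B)$ (or at least the limsup inequality) rigorous. Equivalently, one needs a strict margin in the disk count at bounded energy, and this requires the disk Pólya inequality $\mu_k(B)<4k$ with strictness from \cite{FLPSballs}. I would first try the bracketing inequality above, since it is one-sided and gives exactly the needed direction. The only inputs are then that the inner disk at frequency $\rho X\to0$ contributes just its constant mode, and the strict disk inequality at index $k$.
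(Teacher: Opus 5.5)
Your overall scheme (continuity of $\Phi_k$ on $(0,1)$, the pointwise bound $\Phi_k<4k$ from Theorem~\ref{thm:allannuli}, endpoint limits superior below $4k$, then compactness) is the paper's. Your thin endpoint via Proposition~\ref{prop:mixedlow} is valid. The paper uses instead constant radial trials in orders up to $m=\lceil k/2\rceil$, which give $\Phi_k(\rho)\le 2m^2\log(1/\rho)\to0$ more directly.

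\textbf{The gap is the small-hole endpoint as you intend to try it first.} Neumann bracketing $N_\rho(X^2)\ge\mathcal N_B(X)-\mathcal N_B(\rho X)$ always discards the constant mode of the removed disk. As you yourself note, the route therefore needs the index-shifted disk inequality $\mu_{k+1}(B)<4k$. Your last paragraph's ``strict disk inequality at index $k$'' is not what this route uses.

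\emph{The shifted inequality is false in general.} Lemma~\ref{lem:sixteenth-diskupper} gives $\mathcal N_B(X)\le3$ for $X\le3$, i.e.\ $\mu_3(B)>9>8$. Bracketing therefore cannot certify $\Phi_2<8$ for small $\rho$. Likewise $\mu_8(B)=(j'_{4,1})^2\approx28.28>28$ defeats $k=7$.

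\emph{Lemma~\ref{lem:diskanalyt_surplus} does not supply it for $k\ge8$ either.} The bound $\mathcal N_B(X)-1\ge X^2/4$ yields at most $\mu_{k+1}(B)\le4k$. Inserting it into the bracketing at $X^2=(4k-c)/(1-\rho^2)$ gives $N_\rho(X^2)\ge k+1$ only when $c<4k\rho^2$, and this margin disappears as $\rho\downarrow0$. So this route gives no uniform $c_k$ without a strict shifted disk estimate. The paper does not provide one, and it fails at $k=2$ and $k=7$.

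\textbf{Your fallback is the correct argument, and it is easy, so it should be the proof rather than a contingency.} Restrict the first $k+1$ Neumann eigenfunctions of the unit disk to $A_{\rho,1}$. They belong to $H^1(A_{\rho,1})$. Their mass and energy Gram matrices over the annulus converge to $I$ and $\operatorname{diag}(\mu_0(B),\ldots,\mu_k(B))$ as $\rho\downarrow0$, because the integrals of these smooth functions over $\{|x|<\rho\}$ tend to zero. Min--max then gives $\limsup_{\rho\downarrow0}\Phi_k(\rho)\le\mu_k(B)<4k$, by the strict disk P\'olya inequality.

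Only this upper-semicontinuous direction is needed. You need no convergence of the annular spectrum, no limiting analysis of the inner boundary condition, and no separate treatment of angular order zero. With this replacement your argument coincides with the paper's proof.
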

\begin{proof}
With outer radius one, put
$L_k(\rho)=(1-\rho^2)\nu_k(A_{\rho,1})$.
On every compact subinterval of $0<\rho<1$, mapping the radial
interval linearly onto $[0,1]$ gives forms with uniformly positive,
continuous coefficients. The min--max principle implies continuity
of each ordered eigenvalue, including at angular-sector crossings.
Theorem~\ref{thm:allannuli} gives $L_k(\rho)<4k$ there.

As $\rho\uparrow1$, the constant radial trials in angular orders
through $m=\lceil k/2\rceil$ give
\[
 L_k(\rho)\le2m^2\log(1/\rho)\longrightarrow0.
\]
As $\rho\downarrow0$, restrict the first $k+1$ disk eigenfunctions
to the annulus. Their mass and energy matrices converge to their
disk matrices, since the deleted integrals tend to zero.
The min--max principle gives
\[
 \limsup_{\rho\downarrow0}L_k(\rho)\le\mu_k(B)<4k.
\]
The last strict disk inequality follows at the first eight indices
from the explicit trials, and at the remaining indices from the
strict disk lattice comparison and the surplus in Appendix~\ref{sec:diskanalyt}. Thus both endpoints are separated from $4k$.
Continuity and compactness on the intervening radius interval give
\[
 c_k:=4k-\sup_{0<\rho<1}L_k(\rho)>0.
\]
Scaling proves the assertion. Weyl's law at any fixed radius ratio
also implies $c_k/k\to0$; a fixed relative improvement of the Weyl
coefficient cannot hold at all indices.
\end{proof}

\begin{theorem}[A positive correction over the entire spectrum]
\label{thm:fullspectrumsqrt}
There is a dimensionless constant $\gamma>0$, independent of the annular
radii and the spectral index, such that every annulus satisfies
\begin{equation}\label{eq:fullspectrumsqrteigen}
 \nu_k(A_{r,R})\le\frac{4k-\gamma\sqrt k}{D},\qquad k\ge1,
 \quad D=R^2-r^2.
\end{equation}
Its inclusive counting function consequently satisfies
\begin{equation}\label{eq:fullspectrumsqrtcount}
 N_A(\Lambda)>\frac{D\Lambda}{4}
       +\frac \gamma8+\frac \gamma{16}\sqrt{D\Lambda},\qquad\Lambda\ge0.
\end{equation}
The proof establishes existence of $\gamma$, rather than an explicit
numerical value. The high-index estimate in
Corollary~\ref{cor:quantitativeallannulieigen} is fully explicit.
\end{theorem}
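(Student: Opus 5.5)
The plan is to glue the explicit high-index bound to the finitely many remaining indices, and then obtain the counting statement by inverting the eigenvalue bound. For the eigenvalue inequality \eqref{eq:fullspectrumsqrteigen}, Corollary~\ref{cor:quantitativeallannulieigen} already handles $k\ge1604$. There the subtraction is
\[
 \frac{2}{5D}\left(\sqrt{k+\tfrac1{400}}-\tfrac1{20}\right)\ge\frac{\gamma_\infty\sqrt k}{D}
\]
for some explicit $\gamma_\infty>0$; since $\sqrt{k+1/400}-1/20\ge\sqrt k-1/20\ge(1-1/800)\sqrt k$, one can take $\gamma_\infty=\tfrac25(1-\tfrac1{800})$. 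For $1\le k\le1603$, Proposition~\ref{prop:fixedindexpositive} supplies $c_k>0$, uniform in the radii, with $D\nu_k\le4k-c_k$. I will then set
\[
 \gamma=\min\Bigl\{\gamma_\infty,\ \min_{1\le k\le1603}\frac{c_k}{\sqrt k}\Bigr\}>0,
\]
which is a minimum over finitely many positive numbers. This gives \eqref{eq:fullspectrumsqrteigen} for every $k$. Because the $c_k$ are not explicit, $\gamma$ is not explicit either, which is consistent with how the theorem is stated.

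For the counting bound \eqref{eq:fullspectrumsqrtcount}, write $n=N_A(\Lambda)\ge1$. Since the count is inclusive, $\Lambda<\nu_n$, and the eigenvalue bound gives $D\Lambda<4n-\gamma\sqrt n$. The aim is to show
\[
 n>\frac{D\Lambda}4+\frac\gamma8+\frac\gamma{16}\sqrt{D\Lambda}.
\]
I will set $x=D\Lambda\ge0$, so that $x<4n-\gamma\sqrt n$, and need $n-x/4>\gamma/8+\gamma\sqrt x/16$.

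The key inequality is $n-x/4>\gamma\sqrt n/4$. So it suffices to prove $\gamma\sqrt n/4\ge\gamma/8+\gamma\sqrt x/16$, which is equivalent to $4\sqrt n\ge2+\sqrt x$. Using $\sqrt x<2\sqrt n$, the right side is less than $2+2\sqrt n$, and $2+2\sqrt n\le4\sqrt n$ holds exactly when $n\ge1$. This is always true because of the zero mode. Strictness carries through because $x<4n-\gamma\sqrt n$ is strict. One caveat: the eigenvalue bound is non-strict for $k\ge1604$, so I will use $\Lambda<\nu_n\le(4n-\gamma\sqrt n)/D$, which is still strict. If $\gamma$ turns out to need shrinking to make the constants fit, I can take it smaller, since any smaller positive $\gamma$ still satisfies \eqref{eq:fullspectrumsqrteigen}.

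The main obstacle is not in this assembly. It sits inside Proposition~\ref{prop:fixedindexpositive}, in the uniformity of $c_k$ at the geometric endpoints $\rho\to0$ and $\rho\to1$, and that result is assumed here. In the present argument the only delicate points are making sure $\gamma_\infty$ is chosen so that the corollary's subtraction truly dominates $\gamma\sqrt k$ for every $k\ge1604$, and checking the algebra that converts the eigenvalue bound into a count bound. As shown above, both reduce to elementary inequalities.
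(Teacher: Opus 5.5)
Your proposal is correct and follows the paper's proof. A finite minimum glues the explicit tail from Corollary~\ref{cor:quantitativeallannulieigen} to the non-explicit gaps $c_k$ of Proposition~\ref{prop:fixedindexpositive} for $k\le1603$, and the count bound follows from $\Lambda<\nu_n$ with $n=N_A(\Lambda)\ge1$. The differences are only cosmetic: you use the tail constant $\frac25(1-\frac1{800})$ instead of $\frac38$, and you verify $4\sqrt n\ge2+\sqrt{D\Lambda}$ directly, where the paper instead bounds $\frac\gamma4\sqrt n$ below by $\max\{\frac\gamma4,\frac\gamma8\sqrt{D\Lambda}\}$.
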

\begin{proof}
Expansion of \eqref{eq:quantitativeallannulieigen} gives, for $k\ge1604$,
\[
 D\nu_k\le4k-\left(\frac25\sqrt{k+\frac1{400}}-\frac1{50}\right)
       \le4k-\frac38\sqrt k.
\]
Indeed the difference of the two subtractions is at least
$\sqrt k/40-1/50>0$ for $k\ge1$.
Proposition~\ref{prop:fixedindexpositive} supplies $c_k>0$ for each of
the remaining finitely many indices. Thus
\[
 \gamma=\min\left\{\frac38,
       \min_{1\le k\le1603}\frac{c_k}{\sqrt k}\right\}>0
\]
proves \eqref{eq:fullspectrumsqrteigen} for all $k\ge1$.
The minimum involves finitely many unspecified spectral gaps; it is
not a proposed numerical value of $\gamma$.

For the count let $Q=D\Lambda$ and $n=N_A(\Lambda)\ge1$.
The next eigenvalue has $\nu_n>\Lambda$, by the inclusive convention.
Equation~\eqref{eq:fullspectrumsqrteigen} therefore gives
\[
 n-\frac Q4>\frac \gamma4\sqrt n.
\]
In particular $n>Q/4$, so the right side is both at least $\gamma/4$
and greater than $(\gamma/8)\sqrt Q$. Hence
\[
 n-\frac Q4>\max\left\{\frac \gamma4,\frac \gamma8\sqrt Q\right\}
       \ge\frac \gamma8+\frac \gamma{16}\sqrt Q,
\]
which proves the stated count bound, including $\Lambda=0$.
\end{proof}

\subsection{Sharpness and counterexamples}\label{sec:correctionoptimality}

We now compare the quantitative bounds with the two-term Weyl law
and the small-hole limit. These comparisons establish the sharp
square-root order, bound the possible universal coefficients, and
identify stronger corrections that fail at high or low energy.

\paragraph{Asymptotic order.}
For each fixed annulus, the two-term Neumann Weyl law
\cite[Theorem~1.1(2)]{GJWY} gives
\begin{equation}\label{eq:countoptimalityasymptotic}
 N_A(\Lambda)=\frac{D\Lambda}{4}
     +\frac{R+r}{2}\sqrt\Lambda+O_{r,R}(\Lambda^{1/3}).
\end{equation}
Taking the left and right limits at an eigenvalue bounds
the corresponding multiplicity by the remainder; inversion gives
\begin{equation}\label{eq:eigenoptimalityasymptotic}
 D\nu_k=4k-4\frac{R+r}{\sqrt D}\sqrt k+O_{r,R}(k^{1/3}).
\end{equation}
Consequently the orders $\sqrt\Lambda$ in the counting surplus
and $\sqrt k$ in the eigenvalue subtraction cannot be replaced
by larger powers with a fixed positive coefficient, even at a
single fixed annulus. A fixed decrease in the leading Weyl
coefficient is likewise impossible.

\paragraph{Universal coefficients.}
These order statements do not make the displayed coefficients
optimal. A uniform high-frequency bound
$N_A(\Lambda)\ge D\Lambda/4+bU$ must have $b\le1/2$:
for a fixed ratio its limiting surplus divided by $U$ is
$(1+\rho)/2$, and one can then let $\rho\downarrow0$.
Similarly a universal tail bound
$D\nu_k\le4k-c\sqrt k$ must have $c\le4$.
The fixed-geometry asymptotics alone do not provide a threshold
uniform as the radius ratio tends to zero or one.

For a correction valid at every index the first-pair result is
more restrictive. Define the best possible eigenvalue coefficient by
\[
 \Gamma_*:=\inf_{\substack{0<\rho<1\\k\ge1}}
 \frac{4k-(1-\rho^2)\nu_k(A_{\rho,1})}{\sqrt k}.
\]
Theorem~\ref{thm:fullspectrumsqrt} proves $\Gamma_*>0$, whereas
Proposition~\ref{prop:optimalfirstpair} gives
\[
 0<\Gamma_*\le4-a^2.
\]
Its value is not determined by these arguments. The existence
constant $\gamma$ in Theorem~\ref{thm:fullspectrumsqrt} is not
asserted to equal $\Gamma_*$. In the thin limit, at each fixed
index the constant radial trials give
$(1-\rho^2)\nu_k\to0$; hence that limit contributes
$4\sqrt k$ to this quotient and causes no vanishing gap.

\paragraph{Examples.}
For illustration, $a^2\approx3.38995771667$, so
$4-a^2\approx0.61004228333$ and
$\delta_*\approx0.15251057083$. These numbers are illustrations
of the exact Bessel-zero constants, not premises of the proof.
For example, on the fixed annulus $A_{1,4}$,
\eqref{eq:countoptimalityasymptotic} shows that
$N_A(\Lambda)\ge D\Lambda/4+(3/4)U$ fails at every sufficiently
large energy, since its true limiting coefficient is $5/8$.
On $A_{1,2}$, a subtraction $7\sqrt k/D$ fails at every
sufficiently large index, because the coefficient in
\eqref{eq:eigenoptimalityasymptotic} is $4\sqrt3<7$.
Any all-index coefficient larger than $4-a^2$ fails at the
first eigenvalue of sufficiently small-hole annuli.

\begin{remark}[A concrete low-energy exclusion]
On $A_{1,2}$ one has $N_A(1/4)=1$: nonzero angular modes have
eigenvalues strictly above $1/R^2=1/4$, and every positive
zero-order radial mode is above $\pi^2/(R-r)^2$.
Since $D\Lambda/4=3/16$, this gives a completely explicit
counterexample to the stronger all-energy bound
$N_A(\Lambda)\ge D\Lambda/4+1$.
It also excludes adding both the full zero-mode constant and
an additional positive square-root term at every energy.
\end{remark}

\appendix
\section{The first eight positive eigenvalues}
\label{sec:analyticfirsteight}

\indent We give uniform bounds for the first eight positive eigenvalues using
explicit trial functions. These bounds supply the low-energy endpoint
in the small-hole argument of Theorem~\ref{thm:sixteenth-smallhole}
and the corrections in Proposition~\ref{prop:firsteightpositive}.

\begin{lemma}\label{lem:angularfirstmonotone}
Normalize the outer radius to one. For an integer $m\ge1$, let
$\beta_m\left(\rho\right)$ be the lowest eigenvalue of
\[
 -u''-s^{-1}u'+m^2s^{-2}u=\beta u,
 \,\,\, \rho<s<1,\,\,\, u'\left(\rho\right)=u'\left(1\right)=0.
\]
Then $\beta_m$ is strictly decreasing on $\left(0,1\right)$. In particular,
\[
 \beta_1\left(\rho\right)<\frac{112}{33},\quad
 \beta_2\left(\rho\right)<\frac{75}{8},\quad
 \beta_3\left(\rho\right)<\frac{552}{31},\quad
 \beta_4\left(\rho\right)<\frac{1120}{39}.
\]
\end{lemma}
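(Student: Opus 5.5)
The proof has two parts: strict monotonicity of $\beta_m$ in $\rho$, and explicit disk trial quotients combined with a limiting argument. For monotonicity I would reuse the Hadamard-type variation already sketched in the proof of Lemma~\ref{lem:analyticgroundfamily}.

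\textbf{Strict monotonicity.} For $m\ge1$ the ground state $v_\rho$ is simple and can be taken positive. Normalize $\int_\rho^1 s v_\rho^2\,\mathrm ds=1$. Analytic perturbation theory, after the change of variables mapping $[\rho,1]$ linearly to $[0,1]$, makes $\beta_m$ and $v_\rho$ differentiable in $\rho$. Differentiating the Rayleigh identity
\[
 \beta_m(\rho)=\int_\rho^1\bigl(s v_\rho'^2+m^2s^{-1}v_\rho^2\bigr)\,\mathrm ds
\]
and using the Neumann condition $v_\rho'(\rho)=0$ removes the interior variation terms. What remains is the moving-endpoint contribution
\[
 \beta_m'(\rho)=\rho\Bigl(\beta_m(\rho)-\frac{m^2}{\rho^2}\Bigr)v_\rho(\rho)^2 .
\]
The constant trial gives $\beta_m\le 2m^2\log(1/\rho)/(1-\rho^2)<m^2/\rho^2$. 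So $\beta_m'<0$ once we know $v_\rho(\rho)\ne0$, and this holds: if $v_\rho(\rho)=v_\rho'(\rho)=0$, uniqueness for the ODE would force $v_\rho\equiv0$. The one point needing care is differentiability of the simple eigenvalue. If that is awkward, an alternative route avoids derivatives: for $\rho_1<\rho_2$, compare via the min--max principle by extending the $\rho_2$ ground state to $[\rho_1,1]$ with a suitable constant. I would keep the derivative argument, since the paper already uses it.

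\textbf{Explicit bounds.} Monotonicity gives $\beta_m(\rho)<\lim_{\varepsilon\downarrow0}\beta_m(\varepsilon)$ for each $\rho$. This limit is at most the disk quotient of any regular trial $v$, because restricting $v$ to $[\varepsilon,1]$ changes the mass and energy integrals by $o(1)$. For $m\ge1$ the term $m^2s^{-1}v^2$ is integrable when $v\sim s^m$. Hence it suffices to compute disk quotients of
\[
 f_{m,p}(s)=s^m\Bigl(1-\tfrac{m}{m+2p}s^2\Bigr)^p,\qquad p=1,
\]
whose radial derivative vanishes at $s=1$. The coefficient $m/(m+2)$ gives $s(1-s^2/3)$ for $m=1$. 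The mass $M$ and energy $K$ are finite sums of integrals of the form $\int_0^1 s^{2m+2i+2j\pm1}\,\mathrm ds$, using the formulas displayed in Proposition~\ref{prop:analyticlowfrequency}. I expect $K/M=112/33,\ 75/8,\ 552/31,\ 1120/39$ for $m=1,2,3,4$; the table there already confirms the $m=4$ entry $1120/39$. If a value fails to match, I would test the alternative coefficient $(m+2)/(m+4)$ from Lemma~\ref{lem:analyticgroundfamily}. For each $m$ I would check by direct arithmetic that the quotient equals the stated fraction.

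\textbf{Main obstacle.} The main obstacle is not the arithmetic but rigor in the limiting step. Strictness comes from strict monotonicity, since $\beta_m(\rho)<\beta_m(\rho/2)\le\limsup_{\varepsilon\downarrow0}\beta_m(\varepsilon)\le K/M$. For this chain to hold, monotonicity has to be established on all of $(0,1)$, not just locally.
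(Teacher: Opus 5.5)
Your monotonicity argument is the paper's. You use the same moving-endpoint formula $\beta_m'(\rho)=\rho(\beta_m-m^2/\rho^2)v_\rho(\rho)^2$ (with $\int_\rho^1 s v_\rho^2=1$), the same sign from the constant trial and from $v_\rho(\rho)\neq0$, and the same chain $\beta_m(\rho)<\beta_m(\rho/2)\le K_m/M_m$. The paper gets differentiability by shooting from $s=1$ and the implicit function theorem, whereas you use analytic perturbation theory; either works.

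The gap is in the explicit constants, specifically $m=3$. Your trial $s^m\bigl(1-\frac{m}{m+2}s^2\bigr)$ has the paper's coefficients $1/3,1/2,2/3$ for $m=1,2,4$. For $m=3$, however, it uses $a=3/5$. Then $K=3(1-a)^2+\frac{2a^2}{5}=\frac{78}{125}$ and $M=\frac18-\frac a5+\frac{a^2}{12}=\frac{7}{200}$, so $K/M=\frac{624}{35}$. This exceeds $\frac{552}{31}$, since $624\cdot31=19344>19320=552\cdot35$. Your fallback $a=(m+2)/(m+4)=5/7$ is worse: it gives $b_3=\frac{528}{29}>\frac{552}{31}$. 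As written, the proposal therefore proves only $\beta_3<624/35$, not the stated bound.

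The fix is to drop the Neumann-compatible coefficient, since an $H^1$ trial function need not satisfy the natural boundary condition. With the paper's choice $a_3=2/3$ you get $K=\frac{23}{45}$ and $M=\frac{31}{1080}$, so $K/M=\frac{552}{31}$ exactly. In the span of $s^3,s^5$ the optimal coefficient is about $0.638$, with minimal quotient about $17.75$. So $552/31\approx17.81$ leaves little room, and $3/5$ just misses.

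Separately, drop the derivative-free alternative you mention. Extending the $\rho_2$ ground state to $[\rho_1,1]$ only bounds $\beta_m(\rho_1)$ from above, whereas decreasing monotonicity needs $\beta_m(\rho_1)>\beta_m(\rho_2)$. Moreover, the constant extension raises the quotient above $\beta_m(\rho_2)$, because on $[\rho_1,\rho_2]$ it adds the potential $m^2/s^2\ge m^2/\rho_2^2>\beta_m(\rho_2)$.
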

\begin{proof}
Fix $m\ge1$. The variational characterization of the lowest
eigenvalue for this regular separated boundary problem is
\cite{Davies}
\[
 \beta_m\left(\rho\right)
 =\min_{0\ne v\in H^1\left(\rho,1\right)}
 \frac{\displaystyle\int_\rho^1
       \left(s\left|v'\right|^2+\frac{m^2}{s}\left|v\right|^2\right)\dd s}
      {\displaystyle\int_\rho^1s\left|v\right|^2\dd s}.
\]
A minimizer may be chosen nonnegative, since replacing it by its
absolute value does not change this quotient. The differential
equation then makes it strictly positive on the closed interval:
an interior zero of a nonnegative solution would also have zero
derivative, and a zero at either endpoint is accompanied by the
Neumann condition. In both cases uniqueness for the initial-value
problem would force the solution to vanish identically. Moreover,
every solution satisfying the outer Neumann condition is determined
by its value at $s=1$, so the eigenspace is one-dimensional.
Choose the positive eigenfunction $u\left(s,\rho\right)$ normalized by
$u\left(1,\rho\right)=1$.

We first justify its differentiation with respect to $\rho$.
Let $U\left(s,\lambda\right)$ solve
\[
 -\left(sU_s\right)_s+\frac{m^2}{s}U=\lambda sU,
 \,\,\, U\left(1,\lambda\right)=1,\,\,\,
 U_s\left(1,\lambda\right)=0.
\]
Subscripts denote partial derivatives. On any compact interval with
$s>0$, this initial-value solution depends smoothly on $s$ and
$\lambda$. The eigenvalue equation is
$F\left(\rho,\lambda\right):=U_s\left(\rho,\lambda\right)=0$.
At an eigenvalue put $Z=\partial_\lambda U$. Differentiating the
equation for $U$ gives
\[
 -\left(sZ_s\right)_s+\frac{m^2}{s}Z
 =\lambda sZ+sU,\,\,\, Z\left(1\right)=Z_s\left(1\right)=0.
\]
Multiplying this equation by $U$, subtracting the equation for $U$
multiplied by $Z$, and integrating yields
\[
 \int_\rho^1sU^2\dd s
 =\left[s\left(ZU_s-UZ_s\right)\right]_\rho^1
 =\rho U\left(\rho,\lambda\right)Z_s\left(\rho,\lambda\right).
\]
Thus, at $\lambda=\beta_m\left(\rho\right)$,
\[
 \partial_\lambda F\left(\rho,\beta_m\left(\rho\right)\right)
 =\frac{\displaystyle\int_\rho^1sU^2\dd s}
        {\rho U\left(\rho,\beta_m\left(\rho\right)\right)}>0.
\]
The implicit-function theorem therefore gives a smooth local
eigenvalue branch and a smooth eigenfunction
$u\left(s,\rho\right)=U\left(s,\beta_m\left(\rho\right)\right)$.
This branch is the lowest eigenvalue: under the linear change
$s=\rho+\left(1-\rho\right)y$, the Rayleigh quotient is defined on
the fixed space $H^1\left(0,1\right)$, with positive coefficients
depending continuously on $\rho$. On compact subsets of
$0<\rho<1$, their relative changes are uniformly small, so the
minimum is continuous in $\rho$. It must consequently agree locally
with the isolated root just constructed. Since $\rho$ was arbitrary,
$\beta_m$ is differentiable throughout $\left(0,1\right)$.

For the derivative calculation, write $\beta=\beta_m\left(\rho\right)$,
$u=u\left(s,\rho\right)$, and
$\dot u=\partial_\rho u\left(s,\rho\right)$, where $s$ is held fixed.
Define the energy and mass
\[
 E\left(\rho\right)=\int_\rho^1
       \left(su_s^2+\frac{m^2}{s}u^2\right)\dd s,
 \,\,\,
 M\left(\rho\right)=\int_\rho^1su^2\dd s>0.
\]
The weak eigenvalue equation gives $E=\beta M$. Leibniz's rule,
including the moving lower endpoint and the dependence of $u$ on
$\rho$, gives
\begin{align*}
 E'\left(\rho\right)
 &=-\left(\rho u_s\left(\rho,\rho\right)^2
              +\frac{m^2}{\rho}u\left(\rho,\rho\right)^2\right)
   +2\int_\rho^1\left(su_s\dot u_s+\frac{m^2}{s}u\dot u\right)\dd s,
 \\
 M'\left(\rho\right)
 &=-\rho u\left(\rho,\rho\right)^2
   +2\int_\rho^1su\dot u\dd s.
\end{align*}
Here $u_s\left(\rho,\rho\right)=0$. The cross term in $E'$ is
computed by integration by parts:
\begin{align*}
 \int_\rho^1\left(su_s\dot u_s+\frac{m^2}{s}u\dot u\right)\dd s
 &=\left[su_s\dot u\right]_\rho^1
   +\int_\rho^1\left(-\left(su_s\right)_s+\frac{m^2}{s}u\right)\dot u\dd s
 \\
 &=\beta\int_\rho^1su\dot u\dd s.
\end{align*}
The boundary term vanishes because the original eigenfunction has
$u_s=0$ at both endpoints. No boundary condition on $\dot u$ is
needed: it is an admissible test function for the Neumann weak
equation. In fact, differentiating the moving boundary condition
$u_s\left(\rho,\rho\right)=0$ gives
$\dot u_s\left(\rho,\rho\right)+u_{ss}\left(\rho,\rho\right)=0$,
so $\dot u$ generally does not satisfy a homogeneous Neumann
condition at the inner endpoint.

Differentiate $E=\beta M$ and substitute the preceding formulas.
The two integrals containing $u\dot u$ cancel exactly, leaving
\[
 \beta_m'\left(\rho\right)M\left(\rho\right)
 =E'\left(\rho\right)-\beta M'\left(\rho\right)
 =\rho\left(\beta-\frac{m^2}{\rho^2}\right)
       u\left(\rho,\rho\right)^2.
\]
Therefore
\[
 \beta_m'\left(\rho\right)
 =\frac{\rho\left(\beta_m\left(\rho\right)-m^2/\rho^2\right)
             u\left(\rho,\rho\right)^2}
        {\displaystyle\int_\rho^1su\left(s,\rho\right)^2\dd s}.
\]

To determine its sign, use the constant trial function $v=1$.
Its energy is $m^2\log\left(1/\rho\right)$ and its mass is
$\left(1-\rho^2\right)/2$. Hence
\[
 \beta_m\left(\rho\right)
 \le\frac{2m^2\log\left(1/\rho\right)}{1-\rho^2}
 =\frac{m^2\log\left(\rho^{-2}\right)}{1-\rho^2}
 <\frac{m^2\left(\rho^{-2}-1\right)}{1-\rho^2}
 =\frac{m^2}{\rho^2}.
\]
The strict inequality uses $\log z<z-1$ for $z>1$, with
$z=\rho^{-2}$. Since $\rho>0$, $u\left(\rho,\rho\right)>0$, and
$M\left(\rho\right)>0$, the derivative formula now gives
$\beta_m'\left(\rho\right)<0$ at every $0<\rho<1$.
Thus $\beta_m$ is strictly decreasing on the whole interval.
The assumption $m\ge1$ is essential for strictness; in order zero,
the constant eigenfunction gives $\beta_0\equiv0$.

For the explicit constants use
\[
 f_m\left(s\right)=s^m\left(1-a_ms^2\right),\,\,\,
 \left(a_1,a_2,a_3,a_4\right)=\left(1/3,1/2,2/3,2/3\right).
\]
The factor $s^m$ ensures regularity at the disk centre. The quadratic
correction reduces the energy of the pure monomial. These rational coefficients give bounds sufficient for
the first-eight-eigenvalue estimate below. Direct integration on the
unit disk, with the common angular factor suppressed, gives
\[
 K_m=\int_0^1\left(f_m'^2+m^2s^{-2}f_m^2\right)s\dd s,
 \,\,\, M_m=\int_0^1f_m^2s\dd s,
\]
Since
$f_m'\left(s\right)=ms^{m-1}-a_m\left(m+2\right)s^{m+1}$, expansion and integration
of the monomials give
\[
\begin{aligned}
 K_m
 &=\int_0^1\left(2m^2s^{2m-1}
     -4a_mm\left(m+1\right)s^{2m+1}
     +a_m^2\left(m^2+\left(m+2\right)^2\right)s^{2m+3}\right)\dd s\\
 &=m-2a_mm+\frac{a_m^2\left(m^2+\left(m+2\right)^2\right)}{2m+4}
   =m\left(1-a_m\right)^2+\frac{2a_m^2}{m+2},\\
 M_m
 &=\int_0^1\left(s^{2m+1}-2a_ms^{2m+3}+a_m^2s^{2m+5}\right)\dd s\\
 &=\frac1{2m+2}-\frac{a_m}{m+2}+\frac{a_m^2}{2m+6}.
\end{aligned}
\]
Substituting the four rational coefficients gives the exact values
\[
 \begin{array}{c|c|c|c|c}
 m&a_m&K_m&M_m&K_m/M_m\\\hline
 1&1/3&14/27&11/72&112/33\\
 2&1/2&5/8&1/15&75/8\\
 3&2/3&23/45&31/1080&552/31\\
 4&2/3&16/27&13/630&1120/39
 \end{array}
\]
Fix $\rho>0$. Restricting $f_m$ to $\left[\varepsilon,1\right]$
gives a trial quotient tending to $K_m/M_m$ as
$\varepsilon\downarrow0$. For $0<\varepsilon<\rho/2$, monotonicity
and the variational bound give
$\beta_m\left(\rho/2\right)\le\beta_m\left(\varepsilon\right)
\le E_\varepsilon\left(f_m\right)/M_\varepsilon\left(f_m\right)$,
where $E_\varepsilon$ and $M_\varepsilon$ are the energy and mass
integrals over $\left[\varepsilon,1\right]$. Taking the limit and then
using strict monotonicity yields
\[
 \beta_m\left(\rho\right)<\beta_m\left(\rho/2\right)\le K_m/M_m.
\]
\end{proof}

\begin{proposition}\label{prop:analyticfirsteight}
For every $0<r<R$, with $D=R^2-r^2$ and $\nu_0=0$,
\[
 \nu_k\left(A_{r,R}\right)<\frac{4k}{D}\,\,\,\left(1\le k\le8\right).
\]
In particular, the counting assertion needed here is
\[
 N_A\left(\Lambda\right)>\frac{D\Lambda}{4}
 \,\,\,\text{if }0\le D\Lambda\le32.
\]
\end{proposition}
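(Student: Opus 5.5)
Normalize the outer radius to one, so $D=1-\rho^2<1$; it then suffices to show $\nu_k<4k/(1-\rho^2)$ for $1\le k\le8$. Since $4k/D\ge4k$, any radius-independent bound $\nu_k<4k$ already suffices. I will build $k+1$ independent trial modes (the constant counted) whose Rayleigh quotients lie strictly below the threshold, and then apply the min--max principle.

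The first step uses Lemma~\ref{lem:angularfirstmonotone}. The angular ground levels give $\beta_1<112/33<4$, $\beta_2<75/8<12$, $\beta_3<552/31<20$ and $\beta_4<1120/39<28$. Each positive order contributes a cosine--sine pair. Together with the constant mode, this gives at least $3$ eigenvalues below $4$, $5$ below $75/8$, $7$ below $552/31$, and $9$ below $1120/39$. Hence $\nu_1,\nu_2<4\le4/D$ and $\nu_2<8$. Further, $\nu_3,\nu_4\le75/8<12$, $\nu_5,\nu_6\le552/31<20$, and $\nu_8\le1120/39<32$, each strictly and uniformly in $\rho$.

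The only index not covered by angular ground levels is $k=7$. Here I need a further mode below $28/D$, or else the bound $D\nu_7<28$ directly. I will split the argument at a radius $\rho_0$; for definiteness I take $\rho_0=1/4$.

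For $\rho\ge1/4$, I use the order-four ground pair: $D\nu_7\le D\nu_8<\tfrac{15}{16}\cdot\tfrac{1120}{39}=350/13<28$.

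For $\rho\le1/4$, I add a zero-order radial trial. I take the disk polynomial $p(s)=1-3s^2+\tfrac32s^4$, which satisfies $p'(1)=0$, has weighted disk mean zero, disk mass $1/10$ and disk energy $3/2$. I subtract its annular weighted mean so that it is orthogonal to the constant. The separation of variables makes it orthogonal to all angular sectors. Since $|p|\le1$, its annular mass is at least $1/10-\rho^2/(2D)\ge1/15$ for $\rho\le1/4$, while its energy is strictly less than $3/2$. Its quotient is therefore below $45/2<28$. Adding it to the constant and the orders-one to three ground pairs gives $8$ orthogonal directions, all with quotients $<45/2$, so $\nu_7<45/2<28$.

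Strictness of each bound follows from the strict trial comparisons, because no trial function is an exact eigenfunction, or because it rests on the strict monotonicity in Lemma~\ref{lem:angularfirstmonotone}. The counting statement then follows by inversion. Suppose $D\Lambda\le32$ and $n=N_A(\Lambda)\le8$. Then $\Lambda<\nu_n<4n/D$, so $n>D\Lambda/4$. If instead $n\ge9$, then $n>8\ge D\Lambda/4$.

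The main obstacle is the seventh index for small holes. The explicit mass lower bound for the mean-corrected polynomial must stay above $1/15$ uniformly in $\rho\le1/4$, and the threshold split must be chosen so that both halves close.
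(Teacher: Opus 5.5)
Your proof is correct and follows the paper's argument. Both use the angular ground levels from Lemma~\ref{lem:angularfirstmonotone} for every $k\ne7$, and for $k=7$ a radius split: the order-four pair handles larger holes, and the mean-corrected radial polynomial $1-3s^2+\tfrac32 s^4$ handles small holes. The only difference is the split point. The paper splits at $\rho=1/5$, obtaining mass $\ge19/240$, quotient $<360/19$, and $\tfrac{24}{25}\cdot\tfrac{1120}{39}<28$. Your split at $\rho=1/4$ instead gives the constants $45/2$ and $350/13$, which the paper itself uses later in Proposition~\ref{prop:firsteightpositive}.
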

\begin{proof}
Scale to $R=1$, put $\rho=r$ and $D=1-\rho^2$.
Each nonzero angular order supplies two real modes, and distinct
orders are orthogonal for both the energy and mass forms.
Lemma~\ref{lem:angularfirstmonotone}, together with the zero mode, gives
\[
 \nu_2<\frac{112}{33}<4,\,\,\,
 \nu_4<\frac{75}{8}<12,\,\,\,
 \nu_6<\frac{552}{31}<20,\,\,\,
 \nu_8<\frac{1120}{39}<32.
\]
Since $D<1$, these prove the required bounds for $k=1,2,3,4,5,6,8$.

Only the seventh index needs an additional argument: the fourth
angular estimate alone is slightly larger than its Weyl threshold $28$.
The split at $\rho=1/5$ makes both the area reduction and
the radial trial-function estimate below sufficient. First suppose $\rho\ge1/5$. The fourth angular order gives
\[
 D\nu_7<D\frac{1120}{39}
 \le\frac{24}{25}\frac{1120}{39}<28.
\]
If $0<\rho\le1/5$, use the radial polynomial
$p\left(s\right)=1-3s^2+(3/2)s^4$, subtracting its weighted mean on
$\left[\rho,1\right]$. Its full-disk weighted mean is zero, its mass is $1/10$,
and its energy is $3/2$. To quantify the mean subtraction, put
\[
 W_\rho=\int_\rho^1s\,\dd s=\frac{1-\rho^2}{2},\,\,\,
 b_\rho=\int_\rho^1p\left(s\right)s\,\dd s
        =-\int_0^\rho p\left(s\right)s\,\dd s.
\]
The mean-zero trial function is $p-b_\rho/W_\rho$, with squared
weighted norm $\int_\rho^1p^2s\,\dd s-b_\rho^2/W_\rho$.
Since $\left|p\right|\le1$ on $\left[0,\rho\right]$, the lost disk mass is at most
$\rho^2/2$, and $\left|b_\rho\right|\le\rho^2/2$. Its norm is therefore
at least
\[
 \frac1{10}-\frac{\rho^2}{2\left(1-\rho^2\right)}
 \ge\frac1{10}-\frac1{48}=\frac{19}{240}.
\]
Its Rayleigh quotient is therefore at most $360/19<20$.
Together with the constant and the first three nonzero angular
orders this gives an eight-dimensional trial space whose maximal
quotient is less than $20$. Hence $D\nu_7<20<28$ also in this case.

For the counting statement set $q=D\Lambda$. If $0\le q<4$, the
zero mode suffices. Otherwise $k=\left\lfloor q/4\right\rfloor$ lies between
one and eight, and $\nu_k<4k/D\le\Lambda$. Thus
$N_A\left(\Lambda\right)\ge k+1>q/4$, including $q=32$.
\end{proof}

The trial functions above give the specific quantitative bounds needed
in the small-hole argument. Their use is an application of the standard
min--max principle \cite{Davies}.

\section{A disk counting surplus}
\label{sec:diskanalyt}

\indent Let $B=\left\{x\in\mathbb R^2:\left|x\right|<1\right\}$ be the unit disk and let
$\mathcal N_B\left(X\right)$ denote its Neumann count, including zero, at energy
$X^2$. We need a surplus of one mode over the Weyl term.
The following bound supplies the extra disk mode needed at the small-hole
endpoint. Its strict-count form also provides the small-hole endpoint
estimate in Proposition~\ref{prop:fixedindexpositive}.
The proof combines a disk remainder estimate with
94 explicit integer comparisons arranged in 22 rows below. The phase-counting input
and the adjustable high-frequency remainder are established results
of \cite{FLPSballs,GMWZ}.

The lower endpoint $X^2=32=4\cdot8$ connects the disk surplus with Proposition~\ref{prop:analyticfirsteight}. With the outer radius normalized to one and $\rho=r/R$, the inequality $X^2\le32$ implies $\left(1-\rho^2\right)X^2\le32$, so that proposition already covers the lower-energy part of the small-hole argument. The present estimate starts at an endpoint shared with that bound.
\begin{lemma}\label{lem:diskanalyt_surplus}
For every $X\ge\sqrt{32}$,
\begin{equation}\label{eq:diskanalyt_surplus}
 \mathcal N_B\left(X\right)-1\ge\frac{X^2}{4}.
\end{equation}
\end{lemma}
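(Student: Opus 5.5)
The plan is to split the range $X\ge\sqrt{32}$ into a high-frequency part, handled by the adjustable disk remainder already used in Lemma~\ref{lem:sixteenth-disksurplus}, and a finite bridge $\sqrt{32}\le X\le27$. The bridge is to be handled by explicit integer comparisons of exactly the type used in the proof of Lemma~\ref{lem:tenth-diskmargins}.

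\textbf{High frequencies.} For $X\ge27$, Lemma~\ref{lem:sixteenth-disksurplus} gives
\[
 \mathcal N_B(X)-\frac{X^2}{4}>\frac{27}{250}X-\frac34+\frac{63}{880}.
\]
At $X=27$ the right-hand side equals $\frac{729}{250}-\frac34+\frac{63}{880}$, which exceeds $2$, and it increases with $X$. This already gives \eqref{eq:diskanalyt_surplus} on $X\ge27$, with room to spare. If a lower switching point is convenient, I would instead use the other choice of $\sigma$ in the same GMWZ remainder, or the function $F$ from the proof of Lemma~\ref{lem:tenth-diskmargins}. Either way, the bridge must reach only some explicit $X_1\le27$.

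\textbf{The bridge $\sqrt{32}\le X\le X_1$.} I would use the disk phase comparison: every order $m<X$ contributes at least $\lfloor G_X(m)+3/4\rfloor$ radial modes, counted with angular multiplicity $1$ for $m=0$ and $2$ for $m\ge1$. Since $G_X(m)$ increases in $X$ and decreases in $m$, one checks at a grid energy $X=2\sqrt k$ a vector $(M_1,\dots,M_s)$, where layer $n$ is guaranteed through order $M_n$. Each check reduces to a single integer inequality
\[
 D(k,M_n,n;a,b)=7bP(k,M_n)-1056ka(4n+2M_n-3)>0,
\]
with $P(k,m)=384k^2+48km^2+m^4$, a rational bound $a/b\ge\sqrt k$, and the square-root check $a^2-b^2k\ge0$. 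This comes from the minorant $H(t)\ge1+t^2/2+t^4/24$. A row with total $c=\sum_n(2M_n+1)$ then gives $\mathcal N_B(X)\ge c$ for all $X\ge2\sqrt k$. Hence $\mathcal N_B(X)-1\ge X^2/4$ holds on $4k\le X^2\le4(c-1)$. The rows are chained with the next $k=c-1$, starting from $k=8$ and continuing until $4(c-1)\ge X_1^2$. The final constraint is roughly $k\approx182$ for $X_1=27$, or fewer rows if $X_1$ is lowered. Each row gains only about $c-k\approx\sqrt k$, so the number of rows stays of moderate size.

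\textbf{Main obstacle.} The hard part is the lowest rows near $X^2=32$. There the phase count $\lfloor G_X(m)+3/4\rfloor$ is crude, so it may fail to produce $c\ge k+1$ at $k=8$ or at the next few values. For those rows I would supplement the phase count with explicit variational trial functions on the disk. These are the angular ground trials $s^m(1-a_ms^2)$ and $s^m(1-\frac{m}{m+2p}s^2)^p$ from Lemma~\ref{lem:angularfirstmonotone} and Proposition~\ref{prop:analyticlowfrequency}, together with radial polynomials such as $p(s)=1-3s^2+\tfrac32s^4$ for higher radial levels. Their exact rational Rayleigh quotients, assembled orthogonally across angular orders, supply the missing modes. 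This produces a starred disk row whose extra modes have quotients strictly below the threshold.

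Finally, I would observe that every comparison used is a strict phase or variational inequality. The same argument therefore gives the strict-count version needed in Proposition~\ref{prop:strictspectral} and Proposition~\ref{prop:fixedindexpositive}.
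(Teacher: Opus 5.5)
Your plan is correct and is essentially the paper's proof. High frequencies come from the GMWZ adjustable remainder; the paper takes $\sigma=1/2$, valid from $X=20$, so its bridge stops at $k=99$ after 22 rows instead of running to $X=27$. The bridge itself uses the FLPS floors $\lfloor G_X(m)+3/4\rfloor$, the quartic minorant of $H$, integer sign tests (the paper squares instead of using a rational bound $a/b\ge\sqrt k$), and the chaining $k_{\mathrm{next}}=c-1$. The lattice count stalls only at $k=11$, where it gives $c=12$. There the paper supplies the two missing modes exactly as you anticipate: a ground trial $r^5(1-3r^2/4)$ in the unused angular order five, with quotient $2544/61<44$.
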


\begin{proof}
Use the disk action $G_X$ from \eqref{eq:diskaction}.
The sectorwise derivative-zero bound is
\cite[Proposition~3.1, equation~(3.3)]{FLPSballs}.
Here $j'_{m,k}$ is the $k$th derivative zero in angular order $m$,
with the convention $j'_{0,1}=0$ of Section~2 of that paper.
For $0\le m\le X$ it gives
\[
 \#\left\{k\ge1:j'_{m,k}\le X\right\}
 \ge\left\lfloor G_X\left(m\right)+\frac34\right\rfloor.
\]
In that paper the frequency variable is $\lambda=X$, the spectral
threshold is $\lambda^2$, and the multiplicity weights are
$\kappa_{2,0}=1$ and $\kappa_{2,m}=2$ for $m\ge1$.
Thus its $\mathcal N_{\mathbb D}^{\mathrm N}\left(\lambda\right)$ equals
our $\mathcal N_B\left(X\right)$. Summing the sector inequalities gives
\cite[Theorem~2.3 and equations~(2.8)--(2.9)]{FLPSballs}:
\begin{equation*}
 \mathcal N_B\left(X\right)\ge\mathcal P^{\mathrm N}_2\left(X\right):=
 \left\lfloor G_X\left(0\right)+\frac34\right\rfloor
 +2\sum_{m=1}^{\left\lfloor X\right\rfloor}
       \left\lfloor G_X\left(m\right)+\frac34\right\rfloor.
\end{equation*}
The $3/4$ shift and these weights are exactly the Neumann
lattice convention of the cited paper. The spectral count includes
zero, but the lattice lower bound need not count it at very small
frequency: at $X=0$ the two sides are one and zero, respectively.
No additional zero mode is added to the displayed lower bound.
The multiplicity is two for each positive angular order and one
for order zero. We first prove the desired surplus for $X\ge20$,
then give an explicit finite bridge to $X=\sqrt{32}$.

\paragraph{The analytic range $X\ge20$.}
For $X\ge2$, \cite[Theorem~2.7, equations~(2.3)--(2.4), and
Section~2.2]{GMWZ} imply
\[
 \mathcal N_B\left(X\right)\ge\frac{X^2}{4}+\mathcal R\left(X\right),
\]
where, whenever $0<\sigma<\pi/2$ and
\begin{equation}\label{eq:diskanalyt_sigma}
 X\ge\frac{\pi}{4\left(\sin\sigma-\sigma\cos\sigma\right)},
\end{equation}
one has
\begin{equation*}
 \mathcal R\left(X\right)>
 \left(A\cos\sigma-C-\frac1\pi\right)X
 -\frac34+\frac{\cos\sigma}{4\pi},
\end{equation*}
with
\[
 A=\frac{24\pi-7\sqrt2}{8\left(4\pi-\sqrt2\right)},\,\,\,
 C=\frac{8\pi-3\sqrt2}{8\left(4\pi-\sqrt2\right)}.
\]
Notice that $A=C+1/2$ and $0<C<1/4$.
Choose $\sigma=1/2$. This value balances a positive linear remainder
with a simple Taylor bound on its admissible frequency threshold. This choice suffices to start the analytic range at $X=20$. Since
\[
 \sin\left(1/2\right)-\tfrac12\cos\left(1/2\right)
 =\int_0^{1/2}t\sin t\,\dd t
 \ge\int_0^{1/2}\left(t^2-t^4/6\right)\,\dd t=\frac{13}{320},
\]
the bound $\pi<22/7$ implies that the threshold in
\eqref{eq:diskanalyt_sigma} is less than $1760/91<20$.
Moreover, $\cos\left(1/2\right)\ge7/8$ and $\pi>25/8$ give
\begin{align*}
 A\cos\left(1/2\right)-C-\frac1\pi
 &=\frac12\cos\left(1/2\right)-C\left(1-\cos\left(1/2\right)\right)-\frac1\pi\\
 &>\frac7{16}-\frac1{32}-\frac8{25}=\frac{69}{800}.
\end{align*}
Also $\cos\left(1/2\right)/\left(4\pi\right)>49/704$. Thus, for $X\ge20$,
\begin{equation}\label{eq:diskanalyt_high}
 \mathcal R\left(X\right)>
 \frac{69}{800}X-\frac34+\frac{49}{704}
 \ge\frac{3677}{3520}>1.
\end{equation}
This proves \eqref{eq:diskanalyt_surplus}, strictly, on that range.

\paragraph{A polynomial lower bound for the action.}
Set $H\left(t\right)=\sqrt{1-t^2}+t\arcsin t$ for $0\le t\le1$.
Differentiation cancels the two terms $t/\sqrt{1-t^2}$ and gives
$H'\left(t\right)=\arcsin t$. For $0\le u<1$, convexity of
$v\mapsto\left(1-v\right)^{-1/2}$ gives $\left(1-u^2\right)^{-1/2}\ge1+u^2/2$.
Hence
\[
 H'\left(t\right)=\int_0^t\frac{\dd u}{\sqrt{1-u^2}}
 \ge t+\frac{t^3}{6}.
\]
Since $H\left(0\right)=1$, integration from zero to $t$ proves
\begin{equation*}
 H\left(t\right)\ge1+\frac{t^2}{2}+\frac{t^4}{24}.
\end{equation*}
The constant bound follows from the positive integral
\[
 \frac{22}{7}-\pi
 =\int_0^1\frac{t^4\left(1-t\right)^4}{1+t^2}\dd t>0.
\]
\paragraph{The finite bridge.}
The finite bridge is constructed in layers: a row with lower count $c$
covers the energy interval $\left[4k,4\left(c-1\right)\right]$, so choosing the next $k=c-1$
ensures closed overlap. The angular endpoints are chosen to make the polynomial
sign tests positive while keeping the table short.
In the following table, $\left(M_1,\ldots,M_s\right)$ lists the largest
angular order retained at floor level $n=1,\ldots,s$. The next
column is a vector of strict rational lower bounds
$\ell_{k,n}=d_{k,n}/1000$ for $G_{2\sqrt{k}}\left(M_n\right)+3/4$.
Thus the $n$th entry is strictly greater than $n$. These decimals
denote exact fractions, not rounded values of the action.

The identity $G_X\left(m\right)=XH\left(m/X\right)/\pi-m/2$
converts the polynomial lower bound into action estimates.
To check each bound, put $m=M_n$ and
$P=384k^2+48km^2+m^4$. The bound for $H$ above and $\pi<22/7$ give
\[
 G_{2\sqrt{k}}\left(m\right)+\frac34
 >\frac{7P}{4224k\sqrt{k}}-\frac m2+\frac34.
\]
For $d=d_{k,n}$ the right side exceeds $d/1000$ exactly when
$7000P>4224k\sqrt{k}\left(d-750+500m\right)$. Both sides are positive,
so squaring reduces this to the explicit integer test
\[
 T\left(k,m,d\right):=\left(7000P\right)^2-k\left(4224k\left(d-750+500m\right)\right)^2>0.
\]
All $94$ tests for the $22$ rows have positive integer values.
For example, $k=8$, $m=4$, $d=1018$ give $P=30976$ and
\[
 T\left(8,4,1018\right)=216832000^2-8\cdot76640256^2
            =26285506035712>0.
\]
This proves $G_{2\sqrt8}\left(4\right)+3/4>1.018>1$.
Among the $94$ integer tests the smallest value is
$T\left(8,1,2078\right)=3506047446592>0$; every other row can be checked
by the same displayed integer formula.
Because $G_X$ decreases in its angular argument, the $n$th
comparison supplies $2M_n+1$ modes at floor level $n$.
Summing these layers gives the count $c$, except for the starred
row, whose two additional modes are justified below.
\begin{center}
\small\setlength{\tabcolsep}{4pt}
\begin{longtable}{r|l|l|r}
\hline
$k$ & $(M_1,\ldots,M_s)$ & $(\ell_{k,1},\ldots,\ell_{k,s})$ & $c$\\
\hline
\endfirsthead

\hline
$k$ & $(M_1,\ldots,M_s)$ & $(\ell_{k,1},\ldots,\ell_{k,s})$ & $c$\\
\hline
\endhead

\hline
\endfoot

8&$4,1$&$1.018,2.078$&12\\
11&$4,1$&$1.255,2.384$&$14^{*}$\\
13&$5,2,0$&$1.118,2.133,3.044$&17\\
16&$6,2,0$&$1.044,2.375,3.295$&19\\
18&$6,3,0$&$1.152,2.120,3.449$&21\\
20&$7,3,1$&$1.011,2.257,3.113$&25\\
24&$7,4,1$&$1.196,2.130,3.383$&27\\
26&$8,4,2$&$1.044,2.247,3.057$&31\\
30&$8,5,2,0$&$1.206,2.104,3.293,4.235$&34\\
33&$9,5,3,0$&$1.084,2.257,3.030,4.405$&38\\
37&$10,6,3,1$&$1.002,2.101,3.239,4.133$&44\\
43&$10,7,4,1$&$1.194,2.031,3.118,4.435$&48\\
47&$11,7,4,2,0$&$1.091,2.193,3.299,4.159,5.112$&53\\
52&$12,8,5,2,0$&$1.018,2.062,3.117,4.383,5.338$&59\\
58&$12,8,5,3,1$&$1.178,2.280,3.359,4.190,5.106$&63\\
62&$13,9,6,3,1$&$1.064,2.101,3.128,4.351,5.270$&69\\
68&$14,9,6,4,2$&$1.001,2.298,3.348,4.152,5.036$&75\\
74&$14,10,7,4,2,0$&$1.136,2.174,3.183,4.372,5.261,6.224$&80\\
79&$15,11,7,5,2,0$&$1.039,2.023,3.350,4.131,5.441,6.406$&86\\
85&$15,11,8,5,3,1$&$1.165,2.191,3.177,4.334,5.194,6.125$&92\\
91&$16,12,9,6,3,1$&$1.080,2.060,3.008,4.123,5.395,6.328$&100\\
99&$17,12,9,6,4,2,0$&$1.032,2.267,3.240,4.371,5.210,6.113,7.081$&107\\
\hline
\end{longtable}
\end{center}

The extra modes in the starred row are needed to advance the finite bridge. Its lattice layers alone give $c=12$ at $X^2=44$, which covers only the interval $\left[44,44\right]$ and does not reach the next row at $X^2=52$. Raising the count to $14$ extends the covered interval to $\left[44,52\right]$, preserving the closed chain of overlaps. This is the purpose of the following variational comparison.

For the starred row $k=11$, the phase layers supply 12 modes
and use no mode of angular order five. In that sector, take
\[
 f\left(r\right)=r^5\left(1-3r^2/4\right).
\]
Suppressing the common angular factor, direct integration gives
\[
 \int_0^1\left(f'\left(r\right)^2+25r^{-2}f\left(r\right)^2\right)r\,\dd r
 =\frac{53}{112},\,\,\,
 \int_0^1f\left(r\right)^2r\,\dd r=\frac{61}{5376}.
\]
The quotient is $2544/61<44$. The cosine and sine angular
factors yield two independent modes, orthogonal for both forms
to the sectors used in the phase lower count. Hence
$\mathcal N_B\left(\sqrt{44}\right)\ge14$, as claimed.

Each row yields $\mathcal N_B\left(X\right)\ge c$ throughout
$4k\le X^2\le4\left(c-1\right)$, by monotonicity in the energy. Therefore
$\mathcal N_B\left(X\right)-1\ge X^2/4$ on that closed interval. Each next value of
$k$ is exactly the preceding $c-1$, so these intervals cover
$32\le X^2\le424$. This overlaps the range $X\ge20$ proved in
\eqref{eq:diskanalyt_high}, completing the proof.
\end{proof}

The surplus in \eqref{eq:diskanalyt_surplus}, rather than the disk
P\'olya inequality alone, is what allows one to subtract the zero mode
of the removed disk in the small-hole proof.

\section{Complete angular-layer tables}\label{sec:analytic-table-data}

The tables collect the rational data used in the proof of
Theorem~\ref{thm:allannuli}. Their verification follows the formulas
and worked examples in Section~\ref{sec:allproof}. In each count
table, a layer ending at angular order $p_j$ contributes $2p_j+1$
eigenfunctions: one in order zero and two in every positive order.
The endpoint profiles and error bounds are listed separately.
\begin{center}\scriptsize\setlength{\tabcolsep}{3pt}
\begin{longtable}{r|p{4.55in}|r}
\caption{Integer margins for the disk-action comparisons in Lemma~\ref{lem:tenth-diskmargins}.}\label{tab:smallhole-actions}\\
$k$&$D(k,M_n,n;a,b)$&$a^2-b^2k$\\\hline
\endfirsthead
\multicolumn{3}{l}{\textit{Table \thetable\ (continued)}}\\
$k$&$D(k,M_n,n;a,b)$&$a^2-b^2k$\\\hline
\endhead

$81$&$(24939900,\allowbreak 24634300,\allowbreak 16086400,\allowbreak 61479100,\allowbreak 17552700,\allowbreak 146966400)$&$0$\\
$87$&$(114592,\allowbreak 84155548,\allowbreak 81584800,\allowbreak 136583452,\allowbreak 89431452,\allowbreak 65989852)$&$489$\\
$91$&$(29259328,\allowbreak 22150464,\allowbreak 2950332,\allowbreak 45024000,\allowbreak 144969468,\allowbreak 120452668)$&$116$\\
$97$&$(79817440,\allowbreak 87009120,\allowbreak 73548060,\allowbreak 124989600,\allowbreak 59077600,\allowbreak 19799200,\allowbreak 6751200)$&$225$\\
$102$&$(37901500,\allowbreak 23321980,\allowbreak 141484860,\allowbreak 28902460,\allowbreak 131881600,\allowbreak 90587200,\allowbreak 76867200)$&$100$\\
$108$&$(91123900,\allowbreak 91635580,\allowbreak 65475520,\allowbreak 112307260,\allowbreak 23972860,\allowbreak 184561600,\allowbreak 170035200)$&$1600$\\
$112$&$(30335424,\allowbreak 3054016,\allowbreak 122895808,\allowbreak 176153404,\allowbreak 84593404,\allowbreak 24001404,\allowbreak 240576000)$&$1481$\\
$118$&$(89240448,\allowbreak 76949632,\allowbreak 33844924,\allowbreak 71367616,\allowbreak 185208508,\allowbreak 121390908,\allowbreak 89616508)$&$1569$\\
\end{longtable}\end{center}

\begin{center}\scriptsize
\begin{longtable}{r|rc|l|r}
\caption{Radial layers and their multiplicity counts for Proposition~\ref{prop:analyticleftbridge}.}\label{tab:left-counts}\\
$Q_0$&$g$&\text{trial}&$(p_2,p_3,\ldots)$&$C$\\\hline
\endfirsthead
\multicolumn{5}{l}{\textit{Table \thetable\ (continued)}}\\
$Q_0$&$g$&\text{trial}&$(p_2,p_3,\ldots)$&$C$\\\hline
\endhead

\multicolumn{5}{c}{$[a,b]=[1/10,3/20]$}\\
100&8&B&4,1&29\\
116&8&B&5,2&33\\
132&9&B&5,3,0&38\\
152&10&B&6,3,1&44\\
176&11&B&7,4,2&52\\
208&12&B&8,5,2&58\\
232&13&B&8,5,3,0&63\\
\end{longtable}\end{center}

\begin{center}\scriptsize
\setlength{\tabcolsep}{3pt}\begin{longtable}{r|rr|p{1.65in}|p{2.5in}}
\caption{Endpoint action profiles and phase-error bounds for Table~\ref{tab:left-counts}.}\label{tab:left-checks}\\
$Q_0$&$1000L_a$&$1000L_b$&$\text{profiles}$&$(\alpha_j,\allowbreak s_j)$\\
\hline
\endfirsthead
\multicolumn{5}{l}{\textit{Table \thetable\ (continued)}}\\
$Q_0$&$1000L_a$&$1000L_b$&$\text{profiles}$&$(\alpha_j,\allowbreak s_j)$\\
\hline
\endhead

\multicolumn{5}{c}{$[a,b]=[1/10,3/20]$}\\
$100$&$10050$&$10114$&$E/E,\allowbreak P/P$&$*,\allowbreak *$\\
$116$&$10824$&$10893$&$D/D,\allowbreak F/F$&$*,\allowbreak *$\\
$132$&$11547$&$11620$&$E/E,\allowbreak F/F,\allowbreak P/P$&$*,\allowbreak *,\allowbreak (13/125,\allowbreak 577/500)$\\
$152$&$12390$&$12469$&$D/D,\allowbreak F/F,\allowbreak P/P$&$*,\allowbreak *,\allowbreak *$\\
$176$&$13333$&$13418$&$D/D,\allowbreak E/E,\allowbreak F/P$&$*,\allowbreak *,\allowbreak *$\\
$208$&$14494$&$14587$&$C/C,\allowbreak E/E,\allowbreak F/P$&$*,\allowbreak *,\allowbreak *$\\
$232$&$15308$&$15405$&$D/D,\allowbreak E/E,\allowbreak F/F,\allowbreak P/P$&$*,\allowbreak *,\allowbreak *,\allowbreak (79/1000,\allowbreak 153/100)$\\
\end{longtable}\end{center}

\begin{center}\scriptsize\setlength{\tabcolsep}{3pt}\begin{longtable}{r|r|p{1.35in}|r|rr|c}
\caption{Layers on $[1/5,3/10]$ in Proposition~\ref{prop:analyticmiddlebridge}.}\label{tab:middle-a-counts}\\
$Q_0$&$p_1$&$(p_2,\allowbreak \ldots,\allowbreak p_k)$&$C$&$1000L_-$&$1000L_+$&$p_1\text{ proof}$\\
\hline
\endfirsthead
\multicolumn{7}{l}{\textit{Table \thetable\ (continued)}}\\
$Q_0$&$p_1$&$(p_2,\allowbreak \ldots,\allowbreak p_k)$&$C$&$1000L_-$&$1000L_+$&$p_1\text{ proof}$\\
\hline
\endhead

$100$&$8$&$(4,\allowbreak 1)$&$29$&$10206$&$10482$&$B$\\
$116$&$9$&$(5,\allowbreak 2)$&$35$&$10992$&$11290$&$B$\\
$140$&$10$&$(6,\allowbreak 3)$&$41$&$12076$&$12403$&$B$\\
$164$&$11$&$(7,\allowbreak 4)$&$47$&$13070$&$13424$&$B$\\
$188$&$11$&$(7,\allowbreak 4,\allowbreak 1)$&$50$&$13994$&$14373$&$B$\\
$200$&$12$&$(8,\allowbreak 5,\allowbreak 2)$&$58$&$14433$&$14824$&$B$\\
$232$&$13$&$(9,\allowbreak 6,\allowbreak 3)$&$66$&$15545$&$15966$&$B$\\
$264$&$14$&$(9,\allowbreak 6,\allowbreak 4)$&$70$&$16583$&$17032$&$B$\\
\end{longtable}\end{center}

\begin{center}\scriptsize\setlength{\tabcolsep}{3pt}\begin{longtable}{r|p{2.15in}|p{2.6in}}
\caption{Endpoint profiles and raw-error bounds for Table~\ref{tab:middle-a-counts}.}\label{tab:middle-a-checks}\\
$Q_0$&$\text{profiles}$&$\text{raw errors }(j,\allowbreak e,\allowbreak s)$\\
\hline
\endfirsthead
\multicolumn{3}{l}{\textit{Table \thetable\ (continued)}}\\
$Q_0$&$\text{profiles}$&$\text{raw errors }(j,\allowbreak e,\allowbreak s)$\\
\hline
\endhead

$100$&$(8,\allowbreak 8),\allowbreak  (P,\allowbreak P)$&$(3,\allowbreak 84,\allowbreak 1779)$\\
$116$&$(3,\allowbreak 3),\allowbreak  (P,\allowbreak P)$&$-$\\
$140$&$(3,\allowbreak 3),\allowbreak  (7,\allowbreak P)$&$-$\\
$164$&$(5,\allowbreak 3),\allowbreak  (8,\allowbreak P)$&$-$\\
$188$&$(3,\allowbreak 3),\allowbreak  (7,\allowbreak P),\allowbreak  (P,\allowbreak P)$&$(4,\allowbreak 51,\allowbreak 2614)$\\
$200$&$(5,\allowbreak 5),\allowbreak  (8,\allowbreak 8),\allowbreak  (P,\allowbreak P)$&$(4,\allowbreak 99,\allowbreak 2081)$\\
$232$&$(5,\allowbreak 5),\allowbreak  (8,\allowbreak 8),\allowbreak  (P,\allowbreak P)$&$-$\\
$264$&$(5,\allowbreak 3),\allowbreak  (8,\allowbreak 8),\allowbreak  (7,\allowbreak P)$&$-$\\
\end{longtable}\end{center}

\begin{center}\scriptsize\setlength{\tabcolsep}{3pt}\begin{longtable}{r|r|p{1.35in}|r|rr|c}
\caption{Layers on $[3/10,2/5]$ in Proposition~\ref{prop:analyticmiddlebridge}.}\label{tab:middle-b-counts}\\
$Q_0$&$p_1$&$(p_2,\allowbreak \ldots,\allowbreak p_k)$&$C$&$1000L_-$&$1000L_+$&$p_1\text{ proof}$\\
\hline
\endfirsthead
\multicolumn{7}{l}{\textit{Table \thetable\ (continued)}}\\
$Q_0$&$p_1$&$(p_2,\allowbreak \ldots,\allowbreak p_k)$&$C$&$1000L_-$&$1000L_+$&$p_1\text{ proof}$\\
\hline
\endhead

$100$&$8$&$(4,\allowbreak 1)$&$29$&$10482$&$10910$&$B$\\
$116$&$9$&$(5,\allowbreak 2)$&$35$&$11290$&$11751$&$B$\\
$140$&$10$&$(6,\allowbreak 3)$&$41$&$12403$&$12909$&$B$\\
$164$&$11$&$(7,\allowbreak 4)$&$47$&$13424$&$13972$&$B$\\
$188$&$12$&$(8,\allowbreak 5)$&$53$&$14373$&$14960$&$B$\\
$212$&$13$&$(8,\allowbreak 5,\allowbreak 0)$&$56$&$15263$&$15886$&$B$\\
$224$&$13$&$(9,\allowbreak 6,\allowbreak 2)$&$64$&$15689$&$16329$&$B$\\
$256$&$14$&$(10,\allowbreak 7,\allowbreak 4)$&$74$&$16772$&$17457$&$B$\\
\end{longtable}\end{center}

\begin{center}\scriptsize\setlength{\tabcolsep}{3pt}\begin{longtable}{r|p{2.15in}|p{2.6in}}
\caption{Endpoint profiles and raw-error bounds for Table~\ref{tab:middle-b-counts}.}\label{tab:middle-b-checks}\\
$Q_0$&$\text{profiles}$&$\text{raw errors }(j,\allowbreak e,\allowbreak s)$\\
\hline
\endfirsthead
\multicolumn{3}{l}{\textit{Table \thetable\ (continued)}}\\
$Q_0$&$\text{profiles}$&$\text{raw errors }(j,\allowbreak e,\allowbreak s)$\\
\hline
\endhead

$100$&$(8,\allowbreak P),\allowbreak  (P,\allowbreak P)$&$(3,\allowbreak 44,\allowbreak 2981)$\\
$116$&$(3,\allowbreak 8),\allowbreak  (P,\allowbreak P)$&$(3,\allowbreak 62,\allowbreak 2733)$\\
$140$&$(3,\allowbreak 3),\allowbreak  (P,\allowbreak P)$&$(3,\allowbreak 133,\allowbreak 2201)$\\
$164$&$(3,\allowbreak 3),\allowbreak  (P,\allowbreak P)$&$-$\\
$188$&$(5,\allowbreak 5),\allowbreak  (8,\allowbreak P)$&$-$\\
$212$&$(3,\allowbreak 3),\allowbreak  (8,\allowbreak P),\allowbreak  (P,\allowbreak P)$&$(4,\allowbreak 27,\allowbreak 4578)$\\
$224$&$(5,\allowbreak 5),\allowbreak  (8,\allowbreak P),\allowbreak  (P,\allowbreak P)$&$(4,\allowbreak 33,\allowbreak 4260)$\\
$256$&$(5,\allowbreak 5),\allowbreak  (8,\allowbreak 8),\allowbreak  (P,\allowbreak P)$&$(4,\allowbreak 92,\allowbreak 3052)$\\
\end{longtable}\end{center}

\begin{center}\scriptsize\setlength{\tabcolsep}{3pt}\begin{longtable}{r|r|p{1.35in}|r|rr|c}
\caption{Layers on $[2/5,1/2]$ in Proposition~\ref{prop:analyticmiddlebridge}.}\label{tab:middle-c-counts}\\
$Q_0$&$p_1$&$(p_2,\allowbreak \ldots,\allowbreak p_k)$&$C$&$1000L_-$&$1000L_+$&$p_1\text{ proof}$\\
\hline
\endfirsthead
\multicolumn{7}{l}{\textit{Table \thetable\ (continued)}}\\
$Q_0$&$p_1$&$(p_2,\allowbreak \ldots,\allowbreak p_k)$&$C$&$1000L_-$&$1000L_+$&$p_1\text{ proof}$\\
\hline
\endhead

$100$&$9$&$(5)$&$30$&$10910$&$11547$&$B$\\
$120$&$9$&$(6)$&$32$&$11952$&$12649$&$B$\\
$128$&$10$&$(6,\allowbreak 1)$&$37$&$12344$&$13063$&$B$\\
$148$&$11$&$(7,\allowbreak 3)$&$45$&$13273$&$14047$&$B$\\
$180$&$12$&$(8,\allowbreak 5)$&$53$&$14638$&$15491$&$B$\\
$212$&$13$&$(9,\allowbreak 6)$&$59$&$15886$&$16812$&$B$\\
$236$&$14$&$(10,\allowbreak 7)$&$65$&$16761$&$17738$&$B$\\
$260$&$15$&$(10,\allowbreak 7)$&$67$&$17593$&$18618$&$B$\\
$268$&$15$&$(11,\allowbreak 7)$&$69$&$17861$&$18903$&$B$\\
$276$&$15$&$(11,\allowbreak 8,\allowbreak 1)$&$74$&$18126$&$19183$&$B$\\
$296$&$16$&$(11,\allowbreak 8,\allowbreak 4)$&$82$&$18771$&$19866$&$B$\\
$328$&$17$&$(12,\allowbreak 9,\allowbreak 5)$&$90$&$19760$&$20912$&$B$\\
$360$&$18$&$(13,\allowbreak 10,\allowbreak 7)$&$100$&$20701$&$21908$&$(6,\allowbreak 6)$\\
$400$&$19$&$(14,\allowbreak 11,\allowbreak 8)$&$108$&$21821$&$23094$&$(6,\allowbreak 6)$\\
$432$&$20$&$(15,\allowbreak 11,\allowbreak 8)$&$112$&$22677$&$24000$&$(6,\allowbreak 6)$\\
$448$&$20$&$(15,\allowbreak 12,\allowbreak 9)$&$116$&$23094$&$24440$&$B$\\
$464$&$20$&$(16,\allowbreak 12,\allowbreak 9)$&$118$&$23502$&$24873$&$B$\\
$472$&$21$&$(16,\allowbreak 12,\allowbreak 9)$&$120$&$23704$&$25086$&$(6,\allowbreak 6)$\\
$480$&$21$&$(16,\allowbreak 12,\allowbreak 10,\allowbreak 1)$&$125$&$23904$&$25298$&$(6,\allowbreak 6)$
\end{longtable}\end{center}

\begin{center}\scriptsize\setlength{\tabcolsep}{3pt}\begin{longtable}{r|p{2.15in}|p{2.6in}}
\caption{Endpoint profiles and raw-error bounds for Table~\ref{tab:middle-c-counts}.}\label{tab:middle-c-checks}\\
$Q_0$&$\text{profiles}$&$\text{raw errors }(j,\allowbreak e,\allowbreak s)$\\
\hline
\endfirsthead
\multicolumn{3}{l}{\textit{Table \thetable\ (continued)}}\\
$Q_0$&$\text{profiles}$&$\text{raw errors }(j,\allowbreak e,\allowbreak s)$\\
\hline
\endhead

$100$&$(3,\allowbreak P)$&$-$\\
$120$&$(3,\allowbreak P)$&$-$\\
$128$&$(3,\allowbreak P),\allowbreak  (P,\allowbreak P)$&$(3,\allowbreak 26,\allowbreak 4835)$\\
$148$&$(3,\allowbreak P),\allowbreak  (P,\allowbreak P)$&$(3,\allowbreak 38,\allowbreak 4380)$\\
$180$&$(5,\allowbreak 3),\allowbreak  (P,\allowbreak P)$&$(3,\allowbreak 122,\allowbreak 3047)$\\
$212$&$(5,\allowbreak 5),\allowbreak  (P,\allowbreak P)$&$-$\\
$236$&$(5,\allowbreak 5),\allowbreak  (8,\allowbreak P)$&$-$\\
$260$&$(5,\allowbreak 5),\allowbreak  (P,\allowbreak P)$&$-$\\
\end{longtable}\end{center}

\par\medskip\noindent
\textbf{Use of AI tools.}
OpenAI Codex assisted with mathematical exposition and explicit
calculations. The authors take full responsibility for the
mathematical content of this article.
\par\medskip\noindent
\textbf{Data availability.}
All quantities used in the proofs are specified in this article.
\par\medskip\noindent
\textbf{Conflict of interest.}
The authors declare that they have no conflicts of interest.


\begin{thebibliography}{99}
\small\setlength{\itemsep}{3pt}

\bibitem{Berezin}
F.~A. Berezin,
Covariant and contravariant symbols of operators,
\emph{Math. USSR-Izv.} \textbf{6} (1972), no.~5, 1117--1151.

\bibitem{BucurHenrot}
D.~Bucur and A.~Henrot,
Maximization of the second non-trivial Neumann eigenvalue,
\emph{Acta Math.} \textbf{222} (2019), no.~2, 337--361.

\bibitem{Davies}
E.~B. Davies,
\emph{Spectral theory and differential operators},
Cambridge Studies in Advanced Mathematics, vol.~42,
Cambridge University Press, Cambridge, 1995.

\bibitem{FilonovConvex}
N.~Filonov,
On the P\'olya conjecture for the Neumann problem in planar convex domains,
\emph{Comm. Pure Appl. Math.} \textbf{78} (2025), no.~3, 537--544.

\bibitem{FLPSballs}
N.~Filonov, M.~Levitin, I.~Polterovich, and D.~A. Sher,
P\'olya's conjecture for Euclidean balls,
\emph{Invent. Math.} \textbf{234} (2023), 129--169.

\bibitem{LorchMuldoonSzego1972}
\begingroup\hbadness=10000
L. Lorch, M. E. Muldoon, and P. Szeg\H{o},
Higher monotonicity properties of certain Sturm--Liouville functions.~IV,
\emph{Canad. J. Math.} \textbf{24} (1972), no.~2, 349--368.
\href{https://doi.org/10.4153/CJM-1972-029-9}
{doi:10.4153/CJM-1972-029-9}.
\par\endgroup

\bibitem{ChungLeePark}
S.-Y. Chung, S. Lee, and Y. W. Park,
Complex zeros of Bessel function derivatives and associated
orthogonal polynomials, 2024,
\href{https://arxiv.org/abs/2406.09746}{arXiv:2406.09746}.

\bibitem{Phase}
N.~Filonov, M.~Levitin, I.~Polterovich, and D.~A. Sher,
Uniform enclosures for the phase and zeros of Bessel functions
and their derivatives,
\emph{SIAM J. Math. Anal.} \textbf{56} (2024), no.~6, 7644--7682.
\href{https://arxiv.org/abs/2402.06956v3}{Author version: arXiv:2402.06956v3}.

\bibitem{FLPS}
N.~Filonov, M.~Levitin, I.~Polterovich, and D.~A. Sher,
P\'olya's conjecture for Dirichlet eigenvalues of annuli,
\emph{J. London Math. Soc.} \textbf{113} (2026), no.~2, e70425.

\bibitem{FLPSneumann}
N.~Filonov, M.~Levitin, I.~Polterovich, and D.~A. Sher,
P\'olya's conjecture for higher-dimensional Neumann balls,
preprint, 2026,
\href{https://arxiv.org/abs/2607.29305}{arXiv:2607.29305}.

\bibitem{FrankLarson}
R.~L. Frank and S.~Larson,
Semiclassical inequalities for Dirichlet and Neumann Laplacians
on convex domains,
\emph{Comm. Pure Appl. Math.} \textbf{79} (2026), no.~3, 762--822.

\bibitem{FreitasSalavessa}
P.~Freitas and I.~Salavessa,
Families of non-tiling domains satisfying P\'olya's conjecture,
\emph{J. Math. Phys.} \textbf{64} (2023), no.~12, 121503.

\bibitem{GanJiangLin}
Z.~Gan, R.~Jiang, and F.-H. Lin,
The improved Berezin--Li--Yau inequality and Kr\"oger inequality
and consequences,
\emph{Sci. China Math.} (2026).

\bibitem{GMWZ}
J.~Guo, C.~Miao, W.~Wang, and G.~Zhan,
Improvement of P\'olya's conjecture for balls and cylinders,
to appear in \emph{Sci. China Math.},
\href{https://arxiv.org/abs/2511.17050}{arXiv:2511.17050}.

\bibitem{GJWY}
J.~Guo, T.~Jiang, Z.~Wang, and X.~Yang,
Bessel functions and Weyl's law for balls and spherical shells,
\emph{Adv. Math.} \textbf{484} (2026), 110715;
\href{https://arxiv.org/abs/2412.14059}{arXiv:2412.14059}.

\bibitem{HW}
X.~He and Z.~Wang,
P\'olya's conjecture for thin products,
\emph{Int. Math. Res. Not. IMRN} (2026), no.~16, rnag182.

\bibitem{Ivrii}
V.~Ya. Ivrii,
Second term of the spectral asymptotic expansion of the
Laplace--Beltrami operator on manifolds with boundary,
\emph{Funct. Anal. Appl.} \textbf{14} (1980), 98--106.

\bibitem{JiangLin}
R.~Jiang and F.-H. Lin,
P\'olya's conjecture up to $\epsilon$-loss and quantitative
estimates for the remainder of Weyl's law,
preprint, 2025,
\href{https://arxiv.org/abs/2507.04307}{arXiv:2507.04307}.

\bibitem{Kellner}
R.~Kellner,
On a theorem of Polya,
\emph{Amer. Math. Monthly} \textbf{73} (1966), no.~8, 856--858.

\bibitem{Kroger}
P.~Kr\"oger,
Upper bounds for the Neumann eigenvalues on a bounded domain
in Euclidean space,
\emph{J. Funct. Anal.} \textbf{106} (1992), no.~2, 353--357.

\bibitem{Laptev}
A.~Laptev,
Dirichlet and Neumann eigenvalue problems on domains in
Euclidean spaces,
\emph{J. Funct. Anal.} \textbf{151} (1997), no.~2, 531--545.

\bibitem{LiNeumann}
Y.~Li,
P\'olya's conjecture for the Neumann eigenvalues on Euclidean balls,
preprint, 2026,
\href{https://arxiv.org/abs/2607.25958}{arXiv:2607.25958}.

\bibitem{LiYau}
P.~Li and S.-T. Yau,
On the Schr\"odinger equation and the eigenvalue problem,
\emph{Comm. Math. Phys.} \textbf{88} (1983), no.~3, 309--318.

\bibitem{Melas}
A.~D. Melas,
A lower bound for sums of eigenvalues of the Laplacian,
\emph{Proc. Amer. Math. Soc.} \textbf{131} (2003), no.~2, 631--636.

\bibitem{NIST}
F.~W.~J. Olver, A.~B. Olde Daalhuis, D.~W. Lozier, B.~I. Schneider,
R.~F. Boisvert, C.~W. Clark, B.~R. Miller, B.~V. Saunders,
H.~S. Cohl, and M.~A. McClain (eds.),
\emph{NIST Digital Library of Mathematical Functions},
Chapter~10, especially Sections~10.5 and~10.21.

\bibitem{Polya1954}
G.~P\'olya,
\emph{Mathematics and plausible reasoning},
vols.~I--II, Princeton University Press, Princeton, NJ, 1954.

\bibitem{Polya1961}
G.~P\'olya,
On the eigenvalues of vibrating membranes,
\emph{Proc. London Math. Soc.} (3) \textbf{11} (1961), 419--433.

\bibitem{SZ}
Z.~Song and W.~Zou,
Some inequalities on Laplacian eigenvalues with Neumann type
boundary data for planar disk and annulus,
\emph{Partial Differ. Equ. Appl.} \textbf{7} (2026), article~15.

\bibitem{Szego}
G.~Szeg\H{o},
Inequalities for certain eigenvalues of a membrane of given area,
\emph{J. Rational Mech. Anal.} \textbf{3} (1954), 343--356.

\bibitem{Urakawa}
H.~Urakawa,
Lower bounds for the eigenvalues of the fixed vibrating membrane problems,
\emph{Tohoku Math. J.} (2) \textbf{36} (1984), no.~2, 185--189.

\bibitem{Weinberger}
H.~F. Weinberger,
An isoperimetric inequality for the $N$-dimensional free
membrane problem,
\emph{J. Rational Mech. Anal.} \textbf{5} (1956), 633--636.

\bibitem{Weyl}
H.~Weyl,
\"Uber die asymptotische verteilung der eigenwerte,
\emph{Nachr. Ges. Wiss. G\"ottingen, Math.-Phys. Kl.}
(1911), 110--117.

\end{thebibliography}
\end{document}